\newcommand{\nocontentsline}[3]{}
\newcommand{\tocless}[2]{\bgroup\let\addcontentsline=\nocontentsline#1{#2}\egroup}
\newtcolorbox{tbox}[1][]{%
    breakable,
    enhanced,
    colframe=white,
    coltitle=white,
    #1
}
\newtheorem{theo}{Theorem}[section]
\newtheorem{proposition}[theo]{Proposition}
\newtheorem{theorem}[theo]{Theorem}
\newtheorem{question}[theo]{Question}
\newtheorem{claim}[theo]{Claim}
\newtheorem{corollary}[theo]{Corollary}
\newtheorem{lemma}[theo]{Lemma}
\newtheorem{lemdef}[theo]{Lemma/Definition}
\theoremstyle{definition}
\newtheorem{example}[theo]{Example}
\newtheorem{definition}[theo]{Definition}
\newtheorem{remark}[theo]{Remark}
\newtheorem*{convention*}{Convention}
\newtheorem{computation}[theo]{Computation}
\newcommand{\defi}[1]{\textsf{#1}} 
\newlist{thmcases}{enumerate}{1}
\setlist[thmcases]{
  label=\textbf{\upshape Case~\thetheorem.\arabic*},
  leftmargin=*,
  ref={\thetheorem.\arabic*}}
\newcommand{\tr}{\widetilde{R}}
\newcommand{\Vol}{\text{Vol}}
\newcommand{\Ti}{\mathscr T}
\DeclareSymbolFont{cyrletters}{OT2}{wncyr}{m}{n}
\DeclareMathSymbol{\Sha}{\mathalpha}{cyrletters}{"58}
\DeclareMathOperator{\Width}{Width}
\title{Toric Elliptic Pairs with Picard Number Three}
\author{Aditya Khurmi}
\begin{document}

\begin{abstract}
An \defi{elliptic pair} $(X, C)$ is a generalization of a rational elliptic fibration $X \to \PP^1$ with fiber $C,$ introduced in \cite{jenia_blowup}. Here, $X$ is a projective rational surface with log terminal singularities, and $C$ is an irreducible curve contained in the smooth locus of $X,$ with $p_a(C)=1$ and $C^2=0.$ 
These naturally arise as blowups $X:=\Bl_e(\PP_\Delta)$ of projective toric surfaces, whose Newton polygon is \defi{elliptic}.
The order of $\mc O(C)|_C$ in $\Pic^0(C)$ gives a quantitative way to check if $X$ is an elliptic fibration, which is equivalent to finiteness of the order.
We call $\Delta$ a \defi{Lang-Trotter polygon} when this order is infinite, in which case $\overline{\text{Eff}(\Bl_e(\PP_\Delta))}$ is non-polyhedral.
The paper \cite{lizzie} shows there are exactly $3$ elliptic triangles up to $\SL_2(\ZZ),$ none of which is Lang-Trotter. 
The paper \cite{jenia_blowup} gives an infinite family of Lang-Trotter pentagons and heptagons, and various examples of other polygons when $\rho(\PP_\Delta)>2.$ Remark 4.7 in the paper asks if any Lang-Trotter quadrilaterals exist, and we answer this in the negative by studying the curves in the Zariski Decomposition of $K_X+C.$
\end{abstract}

\maketitle

\tableofcontents

\section{Introduction}

For a projective variety $X,$ one defines $\Eff(X),$ the cone of all effective divisors on $X.$ The \defi{pseudo-effective cone} is then defined as the closure $\ol{\Eff(X)}$ in $\text{NS}(X)_\RR$ which captures a lot of the birational geometry of $X.$ 
In recent work \cite{jenia_blowup} by Castravet, Laface, Tevelev and Ugaglia, they constructed a special class of surfaces $X$ which include various projective surfaces with non-polyhedral pseudo-effective cones.
More formally, an \defi{elliptic pair} $(C,X)$ consists of a projective rational surface $X$ with log terminal singularities and an irreducible curve $C \subseteq X$ of arithmetic genus one, disjoint from the singular locus of $X$ and such that $C^2=0.$ 
An interesting class of such surfaces is given by rational elliptic fibrations $X \to \PP^1$ with a fiber $\simeq C.$

\medskip

For elliptic surfaces, the order of $\mc O(C)|_C$ in $\Pic^0 C$ is an interesting object of study. For instance, $\ol{\Eff(X)}$ is non-polyhedral when the order is infinite \cite[Lem. 3.3]{jenia_blowup}. A method of constructing these surfaces is by blowing up toric surfaces coming from certain \defi{elliptic polygons}. 
Concretely, let $\PP_\Delta$ be the (projective) toric surface associated to a lattice polygon $\Delta \subseteq \ZZ^2.$
For a hyperplane section $H_\Delta$ of $\PP_\Delta,$ 
let $\mc L_\Delta$ denote the linear system $|H_\Delta|,$ and let $\mc L_\Delta(m)$ denote the linear subsystem consisting of curves $\Gamma$ which have multiplicity at least $m$ through $e,$ a fixed point in the torus.

\begin{definition}
\label{def: definition of elliptic polygon}
    Let $\Delta \subseteq \ZZ^2$ be a lattice polygon with lattice perimeter $m=|\partial \Delta \cap \ZZ^2|.$ We call $\Delta$ an \defi{elliptic polygon} if 
    \begin{enumerate}
        \item $\Vol(\Delta)=m^2,$ where $\Vol(\Delta)$ is the \defi{normalized volume}, numerically equal to twice the area.
        \item $\dim \mc L_\Delta(m)=1,$ and the unique (basis) curve $\Gamma$ of $\mc L_\Delta(m)$ is irreducible.
        \item the Newton Polygon of $\Gamma$ coincides with $\Delta.$
    \end{enumerate}
\end{definition}

Recall here that for a field $k$ and $f = \sum_{u \in \ZZ^2} a_uz^u \in [x^{\pm 1},y^{\pm 1}],$ the \defi{Newton Polygon of $f$} is the convex hull of the points $u \in \ZZ^2$ with $a_u \ne 0.$ In this paper, we assume $k$ is algebraically closed of arbitrary characteristic. 

\begin{example}
\label{example:random example}
    Consider 
    $$\Delta:\lt[\begin{matrix}
        0 & 1 & 20 & 7\\
        0 & 0 & 14 & 5
    \end{matrix}\rt].$$
    We can verify this is an elliptic quadrilateral with $m=4;$ see Computation \ref{comp:random example}. 
\end{example}

\begin{remark}
    Elliptic polygons are called \defi{good} in \cite{jenia_blowup}. The reason we call them elliptic is their connection to elliptic pairs; see the definition that follows.
\end{remark}

\begin{definition}
    Let $X:=\Bl_e(\PP_\Delta)$ be the blowup of $\PP_\Delta$ at $e.$ Let $E$ be the exceptional divisor and $C$ be the strict transform of $\Gamma.$ Theorem 4.4 in 
    \cite{jenia_blowup} shows that the conditions for $\Delta$ to be elliptic imply that $(X, C)$ is an elliptic pair.  We call such an elliptic pair $(X,C)$ a \defi{toric elliptic pair}.    
\end{definition}

\begin{remark}
\label{remark: toric boundary for X}
    In what follows, we will abuse notation and use terms like ``toric boundary'', ``toric fan'' etc. for $X$ as well, unless stated otherwise. 
    This is possible since $\PP_\Delta$ and $X$ are away from $e, E$ resp., which are disjoint from the toric boundary.
\end{remark}

\begin{definition}
    Let $\Delta$ be an elliptic polygon, and $X=\Bl_e(\PP_\Delta)$ be the corresponding blown up toric surface and $C$ the strict transform of $\Gamma.$ If $\res\;C$ has infinite order in $\Pic^0C$, we call $\Delta$ a \defi{Lang-Trotter} polygon. Equivalently (cf. Lemma–Definition 3.2. in \cite{jenia_blowup}), $\Delta$ is Lang-Trotter if $h^0(X,\mc O(nC))=1$ for all $n>0.$
\end{definition}

While an interesting object in its own right, \cite{jenia_blowup} uses this construction to prove other results such as the non-polyhedrality of the pseudo-effective cone of the Grothendieck–Knudsen moduli space $\ol{M_{0,n}}$ for $n \ge 10.$
However if one studies these Lang-Trotter polygons, the first natural question is the following:
\begin{question}
\label{question: does there exist lang trotter}
    Does there exist a Lang-Trotter $n$-gon for every $n \ge 3?$
\end{question}
The authors in \cite{jenia_blowup} provide infinite families of pentagons and heptagons (see Remarks 5.14, 5.15), and provide a database of various other examples.
They also remark in the introduction that the answer to Question \ref{question: does there exist lang trotter} is not known.
Pratt in \cite{lizzie} shows the answer is negative in the case for triangles.
In this paper, we consider the quadrilateral case, and prove the following:

\begin{theorem}
\label{theorem:key theorem, no Lang Trotter}
There are no Lang-Trotter quadrilaterals.
\end{theorem}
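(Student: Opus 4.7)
The plan is to assume $\Delta$ is Lang-Trotter and derive a contradiction from the Zariski decomposition of $K_X+C$. First, I would establish that $K_X+C$ is effective. Adjunction on $C$ gives $(K_X+C)\cdot C=2p_a(C)-2=0$ and hence $K_X\cdot C=0$; Riemann-Roch then yields $\chi(\mathcal O_X(K_X+C))=\chi(\mathcal O_X)+\tfrac12(K_X+C)\cdot C=1$, and Serre duality gives $h^2(K_X+C)=h^0(-C)=0$, so $h^0(K_X+C)\ge 1$. Picking any effective $D\sim K_X+C$, the nefness of $C$ (an irreducible curve with $C^2=0$) combined with $D\cdot C=0$ forces every component of $D$ to have zero intersection with $C$; each such component is either $C$ itself or disjoint from $C$.

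Next, I would apply the Zariski decomposition $K_X+C=P+N$ with $P$ nef and $N$ effective of negative-definite support, so $P\cdot C=N\cdot C=0$. Hodge index on the nef, isotropic pair $(P,C)$ forces $P\equiv qC$ numerically with $q\ge 0$; intersecting with $E$ gives $(K_X+C)\cdot E=m-1$ and $P\cdot E=qm$, whence $N\cdot E=m-1-qm\ge 0$ yields the useful bound $q\le 1-1/m$. The Lang-Trotter hypothesis $h^0(nC)=1$, together with the rationality of $X$ (so $\mathrm{Pic}(X)=\mathrm{NS}(X)$ is torsion-free), shows that any irreducible $B\neq C$ with $B\cdot C=0$ must satisfy $B^2<0$: otherwise Hodge index gives $B\equiv q'C$, hence $kB\sim\ell C$ for coprime positive integers $k,\ell$, and the uniqueness $|\ell C|=\{\ell C\}$ forces $B=C$. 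Consequently each component of $N$ is an irreducible curve distinct from and disjoint from $C$, of strictly negative self-intersection. The remaining possibility $N=0$ would force $q=1-1/m$ and $mK_X+C\sim 0$, i.e.\ $\sum(a_i-m)D_i\sim 0$ on $\mathbb P_\Delta$ for the edge offsets $a_i$ of $\Delta=\{u:\langle u,v_i\rangle\ge -a_i\}$. This in turn forces $\Delta$ to be an $M$-translate of $m\Delta^\ast$ for $\Delta^\ast=\{u:\langle u,v_i\rangle\ge -1\}$ with $\mathrm{Vol}(\Delta^\ast)=1$, which one rules out by direct inspection of four-ray fans.

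The heart of the proof thus reduces to showing that, for an elliptic quadrilateral $\Delta$, no irreducible curve $B\ne C$ on $X=\mathrm{Bl}_e(\mathbb P_\Delta)$ with $B\cdot C=0$ exists. Any such $B$ descends to an irreducible $B'\subset\mathbb P_\Delta$ passing through $e$ with multiplicity $k:=\mathrm{mult}_e(B')\ge 1$ (the case $k=0$ is ruled out by the ampleness of $H_\Delta$), of $H_\Delta$-degree $km$, and meeting $\Gamma$ only at $e$ with local intersection exactly $km$. I would attack this by constraining the Newton polygon $\Delta'$ of $B'$: the asymptotic behavior of $\Gamma$ along each of the four toric boundary divisors $D_i$ (where $\Gamma\cap D_i$ consists of $|e_i|$ points with positions prescribed by $\Delta$) forces the tangent cone at $e$ and the boundary branches of $B'$ to be highly compatible with those of $\Gamma$. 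Combining the degree and multiplicity relations for $B'$ with the quadrilateral structure of $\Delta$, a combinatorial elimination---most likely carried out after reducing $\Delta$ to a short list of $\mathrm{SL}_2(\mathbb Z)$-normal forms, in the spirit of Pratt's classification for triangles in \cite{lizzie}---should rule out every such $B'$. I expect this final polygonal step, translating the existence of a curve disjoint from $C$ into a polygon-level incompatibility with being a quadrilateral, to be the central difficulty of the argument.
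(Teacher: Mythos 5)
Your setup—effectivity of $K_X+C$ via Riemann--Roch, orthogonality of the decomposition to $C$, and negativity of the components via Hodge index plus the Lang--Trotter hypothesis—matches the facts the paper imports from \cite{jenia_blowup}. The gap is in your final reduction, which is where all the work lives. You reduce the theorem to showing that no irreducible curve $B\ne C$ with $B\cdot C=0$ exists on $X=\Bl_e(\PP_\Delta)$, and you propose to prove this by a Newton-polygon analysis that uses only the hypothesis that $\Delta$ is an elliptic quadrilateral. That target statement is false for \emph{every} elliptic quadrilateral: Theorem \ref{theorem: main theorem} shows that the Zariski decomposition of $K_X+C$ consists of exactly \emph{two} irreducible negative curves lying in $C^\perp$, so such curves always exist and no combinatorial elimination can rule them out. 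Since your proposed argument for the "heart" never invokes the Lang--Trotter hypothesis, it is attempting to prove a statement that is false under weaker hypotheses than the ones you are allowed to use.

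The correct dichotomy, which your proposal misses, exploits $\rho(X)=3$: the plane $C^\perp$ is tangent to the light cone along $\RR C$, so it meets $\ol{\Eff(X)}$ in a two-dimensional face and the negative part $N$ is supported on at most two irreducible curves. If on exactly two, then $C$ lies in the cone spanned by $R_1,R_2$, and clearing denominators in $C+a_1R_1+a_2R_2\sim 0$ and restricting to $C$ makes $\res C$ torsion — this kills Lang--Trotter by the same mechanism you use to prove $B^2<0$, and it is the paper's Lemma \ref{lemma:dim of linear system}. If $N=0$, then $(K_X+C)\cdot E=m-1\ne 0$ gives an immediate contradiction (your detour through $m\Delta^\ast$ is unnecessary). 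The entire remaining difficulty is the case $K_X+C\sim\alpha R$ for a single irreducible negative curve $R$, which your sketch does not address: the paper rules it out only after classifying the minimal resolution $Y$ as a semi-elliptic surface, subdividing by the Du Val type of the minimal elliptic model $Z$, and running a long computer-assisted analysis combining the matrix equation for toric boundaries, the width bound $\Width(\Delta)\ge m$, and the volume condition $\Vol(\Delta)=m^2$ (producing Pell equations and singular quadric intersections). Expecting this to fall out of "compatibility of boundary branches" as in the triangle case of \cite{lizzie} substantially underestimates the problem.
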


This answers the question raised in Remark 4.7 in \cite{jenia_blowup}. 
Our method of approach is to study the curves in the Zariski Decomposition of $K_X+C$ (for $X=\Bl_e(\PP_\Delta),$ with $\Delta$ Lang-Trotter) which, in this case, is a non-negative integral decomposition into irreducible negative curves\footnote{a \defi{negative curve} is a curve with negative self-intersection} (cf. \cite[Lem. 3.4. and Cor. 3.12]{jenia_blowup}).
These are also closely related to the generators of $\ol{\Eff}(X).$

\medskip

It is easy to prove that $K_X+C \sim 0$ is not possible.
We prove more in this paper:

\begin{theorem}
\label{theorem: main theorem}
    Let $(X,C)$ be a toric elliptic pair with $\rho(X)=3.$ 
    The Zariski Decomposition of $K_X+C$ has exactly two curves, all of which are generators of extremal rays of the pseudoeffective cone $\ol{\Eff(X)}.$
\end{theorem}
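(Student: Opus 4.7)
The plan is to show that $K_X + C$ is pseudo-effective, that its Zariski decomposition is supported entirely in the two-dimensional hyperplane $C^\perp \subset N^1(X)_\RR$, and that $C^\perp \cap \ol{\Eff(X)}$ is generated by exactly two extremal irreducible negative curves.

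Adjunction (applied to $C$, which lies in the smooth locus of $X$) gives $(K_X + C)\cdot C = 2p_a(C) - 2 = 0$. By \cite[Lem.~3.4, Cor.~3.12]{jenia_blowup}, $K_X + C$ is pseudo-effective with Zariski decomposition $K_X + C = \sum a_i N_i$, where the $N_i$ are irreducible negative curves and the $a_i$ are non-negative integers. The class $[C]$ is nef ($C^2 = 0$ and $C$ irreducible force $D\cdot C \ge 0$ for every other irreducible $D$), so combining $(K_X+C)\cdot C = 0$ with $N_i \cdot C \ge 0$ (as each $N_i \ne C$ is irreducible) yields $N_i\cdot C = 0$ for every $i$, so every $N_i$ lies in $C^\perp$.

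Since $\rho(X) = 3$, the Hodge Index Theorem gives $N^1(X)_\RR$ signature $(1,2)$, making $C^\perp$ a $2$-dimensional subspace on which the intersection form is negative semi-definite with kernel $\RR[C]$. Any nef class in $C^\perp$ is therefore a non-negative multiple of $[C]$, so the extremal rays of the $2$-dimensional cone $C^\perp \cap \ol{\Eff(X)}$ must be generated by classes of irreducible negative curves. The main obstacle, and the heart of the proof, is to show that there are \emph{exactly} two such curves $N_1, N_2$: one uses the explicit toric structure of the elliptic quadrilateral $\Delta$ to identify $N_1, N_2$ as strict transforms of specific irreducible curves on $\PP_\Delta$ passing through $e$ with appropriate multiplicities, where the constraints $[N_i] \cdot C = 0$ together with $\Delta$ being elliptic (in particular $\dim \mc L_\Delta(m) = 1$) conspire to yield exactly two such irreducible realizations.

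Once $N_1, N_2$ are identified, $K_X + C \in C^\perp \cap \ol{\Eff(X)}$ forces a decomposition $K_X + C = a_1[N_1] + a_2[N_2]$ with $a_i \ge 0$. That both $a_i$ are strictly positive is checked by explicit toric computation using $K_X = -\sum_{i=1}^{4} D_i + E$ and $C = \pi^* H_\Delta - mE$, verifying that $K_X + C$ is proportional to neither $[N_1]$ nor $[N_2]$ alone. Finally, any irreducible negative curve on a $\QQ$-factorial surface generates an extremal ray of the pseudo-effective cone (if $[N] = [A] + [B]$ with $A,B$ pseudo-effective and not proportional to $[N]$, intersecting with $N$ gives $N^2 = A\cdot N + B \cdot N \ge 0$, contradicting $N^2 < 0$), so $N_1, N_2$ give extremal rays of $\ol{\Eff(X)}$.
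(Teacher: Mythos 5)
Your reduction is the same one the paper makes at the outset: $(K_X+C)\cdot C=0$ by adjunction, every irreducible component of the unique effective divisor in $|K_X+C|$ is a negative curve lying in $C^\perp$, and since $\rho(X)=3$ the hyperplane $C^\perp$ is $2$-dimensional and tangent to the light cone, so there are at most two components and any irreducible negative curve spans an extremal ray of $\ol{\Eff(X)}$. This matches Proposition \ref{prop: R are extremal rays} and the first half of Lemma \ref{lemma:dim of linear system}, and it is correct as far as it goes.

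The gap is that the entire content of the theorem --- that the number of components is \emph{exactly} two, i.e.\ that $K_X+C\sim 0$ and $K_X+C\sim\alpha R$ are both impossible --- is asserted rather than proved. You write that the toric structure and $\dim\mc L_\Delta(m)=1$ ``conspire to yield exactly two such irreducible realizations,'' but no mechanism is given, and no short mechanism exists: the one-curve configuration $K_X+C\sim\alpha R$ is perfectly consistent with your cone-theoretic setup (a $2$-dimensional face of $\ol{\Eff(X)}$ whose boundary rays are $\RR_{\ge0}R$ and a ray carrying no effective class is exactly the Lang--Trotter scenario the paper must exclude). Ruling it out is what Sections \ref{subsection: semi-elliptic} through \ref{sec: the A_n case} do: assuming $K_X+C\sim\alpha R$, one shows the minimal resolution $Y$ is semi-elliptic, runs a case division on the Du Val singularities of the minimal elliptic model $Z$ (opposite vs.\ adjacent, then smooth, $A_n$, or $DE$), and kills each case by combining the width bound $\Width(\Delta)\ge m$, the matrix equation for toric boundaries (Lemma \ref{lem: matrix equation}), and the constraint $\Vol(\Delta)=m^2$, the last two reducing to Pell-type and biquadratic Diophantine systems settled by computer. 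None of this is replaced by your argument. Two smaller points: the zero-curve case, which the paper dispatches via $(K_X+C)\cdot E=m-1\ne0$, is not addressed in your write-up either; and your step ``the extremal rays of $C^\perp\cap\ol{\Eff(X)}$ must be generated by classes of irreducible negative curves'' is unjustified as stated --- a boundary ray of that face with self-intersection $0$ lies on $\RR_{\ge0}[C]$ and need not be a negative curve, so the existence of two \emph{distinct} such curves is precisely what must be established, not a consequence of the signature computation.
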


Lemma \ref{lemma:dim of linear system} in this paper would then prove Theorem \ref{theorem:key theorem, no Lang Trotter}. Furthermore, Lemma 3.3 in \cite{jenia_blowup} then implies the following:

\begin{corollary}
    Every toric elliptic pair $(X,C)$ with $\rho(X)=3$ is an elliptic fibration.
\end{corollary}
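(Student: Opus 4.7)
The plan is to read off the corollary directly from Theorem~\ref{theorem:key theorem, no Lang Trotter} via two standard translations. The first is the polygon/Picard rank dictionary: for a lattice polygon $\Delta$ with $n$ edges, the toric surface $\PP_\Delta$ has $\rho(\PP_\Delta)=n-2$ (one relation among the torus-invariant divisors for each dimension), and since $X=\Bl_e(\PP_\Delta)$ is a single blowup we have $\rho(X)=n-1$. Hence the hypothesis $\rho(X)=3$ is exactly the condition that $\Delta$ is an elliptic quadrilateral.

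Once we are in the quadrilateral regime, Theorem~\ref{theorem:key theorem, no Lang Trotter} says that no such $\Delta$ is Lang-Trotter, i.e.\ $\mc O(C)|_C$ has finite order in $\Pic^0 C$. The second translation is the equivalence recalled in the abstract and proved as Lemma~3.3 of \cite{jenia_blowup}: a toric elliptic pair $(X,C)$ is an elliptic fibration if and only if this restriction has finite order. Stringing the two implications together yields the corollary, with the resulting fibration $X\to \PP^1$ having generic fiber $\simeq C$.

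There is essentially no new obstacle at this step: all of the substantive input has already been absorbed into Theorem~\ref{theorem: main theorem} together with Lemma~\ref{lemma:dim of linear system}, which between them control the Zariski decomposition of $K_X+C$ tightly enough to preclude Lang-Trotter quadrilaterals. The corollary is thus a geometric repackaging that turns an arithmetic statement about the order of a divisor class on an elliptic curve into a classification of all toric elliptic pairs with Picard rank three.
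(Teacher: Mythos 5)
Your proposal is correct and follows the paper's own (one-line) derivation: the corollary is obtained exactly by combining Theorem~\ref{theorem:key theorem, no Lang Trotter} (no Lang--Trotter quadrilaterals, itself a consequence of Theorem~\ref{theorem: main theorem} and Lemma~\ref{lemma:dim of linear system}) with the finite-order~$\Leftrightarrow$~elliptic-fibration equivalence from Lemma~3.3 of \cite{jenia_blowup}. The only addition is your explicit check that $\rho(X)=3$ forces $\Delta$ to be a quadrilateral via $\rho(\PP_\Delta)=n-2$, which the paper leaves implicit.
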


The case we need to rule out to prove Theorem \ref{theorem: main theorem} is when $K_X+C \sim \alpha R,$ with $(X,C)$ a toric elliptic pair and $\rho(X)=3.$ 
The approach in this paper is to develop a combinatorial, a geometric and an algebraic constraint.
The first one is purely about the self-intersection numbers of certain curves in the minimal resolution $Y$ of $X,$ and the images of these under a series of contractions. 
Our surface $Y$ is a special example of what we call a \defi{semi-elliptic} surface. 
We define it as follows: 
\begin{definition}
\label{defi: semi-elliptic} 
Let $X$ be a projective toric surface, possibly blown up away from the toric boundary locus (cf. Rem. \ref{remark: toric boundary for X}).
Let $\sigma:Y \to X$ be the minimal resolution. 
We say $Y$ is \defi{semi-elliptic}, if there exists a non-boundary curve $\wt R \subset Y$ so that the following hold:
\begin{enumerate}
    \item $\wt R$ is smooth, rational and a $(-1)$ curve;
    \item Every toric boundary curve of $Y$ has negative self-intersection;
    \item $\wt R$ intersects at most $2$ exceptional curves of $Y \to X,$ and it does so transversally. Further, these two curves don't both lie over the same point in $X;$
    \item Any torus invariant points in $X$ that $R=\sigma_{\ast}\wt R$ doesn't pass through are smooth or Du Val singularities;
    \item Let $S$ be the set of torus invariant points in $X$ that $R$ passes through, so that $|S| \le 2$ by $(3).$
    Define the divisor $\mc D = \sum_{p \in S} \sigma^{-1}p,$ and call $\mc D \cup \wt R$ the \defi{relevant locus} (also see Def. \ref{defi: relevant locus}). 
    Let $\pi:Y \to W$ be the composite map contracting $\wt R$ and any subsequent $(-1)$ curves in the image of $\mc D \cup \wt R$ under these contractions (cf. Def. \ref{lemma: key commutative diagram X,Y,Z,W, and 8 curves}). 
    Then $\pi$ maps the exceptional locus of $Y \to X$ to exactly $8$ curves.
\end{enumerate}
Observe here that apart from the existence of $\wt R,$ all the axioms are purely combinatorial constraints on the self-intersection numbers of the toric boundary of $Y;$ see Example \ref{example: semi-elliptic} below.
\end{definition}

\begin{example}
\label{example: semi-elliptic}
A semi-elliptic quadrilateral $\Delta$ is given by
$$\Delta: \lt[\begin{matrix}
        0 & 39 & 45 & 0\\
        0 & 18 & 21 & 16
    \end{matrix}\rt].$$
    The resolution $Y$ and a hypothetical $(-1)$ curve $\wt R$ are sketched in Figure \ref{fig:semi_example}. 
    The right figure in \ref{fig:semi_example} shows the relevant locus.
    As we can see, it contracts to a smooth point in $Y \to W,$ and so the exceptional locus of $Y \to X$ maps to $8$ curves in $W.$
    Hence, $\Delta$ is semi-elliptic. In fact, $\Delta$ also satisfies $\Vol(\Delta)=m^2$ with $m=27,$ i.e. it satisfies the first condition for being an elliptic polygon (cf. Def. \ref{def: definition of elliptic polygon}).

    \begin{figure}[h]
        \centering
        \includegraphics[scale=0.2]{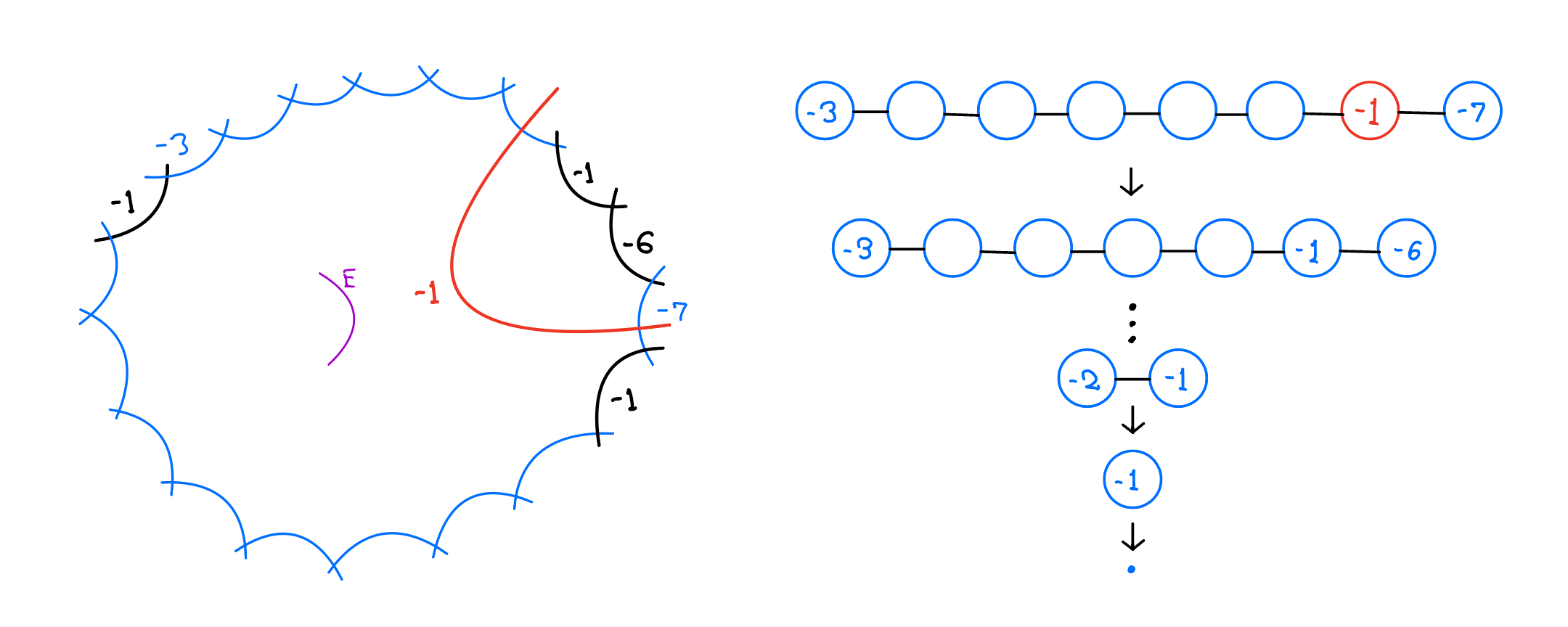}
        \caption{Left: The smooth blown up toric surface $Y$ for $\Delta,$ with a potential $(-1)$ curve intersecting two toric boundary curves. 
        The blue curves form the exceptional divisor of $Y \to X.$
        Any blue curve whose self-intersection has not been written is a $(-2)$ curve. Right: The dual graph of the relevant locus under the contractions $Y \to \dots \to W.$ }
        \label{fig:semi_example}
    \end{figure}
\end{example}

Section \ref{subsection: semi-elliptic} is devoted to proving that $Y$ is semi-elliptic when the Zarisiki Decomposition of $K_X+C$ has exactly one irreducible curve.
Given the singularity types of $X,$ semi-elliptic surfaces are easy to classify using the matrix equation in Lemma \ref{lem: matrix equation} in the Appendix, and hence form the base of our analysis.
We discuss the corresponding case subdivision in subsection \ref{subsec: case division}. 
For the cases when the Du Val singularities in $X$ are of type $DE,$ the combinatorial constraint is enough to restrict to a finite number of cases. This is done in Section \ref{sec: the DE case}.

\medskip

The next is a geometric constraint. We recall the following definition and result:

\begin{definition}
    Let $\Delta \subset \ZZ^2$ be a lattice polygon. For $v \in \ZZ^2,$ the \defi{width of $\Delta$ along $v$}, denoted $\Width_v(\Delta),$ is defined as
    $$\Width_v(\Delta) := \max_{u,w \in \Delta} \la u- w,v\ra, \footnote{In this paper, $\la \bullet, \bullet \ra$ is the standard Euclidean dot product.}.$$

    Subsequently, we define the \defi{width of $\Delta$}, denoted $\Width(\Delta),$ by $\min_{v\in \ZZ^2}\{\Width_v(\Delta)\}.$
\end{definition}

\begin{proposition}[{\cite[Lem.~2.3]{lizzie}}]
\label{prop: width}
    An elliptic polygon $\Delta$ satisfies $\Width(\Delta) \ge m.$
\end{proposition}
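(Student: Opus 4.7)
The plan is to establish $\Width_v(\Delta) \ge m$ for every primitive nonzero $v \in \ZZ^2$; since $\Width_{nv}(\Delta) = n \Width_v(\Delta)$ for $n \ge 1$, this suffices to conclude $\Width(\Delta) \ge m$. The strategy is to reduce to a convenient normal form via $\SL_2(\ZZ)$ and torus translations, and then exploit the order of vanishing of a defining equation of $\Gamma$ at $e$ together with the Newton polygon constraint.

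First I would set up the normal form. For a given primitive $v$, pick $g \in \SL_2(\ZZ)$ with $g \cdot v = (0,1)$. The induced monomial automorphism of the torus $(k^*)^2$ sends $\Gamma$ to an isomorphic irreducible curve with Newton polygon $g \cdot \Delta$, preserves multiplicity at every torus point, and satisfies $\Width_v(\Delta) = \Width_{(0,1)}(g \cdot \Delta)$. Translating by a torus element further allows the assumption $e = (1,1)$. Therefore it suffices to show $\Width_{(0,1)}(\Delta) \ge m$ under the hypothesis $e = (1,1)$.

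The key step is specialization. Let $f = \sum_{(i,j)} a_{ij} x^i y^j$ be a Laurent polynomial defining $\Gamma$ (with Newton polygon $\Delta$), so that $f$ vanishes to order $\ge m$ at $(1,1)$. Expanding around $(1,1)$ in $u = x-1$, $w = y-1$ and setting $u = 0$, the restriction $F(y) := f(1, y) \in k[y^{\pm 1}]$ either vanishes identically or vanishes at $y = 1$ to order $\ge m$. Meanwhile, the monomials appearing in $F$ are of the form $y^j$ with $y_{\min} \le j \le y_{\max}$, where $y_{\max} - y_{\min} = \Width_{(0,1)}(\Delta)$; thus $F$ has degree span at most $\Width_{(0,1)}(\Delta)$. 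Since a nonzero one-variable Laurent polynomial vanishing at a nonzero value cannot vanish to order greater than its degree span, the case $F \not\equiv 0$ immediately yields $m \le \Width_{(0,1)}(\Delta)$, as required.

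The main obstacle is ruling out the degenerate case $F \equiv 0$. In this case, $(x - 1) \mid f$ in the Laurent polynomial ring $k[x^{\pm 1}, y^{\pm 1}]$, which is a UFD in which $x - 1$ is prime. Since $\Gamma$ is irreducible by the elliptic polygon hypothesis, $f$ is irreducible up to monomial units, so $f = c \, x^a y^b (x - 1)$ for some $c \in k^*$ and $a, b \in \ZZ$. But then the Newton polygon of $f$ is the segment from $(a, b)$ to $(a+1, b)$, of normalized volume zero, contradicting $\Vol(\Delta) = m^2 \ge 1$. This eliminates the degenerate case and finishes the argument.
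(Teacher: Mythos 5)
The paper does not actually prove this proposition—it imports it wholesale from \cite[Lem.~2.3]{lizzie}—so there is no in-paper argument to compare against; what you have written is a correct, self-contained proof of the cited result. The reduction to $v=(0,1)$ and $e=(1,1)$ is legitimate because all three conditions in Definition \ref{def: definition of elliptic polygon}, the multiplicity of $\Gamma$ at $e$, and the lattice perimeter $m$ are invariant under monomial automorphisms of the torus; the restriction $F(y)=f(1,y)$, when not identically zero, indeed vanishes to order $\ge m$ at $y=1$ (set $u=0$ in the expansion of $f$ in $\mathfrak m_e^m$); and since $F$ is $y^{y_{\min}}$ times a polynomial of degree at most $\Width_{(0,1)}(\Delta)$ vanishing at the nonzero point $y=1$, the inequality $m\le\Width_{(0,1)}(\Delta)$ follows. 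One cosmetic caveat in the degenerate case: irreducibility of the curve $\Gamma$ only forces $f$ to be a monomial unit times a power of an irreducible Laurent polynomial, so a priori $f=c\,x^ay^b(x-1)^k$ with $k\ge 1$ rather than $k=1$; but the Newton polygon is then still a segment, contradicting condition (3) (or $\Vol(\Delta)=m^2>0$) exactly as you argue. This line-restriction argument is presumably the same mechanism behind the cited proofs (compare the remark following Lemma \ref{lemma:dim of linear system} on the flawed strict version in \cite{jenia_blowup}), and notably your argument only yields the weak inequality $\Width(\Delta)\ge m$, which is precisely what the paper needs.
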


Proposition \ref{prop: width}, our geometric constraint, is extremely useful.
Not only is this used to rule out individual cases, it forms the key in understanding the toric fan (see Prop. \ref{prop: negative surface}, Lem. \ref{lemma: special fan}) and analyzing the \defi{Adjacent Du Val case} in Section \ref{sec: adjacent du val case}.

\medskip


Lastly, there is an algebraic constraint, namely $\Vol(\Delta)=m^2.$ We call this ``algebraic'' since this controls the genus of $C,$ see Prop. 4.2 in \cite{jenia_blowup}. Once we find the self-intersection numbers of the toric boundary of $Y$, there is a $\rho(\PP_\Delta)=2$ parameter family of polygons giving such toric surfaces.
Then $\Vol(\Delta)=m^2$ gives another equation; and in case of quadrilaterals gives a homogeneous quadratic form in two of the side lengths, say $\gamma,\delta$.
Up to scaling, roots $\delta/\gamma$ uniquely determine a potential quadrilateral. This approach is used in Sections \ref{sec: the smooth case} and \ref{sec: the A_n case}.
This also forms the bulk of the proof, making the two cases in these sections the hardest part of the paper.

\medskip

The cases in sections \ref{sec: the smooth case} and \ref{sec: the A_n case} are very similar, but have a subtle difference in how we define one of our variables. 
Both are heavily dependent on using \textsc{Macaulay2} to solve a system of matrix equations.
Both have two cases based on the toric boundary curves $\wt R$ intersects.
In the first case in both, the combinatorial and algebraic constraint give a pair of quadrics in $\mathbb{A}^3,$ whose intersection is singular; i.e. has geometric genus $0.$
The way to rule out these cases is an exhaustive case analysis.
The case II in both the sections leads to a biquadratic equation which has finitely many integral solutions.






\subsection*{Acknowledgements}
\tocless

I am extremely grateful to my advisor Professor Jenia Tevelev, for teaching and mentoring me. This project would not have been possible without his constant ideas, motivation and feedback throughout the project. 
Professors Antonio Laface and Luca Ugaglia reviewed the paper and gave helpful feedback.
Discussions with Professor Tom Weston helped in the final part of the paper.
Thanks to Gabriel Ong who worked with me in the initial stages, and Elias Sink for his constructive feedback.
Lastly, I would like to thank my mother for always being there for me.
This project has been partially supported by the NSF grant DMS-2101726 (PI Jenia Tevelev).

\section{Elliptic Pairs and Polygons}
\subsection{Semi-Elliptic Quadrilaterals}
\label{subsection: semi-elliptic}

In this section, we prove results about the combinatorics of the configuration of $Y,$ the minimal resolution of $X.$ 
Then we prove all the essential results to show that $Y$ is semi-elliptic when $K_X+C \sim \alpha R.$

\medskip

First, an essential result from \cite{jenia_blowup} that relates the Zariski Decomposition of $K_X+C$ to the geometry of the cone.

\begin{proposition}
[{\cite[Lem. 3.4. and Cor. 3.12]{jenia_blowup}}]
\label{prop: R are extremal rays}
    Let $(X,C)$ be a toric elliptic pair coming from a Lang-Trotter polygon.
    \begin{enumerate}
        \item The divisor $K_X+C$ is effective, and $h^0(K_X+C)=1.$ 
        \item The Zariski decomposition $K_X+C \sim \alpha_1R_1+\dots+\alpha_nR_n$ is integral, i.e. $\alpha_i \in \ZZ_{>0}.$ Further, $R_i$ have a negative define intersection matrix, and in particular have negative self-intersection.
        \item The curves $R_i$ are generators of extremal rays of $\ol{\Eff(X)}$, and they lie in $C^\perp.$ 
        Further, they can be contracted to give a minimal elliptic model $(Z,C)$ (cf. \cite[Def. 3.5]{jenia_blowup})
    \end{enumerate}
\end{proposition}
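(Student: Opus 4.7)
The plan is to establish the three parts in sequence, drawing on adjunction, Riemann--Roch on surfaces with log terminal singularities, Zariski decomposition theory, and Artin contractibility.

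For $(1)$, I would start from adjunction: since $C$ lies in the smooth locus of $X$ and $C^2=0$, we have $(K_X+C)|_C \cong K_C$. The short exact sequence
\[
0 \to \mathcal O_X(K_X) \to \mathcal O_X(K_X+C) \to \mathcal O_C(K_C) \to 0
\]
combined with $h^0(K_X)=0$ (as $X$ is rational) and $h^0(K_C)=1$ (since $p_a(C)=1$) gives $h^0(K_X+C)\le 1$. For effectivity, Riemann--Roch on the log terminal surface yields $\chi(K_X+C)=\chi(\mathcal O_X)+\tfrac{1}{2}(K_X+C)\cdot C = 1$, while Serre duality gives $h^2(K_X+C)=h^0(-C)=0$ since $-C$ is not pseudo-effective. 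Hence $h^0(K_X+C)=1$.

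For $(2)$, I would write the Zariski decomposition $K_X+C = P+N$ with $P$ nef, $N=\sum \alpha_i R_i$ an effective $\mathbb{Q}$-divisor, and the $R_i$ having negative definite intersection matrix. Pairing with $C$: the adjunction computation gives $(K_X+C)\cdot C=0$, and since $C$ is irreducible with $C^2=0$ it is nef, so both $P\cdot C\ge 0$ and $\alpha_i(R_i\cdot C)\ge 0$; each must vanish, giving $R_i\cdot C=0$ and $P\cdot C=0$. Now $P$ is nef and orthogonal to the isotropic class $C$, so the Hodge index theorem (for $\mathbb{Q}$-Cartier classes on the log terminal surface) forces $P\equiv \lambda C$ for some $\lambda\ge 0$. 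The Lang--Trotter hypothesis $h^0(nC)=1$ for all $n$, together with the Zariski identity $h^0(m(K_X+C)) = h^0(\lfloor mP\rfloor)$ for $m$ sufficiently divisible, forces $\lambda=0$, hence $P=0$. So $K_X+C\equiv \sum \alpha_i R_i$ as $\mathbb{Q}$-divisors, and the uniqueness and integrality of the effective representative from $(1)$ upgrades the $\alpha_i$ to positive integers.

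For $(3)$, the containment $R_i\in C^\perp$ was established above. Each $R_i$ has negative self-intersection (from the negative definite matrix), and any irreducible curve of negative self-intersection on a projective surface generates an extremal ray of $\ol{\Eff(X)}$: in any pseudo-effective decomposition $R_i\equiv D_1+D_2$, one has $R_i\cdot D_j<0$ for some $j$, forcing $R_i$ into the support of $D_j$ and iteratively proportionality of both $D_j$ to $R_i$. The negative definite configuration $\{R_i\}$ then contracts via Artin's theorem to a normal surface $Z$; the image of $(X,C)$ inherits log terminal singularities and satisfies $K_Z + C \equiv 0$, realizing the minimal elliptic model.

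The main technical obstacle lies in the step of $(2)$ that $P=0$. Showing that $P\equiv \lambda C$ with $\lambda>0$ contradicts Lang--Trotter requires relating $\lfloor mP\rfloor$ to an honest multiple of $C$, using that the blowup of a toric surface has torsion-free divisor class group so that numerical and linear equivalence coincide; one must also handle the $\mathbb{Q}$-Cartier nature of $K_X$ carefully when upgrading the decomposition from numerical to linear and extracting integrality of the coefficients $\alpha_i$.
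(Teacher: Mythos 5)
The paper does not actually prove this proposition --- it imports it verbatim from \cite[Lem.~3.4, Cor.~3.12]{jenia_blowup} --- so there is no in-paper argument to compare against; what follows is an assessment of your reconstruction on its own terms. Parts $(1)$ and $(3)$ are structurally sound: the exact sequence $0 \to \mathcal O_X(K_X) \to \mathcal O_X(K_X+C) \to \omega_C \to 0$ together with $h^0(K_X)=h^1(K_X)=0$ gives $h^0(K_X+C)=1$ directly (this is cleaner than invoking Riemann--Roch, which on a log terminal surface carries fractional correction terms for Weil divisors that you would otherwise have to control), and the extremality of negative curves and Artin-type contractibility are standard (note only that the cited corollary gives $Z$ \emph{Du Val}, not merely log terminal, singularities --- a fact the paper relies on later).

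The genuine gap is the step $P=0$ in part $(2)$. Having shown $P\equiv\lambda C$, you claim that $h^0(nC)=1$ for all $n$ together with $h^0(m(K_X+C))=h^0(\lfloor mP\rfloor)$ forces $\lambda=0$. This is a non sequitur: if $\lambda>0$ then $h^0(\lfloor mP\rfloor)=h^0(\lfloor m\lambda\rfloor C)=1$ and $h^0(m(K_X+C))=1$ for every $m$, so the two dimension counts agree for all $\lambda\ge 0$ and no contradiction appears. Your closing paragraph identifies the obstacle as ``relating $\lfloor mP\rfloor$ to an honest multiple of $C$,'' but even after that identification the numbers match. Two repairs are available. (a) Uniqueness: for divisible $m$ the unique member of $|m(K_X+C)|$ must equal $mN+m\lambda C$, so $C$ is a component of the unique $D\in|K_X+C|$ and $K_X\sim D-C$ is effective, contradicting rationality of $X$. (b) Restriction to $C$ --- which is how the Lang--Trotter hypothesis is genuinely meant to enter: each $R_i$ satisfies $R_i\cdot C=0$ and $R_i\ne C$ (no curve with $C^2=0$ sits in a negative definite configuration), hence $R_i\cap C=\emptyset$ and $\mathcal O(N)|_C\cong\mathcal O_C$; since $\mathcal O(K_X+C)|_C\cong\omega_C\cong\mathcal O_C$, one gets $\mathcal O(\lambda' C)|_C\cong\mathcal O_C$ for some integer $\lambda'>0$ after clearing denominators, contradicting the infinite order of $\mathcal O(C)|_C$ in $\operatorname{Pic}^0(C)$. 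Either route closes the gap; as written, your argument does not.
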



\begin{proposition}
\label{prop: negative surface}
    Let $\Delta$ be an elliptic polygon, and $\sigma:Y \to X=\Bl_e(\PP_\Delta)$ be the minimal resolution.
    Then the self-intersection of every toric boundary curve of $Y$ is negative.
\end{proposition}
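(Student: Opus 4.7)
The plan is to split the toric boundary of $Y$ into two types and handle each separately.

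The first type consists of exceptional curves of $\sigma$, lying over the torus-fixed singular points of $X$. Since $e$ is interior to the torus of $\PP_\Delta$, these singularities are exactly those of $\PP_\Delta$, all of them cyclic quotient (in particular log terminal). Their minimal resolutions are, by Hirzebruch--Jung, chains of smooth rational curves with self-intersection at most $-2$, so every such boundary component has strictly negative self-intersection.

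The second type consists of strict transforms $\widetilde{D}_v$ of the torus-invariant curves $D_v \subset \PP_\Delta$, indexed by primitive rays $v$ of $\Sigma_\Delta$. Let $\widetilde{H} = \sigma^\ast H_\Delta$. Then $\widetilde{H}$ is nef with $\widetilde{H}^2 = \Vol(\Delta) = m^2$ and $\widetilde{H}\cdot\widetilde{D}_v = \ell_v$, the lattice length of the edge of $\Delta$ with outer normal $v$. Since $\Delta$ has at least three edges, $0 < \ell_v < m$, and in particular $\widetilde{D}_v$ cannot be numerically proportional to $\widetilde{H}$ (otherwise $\widetilde{D}_v^2 = \ell_v^2/m^2$ would fail to be an integer). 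The Hodge index theorem then yields the strict inequality
\[
m^2\cdot \widetilde{D}_v^2 \;<\; (\widetilde{H}\cdot\widetilde{D}_v)^2 \;=\; \ell_v^2 \;<\; m^2,
\]
so $\widetilde{D}_v^2 \leq 0$.

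To rule out $\widetilde{D}_v^2 = 0$, the toric self-intersection formula yields $u_1 + u_2 = 0$ for the two rays $u_1, u_2$ of $\Sigma_Y$ adjacent to $v$, so $u_2 = -u_1$. Since the cones $\langle v, u_i\rangle$ are smooth, an $\SL_2(\ZZ)$ change of basis takes $v \mapsto (1, 0)$ and $\{u_1, u_2\} \mapsto \{(0, 1), (0, -1)\}$. Adjacency of $v$ to both $u_1$ and $u_2$ in $\Sigma_Y$ means that no ray of $\Sigma_Y$ lies in the open first or fourth quadrant; in particular, every outer normal of $\Delta$ other than $v$ has non-positive first coordinate. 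Placing $E_v$ at $x = x_{\max}$ with endpoints $(x_{\max}, y_0)$ and $(x_{\max}, y_0 + \ell_v)$, a short case analysis on whether $u_1$ (resp.\ $u_2$) is itself an original ray of $\Sigma_\Delta$ or only a Hirzebruch--Jung resolution ray shows that the adjacent edges $E_{v_{\mathrm{next}}}$, $E_{v_{\mathrm{prev}}}$ head strictly ``inward''---down-left from the top vertex of $E_v$ and up-right into the bottom vertex---so that neither extends $\Delta$ beyond the vertical span of $E_v$. By convexity, no other edge does either, hence
\[
\Width_{(0,1)}(\Delta) \;=\; \ell_v \;<\; m,
\]
contradicting the width bound $\Width(\Delta) \geq m$ of Proposition~\ref{prop: width}.

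The main obstacle is this last case analysis: verifying that the constraint ``$u_1 = (0,1)$ lies in the cone $\langle v, v_{\mathrm{next}}\rangle$'' forces $v_{\mathrm{next}}$ into the open second quadrant (and analogously for $u_2$ and $v_{\mathrm{prev}}$), so that the perpendicular edge directions always have the correct sign pattern to stay inside the vertical span of $E_v$. Once this sign pattern is established, the contradiction with $\Width(\Delta)\geq m$ is immediate.
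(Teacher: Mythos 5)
Your plan is correct, and its backbone is the same as the paper's: reduce to the non-exceptional boundary curves (the exceptional ones are Hirzebruch--Jung chains, hence $\le -2$), and then show that a strict transform $\widetilde{D}_v$ with $\widetilde{D}_v^2\ge 0$ forces the two rays adjacent to $v$ in the fan of $Y$ to span the closed half-plane on one side of $v$, so that every other outer normal of $\Delta$ points weakly away from the edge $E_v$ and $\Width_{(0,1)}(\Delta)=\ell_v<m$, contradicting Proposition~\ref{prop: width}. The one genuine difference is your use of the Hodge index theorem to dispose of $\widetilde{D}_v^2>0$ first: since $\widetilde H^2=m^2$ and $\widetilde H\cdot\widetilde D_v=\ell_v<m$, this is a clean and correct way to get $\widetilde{D}_v^2\le 0$, but it is also dispensable, because the fan/width argument you run for the $k=0$ case works verbatim for $u_1+u_2=kv$ with any $k\le 0$ (then $u_2=(k,-1)$ still lies in the closed left half-plane), which is exactly how the paper treats $D_i^2\ge 0$ in one stroke. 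Your closing "case analysis" is fine as sketched; the cleanest way to finish it is to note that every edge other than $E_v$ has outer normal with non-positive first coordinate, hence counterclockwise direction with non-positive $y$-component, so the $y$-coordinate increases only along $E_v$ and the total height of $\Delta$ is $\ell_v$.
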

\begin{proof}
It suffices to show that every toric boundary curve on $\PP_\Delta$ has negative self-intersection.
Assume on the contrary, and let $D \subset \PP_\Delta$ be a toric boundary curve with $D^2>0.$ 
    Assume not, and suppose $D_i^2 \ge 0$ for some toric boundary curve $D_i.$ 
    Then $D_i$ cannot be an exceptional curve of $Y \to X.$ 
    Let $v_{i-1}, v_i, v_{i+1}$ be consecutive primitive vectors in the toric fan $\Ti$ of $Y$ so that $D_i$ corresponds to the vector $v_i.$ Then $D_i^2 \ge 0$ corresponds to $v_{i-1}+v_{i+1}=kv_i$ for $k \le 0.$ Assume without loss of generality that $v_{i-1}$ is the vector $(0,1)$ and $v_i$ is $(1,0),$ so that $k \le 0$ is equivalent to $v_{i+1}=(x,y)$ with $x<0$ (see Figure \ref{fig: negative surface proof}). 
    Let $u, w$ be the closest vectors to $v_i$ in the fan $\Ti$ which correspond to non-exceptional (for $Y \to X)$ divisors, so that the $u,v_{i-1},v_i,v_{i+1},w$ are in this order in $\Ti.$ Note that we might possibly have $u=v_{i-1}$ or $v_{i+1}=w.$

    \begin{figure}[h]
        \centering
        \includegraphics[scale=0.17]{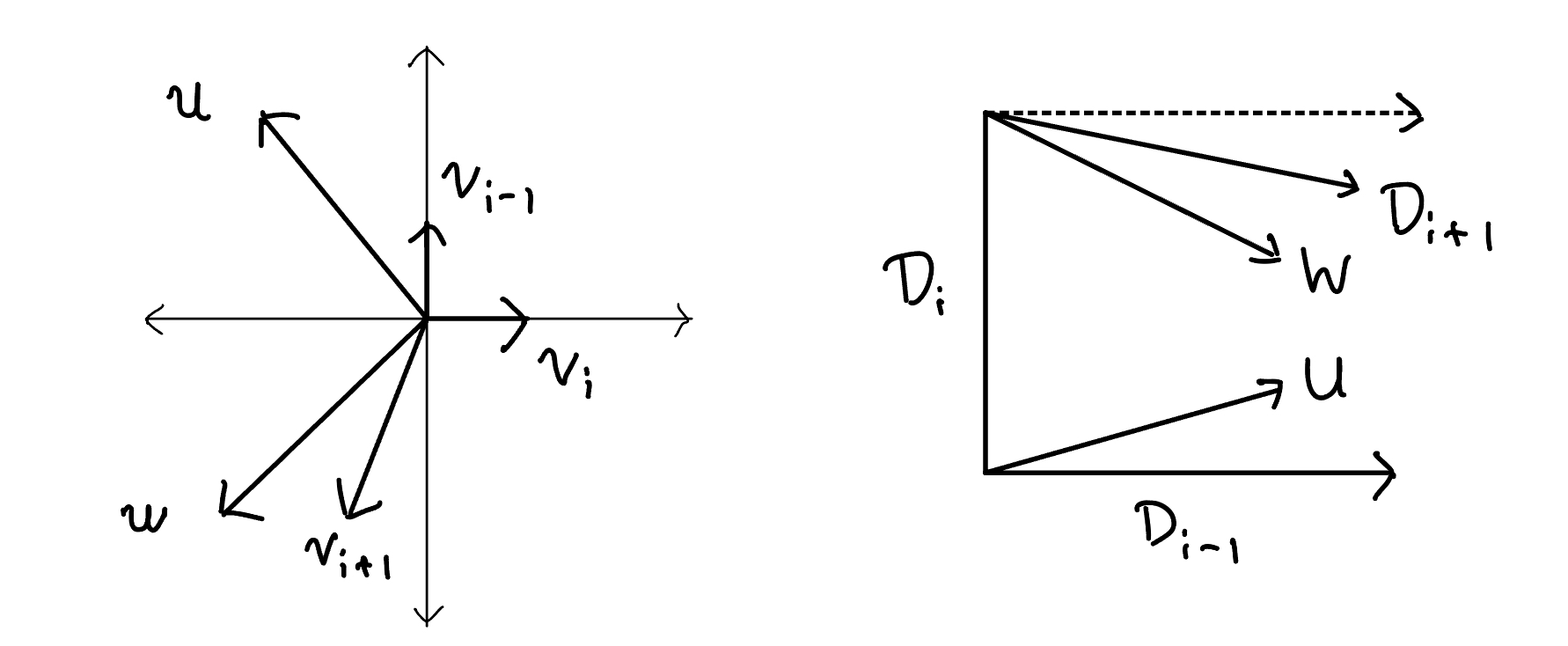}
        \caption{Left: The toric fan (notation from \cite{fulton}). Right: (Part of) the corresponding Newton polygon.}
        \label{fig: negative surface proof}
    \end{figure}

    Abusing notation, let $D_i$ denote the side of $\Delta$ corresponding to the divisor $D_i.$ Further, let $U, W$ be the sides corresponding the vectors $u,w.$ Since the corresponding sides of $X$ are perpendicular to the vectors in $\Ti$ (since the $M$ and $N$ fan are dual), we find that $D_i$ lies along the $y$-axis. Then the unit vector along $D_{i-1}$ is $(1,0),$ so translate to assume $D_i, D_{i-1}$ meet at the origin. 
    Let $\ell_1,\ell_2$ be the two lines parallel to the $x$ axis passing through the endpoints of $D_i,$ where $\ell_1$ lies above $\ell_2.$

    \medskip
    
    The order $u,v_{i-1},v_i$ shows $\measuredangle (U,D_i) < \pi/2,$\footnote{In this paper, we use directed angles $\measuredangle,$ measured counter-clockwise.} meaning $U$ lies above $\ell_2.$ Similarly, $\measuredangle (v_i,v_{i+1}) \ge \pi/2$ shows $\measuredangle (D_i,W) \le \measuredangle (D_{i},D_{i+1}) \le \pi/2,$ and so $W$ lies below $\ell_1.$ Hence, the other endpoints of $U, W$ lie between $\ell_1,\ell_2.$

    \medskip

    By convexity of $\Delta,$ all other vertices lies between $\ell_1, \ell_2.$ Hence,
    $$\Width(\Delta) = \min_{v \in \ZZ^2} \Width_{v}(\Delta) \le \Width_{(0,1)}(\Delta) = |D_i \cap \ZZ^2| < |\partial \Delta \cap \ZZ^2|=m$$
    contradicting the bound $\Width(\Delta) \ge m$ from Proposition \ref{prop: width}.
\end{proof}

\begin{lemma}
\label{lem: at least three (-1) curves}
    There are at least three $(-1)$ toric boundary curves in $Y.$ Further, all of these must be the strict transforms of toric boundary curves in $X.$
\end{lemma}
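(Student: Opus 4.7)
The lemma has two parts, which I handle separately.

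For the assertion that every $(-1)$ toric boundary curve of $Y$ must be the strict transform of a boundary curve of $X$ (not an exceptional divisor of $\sigma\colon Y\to X$), the argument is immediate from minimality of the resolution: if some exceptional divisor $E$ of $\sigma$ had $E^2=-1$, then contracting $E$ would yield a smaller resolution of $X$, contradicting the hypothesis that $\sigma$ is the minimal resolution.

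The existence of at least three $(-1)$-curves is the substantive claim. My plan is to reduce the question to a standard identity on smooth complete toric surfaces. Since $e\in\PP_\Delta$ lies in the dense torus, the blowup $Y\to\widetilde{\PP_\Delta}$ (where $\widetilde{\PP_\Delta}$ denotes the smooth toric minimal resolution of $\PP_\Delta$) does not meet any boundary curve, so the toric boundary of $Y$, together with its self-intersections and incidences, coincides with that of the smooth complete toric surface $\widetilde{\PP_\Delta}$. Noether's formula applied to $\widetilde{\PP_\Delta}$ (using $\chi(\mathcal{O})=1$ for rational surfaces and $\chi_{\mathrm{top}}=n$, the number of torus fixed points) yields $K^2=12-n$; combining this with $K\sim-\sum D_i$ and the cyclic adjacencies $D_i\cdot D_{i+1}=1$ produces the identity
\[
\sum_{i=1}^n(-D_i^2) \;=\; 3n-12.
\]
By Proposition~\ref{prop: negative surface}, $-D_i^2\ge 1$ for every $i$, so letting $p$ denote the number of $(-1)$-curves among the toric boundary, the estimate $p+2(n-p)\le 3n-12$ yields $p\ge 12-n$. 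This already forces $p\ge 3$ whenever $n\le 9$.

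For $n\ge 10$ this direct counting is too weak, and the main obstacle is to exploit the fan's additional structure. The four main rays of $\widetilde{\PP_\Delta}$ (inner primitive normals to the edges of $\Delta$) are separated by Hirzebruch--Jung chains of exceptional rays at the singular vertices of $\Delta$; the self-intersection $-\tilde D_i^2$ of each strict transform equals the coefficient $c_i$ in the relation $v_{i-1}^{\mathrm{new}}+v_{i+1}^{\mathrm{new}}=c_i v_i$, where $v_{i-1}^{\mathrm{new}},v_{i+1}^{\mathrm{new}}$ are $v_i$'s neighbors in the resolved fan (the closest HJ-intermediates, or adjacent main rays at smooth corners). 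I would carry out an angle-and-determinant analysis of these relations, together with the width bound $\Width(\Delta)\ge m$ from Proposition~\ref{prop: width}, to show that at most one main ray can fail $c_i=1$. The chief technical hurdle is this bookkeeping; once handled, $p\ge 3$ follows.
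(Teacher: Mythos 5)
Your handling of the second assertion (that no exceptional curve of $\sigma:Y\to X$ can be a $(-1)$-curve) is correct and is the same minimality argument the paper uses. Your derivation of the identity $\sum_i(-D_i^2)=3n-12$ and the resulting bound $p\ge 12-n$ is also correct. But this settles the existence of three $(-1)$-curves only for $n\le 9$, and for $n\ge 10$ you offer only a plan (``an angle-and-determinant analysis \dots\ together with the width bound'') that you do not carry out; you say so yourself. That unexecuted step is exactly where the content of the lemma lies, so the proof as written has a genuine gap. It is also aimed at the wrong target: the statement is a purely combinatorial fact about \emph{any} smooth complete toric surface all of whose boundary curves are negative, and it does not require $\Width(\Delta)\ge m$ at all.

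The paper closes the gap with Lemma~\ref{lemma: convexity of (-2) curves}: a consecutive run of rays all of whose curves have self-intersection $\le -2$ spans a directed angle strictly less than $\pi$. Since the surface is not $\PP^2$, $\PP^1\times\PP^1$, or a Hirzebruch surface (those have a non-negative boundary curve, excluded by Proposition~\ref{prop: negative surface}), there is at least one $(-1)$-curve, say at $v_0$; the contrapositive of the convexity lemma applied to the arc following $v_0$ (which spans $\ge\pi$) produces a second $(-1)$-curve at some $v_i$ with $\measuredangle(v_0,v_i)<\pi$, and then the complementary arc from $v_i$ back to $v_0$ spans $>\pi$ and forces a third. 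If you want to repair your proof, replace the $n\ge 10$ plan with this angle argument (or prove and invoke the convexity lemma directly); the Noether-formula counting then becomes unnecessary.
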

\begin{proof}
    The latter follows from Proposition \ref{prop: negative surface} and the minimality of the resolution $Y \to X.$ To prove the former, we prove in general that for any toric surface with negative boundary curves, there are at least three $(-1)$ curves. 

    \medskip

     Firstly, $X$ must be different from $\PP^2, \PP^1 \times \PP^1$ and the Hirzebruch surface $\mathbb{F}_a = \mathbf{P}(\mc O \oplus \mc O(-a))$ (for $a \ge 2)$ as all of these have a nonnegative toric boundary curve. Hence, $X$ has at least one $(-1)$ toric boundary curve (cf. \cite[2.5]{fulton}).
     Fix a vector $v_0$ in the toric fan corresponding to this curve.  
     By the contrapositive of Lemma \ref{lemma: convexity of (-2) curves}, there must be another $(-1)$ curve, say corresponding to some $v_i$ with $\measuredangle (v_0,v_i) < \pi$ and so $\measuredangle (v_i,v_0) > \pi$ (recall here that $\measuredangle (\alpha,\beta)$ is the directed angle between $\alpha,\beta,$ measured counter-clockwise). The contrapositive of Lemma \ref{lemma: convexity of (-2) curves} again shows there must be another $(-1)$ curve, corresponding to a vector between $v_i, v_0$ (counter-clockwise), as desired.
\end{proof}

Proposition \ref{prop: R are extremal rays} $(3)$ 
shows that the $R_i$ lie in $C^\perp.$
When $\Delta$ is a Lang-Trotter quadrilateral, $\rho(X)=3$ and hence we can bound the number of $R_i.$

\begin{lemma}
\label{lemma:dim of linear system}
    For a toric elliptic pair $(X,C)$ with $\rho(X)=3,$ the linear system $|K_X+C|$ has exactly one (effective) divisor which is nonzero and has at most $2$ irreducible components.
    Furthermore, for a (hypothetical) Lang-Trotter quadrilateral, the linear system must have exactly one irreducible component.
\end{lemma}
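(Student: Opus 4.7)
The plan is to prove the four claims packaged into the statement in sequence: (a) $h^0(K_X+C)=1$; (b) $K_X+C\not\sim 0$; (c) the unique effective divisor has at most two irreducible components; and (d) exactly one component in the Lang-Trotter case.

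For (a), I would use Serre duality together with the short exact sequence $0\to\mc O_X(-C)\to\mc O_X\to\mc O_C\to 0$: since $X$ is rational, $H^i(\mc O_X)=0$ for $i\ge 1$, and the restriction $H^0(\mc O_X)\to H^0(\mc O_C)$ is an isomorphism $\CC\to\CC$ (constants to constants), so $h^1(\mc O_X(-C))=0$ and hence $h^1(K_X+C)=0$ by Serre duality. Riemann--Roch together with $(K_X+C)\cdot C=0$ (adjunction on $C$) then gives $h^0(K_X+C)=\chi(\mc O_X)=1$.

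For (b), I would use the blowup formula $K_X+C=\sigma^*(K_{\PP_\Delta}+H_\Delta)+(1-m)E$, where $\sigma\colon X\to\PP_\Delta$ is the blowup at $e$ with exceptional divisor $E$ and $m$ is the multiplicity of $\Gamma$ at $e$. A trivialization $K_X+C\sim 0$ would force $1-m=0$ by reading off the coefficient along $E$, contradicting the bound $m\ge 3$ valid for any lattice polygon with at least three vertices. For (c), the components of the negative part of the Zariski decomposition of $K_X+C$ lie in $C^\perp$, by Proposition \ref{prop: R are extremal rays}(3) in the LT case (and by a parallel nef/adjunction argument in general using that $C$ is nef and $(K_X+C)\cdot C=0$). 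Since $C^2=0$ and $C\not\equiv 0$, the subspace $C^\perp\subset N^1(X)_\RR$ has dimension $\rho(X)-1=2$, and negative-definiteness of the intersection matrix forces the $R_i$ to be linearly independent, giving at most two components.

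The heart of the proof, and the step I expect to be the main obstacle, is (d), where I would rule out the two-component case for LT quadrilaterals. Assuming $R_1, R_2$ form a basis of the 2-dimensional subspace $C^\perp$, write $C\equiv \beta_1 R_1+\beta_2 R_2$ in $N^1(X)_\RR$. Intersecting with each $R_j$ converts the identities $C\cdot R_j=0$ into a homogeneous linear system $(R_i\cdot R_j)\beta=0$ whose coefficient matrix is negative-definite and hence invertible, forcing $\beta_1=\beta_2=0$ and thus $C\equiv 0$. This contradicts $C\cdot H>0$ for any ample class $H$. The subtle point worth highlighting is that negative-definiteness plays a double role: it ensures the $R_i$ form a basis of $C^\perp$, and it ensures the resulting system for $\beta$ admits only the trivial solution, so that the two-component scenario collapses purely by linear algebra once the orthogonality $R_i\in C^\perp$ is in hand.
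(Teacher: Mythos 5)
Your proposal is correct, and steps (a)--(c) essentially coincide with the paper's: the paper black-boxes $h^0(K_X+C)=1$ by citing Proposition \ref{prop: R are extremal rays}(1) (your Serre duality/Riemann--Roch derivation is the standard one, though on a log terminal surface $K_X$ is only $\QQ$-Cartier, so strictly one should run it on the minimal resolution or invoke the singular versions of those theorems); your exclusion of $K_X+C\sim 0$ via the coefficient of $E$ is the same computation as the paper's $(K_X+C)\cdot E=m-1\neq 0$; and ``at most two components'' is the same dimension count inside the two-dimensional subspace $C^\perp$. The genuine divergence is in step (d). The paper writes $C+a_1R_1+a_2R_2\sim 0$, restricts to $C$ (using that the $R_i$ are disjoint from $C$), and contradicts the infinite order of $\res C$ in $\Pic^0 C$ --- the Lang--Trotter hypothesis is used head-on. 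You instead observe that $R_1,R_2$ would be a negative definite basis of $C^\perp$, so $C\cdot R_1=C\cdot R_2=0$ forces $C\equiv 0$; equivalently, a negative definite plane cannot contain the nonzero isotropic class $C$. This is valid and arguably cleaner (no passage from numerical to linear equivalence, no restriction to $C$), but note where the Lang--Trotter hypothesis now hides: entirely inside Proposition \ref{prop: R are extremal rays}(2)--(3), i.e.\ the negative definiteness of $(R_i\cdot R_j)$ and the orthogonality $R_i\in C^\perp$, which the paper states only for Lang--Trotter pairs. That this input really is LT-specific is forced by Theorem \ref{theorem: main theorem}: honest toric elliptic pairs with $\rho(X)=3$ exist and carry two curves in the decomposition, so for those the Gram matrix of the two curves can only be negative semi-definite, with $C$ spanning the radical. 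One caveat shared with the paper's own write-up: the ``at most two'' count really bounds the components of the negative part of the Zariski decomposition, and the fact that the unique effective divisor in $|K_X+C|$ has trivial positive part is part of what the citation to \cite{jenia_blowup} supplies and should be acknowledged rather than assumed.
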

\begin{proof}
By Proposition \ref{prop: R are extremal rays} $(1)$, there is exactly one (effective) divisor in $|K_X+C|.$ Since $\rho(X)=3,$ $C^\perp$ is a subspace of dimension $2,$ and is tangent to the light cone $\{D:D^2 \ge 0\}.$ Hence there are at most two irreducible negative curves in the decomposition, say $R_1, R_2.$ 
We rule out the possibility that there are exactly $2$ curves. Otherwise, these curves are different from $C$ since $C^2=0,$ while the $R_i$ are negative curves (see Prop. \ref{prop: R are extremal rays}).
Furthermore, $C \in C^\perp$ and $C^\perp$ is a subspace intersecting $\ol{\text{Eff}(X)}$ in a facet generated by the two extremal rays $R_1,R_2,$ we must have $C+a_1R_1+a_2R_2 \sim 0$ for some $a_1,a_2 \in \QQ.$ Clearing denominators and restricting to $C$ then, however, shows that $a_0\res (C)=0$ for some $a_0 \in \ZZ.$ 
This contradicts the definition of a Lang-Trotter polygon. 

\medskip

Finally, we consider the case $K_X+C \sim 0.$ As $E \cap X_{\text{sing}} = \phi,$ the adjunction formula for $E$ shows $K_X \cdot E=-1.$ Hence
$$0=(K_X+C) \cdot E=K_X \cdot E+m=2g(E)-2-E^2+m=m-1$$
which shows $m=1,$ a contradiction as $m=|\partial \Delta \cap \ZZ^2| \ge 4.$ Hence the only possibility is $K_X+C \sim \alpha R$ for some $\alpha \in \ZZ_{>0}.$ 
\end{proof}

\begin{remark}
    The above proof is adapted from the proof of Proposition 4.6 in \cite{jenia_blowup}. 
    It must be remarked that the proof to show the strict inequality $\Width(\Delta)>m$ given there is incorrect; the error is near the end which assumes that every interior point of $\Delta$ lies on the segment of lattice length $m-1.$
    While one can fix the proof by a more detailed analysis, the bound $\Width(\Delta) \ge m$ is sufficient in this paper.
\end{remark}

Hence, Theorem \ref{theorem: main theorem} with Lemma \ref{lemma:dim of linear system} imply that there are no Lang-Trotter quadrilaterals, proving Theorem \ref{theorem:key theorem, no Lang Trotter}. So from here on, we proceed with the proof of Theorem \ref{theorem: main theorem}: assume on the contrary that there is a toric elliptic pair $(X,C)$ with $\rho(X)=3$
and $K_X+C \sim \alpha R$ with $\alpha \in \ZZ_{>0}.$ 
Let $\sigma : Y \to X$ be the minimal resolution of $X,$ and $\wt R$ be the strict transform of $R.$
We can now make some geometric remarks about $R$ and $\wt R.$

\begin{lemma}
\label{lemma:tilde r is -1}
\hangindent\leftmargini
$(1)$\hskip\labelsep The curve $R$ is a smooth rational $K$-negative curve, passing through at most $2$ torus invariant points in $X.$
\begin{enumerate}
     \item[(2)] The curve $\tr$ is a smooth rational $K$-negative $(-1)$ curve. In particular, $\wt R$ can be contracted to a smooth point (by Castelnuovo's Theorem). 
    \item[(3)] The curve $\wt R$ intersects any exceptional curve of $Y \to X$ transversally in at most one point. Further, it intersects at most one curve in the exceptional divisor over a fixed torus invariant point.
\end{enumerate}
\end{lemma}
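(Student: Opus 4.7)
The plan is to deduce all three parts in linked fashion, with part (2) as the technical core. I work primarily on the smooth minimal resolution $\sigma\colon Y\to X$, where standard adjunction and Castelnuovo's criterion apply directly.

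The $K$-negativity in (1) is immediate from $K_X+C\sim\alpha R$: since $R\in C^\perp$ by Proposition \ref{prop: R are extremal rays}, intersecting with $R$ gives $K_X\cdot R=\alpha R^2-C\cdot R=\alpha R^2<0$, using $R^2<0$ from the same proposition.

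For part (2), decompose $\sigma^*R=\widetilde R+\sum\beta_iF_i$ as $\mathbb Q$-divisors, with $\beta_i\ge 0$ on the exceptional curves $F_i$ of $\sigma$. Two applications of the projection formula give the key inequalities
$$\widetilde R^2=R^2-\sum\beta_i(\widetilde R\cdot F_i)\le R^2<0,\qquad K_Y\cdot\widetilde R=K_X\cdot R-\sum\beta_i(K_Y\cdot F_i)\le K_X\cdot R<0,$$
where the right-hand inequality uses the minimality of $\sigma$, which forces $K_Y\cdot F_i=-2-F_i^2\ge 0$ on each exceptional component. Adjunction on the smooth $Y$ now yields $K_Y\cdot\widetilde R+\widetilde R^2=2p_a(\widetilde R)-2\ge -2$; since both summands are strictly negative integers, each equals $-1$ and $p_a(\widetilde R)=0$. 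Thus $\widetilde R\cong\PP^1$ is a smooth rational $(-1)$-curve, contractible to a smooth point by Castelnuovo. Smoothness and rationality of $R$ in (1) then follow: the birational morphism $\widetilde R\to R$ has source $\PP^1$, and combined with the local single-branch analysis at the singular points of $X$ below, it is an isomorphism.

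For part (3), at each torus invariant singular point $p\in X$ lying on $R$, the exceptional chain $F_1+\cdots+F_k$ over $p$ resolves the local cyclic quotient singularity; since the single analytic branch of $R$ at $p$ has a well-defined tangent direction in the local uniformization, its strict transform $\widetilde R$ meets this chain in exactly one point of exactly one $F_i$, transversally, by the standard toric picture of such resolutions. Finally, the ``at most two torus invariant points'' assertion in (1) is obtained by combining the numerical identity $\widetilde R\cdot\sum\widetilde D_i=1+(m-1)/\alpha$ (derived from $\widetilde R\cdot(-K_Y)=1$, from $-K_Y=\sum\widetilde D_i-E$ on $Y$, and from the computation $\widetilde R\cdot E=R\cdot E=(m-1)/\alpha$ obtained by intersecting $K_X+C\sim\alpha R$ with $E$) with the width bound $\Width(\Delta)\ge m$ of Proposition \ref{prop: width}, applied to the Newton polygon of the image of $R$ in $\PP_\Delta$. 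I expect this last step to be the main obstacle: converting the combinatorial width bound into the sharp ``at most two'' count requires a careful convexity analysis tying the passage of $R$ through the vertices of $\Delta$ to the polygon's structure.
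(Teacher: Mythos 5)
Your argument for part (2) is correct and is actually cleaner than the paper's: writing $\sigma^{\ast}R=\wt R+\sum\beta_iF_i$ with $\beta_i\ge 0$ and using minimality to get $K_Y\cdot F_i=-2-F_i^2\ge 0$, you obtain $\wt R^2<0$ and $K_Y\cdot\wt R<0$ simultaneously, and then $p_a(\wt R)\ge 0$ forces both numbers to equal $-1$ and $\wt R$ to be a smooth rational curve --- no prior knowledge of the rationality of $R$ is needed. The paper instead first invokes Proposition 3.8 of \cite{Fujino_2012a} to conclude that $R$, hence $\wt R$, is smooth and rational, and only then runs the same two negativity estimates; your route makes part (2) self-contained.

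The rest of the proof, however, has genuine gaps. First, smoothness and rationality of $R$ itself are never established: you deduce them from the ``local single-branch analysis'' of part (3), but that analysis takes as input that $R$ has a single analytic branch at each torus invariant point, which is exactly the unibranch statement being proved (the number of branches of $R$ at $p$ equals the number of points of the smooth curve $\wt R$ lying over $p$). The paper breaks this circle by citing Fujino's result for smoothness of $R$ directly and then deriving (3) from it: if $\wt R\cdot\sigma^{-1}(p)\ge 2$, the contraction would create a singular point of $R$ at $p$. Note also that even granting unibranch-ness, ``a well-defined tangent direction'' does not yield transversality: a cuspidal branch such as $y^2=x^3$ has a well-defined tangent, yet its strict transform meets the exceptional curve tangentially; smoothness of $R$ at $p$ is the hypothesis actually needed. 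Second, the bound of at most two torus invariant points is only sketched: the identity $\wt R\cdot\sum\wt D_j=1+(m-1)/\alpha$ does not by itself bound the number of fixed points $R$ meets (the right-hand side can be large), and you concede that converting the width bound $\Width(\Delta)\ge m$ into the count of two is ``the main obstacle'' without carrying it out. The paper obtains this directly from Lemma 3.2 of \cite{liu2021numbersingularpointsprojective}, which bounds by two the number of singular points of a klt surface through which a $K$-negative curve can pass.
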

\begin{proof}
\hangindent\leftmargini
$(1)$\hskip\labelsep Firstly, $K_X+C \sim \alpha R$ shows $K_X \cdot R=(K_X+C) \cdot R = \alpha R^2<0,$ so $R$ is $K$-negative.
Hence Proposition 3.8 in \cite{Fujino_2012a} shows $R$ is smooth and rational. As $X$ is a klt surface, 
Lemma 3.2 in \cite{liu2021numbersingularpointsprojective} shows that $R$ passes through at most $2$ singular points, hence at most $2$ torus invariant points (since $X$ is toric outside $E,$ and the only singular points of a toric surface can be the ones fixed by the torus action).
\begin{enumerate}
    \item[(2)] Rationality of $R$ shows $\tr \cong \PP^1$ too, and so the adjunction formula shows
    \begin{equation}
        K_Y \cdot \tr + (\tr)^2 = \deg K_{\tr}= -2.
    \end{equation}
    It suffices to show that $(\tr)^2 < 0$ and $K_Y \cdot \tr < 0.$ 
    As $R$ is a negative curve, $(\tr)^2<0$ as well. 
    Let $\alpha_1,\dots,\alpha_n$ be the discrepancies of $Y \to X.$
    Minimality of the resolution $Y \to X$ shows $\alpha_i \le 0$ for all $i.$ As $E_i, \tr$ are distinct irreducible curves, hence $\alpha_i(E_i \cdot \wt R) \le 0$ for all $i.$
    Further, $K_X \cdot R<0$ as $R$ is $K$-negative. Thus, we find
    \begin{align*}
        K_Y \cdot \tr &= \Big( \sigma^\ast K_X + \sum_{i} \alpha_i E_i \Big) \cdot \tr  \le \sigma^\ast K_X \cdot \wt R = K_X \cdot R < 0.
    \end{align*}
    where we used the projection formula at the end. This shows $K_Y \cdot \wt R=(\wt R)^2 = -1,$ as desired.
    \item[(3)] Assume on the contrary, and let $p \in X$ be a torus invariant point such that $\tr \cdot \sigma^{-1} p \ge 2.$ 
    Under the contraction $Y \to X,$ the point $p$ becomes a singular point of $R,$ contradicting rationality of $R$ (also see Figure \ref{fig:wt R passes through torus inv points}).  \qedhere
\end{enumerate} 
\end{proof}

A clear implication of $(1), (3)$ in Lemma \ref{lemma:tilde r is -1} is the following corollary, which is important enough to be noted separately.

\begin{corollary}
\label{cor: wt R passes through torus inv points}
    The curve $\wt R$ intersects at most two exceptional curves of $Y \to X.$ It does not pass through a torus invariant point which is the intersection of two exceptional (under $Y 
    \to X)$ toric boundary curves (see Figure \ref{fig:wt R passes through torus inv points}). 
\end{corollary}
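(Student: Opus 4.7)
The plan is to derive both assertions directly from parts $(1)$ and $(3)$ of Lemma \ref{lemma:tilde r is -1}; no additional input is required, so this is essentially a bookkeeping argument.

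For the first assertion, I would begin by noting that every exceptional curve of $\sigma : Y \to X$ contracts to a singular point of $X$. Since $X$ is toric away from $E$, and the exceptional divisor $E$ over $e$ lies off the toric boundary, every singular point of $X$ is torus invariant. Thus whenever $\wt R$ meets an exceptional curve $E_i$ at a point $q$, the image $\sigma(q)$ is a torus invariant point through which $R = \sigma_{\ast}\wt R$ passes. By Lemma \ref{lemma:tilde r is -1}$(1)$ there are at most two such points on $R$, and by Lemma \ref{lemma:tilde r is -1}$(3)$ the curve $\wt R$ meets at most one exceptional curve over each of them. Combining these two bounds yields the desired count of at most two exceptional curves.

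For the second assertion I would argue by contradiction. Suppose $\wt R$ passes through a torus invariant point $q \in Y$ that lies on two exceptional toric boundary curves $E_i, E_j$ of $Y \to X$. Because $E_i$ and $E_j$ share a point, they lie in a single connected component of the exceptional locus of $\sigma$, hence both contract to the same torus invariant point $\sigma(q) \in X$. Then $\wt R$ meets two distinct curves of $\sigma^{-1}(\sigma(q))$, directly contradicting the second half of Lemma \ref{lemma:tilde r is -1}$(3)$.

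There is no real obstacle here: the only subtlety worth flagging is the identification of ``torus invariant singular points'' of $X$ with the points over which exceptional curves appear, which uses the remark that $X$ is toric outside the exceptional divisor $E$ over $e$ (cf. Remark \ref{remark: toric boundary for X}) and that singular points of a toric surface are fixed by the torus action (as recalled in the proof of Lemma \ref{lemma:tilde r is -1}$(1)$).
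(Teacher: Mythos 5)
Your proposal is correct and follows exactly the route the paper intends: the paper states this corollary as ``a clear implication of $(1)$, $(3)$'' of Lemma \ref{lemma:tilde r is -1} without writing out the details, and your argument is precisely that implication spelled out (exceptional curves of the minimal resolution lie over torus-invariant singular points, so part $(1)$ bounds the number of relevant points by two and part $(3)$ bounds the number of exceptional curves met over each by one, while the second assertion is the contrapositive of the second half of $(3)$).
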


\begin{figure}[h]
    \centering
    \includegraphics[scale=0.1]{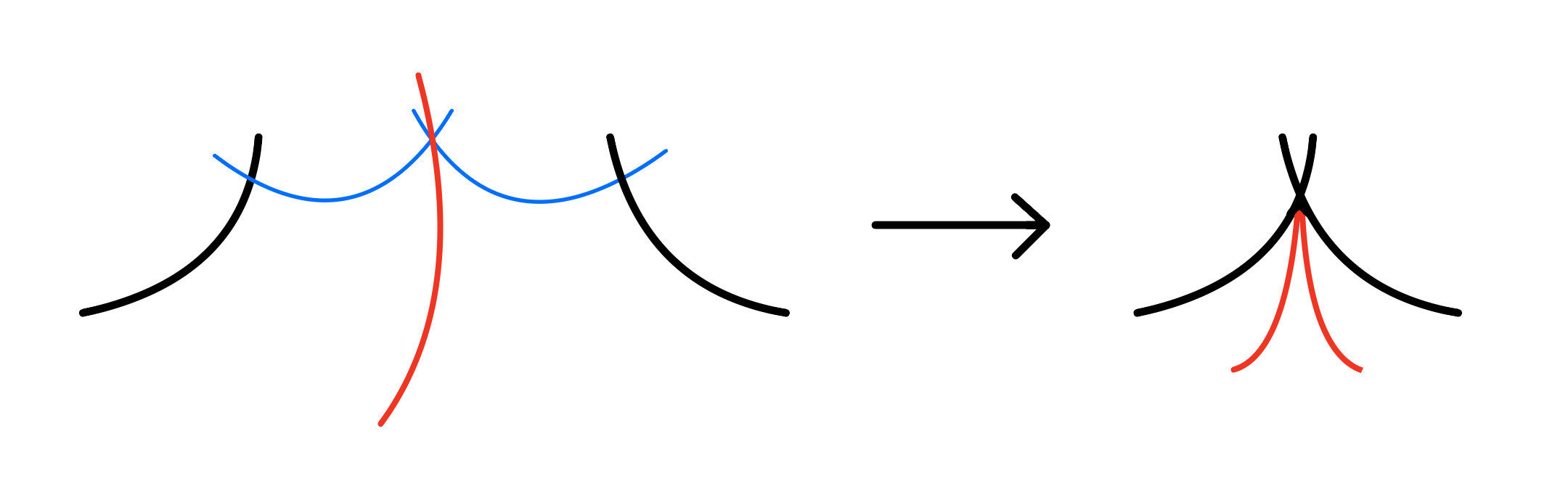}
    \caption{Contraction introduces a singular point on $R$ if $\wt R$ passes through torus invariant points in the exceptional locus}
    \label{fig:wt R passes through torus inv points}
\end{figure}

Proposition \ref{prop: R are extremal rays} shows that $R$ can be contracted to give the minimal elliptic model $(Z,C).$
Further, $\wt R \subset Y$ is a $(-1)$ curve, and hence can be contracted to a smooth point by Castelnuovo's Theorem giving a surface $Y_1.$
This gives a resolution of $Z,$ not necessarily minimal. 
As $Y_1 \to Z$ factors through the minimal resolution $W$ of $Z,$ we get a commutative diagram. 

\[\begin{tikzcd}
	Y & {Y_1} & \dots & Y_\ell & W \\
	X &&&& Z
	\arrow["{\pi_0}", from=1-1, to=1-2]
	\arrow["\pi"{description}, curve={height=-40pt}, from=1-1, to=1-5]
	\arrow["\sigma"', from=1-1, to=2-1]
	\arrow["{\pi_1}", from=1-2, to=1-3]
        \arrow["{\pi_{\ell-1}}", from=1-3, to=1-4]
	\arrow["{\pi_\ell}", from=1-4, to=1-5]
	\arrow["{\wt \sigma}", from=1-5, to=2-5]
	\arrow["{\wt \pi}"', from=2-1, to=2-5]
\end{tikzcd}\]
    
However we can say much more about the map $Y_1 \to W$ and the surface $W.$

\begin{lemma}
\label{lemma: key commutative diagram X,Y,Z,W, and 8 curves}
    There is a series of contractions $Y=Y_0 \to Y_1 \to \dots \to Y_{\ell} \to W$ of $(-1)$ curves, where $W$ is the minimal resolution of $Z,$ and $Y=Y_0 \to Y_1$ is the contraction of $\wt R.$ Here,
    \begin{enumerate}
        \item $\pi$ maps the exceptional locus of $Y \to X$ to the exceptional locus of $W \to Z.$
        \item $Z$ has at most Du Val singularities, and any torus invariant points in $X$ that $R$ doesn't pass through are smooth or Du Val singularities
        \item $\wt \sigma:W \to Z$ has exactly $8$ exceptional curves.
    \end{enumerate}
\end{lemma}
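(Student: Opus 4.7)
The plan is to prove the three parts in order: (i) a Castelnuovo-reduction argument builds the factorization; (ii) the adjunction identity $K_Z+C\sim 0$ combined with rationality forces singularities of $Z$ to be Du Val; (iii) Noether's formula pins down the number of exceptional curves.

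For part (1), first observe $R\cdot C=0$: from $p_a(C)=1,$ $C^2=0,$ and adjunction, $(K_X+C)\cdot C=0,$ so $\alpha R\cdot C=0.$ Hence $R\cap C=\emptyset,$ and the point $z:=\wt\pi(R)\in Z$ lies off $C.$ By Lemma \ref{lemma:tilde r is -1}(2), $\wt R$ is a $(-1)$-curve, which I contract via Castelnuovo to obtain $Y\to Y_1.$ The map $Y_1\to Z$ is then a birational morphism that is an isomorphism away from $z$ (since $Y\to X$ was the minimal resolution) and resolves $Z$ at $z.$ Successively contract any $(-1)$-curve in the exceptional fibre over $z$ until none remains, producing the minimal resolution $W\to Z$ and the factorization $Y\to Y_1\to\dots\to Y_\ell\to W.$ Each intermediate $(-1)$-curve is necessarily the strict transform of a $\sigma$-exceptional curve lying over one of the $\le 2$ torus invariant points on $R$: elsewhere, strict transforms of $\sigma$-exceptional $(-2)$-curves never meet any contracted curve, so their self-intersection is unchanged. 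Thus every $\sigma$-exceptional curve either gets contracted by $\pi$ (to a point over $z$) or descends to an exceptional curve of $\wt\sigma,$ establishing (1).

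For part (2), pushing $K_X+C\sim\alpha R$ forward under $\wt\pi$ gives $K_Z+C\sim 0,$ so $K_Z\sim -C.$ Since $C$ is disjoint from both $X_{\text{sing}}$ and $R,$ it lies in the smooth locus of $Z.$ Hence at each singular point $p\in Z,$ locally $K_Z\sim 0,$ so $K_Z$ is Cartier at $p.$ Since $X$ is klt, $X$ has rational singularities, and since $\wt\pi$ contracts the smooth rational curve $R$ with $R^2<0,$ $Z$ also has rational singularities. A rational Gorenstein surface singularity is Du Val, proving the first assertion of (2). As $\wt\pi$ is a local isomorphism at the torus invariant points of $X$ not on $R,$ those points of $X$ match the corresponding points of $Z,$ so they too are smooth or Du Val.

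For part (3), since $Z$ has only Du Val singularities, $\wt\sigma$ is crepant, giving $K_W^2=K_Z^2=C^2=0.$ On the smooth rational surface $W,$ $\chi(\mc O_W)=1$ and $e(W)=2+\rho(W),$ so Noether's formula $12\chi(\mc O_W)=K_W^2+e(W)$ yields $\rho(W)=10.$ Also $\rho(Z)=\rho(X)-1=2,$ since $\wt\pi$ contracts the single $\QQ$-Cartier curve $R$ on the $\QQ$-factorial klt surface $X.$ Therefore $\wt\sigma$ has $\rho(W)-\rho(Z)=8$ exceptional curves, completing (3).

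The main obstacle is controlling the intermediate $(-1)$-curves in Stage (i): ruling out the appearance of a $(-1)$-curve outside the preimage of the $\le 2$ torus invariant points on $R.$ This is under control because by Lemma \ref{lemma:tilde r is -1}(3) and Corollary \ref{cor: wt R passes through torus inv points}, $\wt R$ meets the $\sigma$-exceptional divisor in at most two curves lying over distinct torus invariant points on $R,$ and by induction every subsequent contraction occurs in the same small locus, since the only curves whose self-intersection increases are those meeting a previously contracted curve.
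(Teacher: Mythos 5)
Your proposal is correct and reaches all three conclusions, but part (3) takes a genuinely different route from the paper. For (1) and (2) you essentially follow the paper: the factorization through the minimal resolution and the observation that $\pi$ only contracts $\wt R$ and $\sigma$-exceptional curves is the same argument, and for (2) you replace the paper's citation of \cite[Cor.~3.12]{jenia_blowup} by the standard derivation (push forward $K_X+C\sim\alpha R$ to get $K_Z\sim -C$, hence $K_Z$ Cartier near the singularities, and rational $+$ Gorenstein $\Rightarrow$ Du Val). For (3) the paper works on $Y$: it uses the drop of $K^2$ under each blowdown together with $e(Y)=n+1$ and Noether's formula to get $\ell=n-12$, so that $12$ toric boundary curves survive to $W$ and $12-4=8$ of them are $\wt\sigma$-exceptional. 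You instead apply Noether's formula directly on $W$ (using the same crepancy input $K_W^2=K_Z^2=C^2=0$) to get $\rho(W)=10$, note $\rho(Z)=\rho(X)-1=2$, and count exceptional curves as $\rho(W)-\rho(Z)=8$. Your version is cleaner and bypasses the toric Euler-characteristic bookkeeping; the paper's version has the side benefit of identifying $\ell$ explicitly as $n-12$, a quantity it reuses later.

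Two small points of rigor. First, your sentence that $Y_1\to Z$ ``is an isomorphism away from $z$'' is literally false: it also has nontrivial fibres over the Du Val points of $Z$ coming from the torus invariant points of $X$ not on $R$. Your subsequent discussion of the untouched $(-2)$-chains shows you know this, but the sentence should be corrected. Second, the claim that $Z$ has rational singularities ``since $\wt\pi$ contracts the smooth rational curve $R$ with $R^2<0$'' is not a proof: contracting a smooth rational curve through singular points need not produce a rational singularity in general. The cleanest fix is either to cite \cite[Cor.~3.12]{jenia_blowup} as the paper does, or to note that $R$ spans a $K_X$-negative extremal ray of $\ol{NE}(X)$ on the klt surface $X$, so the contraction preserves klt (hence rational) singularities; alternatively, apply Artin's criterion to the exceptional tree of smooth rational curves over $z$ in $Y$. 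With either repair the argument is complete.
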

\begin{proof}
    Let $\wt{\mc D}$ be the toric boundary divisor of $Y,$ and $\mc D := \sigma_\ast \wt{\mc D}.$ 
    As $W$ is the minimal resolution of $Z,$ it is the minimal surface over $Z$ with no $(-1)$ boundary curves. As $Y_1$ and $Z$ are isomorphic outside the images of the boundary locii $(\pi_0)_\ast \wt{\mc D}$ and $\wt \pi_\ast \mc D,$ the map $Y_1 \to W$ is a series of contractions of $(-1)$ curves $Y_1 \to \dots \to Y_\ell \to W$ where $\ell$ is the number of toric boundary curves contracted. Further, $(1)$ is clear as $\pi:Y \to W$ only contracts $\wt R$ or the curves in the exceptional locus of $Y \to X.$
    
    \medskip

    The minimal model $Z$ has at most Du Val singularities by Corollary 3.12 in \cite{jenia_blowup}.
    Since $X$ and $Z$ are isomorphic outside $R,$ any singular torus invariant points in $X$ must also be Du Val.

    \medskip

    Lastly, as $K^2$ decreases by one after each blowup $\pi_i,$ $K_Y^2=K_W^2-(\ell+1)=C^2-(\ell+1)=-(\ell+1).$
    Since $Y$ is the blowup of a smooth toric surface in a point, we find the topological Euler characteristic $e(Y)=n+1,$ where $n$ is the number of toric boundary divisors in $Y.$ So Noether's formula shows
    $$1=\chi(Y)=\frac{K_Y^2+e(Y)}{12} \implies K_Y^2=11-n.$$
    Hence we find $\ell=n-12.$ 
    As argued above, $\ell$ is also the number of toric boundary divisors contracted. 
    Hence, out of the $n$ toric boundary divisors of $Y,$ there are $12$ that remain in $W,$ out of which $12-4=8$ are exceptional in $W \to Z.$
\end{proof}

\begin{remark}
    In what follows, we will abuse notation for curves disjoint from the contracted locus. That is, if $\mc C$ is a curve in $\mc Y$ and $\pi:\mc Y \to \mc X$ is the contraction of a curve $\mc E \subset \mc Y$ with $\mc E \cap \mc C = \phi,$ we use the same label $\mc C$ for $\pi_\ast \mc C.$
\end{remark}

\begin{definition}
\label{defi: relevant locus}
    Define the \defi{relevant locus} $\mc D$ in $Y$ to be the curve $\wt R$ along with the exceptional locus of $Y \to X$ over the torus invariant points in $X$ that $R$ passes through. 
    In the series of contraction $Y \overset{\pi_0}{\to} Y_1\overset{\pi_1}{\to} \dots \to W,$
    we will abuse notation and use ``relevant locus in $Y_i$'' to mean $(\pi_{i-1} \circ \dots \circ \pi_0)_\ast \mc D,$ i.e. the image of the relevant locus in $Y_i.$ 
\end{definition}

Combining all our results, we finally obtain the following:

\begin{corollary}
$Y$ is a semi-elliptic surface (see Definition \ref{defi: semi-elliptic}).
\end{corollary}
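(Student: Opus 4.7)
The statement is a corollary whose content is essentially bookkeeping: each of the five conditions in Definition \ref{defi: semi-elliptic} has already been established individually in the preceding lemmas, and the plan is simply to invoke them in order. The role of $\wt R$ as the required non-boundary curve is played by the strict transform of the unique irreducible component $R$ of $K_X + C \sim \alpha R,$ whose existence is guaranteed by the running assumption (justified via Lemma \ref{lemma:dim of linear system}) that $(X,C)$ is a toric elliptic pair with $\rho(X)=3.$

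The plan is as follows. First I would verify condition $(1)$ by citing Lemma \ref{lemma:tilde r is -1}$(2)$, which states precisely that $\wt R$ is a smooth rational $(-1)$-curve. Next, for condition $(2),$ I would invoke Proposition \ref{prop: negative surface}, which gives that every toric boundary curve of $Y$ has negative self-intersection. For condition $(3)$ I would combine two facts: Lemma \ref{lemma:tilde r is -1}$(3)$ shows that $\wt R$ meets any exceptional curve of $Y \to X$ transversally at one point, and meets at most one exceptional curve over any fixed torus invariant point, while Corollary \ref{cor: wt R passes through torus inv points} bounds the total number of such exceptional curves hit by $\wt R$ by two; together these yield exactly the statement in $(3).$ Condition $(4)$ is then precisely the content of Lemma \ref{lemma: key commutative diagram X,Y,Z,W, and 8 curves}$(2),$ and condition $(5)$ is precisely Lemma \ref{lemma: key commutative diagram X,Y,Z,W, and 8 curves}$(3)$ together with the identification of the contraction $\pi:Y \to W$ in Lemma \ref{lemma: key commutative diagram X,Y,Z,W, and 8 curves} with the contraction described in Definition \ref{defi: semi-elliptic}$(5).$

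The one point that merits explicit reconciliation is that the contraction $\pi$ in the definition of semi-elliptic is described as contracting $\wt R$ and then iteratively contracting $(-1)$-curves appearing in the image of the relevant locus $\mc D \cup \wt R,$ whereas Lemma \ref{lemma: key commutative diagram X,Y,Z,W, and 8 curves} describes $\pi$ as the composite $Y \to Y_1 \to \dots \to Y_\ell \to W$ of contractions of $(-1)$ boundary curves terminating at the minimal resolution $W$ of $Z.$ I would note that the curves contracted in $Y_1 \to \dots \to W$ are, by minimality of $W,$ precisely the $(-1)$-curves among the images of the toric boundary exceptional over the torus-invariant points that $R$ passes through — i.e. the images of $\mc D$ — so the two descriptions coincide. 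There is no genuine obstacle here; the only slight subtlety is checking that the contractions past the first one are indeed within the image of $\mc D,$ which follows because the boundary curves outside $\mc D$ already had negative self-intersection and were not incident to $\wt R,$ so their self-intersection is unchanged by $\pi_0,$ and because the remaining contractions only touch the pieces of the relevant locus by construction.
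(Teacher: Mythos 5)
Your proposal is correct and matches the paper's argument, which likewise just cites Proposition \ref{prop: negative surface}, Lemma \ref{lemma:tilde r is -1}, and Lemma \ref{lemma: key commutative diagram X,Y,Z,W, and 8 curves} to verify the five conditions of Definition \ref{defi: semi-elliptic}. Your extra reconciliation of the two descriptions of the contraction $\pi$ is a reasonable elaboration of what the paper leaves implicit.
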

\begin{proof}
Follows from Proposition \ref{prop: negative surface}, Lemmas \ref{lemma:tilde r is -1} and Lemma \ref{lemma: key commutative diagram X,Y,Z,W, and 8 curves}.
\end{proof}
    Reiterating, this means $\wt R$ intersects exactly two exceptional curves of $Y \to X.$
    Further, $Z$ has only Du Val singularities, so that the series of contractions $Y \to \dots \to W$ of $(-1)$ curves leads to a surface with exactly eight $(-2)$ curves in the exceptional locus of $W \to Z.$ See Example \ref{example: semi-elliptic} for an instance of this.
    Observe that this is a purely combinatorial condition on the self-intersection numbers of the toric boundary curve of $Y,$ and the curves $\wt R$ intersects.

\begin{remark}
\label{remark: semi-elliptic surface}
    We will often abuse notation and call a lattice polygon $\Delta$ a \defi{semi-elliptic polygon} if the minimal resolution $Y$ of the blown up toric surface $\Bl_e(\PP_\Delta)$ is a semi-elliptic surface.
\end{remark}

\subsection{Case Division} 
\label{subsec: case division}
In the rest of our paper, we look at possibilities for the Du Val singularities of $Z.$ Our goal is to show that no surface $X$ exists, which we do by analyzing each case separately.

\begin{definition}
\label{defi: opposite and adjacent points}
Call a pair of torus invariant points in $X$ \defi{opposite points} if they don't lie on any common toric boundary curves. 
On a similar note, call a pair of torus invariant points (of $X)$ \defi{adjacent points} if they lie on the same toric boundary curve.
\end{definition}

\begin{convention*}
    For simplicity, we will refer to a smooth point also as a Du Val singularity; the $A_0$ singularity.
\end{convention*}

The first case is when there are two opposite Du Val singularities in $X,$ which we naturally call the \defi{Opposite Du Val case}. If this doesn't hold, we claim that $R$ passes through exactly two adjacent points in $X$ (a priori, it passes through at most two torus invariant points by Lemma \ref{lemma:tilde r is -1}).
Indeed, the torus invariant points in $X$ that $R$ does not pass through are Du Val singularities by Lemma \ref{lemma: key commutative diagram X,Y,Z,W, and 8 curves}, and hence if $R$ passes through less than $2$ torus invariant points or through $2$ opposite point, we find two opposite Du Val singularities (because there are $4$ torus invariant points).

\medskip

Now assume there aren't two opposite Du Val singularities. By the analysis above, we conclude that $R$ passes through exactly $2$ adjacent torus invariant points. 
So we (naturally) call this the \defi{Adjacent Du Val Case}.
We further subdivide cases based on the singularity type of the point $\wt \pi_\ast R \in Z:$ smooth, Du Val of type $DE,$ or Du Val of type $A_n.$

\begin{figure}[h]
    \centering
    \includegraphics[scale=0.2]{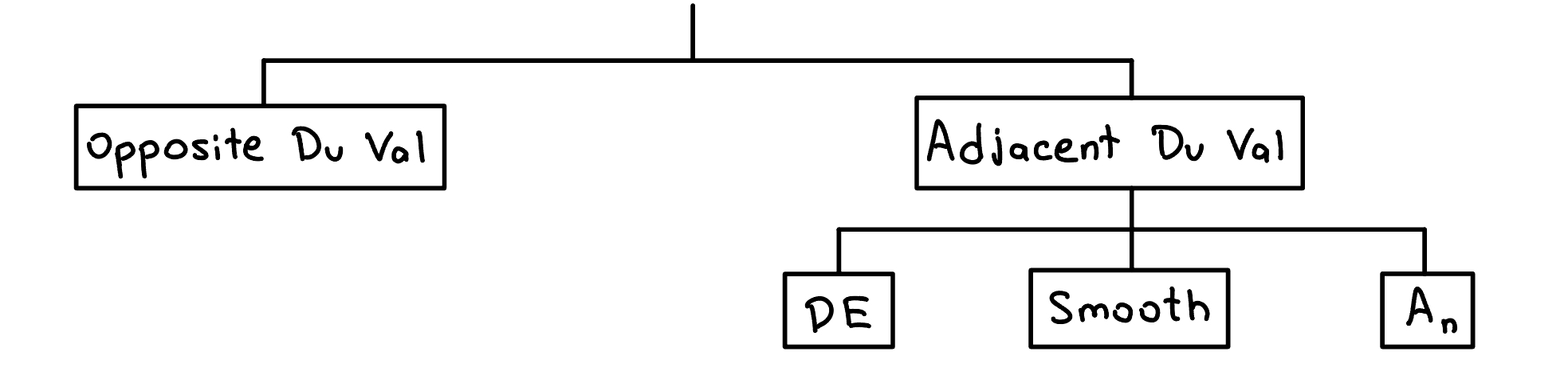}
    \caption{The tree of cases}
    \label{fig:The tree of cases}
\end{figure}

\section{Opposite Du Val case}
The approach in this section is purely combinatorial, establishing a lot of results which will be essential for our future analysis as well. 
The toric fan of a projective toric surface is determined up to $\GL_2(\ZZ).$ 
However, if we choose the right fan in this equivalence class, the condition $\Width(\Delta) \ge m$ is easier to interpret (like we did in Proposition \ref{prop: negative surface}).
Establishing structure on the fan using this condition is the underlying theme in this section, leading to the key Lemma \ref{lemma: special fan}.

\begin{definition}
\label{def: sign}
    Consider the toric fan $\Ti$ of a projective toric surface $\PP_\Delta.$ Consider three consecutive vectors $u,v,w \in \Ti.$ We define the \defi{sign} of the vector $v$ as the sign of the (possibly rational) self-intersection number of the correspond toric boundary divisor in $X.$
Equivalently,
$$\text{sign of }v \text{ is }\begin{cases}
    \text{positive} & \text{if }\measuredangle (u,w) > \pi \\ 
    \text{negative} & \text{if }\measuredangle (u,w) < \pi \\
    0 & \text{if }\measuredangle (u,w) = \pi
    \end{cases}.$$
\end{definition}

\begin{remark}
    Observe that sign is $\GL_2(\ZZ)$ invariant (even though $\measuredangle (u,w)$ is not).
\end{remark}

\begin{lemma}
\label{lem:signature}
    Consider a complete fan (where, hence, the angle between consecutive vectors is at most $\pi)$ consisting of $4$ vectors, none of whose sign is $0.$ If two vectors $u,v$ are opposite (i.e. not consecutive), they have the opposite sign. Consequently, the signs of the vectors form the ordered list $(+,+,-,-).$ 
\end{lemma}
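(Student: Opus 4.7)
The plan is to exploit the angle-sum identity for a complete fan of four rays. Label the rays as $v_1, v_2, v_3, v_4$ in counter-clockwise order, and set $\theta_i := \measuredangle(v_i, v_{i+1})$ with indices read modulo $4$. Completeness of the fan combined with the hypothesis that every consecutive angle is at most $\pi$ gives
$$\theta_1 + \theta_2 + \theta_3 + \theta_4 = 2\pi, \qquad 0 < \theta_i \le \pi.$$

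The first step is to prove that opposite vectors have opposite signs. By Definition \ref{def: sign}, the sign of $v_i$ is controlled by whether $\measuredangle(v_{i-1}, v_{i+1}) = \theta_{i-1} + \theta_i$ exceeds, equals, or falls short of $\pi$. For the opposite pair $v_2, v_4$ this amounts to comparing $\theta_1 + \theta_2$ and $\theta_3 + \theta_4 = 2\pi - (\theta_1 + \theta_2)$ to $\pi$. Since neither sign is zero by hypothesis, neither sum equals $\pi$; so exactly one exceeds $\pi$ while the other is strictly less, producing opposite signs. The same reasoning applied to $v_1, v_3$ finishes this step.

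For the ordered-list statement, the first step forces exactly two positive and two negative signs among $v_1, v_2, v_3, v_4$, with each diametrically opposite pair carrying opposite signs. Up to a cyclic relabeling of the starting index, the only two possible cyclic arrangements of two $+$'s and two $-$'s are $(+,+,-,-)$ and the alternating $(+,-,+,-)$. But the alternating arrangement would place the same sign on $v_1$ and $v_3$, contradicting the first step. Hence, reading counter-clockwise and up to cyclic rotation, the signs necessarily appear in the order $(+,+,-,-)$.

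The argument is pure angle chasing; the only genuine point of care is keeping track of the counter-clockwise convention for $\measuredangle$ and recognising that the conclusion is naturally formulated up to cyclic rotation of the indexing of the rays. No deeper obstacle arises.
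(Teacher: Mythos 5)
Your proof is correct and follows essentially the same idea as the paper: both arguments reduce to the fact that $\measuredangle(v_1,v_3)$ and $\measuredangle(v_3,v_1)$ sum to $2\pi$, so exactly one exceeds $\pi$ once neither equals $\pi$. The paper phrases this after a $\GL_2(\ZZ)$ normalization to explicit coordinates, while you use the angle-sum identity directly and also spell out the final cyclic-arrangement step, which the paper leaves implicit; there is no substantive difference.
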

\begin{proof}
    Recalling that sign is $\GL_2(\ZZ)$ invariant, assume the vectors are $e_1,\dots,e_4$ with $e_1=(1,0).$ The opposite vector to $e_1$ then is $e_3,$ say $(x,y)$ with $y>0$ $(y \ne 0$ as the sign of $e_2$ is not $0$ by hypothesis, see Figure \ref{fig:signature}). Then $\measuredangle (e_4,e_1) < \pi$ shows $u$ lies under the $y$-axis. Then $e_2, e_4$ indeed have opposite signs, namely $-, +$ respectively. 
\end{proof}

\begin{figure}[h]
    \centering
    \includegraphics[scale=0.3]{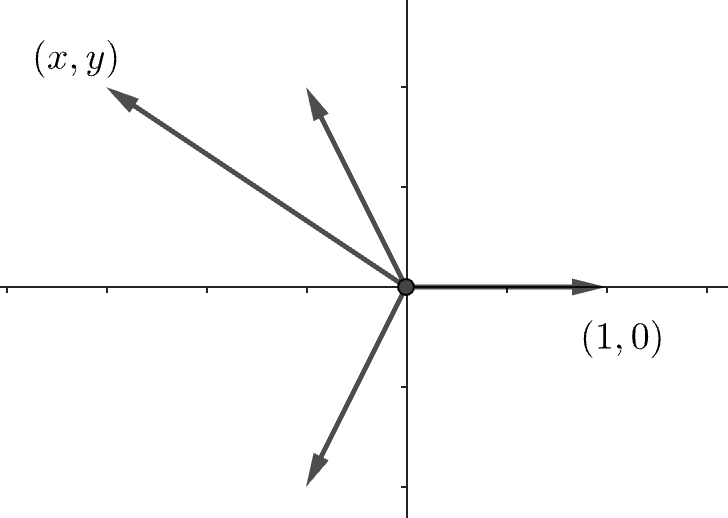}
    \caption{The (unordered) signature is $(+,+,-,-)$}
    \label{fig:signature}
\end{figure}

\begin{definition}
\label{def: Du Val point}
In what follows, if we say a point $p$ in $\Delta$ is \defi{Du Val}, we mean the point corresponding to $p$ in the toric surface $\PP_\Delta$ is either a smooth point or a Du Val singularity. 
A smooth point can be thought of as an $A_0$ singularity.
\end{definition}

\begin{lemma}
\label{lemma: not between +}
    Let $\Delta$ be an elliptic quadrilateral with toric surface $\PP_\Delta.$ If $\ell_1,\ell_2$ are two boundary divisors of $\PP_\Delta$ such that $\ell_1 \cap \ell_2$ is \textit{Du Val}, then at least one of $\ell_1^2, \ell_2^2$ is negative.
\end{lemma}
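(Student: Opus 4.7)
The plan is to mirror the Width argument from Proposition \ref{prop: negative surface} applied to a single one of the two divisors, say $\ell_1$, and obtain a direct contradiction with $\Width(\Delta) \ge m$.

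Suppose for contradiction that $\ell_1^2 \ge 0$ (one of the two assumed inequalities). By Definition \ref{def: sign}, this is equivalent to the angle condition $\measuredangle(v_0, v_2) \ge \pi$, where $v_0, v_2$ are the primitive outward normals of the two sides of $\Delta$ adjacent to $\ell_1$ in the fan of $\PP_\Delta$. After a $\GL_2(\ZZ)$-change of coordinates placing $v_1 = (1,0)$, this angle condition places $v_0, v_2$ in opposing half-planes and forces the adjacent sides $\ell_0, \ell_2$ to make interior angles at least $\pi/2$ with $\ell_1$, exactly as in the proof of Proposition \ref{prop: negative surface}. By convexity, $\Delta$ is then contained in the strip orthogonal to $v_1$ whose height equals the lattice length $L_{\ell_1}$ of $\ell_1$. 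Since $\ell_1$ is one of four sides of $\Delta$, each of positive lattice length, $L_{\ell_1} < m$, and so
\[
\Width(\Delta) \le \Width_{v_1}(\Delta) \le L_{\ell_1} < m,
\]
contradicting Proposition \ref{prop: width}.

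A few remarks on scope. The Du Val hypothesis on $\ell_1 \cap \ell_2$ is not actually invoked by the argument above, and the same proof in fact shows that every boundary divisor of $\PP_\Delta$ has strictly negative self-intersection; the weaker statement ``at least one of $\ell_1^2, \ell_2^2$ is negative'' is phrased in the form convenient to pair with Lemma \ref{lem:signature} in the subsequent case analysis. The main (small) subtlety compared with Proposition \ref{prop: negative surface} is that we now work in the possibly singular fan of $\PP_\Delta$ itself, where the integer relation $v_0 + v_2 = k v_1$ is unavailable; only the equivalent angle condition from Definition \ref{def: sign} is invoked, which is enough for the geometric strip containment of $\Delta$ to go through. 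I do not anticipate any substantive obstacle beyond checking this translation carefully.
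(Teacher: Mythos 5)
Your argument has a genuine gap, and in fact the stronger statement you claim in your closing remarks --- that every boundary divisor of $\PP_\Delta$ has negative self-intersection --- is false. It contradicts Lemma \ref{lem:signature}: for a complete fan with four rays, opposite rays have opposite signs, so $\PP_\Delta$ always has (at least) one boundary divisor with non-negative self-intersection, and generically an adjacent pair of them with positive self-intersection. There is no contradiction with Proposition \ref{prop: negative surface}, because that proposition is about the smooth resolution $Y$: a strict transform can have negative self-intersection on $Y$ while the divisor downstairs on $\PP_\Delta$ has positive rational self-intersection.

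The step that fails is the strip containment. In Proposition \ref{prop: negative surface} the argument runs on the \emph{smooth} surface, where the consecutive rays $v_{i-1},v_i$ form a $\ZZ$-basis and can be normalized to $(0,1),(1,0)$; this pins the interior angle of $\Delta$ at one endpoint of the side to exactly $\pi/2$, and $D_i^2\ge 0$ then forces the angle at the other endpoint to be $\le \pi/2$, so both adjacent sides fold back into the strip. On the singular $\PP_\Delta$, the condition $\ell_1^2\ge 0$ only says $\measuredangle(v_0,v_1)+\measuredangle(v_1,v_2)\ge\pi$, i.e.\ the two interior angles at the endpoints of $V_1$ sum to at most $\pi$; it does not make each one $\le\pi/2$, and no $\GL_2(\ZZ)$ shear need fix this (take $v_0=(10,1)$, $v_1=(0,1)$, $v_2=(-10,-9)$: the shears making one angle acute and the other acute correspond to integers in the interval $[1/10,9/10]$, which is empty). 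Then one adjacent side exits the strip and the bound $\Width_{v}(\Delta)\le L_{\ell_1}$ is simply wrong. This is exactly where both hypotheses of the lemma are needed: the Du Val condition on $\ell_1\cap\ell_2$ lets one normalize $v_1=(a,1)$, $v_2=(0,1)$ simultaneously, so the two corresponding sides each have horizontal extent equal to their lattice length, and positivity of \emph{both} $\ell_1^2$ and $\ell_2^2$ confines the remaining rays $v_0,v_3$ to the third quadrant, giving $\Width_{(1,0)}(\Delta)\le\alpha+\delta<m$ and the contradiction with Proposition \ref{prop: width}. Your proof uses neither hypothesis, which is a sign that it cannot be correct as stated.
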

\begin{proof}
    Assume on the contrary that both $\ell_1^2,\ell_2^2$ are non-negative. 
    Consider the fan $\Ti$ of $\PP_\Delta$ with vectors $v_0,v_1,v_2,v_3$ (counterclockwise) where $v_i$ corresponds to $\ell_i$ for $i=1,2.$ Let $V_i$ be the side in $\Delta$ corresponding to $v_i.$ Let $p=V_1 \cap V_2 \in \Delta$ correspond to the Du Val singularity.

    \begin{figure}[h]
        \centering
        \includegraphics[scale=0.3]{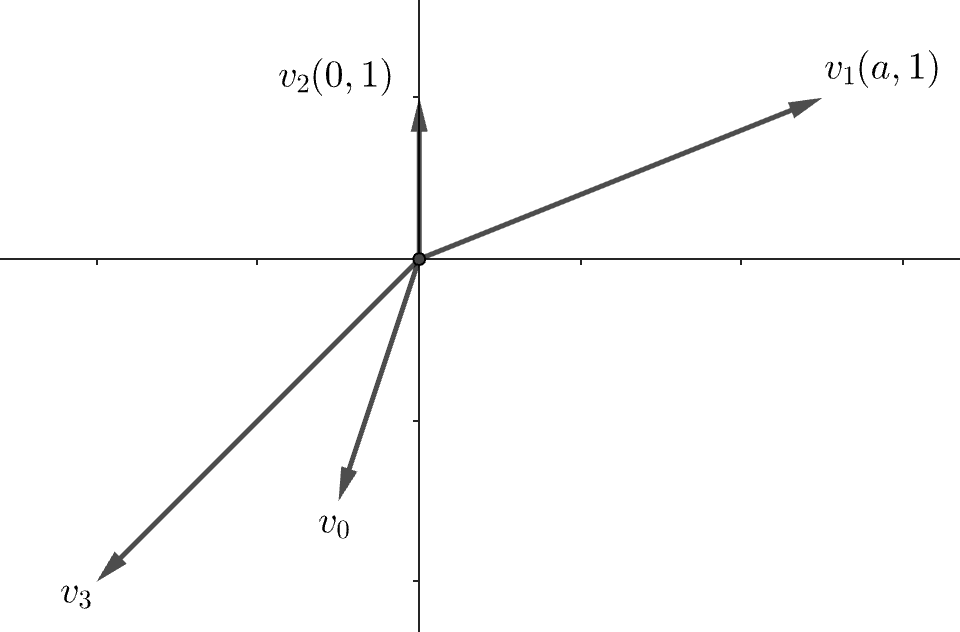}
        \includegraphics[scale=0.3]{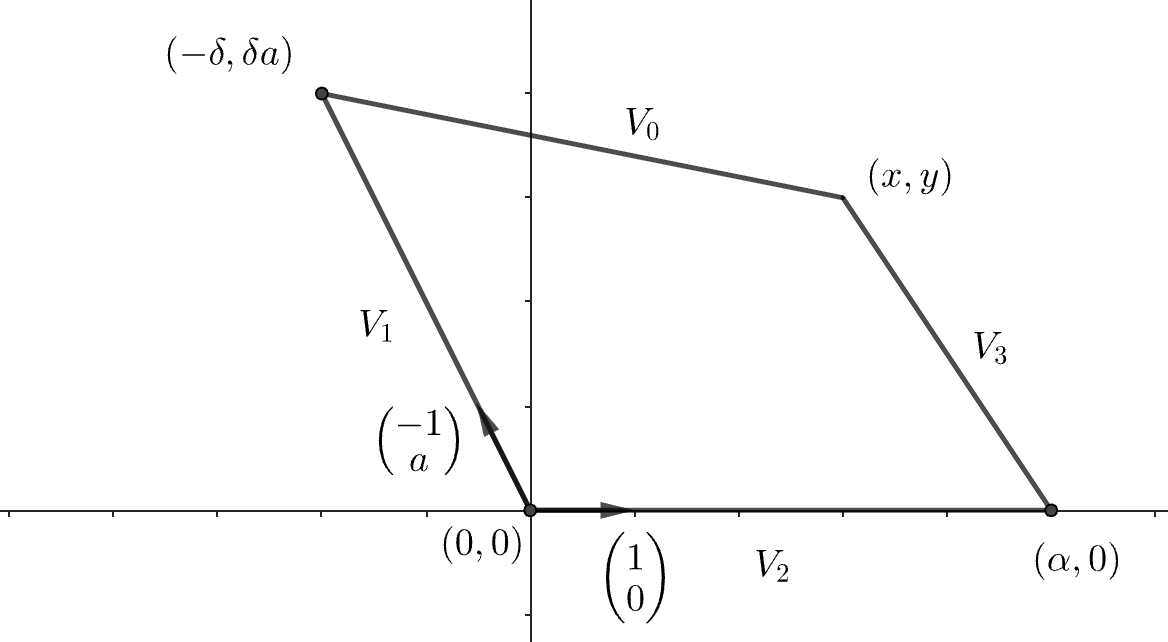}
        \caption{Toric $N$ and $M$ fan.}
        \label{fig:two positive}
    \end{figure}

    Up to $\GL_2(\ZZ),$ we can assume $v_1$ is $(a,1)$ and $v_2$ is $(0,1)$ (c.f. \ref{example: du val matrix}).
    As $v_1, v_2$ are positive or $0$ (cf. Def. \ref{def: sign}), hence $v_0, v_3$ both lie in the third quadrant. 
    Further, using inner normals, $V_1$ is along $(-1,a)$ and $V_2$ along $(1,0).$
    Let $\delta,\alpha$ be the lattice lengths of $V_1,V_2.$ As $v_0,v_3$ are in the third quadrant, the width along $(1,0)$ is (see Figure \ref{fig:two positive})
     $$m \le \Width(\Delta) \le \Width_{(1,0)}(\Delta) = \alpha+\delta < m,$$
     a contradiction.
\end{proof}

The proof of Lemma \ref{lemma: not between +} shows something much stronger about the fan, something we discuss in the next lemma:

\begin{lemma}
\label{lemma: special fan}
    Suppose that $v_0, v_1, v_2, v_3$ form the vectors of the toric fan of $\PP_\Delta,$ where $\Delta$ is an elliptic quadrilateral. Suppose $v_0, v_1$ are positive vectors, and (the point corresponding to) $v_1 \cap v_2$ is Du Val of type $A_{n-1}$. 
    By a $\GL_2(\ZZ)$ transform, say $v_0=(w,-z)$ and $v_1=(0,1),$ where $w>z>0$ (see Figure \ref{fig:special choice}). Then $v_2=(-n,1)$ and $v_3$ lies over the $x$-axis.

    \begin{figure}[h]
        \centering
        \includegraphics[scale=0.4]{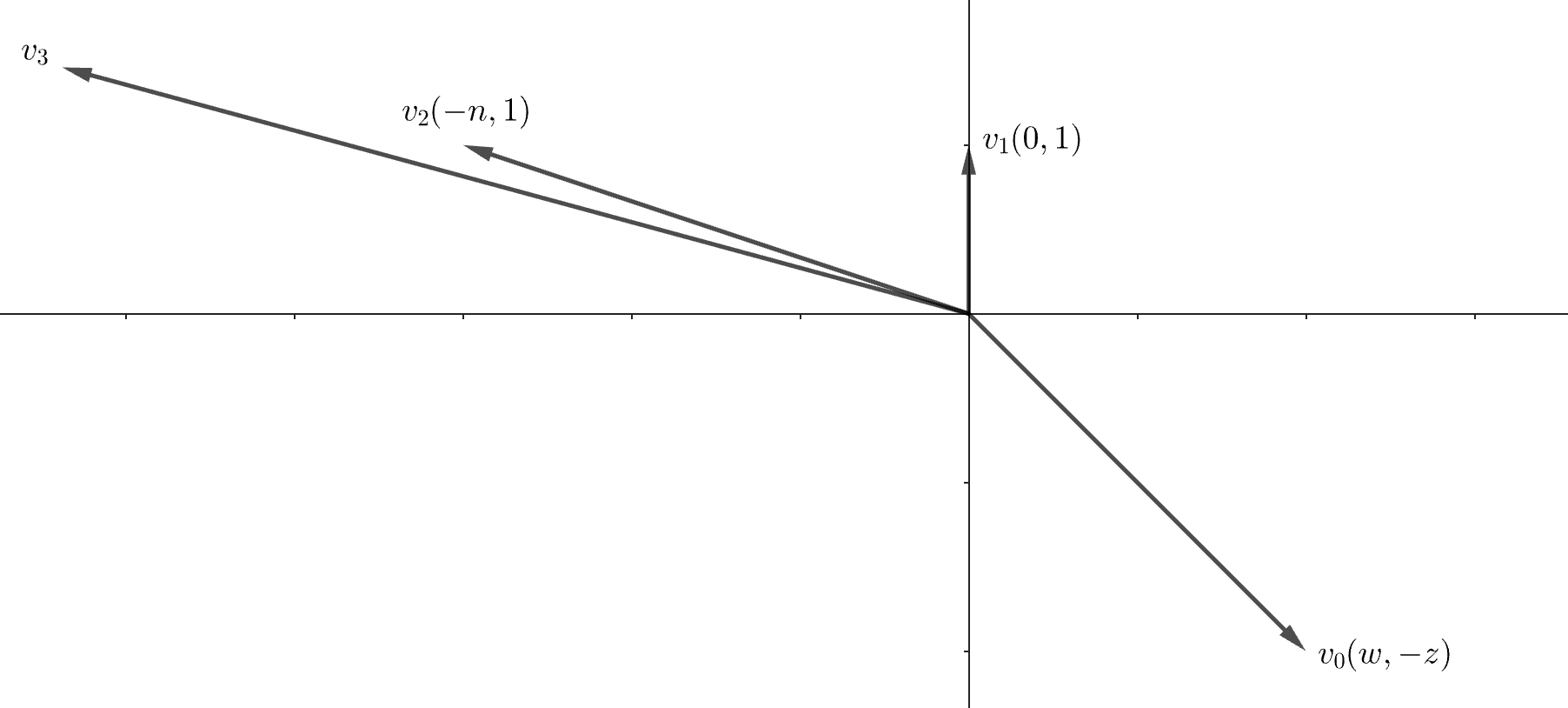}
        \caption{Good choice of fan up to $\GL_2(\ZZ).$}
        \label{fig:special choice}
    \end{figure}
\end{lemma}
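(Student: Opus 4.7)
The plan is to chain together the Du Val normal form for $v_2$ with a positivity-based placement of $v_3$, using the $\GL_2(\ZZ)$-freedom carefully. I would separate the argument into (i) reading off $v_2$ from the $A_{n-1}$ condition, and (ii) locating $v_3$ from the positivity of $v_0$.

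For (i), once $v_1 = (0,1)$ is fixed, the $A_{n-1}$-singularity at $v_1 \cap v_2$ forces $|\det(v_1,v_2)| = n$, and since $v_2$ is counter-clockwise after $v_1$, this gives $v_2 = (-n, y)$ for some $y$ with $\gcd(y,n)=1$. A direct computation of the minimal resolution of $\langle (0,1), (-n, y)\rangle$ shows that all intermediate self-intersections are $-2$ precisely when $y \equiv 1 \pmod n$: for $y=1$ the intermediate primitive vectors are $u_k = (-k, 1)$ for $k = 1, \ldots, n-1$, and $u_{k-1} + u_{k+1} = 2u_k$ yields $u_k^2 = -2$. The residual stabilizer $\bigl(\begin{smallmatrix}1 & 0 \\ k & 1\end{smallmatrix}\bigr)$ of $v_1$ acts as $v_2 \mapsto (-n, y - kn)$ and $v_0 \mapsto (w, kw - z)$, so choosing $k$ suitably is what lets us declare the joint normal form $v_2 = (-n, 1)$ together with $v_0 = (w, -z)$, $w>z>0$, used in the statement.

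For (ii), Lemma \ref{lem:signature} yields that $v_2, v_3$ are negative, and the angular reformulations of positivity give $\theta(v_0) < -\arctan(1/n) < 0$ and $\theta(v_3) < 3\pi/2$. A polygon-closure computation (summing the CCW edge vectors around $\Delta$, exactly as in the proof of Lemma \ref{lemma: not between +}) rules out the possibility that $v_0$ lies in the third quadrant: the $y$-coordinate balance $\beta n - \gamma x_3 - \delta \cdot (\text{$y$-coord of dir. of }D_0) = 0$ acquires incompatible signs. So $v_0$ is in the fourth quadrant, yielding $v_0 = (w,-z)$ with $w, z > 0$. For $v_3$: CCW-ordering forces $\theta(v_3) > \theta(v_2) \approx \pi$, and combined with $\theta(v_3) < 3\pi/2$ from the positivity of $v_0$, this pins $v_3$ in the third quadrant, i.e., on the opposite side of the $x$-axis from $v_1, v_2$, which is the region labeled ``over the $x$-axis'' in Figure \ref{fig:special choice}.

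The main obstacle I anticipate is showing that the single-parameter stabilizer of $v_1$ can be chosen to realize the two normalizations, $v_2 = (-n, 1)$ and $w > z$, simultaneously. A priori these are competing conditions on the one integer $k$; I expect compatibility to follow from the polygon-closure equations together with the width bound $\Width(\Delta) \geq m$ of Proposition \ref{prop: width}, which ties the lattice edge lengths $\alpha,\beta,\gamma,\delta$ to $w$ and $z$ tightly enough that the normalization becomes forced.
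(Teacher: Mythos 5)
Your proposal contains two genuine gaps, the first of which you have flagged yourself. The entire content of the lemma is precisely the compatibility you defer in your last paragraph: once $v_1=(0,1)$ is fixed, the residual freedom is the one-parameter group of shears $\bigl(\begin{smallmatrix}1&0\\k&1\end{smallmatrix}\bigr)$, and the normal form $v_0=(w,-z)$ with $w>z>0$ consumes it entirely (any nontrivial shear moves $-z$ out of the interval $(-w,0)$). After that, $v_2=(-n,1+kn)$ for one \emph{specific} integer $k$ determined by the fan, and the assertion $k=0$ is a theorem, not a choice of coordinates. Writing that you "expect compatibility to follow from the polygon-closure equations together with the width bound" is exactly the step that must be supplied. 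The paper's argument is short but essential: (a) if $1+kn\le 0$, then $\Width(\Delta)\le\Width_{(1,0)}(\Delta)\le |V_1\cap\ZZ^2|<m$, contradicting Proposition \ref{prop: width}, so $1+kn>0$; (b) positivity of $v_1$ forces $-v_0=(-w,z)$ to lie strictly between $v_1$ and $v_2$ in counterclockwise order, which in the second quadrant reads $0<(1+kn)/n<z/w<1$. Together these pin $k=0$. Without (a) and (b) your normalization claim is unproved, and no amount of shear-bookkeeping substitutes for them.

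The second gap is your placement of $v_3$, where you arrive at the wrong conclusion. You argue that $\theta(v_2)\approx\pi<\theta(v_3)<3\pi/2$ "pins $v_3$ in the third quadrant" and identify that with the region "over the $x$-axis." The third quadrant is $\{x<0,\ y<0\}$, i.e.\ \emph{below} the $x$-axis, whereas the lemma asserts $\langle v_3,v_1\rangle>0$: the $y$-coordinate of $v_3$ is positive, so $v_3$ sits in the second quadrant between $v_2$ and the negative $x$-axis, on the \emph{same} side as $v_1,v_2$. Moreover, the angular window $(\theta(v_2),3\pi/2)$ that sign considerations give you contains both the relevant part of the upper half-plane and all of the third quadrant, so positivity alone cannot locate $v_3$; this step also requires the width bound. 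The correct argument: if $\langle v_3,v_1\rangle\le 0$, then (using $v_2=(-n,1)$, so the horizontal displacement of $V_2$ equals its lattice length) $\Width_{(1,0)}(\Delta)\le|V_1\cap\ZZ^2|+|V_2\cap\ZZ^2|<m$, again contradicting Proposition \ref{prop: width}. In short, both conclusions of the lemma are driven by the inequality $\Width(\Delta)\ge m$; the Du Val normal form and the sign pattern are not enough on their own.
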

\begin{proof}
    Based on the singularity type (or, say, an appropriate $\GL_2(\ZZ)$ transform), we can pick the fan so that $v_0=(w,-z)$ and $v_1=(0,1)$ so that $w>z>0.$ Since $v_0$ is positive, hence $\la v_3, (1,0) \ra <0.$\footnote{In this paper, $\la \bullet, \bullet \ra$ is the standard Euclidean dot product.} 

    \medskip
    
    As (the point corresponding to) $v_1 \cap v_2$ is a Du Val singularity of type $A_{n-1}$, hence $v_2=(-n, 1+kn)$ for some $k \in \ZZ$ (cf. end of \ref{example: du val matrix}). Note here that $n=1$ corresponds to a smooth point, consistent with our convention in Definition \ref{def: Du Val point}.

    \medskip

    Firstly, we show $\la v_2, v_1 \ra > 0.$ Otherwise, the fan and quadrilateral look like what we have in Figure \ref{fig:v2y}. 
    In the corresponding quadrilateral, we thus find $$\Width \le \Width_{(1,0)} \le |V_1 \cap \ZZ^2|<m,$$ a contradiction. Hence $\la v_2,v_1 \ra = 1+kn>0.$ 

     \begin{figure}[h]
        \centering
        \includegraphics[scale=0.3]{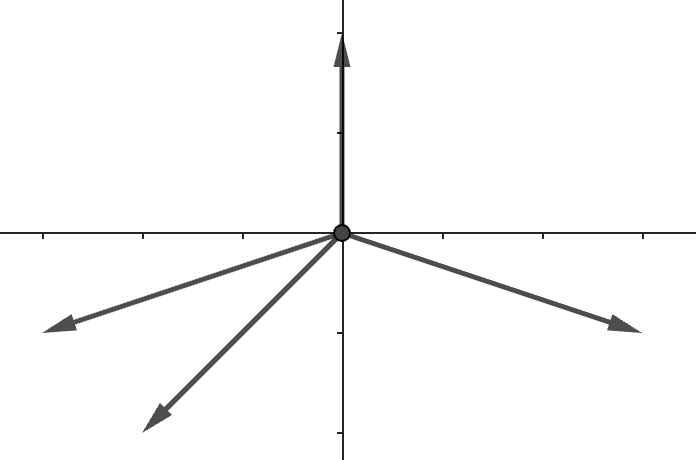}
        \includegraphics[scale=0.3]{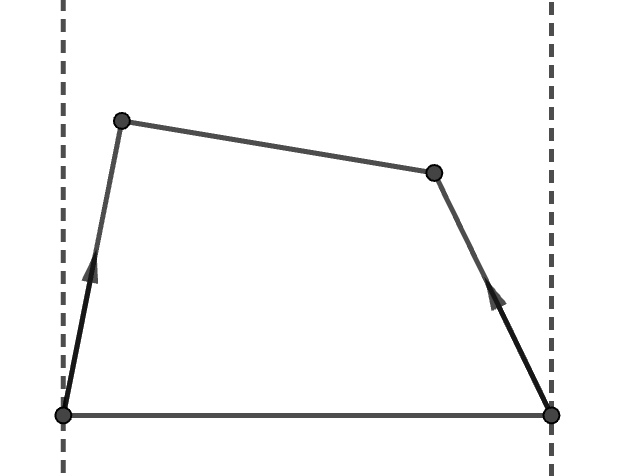}
        \caption{The case when $\la v_2,v_1 \ra \le 0.$}
        \label{fig:v2y}
    \end{figure}
    
    Next, since $v_1$ is positive, the vector $-v_0$ must lie between $v_1,v_2.$ Hence $-v_0$ has a higher slope than $v_1,$ showing
    $$0<\frac{1+kn}{n}<\frac{z}{w}<1.$$
    Since $k \in \ZZ$ and $n \ge 1,$ we must have $k=0,$ as desired.

    \medskip

    Lastly, it suffices to show $\la v_3 , v_1 \ra>0.$ However, this follows from a similar argument as the one used Lemma \ref{lemma: not between +}; say $\la v_3,v_1 \ra \le 0.$ Consider the corresponding quadrilateral with side $V_i$ corresponding to the vector $v_i$. As $v_2$ is $(-n,1),$ hence $\la V_2, (1,0) \ra = |V_2 \cap \ZZ^2|.$ Thus 
    $$\Width \le \Width_{(1,0)} \le |V_1 \cap \ZZ^2|+\la V_2, (1,0) \ra = |V_1 \cap \ZZ^2|+|V_2 \cap \ZZ^2| < m,$$
    a contradiction (see Figure \ref{fig:key-final}).
\end{proof}

\begin{figure}[h]
        \centering
        \includegraphics[scale=0.3]{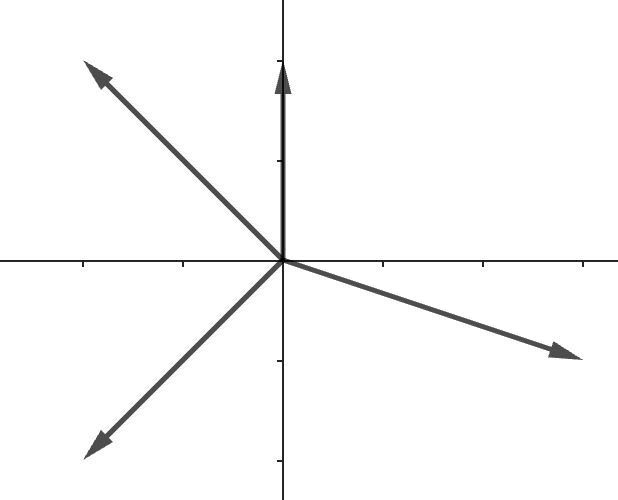}
        \includegraphics[scale=0.3]{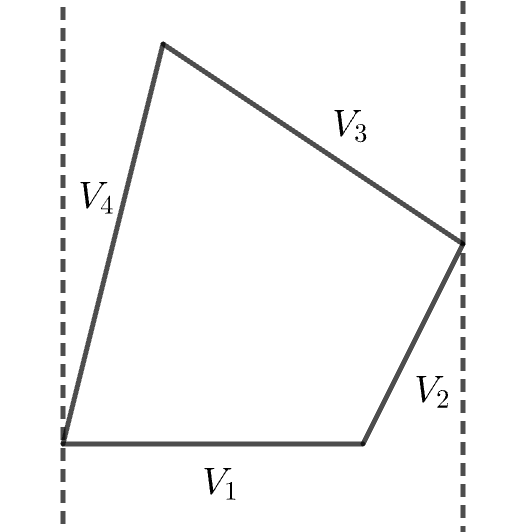}
        \caption{The width is less than $m$ along $(1,0).$}
        \label{fig:key-final}
    \end{figure}

How do the signs of vectors here relate to the signs of their strict transforms in the resolution? We can analyze that by subdividing our cones:

\begin{lemma}
\label{lemma: self-intersections in resolution}
    Let $\Ti$ be the toric fan of $\PP_\Delta.$ If $u,v,w \in \Ti$ are consecutive (counterclockwise) such that $u,v$ are positive and $\measuredangle (v,w)$ is Du Val\footnote{meaning the point $V \cap W$ is Du Val, where $V, W$ are sides in $\Delta$ corresponding to $v,w$}, then the strict transforms of the divisor corresponding to $v$ has self-intersection $(-1),$ while that of $w$ is $\le (-2).$
\end{lemma}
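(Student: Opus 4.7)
The plan is to normalize the fan via Lemma \ref{lemma: special fan} and then identify, in explicit coordinates, the two rays of the fan of $Y$ adjacent to $v$ and to $w$; the self-intersections then follow immediately from the smooth-toric relation $v_- + v_+ = -D^2 \cdot v$. Applying Lemma \ref{lemma: special fan} with $u, v, w$ in the roles of $v_0, v_1, v_2$, after a $\GL_2(\ZZ)$-transform we may take $u = (\alpha, -\beta)$ with $\alpha > \beta > 0$, $v = (0, 1)$, and $w = (-n, 1)$ (where $n = 1$ corresponds to the smooth case of the Du Val singularity at $v \cap w$), and moreover the opposite fan vector $v_3$ has strictly positive $y$-coordinate. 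The Hirzebruch--Jung resolution of the $A_{n-1}$ singularity between $v$ and $w$ inserts the $(-2)$-rays $r_i = (-i, 1)$ for $1 \le i \le n - 1$ (empty when $n = 1$); thus in $Y$ the ray adjacent to $v$ on the $w$-side is $(-1, 1)$ and the ray adjacent to $w$ on the $v$-side is $(-(n-1), 1)$ (reducing to $v$ itself when $n = 1$).

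The heart of the argument is to show that in the Hirzebruch--Jung resolution of $\mathrm{Cone}(u, v)$, the ray $p$ adjacent to $v$ is $(1, 0)$. This amounts to checking that no nonzero lattice point $(a, b)$ of the cone $\{a \ge 0,\ b\alpha + a\beta \ge 0\}$ satisfies $a + b \le 0$, i.e.\ lies weakly below the segment from $(0, 1)$ to $(1, 0)$. A short case split handles this: $a = 0$ forces $b = 0$, which is excluded, while for $a \ge 1$ the cone inequality gives $b \ge -a\beta/\alpha > -a$ using $\beta < \alpha$, contradicting $b \le -a$. With $p = (1, 0)$ established, the smooth-toric formula at $v$ reads $(1, 0) + (-1, 1) = (0, 1) = v$, so $\widetilde D_v^{\,2} = -1$.

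For $\widetilde D_w$, the $v_3$-side neighbor is some primitive $s = (s_1, s_2) \in \mathrm{Cone}(w, v_3)$ (either the first inserted resolution ray or $v_3$ itself). Since both $w$ and $v_3$ have positive $y$-coordinate, this cone lies in the open upper half-plane, so $s_2 \ge 1$. Expanding $(-(n-1), 1) + s = -\widetilde D_w^{\,2} \cdot w = c(-n, 1)$ and comparing second coordinates gives $c - 1 = s_2 \ge 1$, hence $\widetilde D_w^{\,2} \le -2$, as required. The main obstacle is the Hirzebruch--Jung identification $p = (1, 0)$, which rests crucially on the normalization $\beta < \alpha$ from Lemma \ref{lemma: special fan}---without this, the resolution could produce an adjacent ray of the form $(1, k)$ with $k \ge 1$, pushing $\widetilde D_v^{\,2}$ strictly below $-1$ and destroying the clean computation.
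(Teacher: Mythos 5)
Your proposal is correct and follows essentially the same route as the paper: normalize the fan via Lemma \ref{lemma: special fan}, identify the rays of $Y$ adjacent to $v$ and to $w$, and read off the self-intersections from the relation $v_-+v_+=-D^2\cdot v$. The only difference is that you supply an explicit convex-hull verification that the resolution ray of $\mathrm{Cone}(u,v)$ adjacent to $v$ is $(1,0)$, a step the paper simply asserts from the uniqueness of the cone resolution algorithm.
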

\begin{figure}[h]
        \centering
        \includegraphics[scale=0.4]{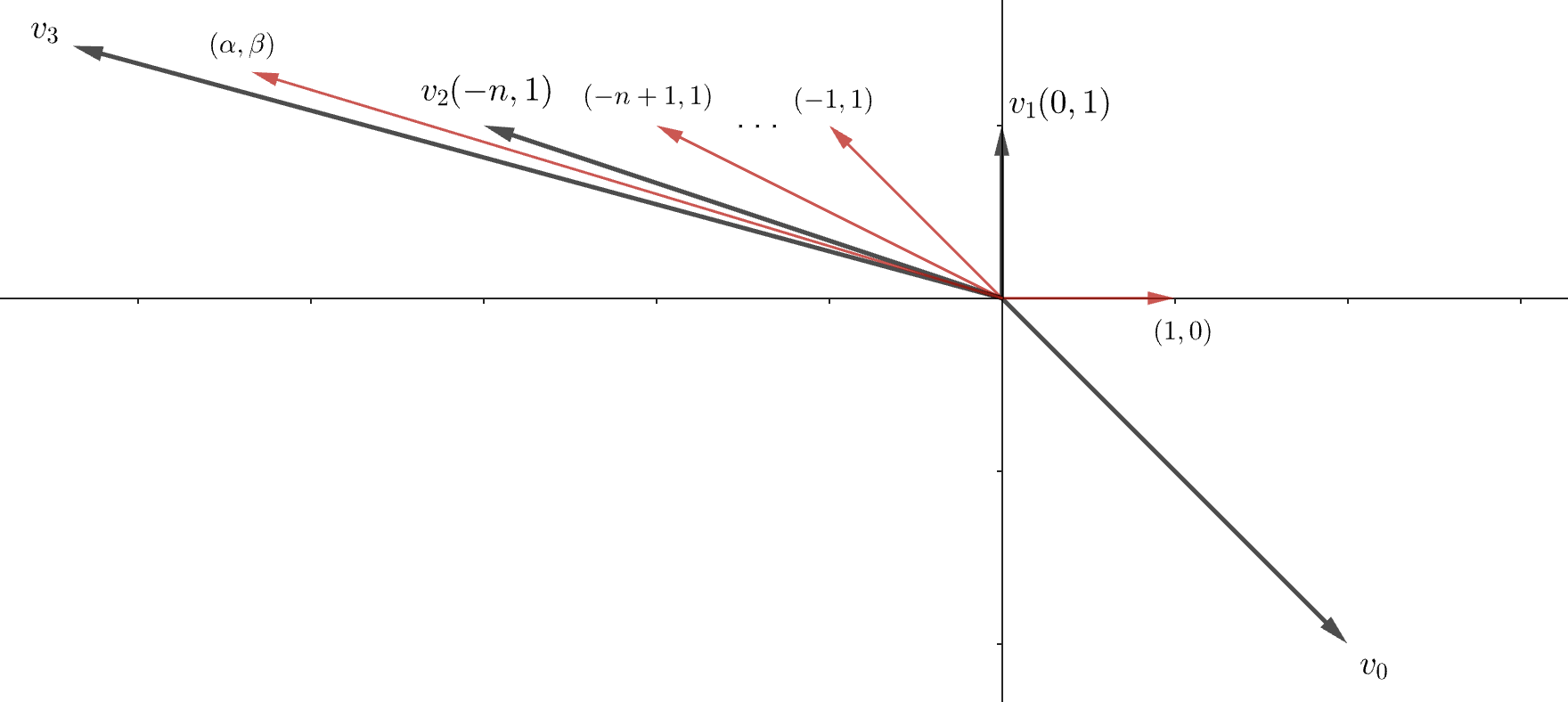}
        \caption{The resolution of our fan $\Ti$}
        \label{fig:resolution}
    \end{figure}
\begin{proof}
    Choose a fan as in \ref{lemma: special fan}. It suffices to prove that the strict transforms of the vectors $v_1$ will have self-intersection $(-1),$ while that of $v_2$ will have self-intersection at most $(-2).$ 
    We know that the resolution of a convex cone is unique. Hence running the cone resolution algorithm (cf. \cite{fulton}), the last vector (counter-clockwise) in the cone $(v_0,v_1)$ is $(1,0).$ The resolution of $(v_1,v_2)$ will be the $(-2)$ vectors $\{(-k,1):k=0, \dots, n\}.$ Hence, $(0,1)=(-1,1)+(1,0)$ shows the the self-intersection of the strict transform of $v_1$ will be $(-1)$ (see Figure \ref{fig:resolution}).

    \medskip

    Next, we consider the first vector (counterclockwise) in the resolution of the cone $(v_2,v_3).$ Since both $v_2,v_3$ lie above the $x$ axis, this vector will be $(-\alpha, \beta)$ for some $\alpha,\beta \in \ZZ,$ with $\beta>0.$ Hence the self-intersection $\tau$ of $v_2=(-n,1)$ satisfies $$-\tau(-n,1)=(-n+1,1)+(\alpha,\beta)=(\bullet, 1+\beta),$$ showing $-\tau = 1+\beta > 1$ since $\beta>0.$ This shows the second part
\end{proof}

\begin{remark}
\label{remark: never used opposite du val}
    While the section is titled the ``Opposite Du Val case'', notice that we never used this hypothesis in either of Lemma \ref{lemma: special fan} or Lemma \ref{lemma: self-intersections in resolution}.
\end{remark}

Using the results we have established, we can rule out the opposite Du Val case completely. 
For this, we have to use the symmetry in Lemma \ref{lemma: self-intersections in resolution}:

\begin{proposition}
\label{prop: opposite duval}
    Let $\Delta$ be an elliptic quadrilateral. Then $X=\Bl_e(\PP_\Delta)$ can't have opposite Du Val singularities. 
\end{proposition}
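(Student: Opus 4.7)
The plan is to argue by contradiction. Suppose $X$ has two opposite Du Val singular points, and label the rays of the fan of $\PP_\Delta$ as $v_0, v_1, v_2, v_3$ counterclockwise so that the Du Val vertices are $c_0 = v_0 \cap v_1$ and $c_2 = v_2 \cap v_3$. The strategy is to combine the sign constraints from Lemmas \ref{lemma: not between +} and \ref{lem:signature} with the self-intersection data of Lemma \ref{lemma: self-intersections in resolution} to show that only two toric boundary curves of $Y$ can be $(-1)$-curves, contradicting Lemma \ref{lem: at least three (-1) curves}.

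First, I would rule out zero signs. If $v_0$ had sign $0$, then by definition $v_1 + v_3 = 0$, and the very same identity forces the sign of $v_2$ to be $0$ as well. Applying Lemma \ref{lemma: not between +} at $c_0$ and $c_2$ then forces both $v_1$ and $v_3$ to have negative sign, so $\measuredangle(v_0, v_2) < \pi$ and $\measuredangle(v_2, v_0) < \pi$; but these two angles sum to $2\pi$, a contradiction. The case of $v_1$ (equivalently $v_3$) having sign $0$ is handled symmetrically using $v_0 = -v_2$.

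With all signs nonzero, Lemma \ref{lem:signature} places them in the cyclic pattern $(+, +, -, -)$. Lemma \ref{lemma: not between +} requires each of $c_0$ and $c_2$ to have at least one negative neighbouring vector, and a short case-check over the four cyclic rotations shows that the only arrangement compatible with both constraints is $(-, +, +, -)$ (its partner $(+, -, -, +)$ is equivalent after relabelling $v_i \mapsto v_{i+2}$). Hence $v_1, v_2$ are positive and $v_0, v_3$ are negative. Applying Lemma \ref{lemma: self-intersections in resolution} to the counterclockwise triple $(v_1, v_2, v_3)$ yields that the strict transform of $v_2$ in $Y$ is a $(-1)$-curve and that of $v_3$ has self-intersection $\le -2$. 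Invoking the lemma a second time on the triple $(v_2, v_1, v_0)$, now viewed counterclockwise in the fan obtained by an orientation-reversing $\GL_2(\ZZ)$ reflection, gives symmetrically that the strict transform of $v_1$ is a $(-1)$-curve and that of $v_0$ has self-intersection $\le -2$.

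The contradiction follows: every toric boundary curve of $Y$ is either the strict transform of some $v_i$ or an exceptional curve of the minimal resolution, and the latter necessarily have self-intersection $\le -2$. So exactly two $(-1)$-curves appear among the toric boundary curves of $Y$, violating Lemma \ref{lem: at least three (-1) curves}. The only subtlety is the orientation-reversal step when invoking Lemma \ref{lemma: self-intersections in resolution} on the $c_0$ side, but since both the hypothesis (``positive'', ``Du Val'') and the conclusion (self-intersections of strict transforms) of that lemma are $\GL_2(\ZZ)$-invariant, this is purely a bookkeeping matter.
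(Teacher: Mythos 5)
Your proof is correct and follows essentially the same route as the paper's: Lemma \ref{lem:signature} and Lemma \ref{lemma: not between +} pin the sign pattern down to $(-,+,+,-)$ with the Du Val corners at the $(+,-)$ transitions, two applications of Lemma \ref{lemma: self-intersections in resolution} (the second after an orientation-reversing relabelling) force two of the four strict transforms to have self-intersection $\le -2$, and Lemma \ref{lem: at least three (-1) curves} supplies the contradiction. The only difference is that you explicitly dispose of the zero-sign case before invoking Lemma \ref{lem:signature}, a hypothesis the paper's proof leaves implicit; that is a welcome extra check but does not change the argument.
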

\begin{proof}
    Assume on the contrary, and consider the fan $\Ti$ of $\PP_\Delta.$ Lemma \ref{lem:signature} shows there is one consecutive pair of positive vectors, and \ref{lemma: not between +} shows that these two vectors cannot correspond to a Du Val singularity. Hence the Du Val singularities are between the opposite $(+,-)$ pairs. 
    Let $v_0,\dots,v_3$ be the vectors so that $v_0,v_1$ are positive and $\measuredangle (v_1,v_2)$ and $\measuredangle (v_3,v_0)$ are both Du Val. 
    By Lemma \ref{lemma: self-intersections in resolution}, the strict transforms of both $v_2, v_3$ will have self-intersection at most $(-2).$ 
    However, Lemma \ref{lem: at least three (-1) curves} shows that we must have at least three $(-1)$ curves in the non-exceptional (toric boundary) locus of $Y \to X$, giving the desired contradiction.
\end{proof}

\begin{remark}
\label{remark: smooth = Du Val}
    It is important to note that the proof also works for an ``$A_0$ singularity'', i.e. a smooth point. 
    Also, we never assumed $K_X+C \sim \alpha R$ in this proof, so this is stronger than what we need.
\end{remark}    

\begin{corollary}
\label{cor: R through exactly 2 points}
Let $(X,C)$ be a (hypothetical) toric elliptic pair with $\rho(X)=3$ and $K+C \sim \alpha R.$ There are exactly two torus invariant points in $X$ that $R$ doesn't pass through. 
\end{corollary}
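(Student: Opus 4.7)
The plan is a short pigeonhole argument combining the bound on how many torus invariant points $R$ meets with the Opposite Du Val ruled-out case.

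First recall from Lemma \ref{lemma:tilde r is -1}(1) that $R$ passes through at most $2$ torus invariant points of $X$. Since $X$ is a blowup of a toric quadrilateral surface at a point $e$ away from the toric boundary, $X$ has exactly $4$ torus invariant points, and so the statement to prove is that $R$ passes through \emph{at least} $2$ of them. I will argue by contradiction: suppose $R$ passes through at most one torus invariant point of $X$.

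Then there are at least three torus invariant points of $X$ not lying on $R$. By Lemma \ref{lemma: key commutative diagram X,Y,Z,W, and 8 curves}(2), every such point is smooth or a Du Val singularity of $X$. Using the convention that a smooth point is an $A_0$ singularity (see the Convention in Section \ref{subsec: case division} and Definition \ref{def: Du Val point}), this means at least three of the four torus invariant points of $X$ are Du Val.

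In a quadrilateral the four torus invariant points come in two opposite pairs (in the sense of Definition \ref{defi: opposite and adjacent points}). Among any three of the four points, the pigeonhole principle forces two of them to come from the same opposite pair; that is, some pair of opposite torus invariant points of $X$ both carry (at worst) Du Val singularities. This contradicts Proposition \ref{prop: opposite duval}, which rules out opposite Du Val singularities on $X=\Bl_e(\PP_\Delta)$ for an elliptic quadrilateral $\Delta$ (and Remark \ref{remark: smooth = Du Val} reminds us that the proof there applies equally to $A_0$ points). Hence $R$ passes through at least $2$ torus invariant points, and combined with Lemma \ref{lemma:tilde r is -1}(1) it passes through exactly $2$, so exactly two torus invariant points of $X$ avoid $R$.

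No step here is really hard; the only subtlety is noting that the convention $A_0=$smooth is in force, so that the three points avoided by $R$ really do qualify as ``Du Val'' for the purposes of Proposition \ref{prop: opposite duval} (as flagged in Remark \ref{remark: smooth = Du Val}).
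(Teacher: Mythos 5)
Your argument is correct and is essentially the paper's own proof: bound the number of points $R$ meets via Lemma \ref{lemma:tilde r is -1}, note via Lemma \ref{lemma: key commutative diagram X,Y,Z,W, and 8 curves} that the avoided points are Du Val, and use the pigeonhole observation that among three of the four torus invariant points two must be opposite, contradicting Proposition \ref{prop: opposite duval}. The extra care you take with the $A_0$ convention matches Remark \ref{remark: smooth = Du Val}; nothing further is needed.
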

\begin{proof}
   By Lemma \ref{lemma:tilde r is -1}, $R$ passes through at most $2$ torus invariant points in $X.$ If it passes through at most one, then there are at least two opposite points (cf. Def. \ref{defi: opposite and adjacent points}) in $X$ that $R$ doesn't pass through.
   By Lemma \ref{lemma: key commutative diagram X,Y,Z,W, and 8 curves}, these points are Du Val singularities (possibly smooth), and hence $X$ has opposite Du Val singularities.
   However, Proposition \ref{prop: opposite duval} shows this is impossible.
\end{proof}

\section{Adjacent Du Val case}
\label{sec: adjacent du val case}
In this section, we first discuss what combinatorial configurations are possible for the relevant locus of $Y$ in the Adjacent Du Val case. This will be essential for using the Matrix Equation \ref{lem: matrix equation} in future sections. Next, we look at what more can be said about these configurations in the $A_n$ case (cf. Fig. \ref{fig:The tree of cases}), since that turns out to be hardest case.

\subsection{A General Analysis on the Combinatorial Configuration}

We now prove a few combinatorial results on the configuration of $Y$ and the curves $\wt R$ intersects. 

\begin{definition}
\label{def: endcurve, endchain}
    An \defi{endcurve} of a chain of curves $\mc C_1,\dots, \mc C_n$ is one of the two curves $\mc C_1, \mc C_n.$
    Consider the two toric boundary chains in the relevant locus (cf. Cor. \ref{cor: R through exactly 2 points}). We call any of these chains an \defi{endchain} if $\wt R$ intersects an endcurve of the chain. 
\end{definition}

By Corollary \ref{cor: R through exactly 2 points}, the relevant locus consists of $\wt R$ and two disjoint chains on the toric boundary of $Y.$ 

\begin{lemma}
\label{lemma: key lemma about endchains}
Consider a semi-elliptic toric surface $Y$ (cf. Def. \ref{remark: semi-elliptic surface}).
\begin{enumerate}
    \item At least one of the toric boundary chains (in the relevant locus) that $\wt R$ intersects is an endchain.
    \item If we assume the singularities of $Z$ are of type $A_n$ (cf. Fig. \ref{fig:The tree of cases}), then both the chains (in the relevant locus) that $\wt R$ passes through must be endchains.
\end{enumerate}
\end{lemma}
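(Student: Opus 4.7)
The plan is to analyze the dual graph of the relevant locus after contracting $\wt R$, tracking how each further $(-1)$ contraction interacts with the Dynkin ADE type required for the exceptional locus of $W \to Z$. Write the two chains as $A_1, \ldots, A_p$ and $B_1, \ldots, B_q$, with $\wt R$ meeting $A_i$ and $B_j$. Contracting $\wt R$ modifies only $A_i^2$ and $B_j^2$ (each by $+1$), so in $Y_1 = \pi_0(Y)$ the only candidates for further $(-1)$ contractions within the relevant locus are $A_i$ and $B_j$ (all other exceptional curves of $Y \to X$ had self-intersection $\le -2$, unchanged by the $\wt R$ contraction).

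For part $(1)$, suppose both $1 < i < p$ and $1 < j < q$. In $Y_1$ the two chains are joined by an edge $A_i$--$B_j$, and both endpoints of this edge have degree $3$ in the dual graph of exceptional curves. If no further contraction occurs, the exceptional dual graph of $W \to Z$ over $\wt\pi_\ast R$ inherits these two degree-$3$ vertices, impossible in an ADE Dynkin diagram. If instead $a_i = 2$ and we contract $A_i$ (symmetrically for $b_j = 2$), the three neighbors $A_{i-1}, A_{i+1}, B_j$ now pairwise meet at the image point, forming a triangle of exceptional curves. Any further contraction of a triangle vertex produces a double edge between the remaining two, also forbidden in ADE. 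Either way we contradict the semi-elliptic hypothesis that $Z$ has Du Val singularities.

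For part $(2)$, by part $(1)$ we may WLOG assume $j = 1$ and aim for a contradiction from $1 < i < p$. The key input from the Adjacent Du Val case is that neither of the two torus-invariant points that $R$ passes through is Du Val in $X$: if for example $p_B$ were Du Val, then together with the Du Val point opposite to it (one of the off-$R$ points, by Lemma \ref{lemma: key commutative diagram X,Y,Z,W, and 8 curves}), we would obtain an opposite Du Val pair, contradicting the Adjacent case. In particular, since $p_B$'s cyclic quotient singularity is non-Du Val, the $B$-chain has some $b_k > 2$. In $Y_1$ the degree-$3$ vertex $A_i$ can only be reduced by contracting the $B$-chain sequentially from $B_1$: any direct contraction of $A_i$ creates the forbidden triangle $A_{i-1}, A_{i+1}, B_1$ of exceptional curves. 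Each $B_k$-contraction requires $b_k = 2$, so that $B_k$ becomes a $(-1)$ curve after cumulative self-intersection increments from prior contractions. If some $b_k > 2$, the sequence halts with $A_i$ still of degree $3$ in the exceptional dual graph over $\wt\pi_\ast R$, incompatible with type $A_n$. But if all $b_k = 2$, then $p_B$ is Du Val $A_q$, contradicting what we just observed. Hence $A$ must also be an endchain.

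The main technical subtlety is that the non-exceptional toric boundary $L$ between the two chains is itself a neighbor of $B_1$ in $Y$, so contracting $B_1$ creates a triangle $A_i, B_2, L$ in the full dual graph of $Y_2$ (with $A_i$ reaching degree $4$). Since $L$ is not in the exceptional locus of $W \to Z$, however, it contributes only to intersections with non-exceptional curves and does not interfere with the ADE analysis of the exceptional dual graph at $\wt\pi_\ast R$; the argument must carefully separate the full dual graph from its exceptional sub-graph throughout.
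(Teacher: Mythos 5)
Your proof is correct and follows essentially the same route as the paper: track the dual graph of the relevant locus under the contractions, use that a degree-$3$ vertex cannot be contracted without creating a non-SNC configuration incompatible with an ADE exceptional locus, and in part (2) combine the forced sequential contraction of the endchain (each curve a $(-2)$) with the prohibition on opposite Du Val pairs. The only difference is organizational: the paper first proves the lone endchain must consist entirely of $(-2)$ curves and then contradicts Proposition \ref{prop: opposite duval}, whereas you invoke Proposition \ref{prop: opposite duval} first to produce a $b_k>2$ and then show the contraction sequence cannot terminate correctly — the same two ingredients in contrapositive order.
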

\begin{proof}
To prove this, we first prove an auxiliary lemma.

\begin{lemma}
\label{lemma: auxilliary lemma about high multiplicity intersections}
    At any step during the series of contractions $Y \to Y_1 \to \dots \to W,$ the relevant locus (cf. Def. \ref{defi: relevant locus})
    \begin{enumerate}
        \item is a simple normal crossing divisor. In particular, no three curves pass through the same point or intersect with multiplicity $>1;$
        \item has no more than one $(-1)$ curve.
    \end{enumerate}
\end{lemma}
\begin{proof}
We know that the contractions $Y \to \dots \to W$ always contract a $(-1)$ curve, and the dual graph of the exceptional locus of $W \to Z$ is a forest (since all singularities of $Z$ are Du Val, cf. Lem. \ref{lemma: key commutative diagram X,Y,Z,W, and 8 curves}).
Hence, the relevant locus must always be a simple normal crossing divisor. 
For $(2),$ observe that we have exactly one $(-1)$ curve in the relevant locus in $Y$ initially (namely $\wt R),$ so consider the smallest $i$ such that the relevant locus of $Y_{i+1}$ has two $(-1)$ curves. Let $\mc C \subset Y_i$ be contracted in $\pi_i:Y_i \to Y_{i+1},$ so that it intersects at most two relevant curves in $Y_i$ by $(1).$
The number of $(-1)$ curves increases if and only if the two relevant curves $\mc C$ intersected both become $(-1)$ curves in $Y_{i+1}.$ 
However, since they are the only $(-1)$ curves in $Y_{i+1},$ one of them must be contracted to get $Y_{i+2}.$ But the other curve then becomes a curve with self-intersection $0$ in $Y_{i+2},$ 
which is a contradiction since the exceptional locus of $Y_{i+2} \to Z$ can have only negative curves.
\end{proof}

We can now prove the first part of Lemma \ref{lemma: key lemma about endchains}.

\begin{enumerate}
    \item To see that at least one of the chains is an endchain, assume not, and consider curves $\mc C, \mc C'$ that $\wt R$ intersects. By assumption, neither of them is an endcurve.
    Consider the contraction of $\wt R,$ so that $(\pi_0)_\ast \mc C, (\pi_0)_\ast \mc C'$ both intersect three other curves. 
    We claim $\ell>0,$ i.e. $Y_1 \ne Y_{\ell+1} = W.$
    Indeed, the dual graph of the relevant locus in $Y_1$ has two nodes with degree $3,$ which means the locus is not of ADE type. However, $W \to Z$ is the resolution of Du Val singularities.  
    \medskip

    Since $\mc C, \mc C'$ are the only curves whose self-intersection changed in the contraction, one of them maps to a $(-1)$ curve after this contraction.
    Contracting either of these curves introduces three curves intersecting in the same point. However, then the relevant locus in $Y_1$ is not simple normal crossing, contradicting Lemma \ref{lemma: auxilliary lemma about high multiplicity intersections}.
\end{enumerate}

To prove the second part of Lemma \ref{lemma: key lemma about endchains}, we need an easy lemma:

\begin{lemma}
\label{lem: endchain is -2}
    Suppose $Z$ has only $A_n$ type singularities. If there is only one endchain in $Y,$ it must be a chain of $(-2)$ curves.
\end{lemma}
\begin{proof}
    Assume on the contrary, and suppose $\wt R$ intersects the curve $\mc C$ in the chain which is not the endchain, and that $\mc D, \mc D'$ intersect $\mc C$ in that chain.
    By Lemma \ref{lemma: auxilliary lemma about high multiplicity intersections}, there is always exactly one $(-1)$ curve in the series of contractions $Y \to W.$ Furthermore, since our contractions end at a chain of curves (since $Z$ has $A_n$ type singularities), as long as $\mc C$ intersects at least three curves, this cannot be in the exceptional locus of $W.$

    \medskip

    The self-intersections of $\mc D, \mc D'$ don't change until $\mc C$ is contracted, so consider the first time that happens. 
    If the endchain hasn't contracted to a smooth point, then $\mc C$ intersects at least three curves, and so contracting it will give three curves intersecting in a point. 
    However, then the divisor is not simple normal crossing, contradicting Lemma \ref{lemma: auxilliary lemma about high multiplicity intersections}. Hence the endchain contracts completely before $\mc C$ is contracted.

    \medskip
    
    Let the endchain in $Y$ be $\mc C_1,\dots, \mc C_n,$ so that $\wt R$ intersects $\mc C_1.$ We inductively show that $\mc C_i^2=-2$ for every $i.$ Indeed, after contracting $\wt R,$ the curve $\mc C_1$ maps to a $(-1)$ curve, hence $\mc C_1^2=-2.$ Assuming the result for some $\mc C_k,$ observe that once we eventually contract $\mc C_k,$ the only curves whose self-intersection numbers change are $\mc C, \mc C_{k+1},$ and so $\mc C_{k+1}$ maps to a $(-1)$ curve. 
    Further, this is the first time in the series of contractions that its self-intersection number of $\mc C_{k+1}$ changes. Hence, $\mc C_{k+1}^2=-2,$ and the induction is complete.
\end{proof}

Now we can finish the proof of $(2)$ in Lemma \ref{lemma: key lemma about endchains}. 
Assume on the contrary that both chains aren't endchains. By Lemma \ref{lem: endchain is -2}, there is one endchain, and it is a chain of $(-2)$ curves, i.e. the exceptional locus of a Du Val singularity. However since we already have two other Du Val singularities, this shows that $X$ has at least three Du Val singularities. Two of them must be opposite, contradicting Proposition \ref{prop: opposite duval}. 
\end{proof}

We conclude this subsection by a definition.

\begin{definition}
\label{def: heavy curve}
Consider the four toric boundary curves $\wt \gamma_0,\wt \gamma_1,\wt \gamma_2,\wt \gamma_3$ in $Y$ that are strict transforms of the four toric boundary curves in $X.$ By Lemma \ref{lem: at least three (-1) curves}, (at least) three of these have self-intersection $(-1).$ 
Call the fourth curve the \defi{heavy curve} in $Y.$
\end{definition}

Let $p,p' \in X$ be the two torus invariant points that $R$ doesn't pass through (cf. Cor. \ref{cor: R through exactly 2 points}), so that $p,p'$ are Du Val singularities (cf. Lem. \ref{lemma: key commutative diagram X,Y,Z,W, and 8 curves}). 
Say $\gamma_0=\sigma_\ast \wt \gamma_0$ is the toric boundary curve in $X$ passing through $p,p'.$
By Lemma \ref{lemma: self-intersections in resolution} and the discussion above, $(\wt \gamma_i)^2 =(-1)$ for $i \ne 0,$ and $(\wt \gamma_0)^2 < (-1).$ Hence the heavy curve is $\wt \gamma_0,$ i.e. it is the toric boundary curve in $X$ through those torus invariant points of $X$ that don't lie on $R.$

\subsection{The $A_n$ case subdivision}

Consider the relevant locus, and let $\mc S, \mc S' \subset Y$ be the two disjoint chains in it, lying over the adjacent torus invariant points $p, p' \in X$ respectively (cf. Def. \ref{defi: opposite and adjacent points}). Let $\mc D \subset Y$ be the strict transform under $Y \to X$ of the toric boundary divisor in $X$ through $p,p'.$

\medskip

As $\mc D$ is not the heavy curve (cf. Def. \ref{def: heavy curve}), it is a $(-1)$ curve.
In the $A_n$ case (cf. Fig. \ref{fig:The tree of cases}), Lemma \ref{lemma: key lemma about endchains} shows that
$\mc S, \mc S'$ are both endchains, so that
we only have the $3$ possibilities shown in Figure \ref{fig:3 cases};
namely, let $\mc C \subset \mc S, \mc C' \subset \mc S'$ be the curves that $\wt R$ intersects. 
Then these cases correspond to whether both, one, or none of the curves $\mc C, \mc C'$ intersect $\mc D.$ 

\begin{figure}[h]
    \centering
    \includegraphics[scale=0.2]{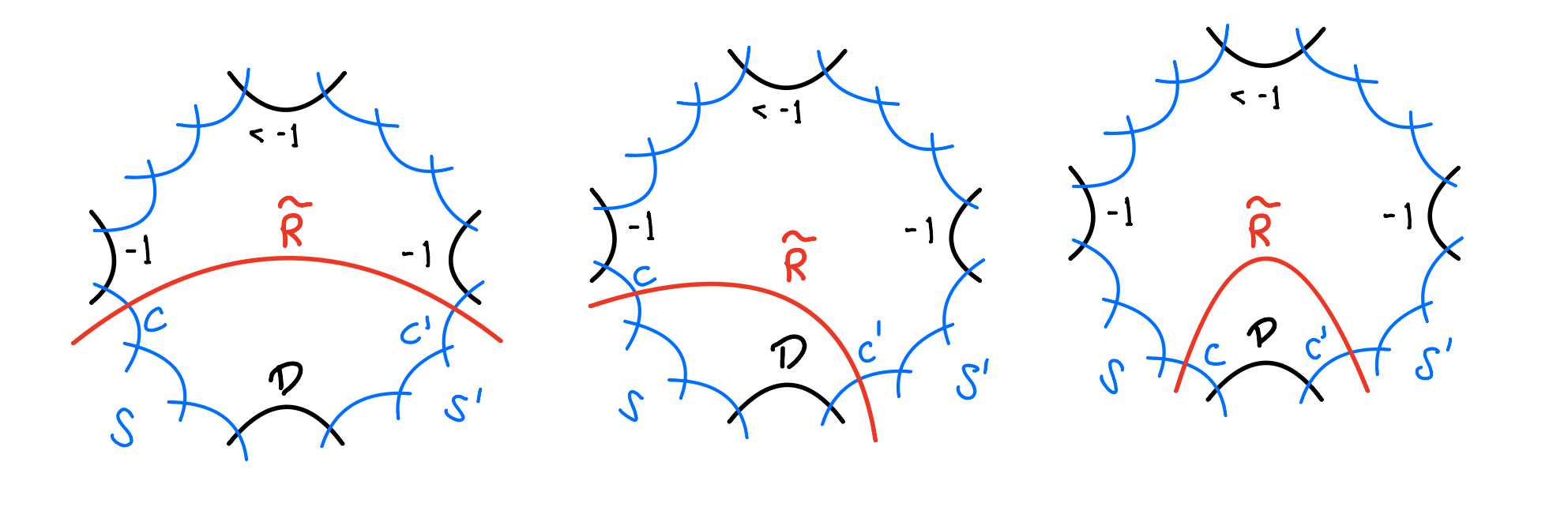}
    \caption{Cases I,II,III respectively (from left to right)}
    \label{fig:3 cases}
\end{figure}

The first two cases will need more work, and are the bulk of the sections that follow. 
The last case, however, is easy to rule out:

\begin{proposition}
    \label{prop: first case not possible}
    Case III in Figure \ref{fig:3 cases} is impossible.
\end{proposition}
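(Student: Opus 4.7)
The plan is to track self-intersections through the contractions $Y \to Y_1 \to \cdots \to W$ from Lemma \ref{lemma: key commutative diagram X,Y,Z,W, and 8 curves}. In Case III the curve $\wt R$ intersects $\mc S$ at the endcurve $\mc C_1$ adjacent to $\mc E_p$ and $\mc S'$ at the endcurve $\mc C'_1$ adjacent to $\mc E_{p'}$, rather than at the ends attached to $\mc D$. After contracting $\wt R$ the two chains merge through a common point in $Y_1$, while $\mc D$ stays attached to their other ends and completes a cycle together with the merged chain.

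First I would use the signature argument to constrain $\mc S$. By Lemma \ref{lem:signature} the signature of the toric fan of $\PP_\Delta$ is, up to rotation, $(+,+,-,-)$, and by Lemma \ref{lemma: not between +} the corner between the two $+$-vectors cannot be Du Val. In the adjacent Du Val setup this corner must be $p$ (since $q,q'$ are Du Val), so $\mc S$ contains at least one self-intersection $\le -3$ and is not a chain of $(-2)$-curves.

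Next I would split into two sub-cases based on whether the merged chain in $Y_1$ admits any further $(-1)$-contractions. In the first sub-case, neither $\mc C_1$ nor $\mc C'_1$ becomes a $(-1)$-curve in $Y_1$, so no further contractions are possible and $Y_1 = W$. Since the singularity of $Z$ at the image of $R$ is of type $A_K$, Lemma \ref{lemma: key commutative diagram X,Y,Z,W, and 8 curves} forces the chain in $W$ to consist entirely of $(-2)$-curves, pinning down both $\mc C_1$ and $\mc C'_1$ to have self-intersection $-3$ in $Y$ with all remaining chain entries equal to $-2$. After normalizing the fan at $p$ (using Lemma \ref{lemma: special fan} or a reflected variant applied at the Du Val corner $q'$), one computes $\Width(\Delta)$ along an explicit direction and finds it strictly less than $m$, contradicting Proposition \ref{prop: width}. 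In the second sub-case, WLOG $\mc C_1$ has self-intersection $-2$ in $Y$; contracting $\mc C_1$ in $Y_1 \to Y_2$ forces $\mc E_p, \mc C_2, \mc C'_1$ to meet at a single point and raises $\mc E_p^2$ from $-1$ to $0$. Iterating, $\mc E_p^2$ and the tangency order $\mc E_p \cdot \mc C'_1$ grow with each subsequent chain contraction, while the cycle through $\mc D$ is preserved. Combining the counting identity of Lemma \ref{lemma: key commutative diagram X,Y,Z,W, and 8 curves} with the requirement that the chain in $W$ be all $(-2)$-curves reduces the problem to finitely many combinatorial configurations, each eliminated by Proposition \ref{prop: width}.

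The hard part will be the bookkeeping in the second sub-case, where $\mc E_p$ and $\mc E_{p'}$ lie outside the relevant locus but interact nontrivially with successive contractions. The clarifying observation is the rigid cycle structure: at every intermediate $Y_i$ the curve $\mc D$ meets both ends of the surviving relevant-locus chain, and this rigidity, combined with the required $(-2)$-chain output in $W$ and Proposition \ref{prop: width}, ultimately forces the polygon to be too narrow.
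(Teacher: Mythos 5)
You have misidentified which configuration Case~III is, and the proof you propose is for the wrong case. The paper's prose is admittedly self-contradictory here (the sentence ``both, one, or none'' conflicts with the sentence identifying $\mc C,\mc C'$ as the \emph{inner} curves in Case~III), but the figure and every subsequent proof pin it down: in Case~III the curve $\wt R$ meets the two inner endcurves $\mc E_{\text{in}},\mc E'_{\text{in}}$, i.e.\ the same two curves that $\mc D$ meets, so that $\wt R$ and $\mc D$ are parallel edges between the two chains. The configuration you describe --- $\wt R$ attached at the outer ends, with $\mc D$ closing a long cycle through both full chains --- is Case~I, and Case~I is \emph{not} combinatorially impossible: Sections~\ref{sec: the smooth case} and~\ref{sec: the A_n case} produce infinitely many semi-elliptic surfaces of exactly that shape (parametrized by Pell-equation solutions) and eliminate them only by combining $\Vol(\Delta)=m^2$ with the width bound case by case. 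Consequently your plan --- ``reduce to finitely many combinatorial configurations, each eliminated by Proposition~\ref{prop: width}'' --- cannot succeed: the finiteness claim is false for the configuration you are analyzing, and the decisive steps (the explicit width computation in sub-case~1, the bookkeeping and finiteness in sub-case~2) are asserted rather than carried out.

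For the correct Case~III the argument is much shorter and entirely different. Since $\wt R$ and $\mc D$ attach to $\mc S$ and $\mc S'$ at the same endcurves and $\mc D^2=\wt R^2=-1$, the toric chain $\mc S\cup\mc D\cup\mc S'$ has \emph{literally the same} self-intersection sequence as the relevant locus $\mc S\cup\wt R\cup\mc S'$; the latter contracts to the resolution of an $A_n$ singularity, hence so does the former. (Note this identification of sequences fails in Case~I, where the two chains differ by swapping the roles of $\mc S$ and $\mc S'$ around the $(-1)$ curve --- this is exactly why the argument is special to Case~III.) Therefore the cone $\measuredangle(v_3,v_1)$ through $v_0$ is the resolved cone of a Du Val singularity, so $\measuredangle(v_3,v_1)<\pi$; but the complementary cone $\measuredangle(v_1,v_3)$ through the heavy curve $v_2$ is subdivided entirely by curves of self-intersection $\le -2$, so it too is $<\pi$, giving $2\pi<2\pi$. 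Your observations about the signature and about $\mc S$ containing a curve of self-intersection $\le -3$ are correct but are not needed for this.
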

\begin{proof}
    We know that contracting $\wt R$ and subsequent contractions contracts the locus $\mc S \cup \wt R \cup \mc S'$ to the resolution of an $A_n$ type singularity. 
    Combinatorially, this process of contractions works the same on the chain $\mc S \cup \mc D \cup \mc S'$ since $\mc D^2=\wt R^2=-1.$ This means the chain $\mc S \cup \mc D \cup \mc S'$ in the toric boundary contracts down to a chain of $(-2)$ curves. 

    \medskip

    Let $\Ti$ be the toric fan of $X$ with vectors $v_0,v_1,v_2,v_3$ such that $\measuredangle (v_3,v_0), \measuredangle (v_0,v_1)$ correspond to the chains $\mc S, \mc S'$ (notation from Figure \ref{fig:special choice}). The above implies that $\measuredangle (v_3,v_1)$ is the (resolution of the) cone of a Du Val singularity, in particular convex. However, $v_2$ corresponds to the heavy curve, i.e. a curve with self-intersection $<(-1)$ in $Y$ (see Definition \ref{def: heavy curve}).
    Hence $\measuredangle (v_1,v_3)$ is convex too, implying $$2\pi = \measuredangle (v_3,v_1)+\measuredangle (v_1,v_3)<\pi + \pi=2\pi$$
    which is the desired contradiction.
\end{proof}

To specify orientations of certain toric boundary chains, we will use the following notation in the future:

\begin{definition}
\label{def: inner and outer curves}
    Consider the chains $\mc S, \mc S',$ and let $\mc E_{\text{in}},\mc E_{\text{out}} \subset \mc S$ and $\mc E'_{\text{in}}, \mc E'_{\text{out}} \subset \mc S'$ be the $4$ endcurves in these chains such that $\mc E_{\text{in}} \cap \mc D \ne \phi$ and $\mc E_{\text{in}}' \cap \mc D \ne \phi.$
    We call $\mc E_{\text{in}}, \mc E_{\text{in}}'$ the \defi{inner curves} and $\mc E_{\text{out}}, \mc E_{\text{out}}'$ the \defi{outer curves}. 
\end{definition}
For instance, in Figure \ref{fig:3 cases}, $\mc C, \mc C'$ are the two outer curves in Case I, and they are the two inner curves in Case III.

\section{The $DE$ cases}
\label{sec: the DE case}
The result in this case is very interesting; we prove that there are exactly $6$ two parameter families of semi-elliptic quadrilaterals where $Z$ has a $D_n$ or $E_n$ type singularity.
To do this, we first classify the chains that contract to a $D_n, E_n$ Dynkin diagram, up to a free parameter $\ell.$
We then use the Matrix equation \ref{lem: matrix equation} to show there are only these $6$ solutions.

\subsection{Classification}
We can classify all $DE$ cases simultaneously. The Dynkin diagram of a $D_n, E_n$ graph has a node of degree $3,$ call $v.$ Say the other three chains have $p,q,r$ nodes, so that $p+q+r+1=n.$
Note that up to permutation, in the $D_n$ case, $(p,q,r)=(1,1,n-3),$ and in the $E_n$ case, $(p,q,r)=(1,2,n-4).$ 

\medskip

Let $\mc S, \mc S'$ be the two chains in the relevant locus in $Y$, and $\mc C \subset \mc S, \mc C' \subset \mc S'$ be the two curves that $\wt R$ intersects. 
By $(1)$ of Lemma \ref{lemma: key lemma about endchains}, one of $\mc C, \mc C'$ is an endcurve (cf. Def. \ref{def: endcurve, endchain}), say $\mc C.$ 
Then up to a free parameter, we can classify the relevant locus completely:

\begin{lemma}
\label{lemma: DE classfication}
There exists some $\ell \in \ZZ_{\ge 0}$ so that $\mc C^2=-2$ and $(\mc C')^2 = -(\ell+3).$ Further, $\mc S$ has $(\ell+r)$ curves, whose self-intersections are all $(-2),$ except the $(\ell+1)^{\text{th}}$ curve (starting from $\mc C$, see Figure \ref{fig:DE classfication}) which has self-intersection $(-3)$.
Further, $\mc C'$ intersects two chains of $(-2)$ curves of lengths $p$ and $q$ which are in the non-exceptional locus of $\pi:Y \to W.$ 

\begin{figure}[h]
    \centering
    \includegraphics[scale=0.15]{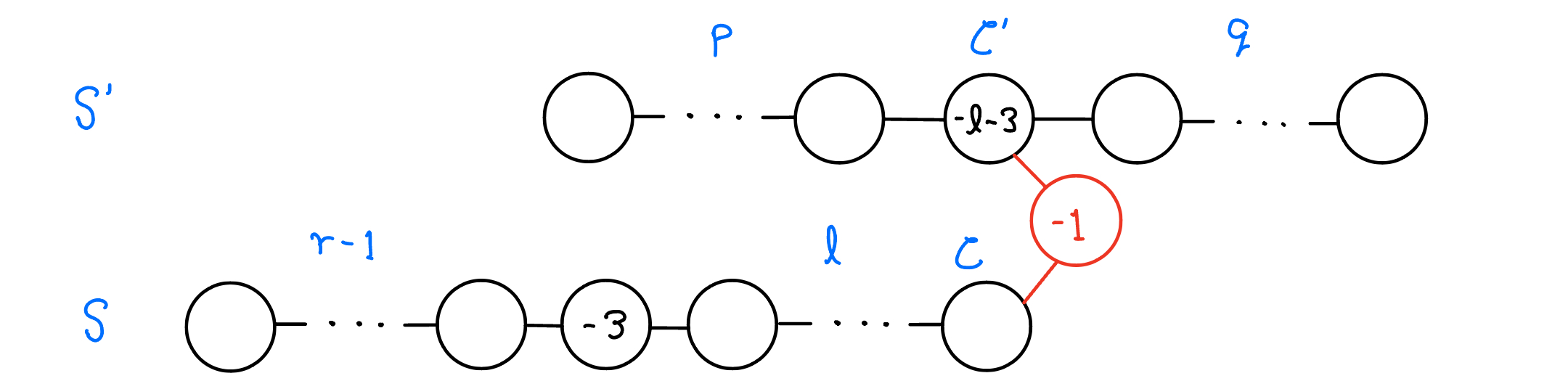}
    \caption{Relevant locus contracting to the resolution of $DE$ type singularity. Any unlabelled node has self-intersection $(-2)$}
    \label{fig:DE classfication}
\end{figure}
\end{lemma}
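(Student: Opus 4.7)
The approach is to trace the contraction sequence $Y \to Y_1 \to \dots \to W$ step by step, invoking Lemma~\ref{lemma: auxilliary lemma about high multiplicity intersections} which guarantees exactly one $(-1)$ curve in the relevant locus at each stage. First I would identify the degree-$3$ vertex of the final $DE$ Dynkin diagram. Since $\mc C$ is an endcurve of $\mc S$ by hypothesis, the dual graph of the relevant locus in $Y$ (the chain $\mc S$, then $\wt R$, then the chain $\mc S'$) can branch only at $\mc C'$, so the unique trivalent vertex of the $DE$ diagram in $W$ must be the image of $\mc C'$. In particular $\mc C'$ survives every step of $\pi: Y \to W$.

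The core of the proof is a \emph{domino effect} along $\mc S$. After contracting $\wt R$, both $\mc C^2$ and $(\mc C')^2$ increase by $1$. Since exactly one curve in the relevant locus must be $(-1)$ at the next step and $\mc C'$ cannot be contracted, the new $(-1)$ curve must be $\mc C$, forcing $\mc C^2 = -2$ in $Y$. Contracting $\mc C$ then turns the next curve $\mc C_2$ of $\mc S$ into the unique $(-1)$ curve, forcing $\mc C_2^2 = -2$; iterating this argument yields $(-2)$ curves $\mc C_1 = \mc C, \mc C_2, \dots, \mc C_\ell$ that contract in succession. The domino halts at $\mc C_{\ell+1}$, whose image in $W$ lies in the $DE$ diagram and is therefore a $(-2)$ curve; since its self-intersection jumps by $+1$ when $\mc C_\ell$ is contracted, we obtain $\mc C_{\ell+1}^2 = -3$ in $Y$. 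The remaining tail $\mc C_{\ell+1}, \dots, \mc C_{\ell+r}$ supplies the length-$r$ arm of $(-2)$ curves in the Dynkin diagram, fixing $|\mc S| = \ell + r$ with all later curves of self-intersection $-2$ in $Y$.

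Tracking $(\mc C')^2$ through the $\ell + 1$ contractions $\wt R, \mc C, \mc C_2, \dots, \mc C_\ell$ that are all adjacent to $\mc C'$ (either initially or by inheritance as intermediate curves collapse), each contraction increments $(\mc C')^2$ by one. Because the image of $\mc C'$ in $W$ must itself be a $(-2)$ curve, we conclude $(\mc C')^2 = -2 - (\ell+1) = -(\ell+3)$ in $Y$; a quick check confirms $(\mc C')^2$ stays at most $-2$ throughout the intermediate stages, so $\mc C'$ is never momentarily a $(-1)$ curve. The remaining two arms of the $DE$ diagram, of lengths $p$ and $q$, then correspond to the toric boundary chains of $Y$ meeting $\mc C'$ outside the contracted locus; since $\pi$ preserves their self-intersections, they must already be chains of $(-2)$ curves in $Y$. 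The main technical obstacle is verifying rigorously that $\mc C'$ is indeed the trivalent vertex — one must exclude both the possibility that some curve of $\mc S'$ itself supplies the branch point and the possibility that the domino along $\mc S$ halts earlier or later than $\mc C_\ell$. Both are ruled out by the single-$(-1)$-curve-at-a-time constraint of Lemma~\ref{lemma: auxilliary lemma about high multiplicity intersections}, combined with the requirement that the terminal configuration is a Dynkin diagram of $DE$ type with all vertices of self-intersection $-2$.
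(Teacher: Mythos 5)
Your argument is correct and follows essentially the same route as the paper: identify $\mc C'$ as the only possible trivalent vertex of the relevant locus (hence never contracted and equal to the node $v$), then read off all self-intersection numbers from the forced order of contractions and the fact that every surviving curve must be a $(-2)$ curve in $W$; your ``domino'' paragraph simply makes explicit what the paper's proof dismisses as ``easy to check.'' The one caveat is the degenerate case $\ell=0$ (which does occur among the solutions found later): there $Y_1=W$, the first domino never falls, and $\mc C^2=-3$ rather than $-2$ --- but this imprecision is already present in the statement of Lemma \ref{lemma: DE classfication} itself, where for $\ell=0$ the curve $\mc C$ coincides with the $(\ell+1)^{\text{th}}$ curve.
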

\begin{proof}
Firstly, observe that $\mc S$ is not fully contracted in $Y \to Y_1 \to \dots W,$ since the image of the relevant locus in $Y \to W$ is not a chain of curves.
Hence $\mc C'$ is never contracted in $Y \to Y_1 \to \dots W,$ else the two neighboring curves of $\mc C'$ will intersect a curve from $\mc S$ at the same point, contradicting the fact that the relevant locus is always normal crossing (cf. Lem. \ref{lemma: auxilliary lemma about high multiplicity intersections}).

\medskip

Let $v$ be the node in the Dynkin diagram of $D_n$ (or $E_n)$ which has degree $3.$
Observe that no blowup can decrease the degree of $v.$
Furthermore, there is only one curve in $\mc S \cup \mc S' \cup \wt R \subset Y$ which intersects more than two curves, namely $\mc C'$ (recall $\mc C$ is an endcurve). As argued above, this is never contracted, hence $\mc C'$ must be the strict transform under $Y \to W$ of the curve corresponding to $v.$ 

\medskip

As $\mc C'$ is never contracted, the two chains of $(-2)$ curves it intersects in $\mc S'$ stay invariant, hence have lengths $p,q$ (since $v$ is adjacent to two $(-2)$ chains of length $p,q)$ 
Since $\mc S \cup \wt R$ contracts to a chain of length of $r,$ it is easy to check the self-intersection numbers are as claimed, where $\ell+1$ is the number of curves contracted in $\mc S \cup \wt R \cup \mc S'$ in $Y \to W.$
\end{proof}

\begin{remark}
As mentioned at the end of the proof, the $\ell$ in Lemma \ref{lemma: DE classfication} is the same as the $\ell$ in Lemma \ref{lemma: key commutative diagram X,Y,Z,W, and 8 curves}, i.e. one less than the number of contractions $Y \to Y_1 \to \dots W.$
\end{remark}

\begin{lemma}
\label{lemma: DE heavy curve}
    The heavy curve (cf. Def. \ref{def: heavy curve}) has self-intersection $-3.$
\end{lemma}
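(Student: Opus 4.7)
The plan is to pin down $\wt\gamma_0^2$ by computing the sum of self-intersections of the toric boundary of $Y$ in two independent ways, and then combining Lemma \ref{lemma: DE classfication} with the $8$-curve count from Lemma \ref{lemma: key commutative diagram X,Y,Z,W, and 8 curves}. Write $\wt\gamma_0^2 = -k$ (so we want $k=3$), let $n$ be the rank of the $DE$-singularity at $\wt\pi_\ast R \in Z$ (so $p+q+r = n-1$), and write $n_1-1, n_2-1$ for the ranks of the $A$-singularities of $X$ at the two $R$-avoiding torus-invariant points $P_{01}, P_{30}$.

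Since $e \in \PP_\Delta$ lies in the torus, $Y$ is the blowup $\tau\colon Y \to Y'$ at $e$ of the smooth complete toric surface $Y'$ (the minimal resolution of $\PP_\Delta$), with exceptional divisor $E$. The toric boundary $\mc T$ of $Y$ equals $\tau^\ast \mc T'$, and combining $K_{Y'} + \mc T' \sim 0$ with the blowup formula $K_Y = \tau^\ast K_{Y'} + E$ gives $K_Y + \mc T \sim E$, hence $\mc T^2 = K_Y^2 + 1$. Noether's formula for the rational smooth surface $Y$ together with $e(Y) = N+1$ (where $N$ denotes the number of toric boundary curves of $Y$) yields $K_Y^2 = 11 - N$, and expanding $\mc T^2 = \sum_i D_i^2 + 2N$ via the cycle structure of the boundary gives the global identity $\sum_i D_i^2 = 12 - 3N$.

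Next, decompose the boundary of $Y$ into (i) the four strict transforms $\wt\gamma_0, \wt\gamma_1, \wt\gamma_2, \wt\gamma_3$, contributing $-k - 3$ (by the discussion after Def.~\ref{def: heavy curve}); (ii) the two Du Val $A$-chains of $(-2)$-curves over $P_{01}, P_{30}$, of lengths $n_1 - 1$ and $n_2 - 1$; and (iii) the chains $\mc S, \mc S'$. Lemma \ref{lemma: DE classfication} gives $|\mc S| = \ell + r$ with self-intersection sum $-2\ell - 2r - 1$. For $\mc S'$, I claim $\mc C'$ is interior to $\mc S'$ and $|\mc S'| = p + q + 1$, contributing $-(\ell+3)$ from $\mc C'$ plus $-2(p+q)$ from the two $(-2)$-arms. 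Indeed, contracting a $(-1)$-curve can only increase the self-intersection of a non-contracted neighbor, so $\wt\gamma_2^2, \wt\gamma_3^2 \ge -1$ throughout $Y \to \cdots \to W$, which rules out $\wt\gamma_2$ or $\wt\gamma_3$ appearing inside a $(-2)$-arm at $\mc C'$; both arms of lengths $p, q$ must therefore stay inside $\mc S'$.

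Counting boundary curves and using $p+q+r=n-1$ gives $N = n_1 + n_2 + \ell + n + 2$. The $8$-curve constraint applied to the three Du Val singularities of $Z$ (the $A_{n_1-1}$ and $A_{n_2-1}$ inherited from $X$ together with the $DE$-singularity of rank $n$) forces $n_1 + n_2 + n = 10$, hence $N = 12 + \ell$ and $12 - 3N = -24 - 3\ell$. Summing all contributions on the left yields
\[-k - 2(n_1 + n_2) - 3\ell - 2n - 1 \;=\; -24 - 3\ell,\]
from which $k = -23 + 2(n_1 + n_2 + n) = 3$. The main obstacle is the structural claim that $\mc C'$ is interior to $\mc S'$, which I verify by explicitly tracking how self-intersections of non-contracted curves change under the $(-1)$-contractions in $Y \to \cdots \to W$; the rest of the argument is combinatorial bookkeeping against Lemma \ref{lemma: DE classfication}.
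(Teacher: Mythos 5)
Your proof is correct and is essentially the paper's own argument: both compute the sum of the boundary self-intersections of $Y$ as $12-3N$ (you re-derive this identity via Noether's formula and adjunction, where the paper simply cites Lemma \ref{lemma: sum of smooth numbers}), decompose the boundary according to Lemma \ref{lemma: DE classfication}, and close with the $8$-curve count from Lemma \ref{lemma: key commutative diagram X,Y,Z,W, and 8 curves}, your $n_1+n_2+n=10$ being the paper's $a+b+n=8$ with $a=n_1-1$, $b=n_2-1$. Two small remarks: the final display has a sign slip (it should read $-k=-23+2(n_1+n_2+n)$, i.e.\ $k=23-20=3$), and the structural point you single out as the main obstacle — that both $(-2)$-arms of lengths $p,q$ at $\mathcal{C}'$ lie inside $\mathcal{S}'$, so $|\mathcal{S}'|=p+q+1$ — is already contained in the statement and proof of Lemma \ref{lemma: DE classfication}, so no additional argument is required.
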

\begin{proof}
Let $s$ be the self-intersection number of the heavy curve.
Consider the toric boundary curves of $Y.$
There are three $(-1)$ toric boundary curves (cf. discussion in \ref{def: heavy curve}), and two chain of $(-2)$ curves of lengths $a,b.$ 
Let $N$ be the number of toric boundary divisors in $Y.$ By Lemma \ref{lemma: sum of smooth numbers}, the sum of self-intersection numbers of toric boundary divisors of $Y$ is $12-3N.$
Hence using Lemma \ref{lemma: DE classfication} and $p+q+r+1=n,$
{\footnotesize
\begin{align*}
    s+3(-1)+(a+b)(-2)+(p+q+(r-&1)+\ell)(-2)-3-(3+\ell)=12-3(4+a+b+p+q+r+1+\ell) \\
    &\implies s = 5-(a+b+n).
\end{align*}
}
By Lemma \ref{lemma: key commutative diagram X,Y,Z,W, and 8 curves}, we find $a+b+n=8,$ and thus we conclude $s=-3.$
\end{proof}

As mentioned in Definition \ref{def: heavy curve}, the heavy curve is the strict transform 
under $Y \to X$ of the toric boundary divisor passing through the torus invariant points that $R$ doesn't pass through.
Combining this observation with our classification in Lemmas \ref{lemma: DE classfication}, \ref{lemma: DE heavy curve} we find two possible orientations for the boundary divisor of $Y$ as in figure \ref{fig:DE schematic}.

\begin{figure}[h]
    \centering
    \includegraphics[scale=0.2]{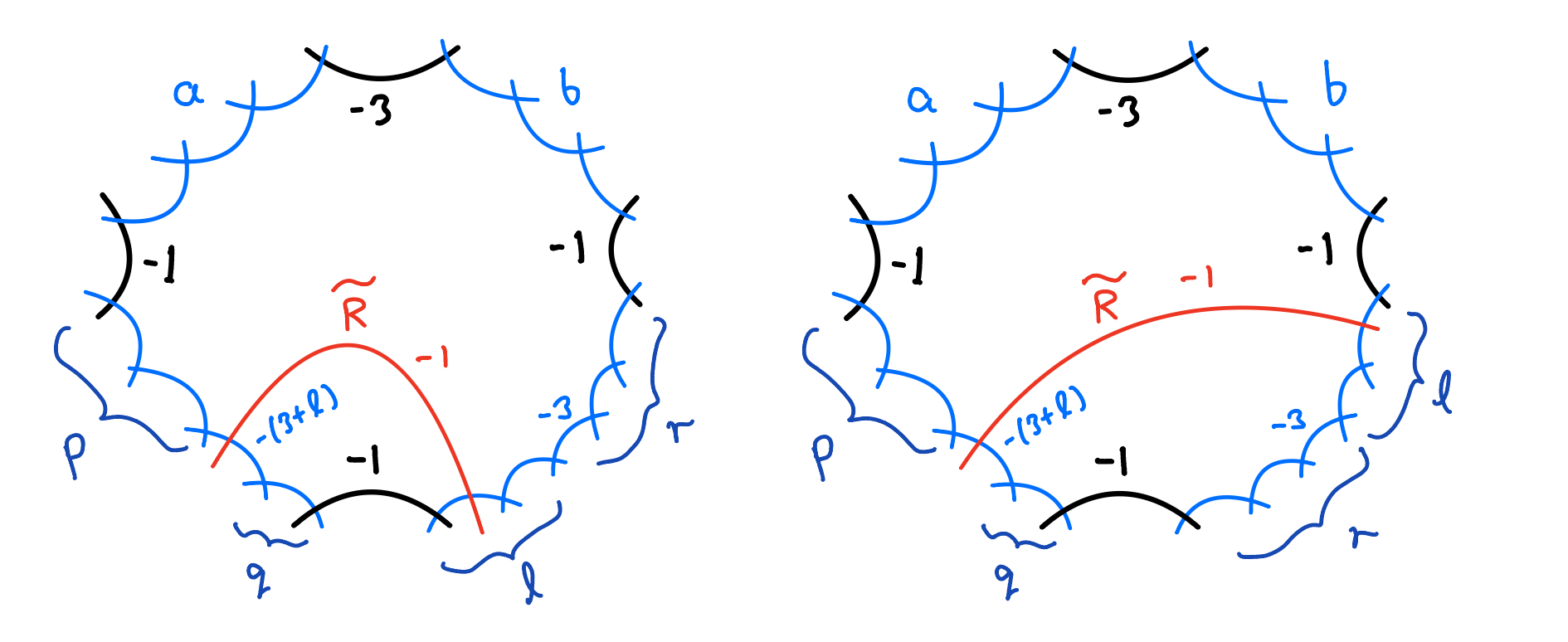}
    \caption{Schematic Picture of $Y$ in the $DE$ case. Any unlabelled curve has self-intersection $(-2).$}
    \label{fig:DE schematic}
\end{figure}

Now $p+q+r=n-1$ and $a+b+n=8$ with $a,b \ge 0$ and $n \ge 0.$ Further, up to permutation, $(p,q,r)=(1,1,n-3)$ in the $D_n$ case and $(p,q,r)=(1,2,n-4)$ in the $E_n$ case.
Hence up to our free parameter $\ell,$ there are a finite number of toric surfaces. 
Quite fascinatingly, \textsc{Macaulay2} can be used to show $\ell \in \{0,1\}$ which gives a finite number of valid toric surfaces.

\begin{proposition}
Let $(X,C)$ be a (hypothetical) toric elliptic pair with $\rho(X)=3$ and $K+C \sim \alpha R.$ Then the DE case in Figure \ref{fig:The tree of cases} is impossible, i.e. $Z$ cannot have $D_n,E_n$ type singularities.
\end{proposition}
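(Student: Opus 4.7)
The plan is to combine the combinatorial classification of the relevant locus (Lemmas \ref{lemma: DE classfication} and \ref{lemma: DE heavy curve}) with the fan-closure relation for $Y$, translate it into the matrix identity of Lemma \ref{lem: matrix equation} from the Appendix, and then use \textsc{Macaulay2} to certify that the resulting polynomial system forces a small, explicit list of candidates which can each be ruled out by hand.

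First, I would enumerate the combinatorial data. By Lemmas \ref{lemma: DE classfication} and \ref{lemma: DE heavy curve}, each candidate $Y$ is determined, up to orientation and $\GL_2(\ZZ)$, by: the triple $(p,q,r)$, which is a permutation of $(1,1,n-3)$ (in the $D_n$ case) or $(1,2,n-4)$ (in the $E_n$ case); the lengths $a,b \ge 0$ of the two toric Du Val chains with $a+b+n=8$; the two orientations depicted in Figure \ref{fig:DE schematic}; and the free integer $\ell \ge 0$ governing the length of the chain that contracts through $\wt R$. All of this, other than $\ell$, ranges over a finite set, and inside each family all toric boundary self-intersections are explicit functions of $\ell$.

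Next, I would impose the fan-closure relation as a single matrix identity. Writing $M_i = \bigl(\begin{smallmatrix} 0 & 1 \\ -1 & -s_i \end{smallmatrix}\bigr)$ for the self-intersection $s_i$ of the $i$-th toric boundary curve (read cyclically), Lemma \ref{lem: matrix equation} turns the requirement that the primitive ray generators $v_0,\ldots,v_{N-1}$ actually close up into
$$M_0 M_1 \cdots M_{N-1} = I.$$
A length-$\ell$ subchain of $(-2)$-curves contributes $\bigl(\begin{smallmatrix} 0 & 1 \\ -1 & 2 \end{smallmatrix}\bigr)^{\ell}$, whose entries are linear in $\ell$, so the left-hand side is a polynomial matrix in $\ell$ of bounded degree. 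Running this computation in \textsc{Macaulay2} for each of the finitely many $(p,q,r,a,b,n,\text{orientation})$ data, I would verify that the integer solutions with $\ell \ge 0$ are exhausted by $\ell \in \{0,1\}$. This reduces the problem to a short explicit list of candidate boundary configurations.

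For each surviving candidate I would then reconstruct the fan $\Ti$ up to $\GL_2(\ZZ)$ and derive a contradiction: either directly, by producing a toric boundary curve whose self-intersection is not negative (violating Proposition \ref{prop: negative surface}) or by producing a non-heavy curve that is not a $(-1)$-curve (violating the computation leading to Lemma \ref{lemma: DE heavy curve}); or, failing that, by reconstructing the associated polygon $\Delta$ and invoking $\Width(\Delta) \ge m$ from Proposition \ref{prop: width} against an explicit width computation along a coordinate direction, as in the proofs of Proposition \ref{prop: negative surface} and Lemma \ref{lemma: special fan}. The main obstacle is the bounding of $\ell$: the polynomial matrix identity has large degree and many branches, so certifying that no $\ell \ge 2$ can occur in any family seems to require a computer algebra search. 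Once $\ell \in \{0,1\}$ is known, the remaining case check is mechanical.
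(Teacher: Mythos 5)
Your setup — enumerating the finite combinatorial data $(p,q,r,a,b,n)$ and the two orientations from the classification in Lemmas \ref{lemma: DE classfication} and \ref{lemma: DE heavy curve}, encoding the fan-closure as the matrix identity of Lemma \ref{lem: matrix equation} with entries polynomial in $\ell$, and using \textsc{Macaulay2} to pin down the finitely many admissible tuples (in fact $\ell \in \{0,1\}$) — is exactly the paper's route up to that point. The gap is in your final elimination step. You propose to kill each surviving candidate by (a) exhibiting a boundary curve of non-negative self-intersection, (b) exhibiting a non-heavy curve that is not a $(-1)$-curve, or (c) a width computation. Options (a) and (b) cannot succeed: the surviving tuples are precisely those for which the matrix identity holds with the prescribed self-intersection numbers, so by Lemma \ref{lem: matrix equation} each one \emph{is} a genuine smooth projective toric surface whose boundary self-intersections are exactly the ones dictated by the classification (all negative, three $(-1)$'s and one heavy $(-3)$). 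There is no fan-level contradiction to be found — the paper explicitly notes that each survivor "gives a valid toric surface, and hence a $2$ parameter family of semi-elliptic quadrilaterals."

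What is missing is the algebraic constraint $\Vol(\Delta)=m^2$. For a fixed fan with $\rho(\PP_\Delta)=2$ the polygon $\Delta$ still varies in a two-parameter family of side lengths, and the volume condition imposes a homogeneous quadratic relation on two of them (say $\gamma,\delta$). The paper rules out each surviving tuple by showing this quadratic either has no rational roots or forces a side length to be negative. Your fallback (c), the width inequality alone, is a scale-invariant open condition on the ratio $\delta/\gamma$ and there is no reason it should fail across an entire one-parameter family of ratios; without first cutting the family down to isolated points via $\Vol(\Delta)=m^2$, the case check you call "mechanical" is not actually finite. To repair the proposal, insert the volume constraint as the decisive second equation for each survivor (the width bound is then not even needed in this case, though it is the workhorse in the $A_n$ and smooth cases).
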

\begin{proof}
It is well known that a list of integers form the self-intersection numbers of the toric boundary of some smooth toric surface if and only if they satisfy a matrix equation, see Lemma \ref{lem: matrix equation} in the Appendix for details. Hence, in the first case in Figure \ref{fig:DE schematic}, we find
    \begin{equation}
    \label{eq: DE eq 1}
        \begin{split}
        &\begin{pmatrix}
        0&-1\\
        1&2
    \end{pmatrix}^p\begin{pmatrix}
        0&-1\\
        1&3+\ell
    \end{pmatrix}\begin{pmatrix}
        0&-1\\
        1&2
    \end{pmatrix}^q\begin{pmatrix}
        0&-1\\
        1&1
    \end{pmatrix}\begin{pmatrix}
        0&-1\\
        1&2
    \end{pmatrix}^\ell\begin{pmatrix}
        0&-1\\
        1&3
    \end{pmatrix}\begin{pmatrix}
        0&-1\\
        1&2
    \end{pmatrix}^{r-1} \\
    &\times \begin{pmatrix}
        0&-1\\
        1&1
    \end{pmatrix}\begin{pmatrix}
        0&-1\\
        1&2
    \end{pmatrix}^b\begin{pmatrix}
        0&-1\\
        1&3
    \end{pmatrix}\begin{pmatrix}
        0&-1\\
        1&2
    \end{pmatrix}^a\begin{pmatrix}
        0&-1\\
        1&1
    \end{pmatrix}=\begin{pmatrix}
        1&0\\
        0&1
    \end{pmatrix}.
    \end{split}
    \end{equation}

    Consider first the $D_n$ case, so that up to permutation $(p,q,r) =(1,1,n-3).$ Further, $a+b+p+q+r=7$ (cf. Lem. \ref{lemma: key commutative diagram X,Y,Z,W, and 8 curves}).
    Using Computation \ref{comp: DE case}, we find the only integral possibilities with $n \ge 4, \ell \ge 0$ to be
    \begin{align*}
        (p,q,r,a,b,n,\ell) \in \{(1,1,1,0,4,4,1),(1,1,4,1,0,7,0)\}
    \end{align*}

    These give valid toric surfaces, and hence a $2$ parameter family of semi-elliptic quadrilaterals. 
    The algebraic constraint, $\Vol(\Delta)=m^2,$ however has no integral solutions for either tuple as can be checked in Computation \ref{comp: DE case}. Hence this case is impossible.

    \medskip

    Next, consider the $E_n$ case, so that up to permutation $(p,q,r) =(1,2,n-4).$ Further, $a+b+p+q+r=7.$ This time, Computation \ref{comp: DE case} shows the only integral possibilities with $n \ge 5, \ell \ge 0$ are
    \begin{align*}
        (p,q,r,a,b,n,\ell)=(1,2,2,0,2,6,0)
    \end{align*}

    This gives a valid toric surface, and hence a $2$ parameter family of semi-elliptic quadrilaterals. 
    While the algebraic constraint, $\Vol(\Delta)=m^2,$ does give rational solutions, one of the side lengths is negative as can be checked in Computation \ref{comp: DE case}. Hence this case is impossible.

    \medskip
    
    Next, we get a similar equation for the second case in Figure \ref{fig:DE schematic}:

    \begin{equation}
    \label{eq: DE eq 2}
        \begin{split}
        &\begin{pmatrix}
        0&-1\\
        1&2
    \end{pmatrix}^p\begin{pmatrix}
        0&-1\\
        1&3+\ell
    \end{pmatrix}\begin{pmatrix}
        0&-1\\
        1&2
    \end{pmatrix}^q\begin{pmatrix}
        0&-1\\
        1&1
    \end{pmatrix}\begin{pmatrix}
        0&-1\\
        1&2
    \end{pmatrix}^{r-1}\begin{pmatrix}
        0&-1\\
        1&3
    \end{pmatrix}\begin{pmatrix}
        0&-1\\
        1&2
    \end{pmatrix}^\ell \\
    &\times \begin{pmatrix}
        0&-1\\
        1&1
    \end{pmatrix}\begin{pmatrix}
        0&-1\\
        1&2
    \end{pmatrix}^b\begin{pmatrix}
        0&-1\\
        1&3
    \end{pmatrix}\begin{pmatrix}
        0&-1\\
        1&2
    \end{pmatrix}^a\begin{pmatrix}
        0&-1\\
        1&1
    \end{pmatrix}=\begin{pmatrix}
        1&0\\
        0&1
    \end{pmatrix}.
    \end{split}
    \end{equation}
    Consider first the $D_n$ case, so that up to permutation $(p,q,r) =(1,1,n-3).$ Further, $a+b+p+q+r=7.$
    Using Computation \ref{comp: DE case}, we find the only integral possibilities with $n \ge 4, \ell \ge 0$ to be
    \begin{align*}
        (p,q,r,a,b,n,\ell) =(1,1,2,0,3,5,0)
    \end{align*}

    As before, we get a valid toric surface and rational solutions to $\Vol(\Delta)=m^2,$ but one of the side lengths is negative as can be checked in Computation \ref{comp: DE case}. Hence this case is impossible.

    \medskip

    Next, consider the $E_n$ case, so that up to permutation $(p,q,r) =(1,2,n-4).$ Further, $a+b+p+q+r=7.$ This time, Computation \ref{comp: DE case} shows the only integral possibilities with $n \ge 5, \ell \ge 0$ is
    \begin{align*}
        (p,q,r,a,b,n,\ell) = (1,2,1,0,3,5,1).
    \end{align*}
    However, since $D_5 \simeq E_5,$ one can check this is the same case we treated above.

    \medskip

    This exhausts all the possibilities and hence we are done.
\end{proof}

\section{The smooth case}
\label{sec: the smooth case}
The method of approach is more algebraic now, using conditions such as $\Width(\Delta) \ge m, \Vol(\Delta)=m^2$ and the matrix equation (see \ref{lem: matrix equation}). However, we first classify the chains that can appear on the toric boundary of $Y.$ We do this by first finding the self-intersection of the heavy curve in terms of a parameter $s,$ and express the matrix of the chains in the relevant locus in terms of $s.$ We do this by the condition that the relevant locus contracts to a point.


\subsection{Classification}

We now look at the $A_0$ case, wherein the chain of contractions $Y = Y_0 \to \dots Y_n = W$ involves contractions of $(-1)$ curves, contracting the relevant locus (cf. Def. \ref{defi: relevant locus}) in a smooth point. 

\medskip

Look at $Y \to Y_1 \to \dots W$ as a series of blowups of a smooth point. By $(2)$ of Lemma \ref{lemma: auxilliary lemma about high multiplicity intersections}, there is always just one $(-1)$ curve in the relevant locus of $Y_i$ for $1 \le i < \ell.$ 
Hence we always have to blowup at a point on the $(-1)$ curve (otherwise we introduce an extra $(-1)$ curve).
An easy observation is the following:

\begin{lemma}
\label{lemma: (-1) intersects at most 2 other}
The $(-1)$ curve intersects at most $2$ other curves (in the relevant locus).
\end{lemma}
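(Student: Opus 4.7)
The plan is to view the contraction sequence $Y \to Y_1 \to \dots \to W$ in reverse as a sequence of blowups starting at the smooth point in $W$ onto which the relevant locus eventually contracts, and induct on the number of blowups performed. Throughout this process, Lemma \ref{lemma: auxilliary lemma about high multiplicity intersections} guarantees that the relevant locus remains simple normal crossing (so at most two curves meet at any point) and contains exactly one $(-1)$ curve at every intermediate stage.

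First I would observe that each such blowup must be centered on the current $(-1)$ curve. Indeed, if the center $p$ lay off the current $(-1)$ curve $E$ (while still being in the relevant locus, since the reverse blowups resolve precisely this locus), then the strict transform of $E$ would retain self-intersection $-1$, and the new exceptional divisor would be a second $(-1)$ curve in the relevant locus, violating Lemma \ref{lemma: auxilliary lemma about high multiplicity intersections}(2).

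I would then induct on the blowup stage. The base case is immediate: after the very first blowup of the smooth point, the new $(-1)$ curve meets no other curves. For the inductive step, a blowup centered at a point $p$ on the current $(-1)$ curve $E$ turns $E$ into a $(-2)$ curve and produces a new $(-1)$ exceptional divisor $F$. The curves $F$ intersects in the new relevant locus are exactly the strict transforms of those curves in the previous relevant locus that passed through $p$; since the previous locus was simple normal crossing, at most two curves pass through $p$, so $F$ intersects at most two other curves in the relevant locus. This completes the induction, and specialising to any stage (in particular to $Y_i$) gives the lemma.

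There is no real obstacle here — the statement is essentially a bookkeeping consequence of the two properties already packaged in Lemma \ref{lemma: auxilliary lemma about high multiplicity intersections}, namely SNC of the relevant locus and uniqueness of the $(-1)$ curve at each stage, together with the fact that a blowup at a smooth (respectively nodal) point of the SNC divisor produces an exceptional curve meeting exactly one (respectively two) strict transforms.
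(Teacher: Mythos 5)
Your argument is correct, but it takes a noticeably longer route than the paper. The paper's proof is a single forward step: if the $(-1)$ curve met three or more curves of the relevant locus, then contracting it would force at least three curves of the image of the relevant locus to pass through one point, contradicting part $(1)$ of Lemma \ref{lemma: auxilliary lemma about high multiplicity intersections} at the next stage. You instead run the whole contraction sequence backwards from the smooth point of $W$ and induct on the blowups, using part $(2)$ of Lemma \ref{lemma: auxilliary lemma about high multiplicity intersections} to force each center onto the current $(-1)$ curve and part $(1)$ to bound the number of branches through the center. Both are sound; your version gives slightly more information (it identifies the $(-1)$ curve at each stage as the newest exceptional divisor and explains \emph{why} it meets at most two curves, which foreshadows the type I/type II dichotomy of Definition \ref{def: type I, type II blowups}), but it leans on the standing assumption of this section that the relevant locus contracts to a smooth point, whereas the paper's one-step contradiction uses only simple normal crossings and so transfers verbatim to the $A_n$ and $DE$ settings. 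For economy, the paper's direct argument is preferable; your reverse-blowup picture is essentially the content of the subsequent classification lemmas rather than of this one.
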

\begin{proof}
Assume not, and consider the contraction of the $(-1)$ curve. Then at least three curves intersect at the same point, contradicting the fact that the relevant locus is always simple normal crossing (cf. Lem. \ref{lemma: auxilliary lemma about high multiplicity intersections}).
\end{proof}


\begin{definition}
    \label{def: type I, type II blowups}
    Consider the intersection points of the $(-1)$ curve with other curves in the relevant locus (possibly none), and consider the blowup at a point on it.
    Call it a \defi{type I blowup} if the point is an intersection point. Otherwise, call it a \defi{type II blowup}.
\end{definition}

A priori, a type II blowup can occur even the $(-1)$ curve intersects two other curves. However we can show that doesn't happen when the relevant locus contracts to a point.

\begin{figure}[h]
    \centering
    \includegraphics[scale=0.2]{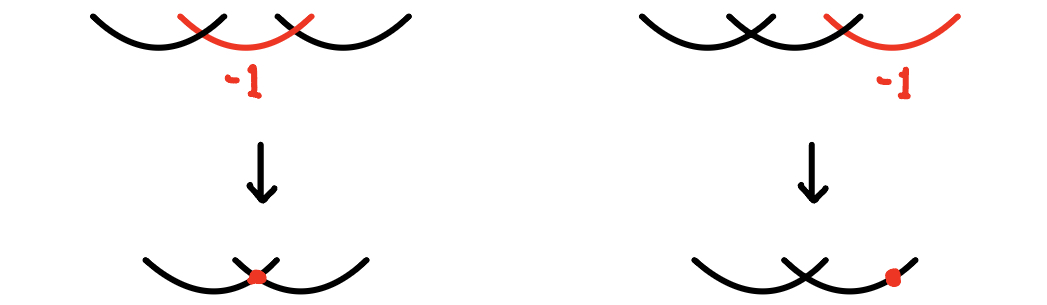}
    \caption{Possible blowups: Type I and II}
    \label{fig:possible contractions}
\end{figure} 

\begin{lemma}
\label{lemma: type II blowups then type I}
\hangindent\leftmargini
$(1)$\hskip\labelsep A type II blowup happens only if the $(-1)$ curve intersects exactly one curve in the relevant locus (also see Lemma \ref{lemma: (-1) intersects at most 2 other}).
\begin{enumerate}
    \item[(2)] Furthermore, there are some type II blowups initially $Y_{\ell-s} \to \dots Y_{\ell}$, and then any subsequent blowup is of type I. 
\end{enumerate}
\end{lemma}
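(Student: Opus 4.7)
The plan is to view the sequence $Y = Y_0 \to Y_1 \to \cdots \to Y_\ell \to W$ in reverse as a chain of blowups $W \to Y_\ell \to \cdots \to Y$ starting from a smooth point. By Lemma \ref{lemma: auxilliary lemma about high multiplicity intersections}$(2)$, at each stage $Y_i$ there is a unique $(-1)$ curve $F_i$ in the relevant locus, and every blowup must be at a point of the current $(-1)$ curve. I would keep track of the dual graph of the relevant locus throughout: a type I blowup at $F_{i+1} \cap C$ inserts a new degree-$2$ vertex $F_i$ that subdivides the edge $F_{i+1}{-}C$ and leaves all other vertex degrees unchanged, whereas a type II blowup attaches a new leaf $F_i$ to $F_{i+1}$ and raises $\deg F_{i+1}$ by one.

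For part $(1)$, suppose toward a contradiction that a type II blowup $b_i$ is performed while $F_{i+1}$ has degree $2$ (the maximum permitted by Lemma \ref{lemma: (-1) intersects at most 2 other}). Immediately after $b_i$, the strict transform of $F_{i+1}$ has self-intersection $-2$ and degree $3$: its two original neighbors together with the new leaf $F_i$. All subsequent blowups take place on other $(-1)$ curves, and the only way they can affect $F_{i+1}$ is through a type I blowup at an intersection incident to $F_{i+1}$, which replaces one neighbor of $F_{i+1}$ with the new exceptional curve while preserving its total degree. Hence $F_{i+1}$ retains degree at least $3$ in $Y$. To complete the argument I would then show that the relevant locus in $Y$ has all vertex degrees at most $2$: the chains $\mc S, \mc S'$ are chains (they resolve cyclic-quotient toric singularities at $p, p' \in X$), and the curves $\mc C \in \mc S$ and $\mc C' \in \mc S'$ meeting $\wt R$ are endcurves of their respective chains, which follows from applying Lemma \ref{lemma: (-1) intersects at most 2 other} at the stage at which each first surfaces as the unique $(-1)$ curve in the contraction sequence. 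With the relevant locus a chain, no vertex has degree $3$, contradicting the above and establishing $(1)$.

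Part $(2)$ then follows quickly. A type I blowup at $F_{i+1} \cap C$ produces a new $(-1)$ curve $F_i$ of degree exactly $2$, namely meeting the strict transforms of both $F_{i+1}$ and $C$. By part $(1)$ the next blowup cannot be type II, so it must be type I; iterating, every blowup performed after the first type I is itself of type I. Hence all type II blowups are grouped together at the beginning of the blowup sequence, which in the paper's contraction notation corresponds to the initial contractions $Y_{\ell-s} \to \cdots \to Y_\ell$ as claimed.

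I expect the main obstacle to be the endcurve characterization of $\mc C'$. The analogous argument for $\mc C$ is immediate once one notes that after contracting $\wt R$ one of $\mc C, \mc C'$ becomes the unique $(-1)$ curve in $Y_1$, and its degree there equals its degree within its chain plus one for the new intersection with the other curve; Lemma \ref{lemma: (-1) intersects at most 2 other} then forces it to be an endcurve. For the second endcurve one must trace self-intersections through the next few contractions to locate the stage at which $\mc C'$ itself first appears as the unique $(-1)$ curve, which requires some care because $\mc C'$ may have self-intersection strictly less than $-2$ in $Y$ and therefore need several intermediate contractions to surface as a $(-1)$ curve.
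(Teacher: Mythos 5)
Your handling of both parts, \emph{granted} that the relevant locus in $Y$ is a chain, coincides with the paper's: a type II blowup performed while the $(-1)$ curve has degree two creates a degree-three vertex that no later blowup can remove, contradicting the chain property; and a type I blowup always leaves the new $(-1)$ curve with degree exactly two, so by part $(1)$ type I blowups, once begun, persist. The difference lies in how the chain property is obtained. The paper simply cites Lemma \ref{lemma: key lemma about endchains}$(2)$ (with the convention that a smooth point is an $A_0$ singularity), which says that both $\mc C$ and $\mc C'$ are endcurves of their chains, so that $\mc S \cup \wt R \cup \mc S'$ is a path. You instead try to re-derive this from Lemma \ref{lemma: (-1) intersects at most 2 other}, and this is where your proposal has a genuine gap --- one you yourself flag as the main obstacle.

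The problem with your route for $\mc C'$ is that Lemma \ref{lemma: (-1) intersects at most 2 other} constrains only the curve that is \emph{currently} the unique $(-1)$ curve, and $\mc C'$ need not become one until several contractions have passed; by that time its degree may no longer be three, since contracting a leaf neighbor (for instance the last curve of $\mc S \cup \wt R$ once that part has been absorbed) lowers its degree by one. So ``apply the lemma when $\mc C'$ first surfaces as the $(-1)$ curve'' does not close the argument. The paper's actual proof that both chains are endchains is not a local degree count: it first shows (Lemma \ref{lem: endchain is -2}) that if only one chain were an endchain, that endchain would have to consist entirely of $(-2)$ curves, i.e.\ be the exceptional locus of a third Du Val torus-invariant point of $X$, and then invokes Proposition \ref{prop: opposite duval} to exclude this. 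Since Lemma \ref{lemma: key lemma about endchains} precedes the statement you are proving, the clean fix is simply to cite it rather than attempt to re-prove it in passing.
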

\begin{proof}
Every curve in this proof is a curve in the relevant locus.
\begin{enumerate}
    \item 
    From Lemma \ref{lemma: key lemma about endchains}, the relevant locus in $Y$ is a chain of curves. Hence the relevant locus is always a chain of curves. 
    Consequently, no blowup can introduce a node of degree $3$ in the dual graph (of the relevant locus). This shows that a blowup of type II cannot occur if the $(-1)$ curve intersects two other curves. 
    \item Observe that after each initial type II blowup, the unique $(-1)$ curve intersects exactly one curve. 
    However, after a type I blowup, it will intersect two curves (see Figure \ref{fig:possible contractions}). The result hence follows from part $(1).$  \qedhere
\end{enumerate} 
\end{proof}



Hence starting at a point, there are some initial blowups of type II, and then any subsequent blowup must be of type I. Since a blowup of type I doesn't change the matrix of the chain (cf. Lem. \ref{lem: blowup at intersection}), we can classify relevant locus' matrix by the number of type I blowups.

\begin{figure}[h]
    \centering
    \includegraphics[scale=0.4]{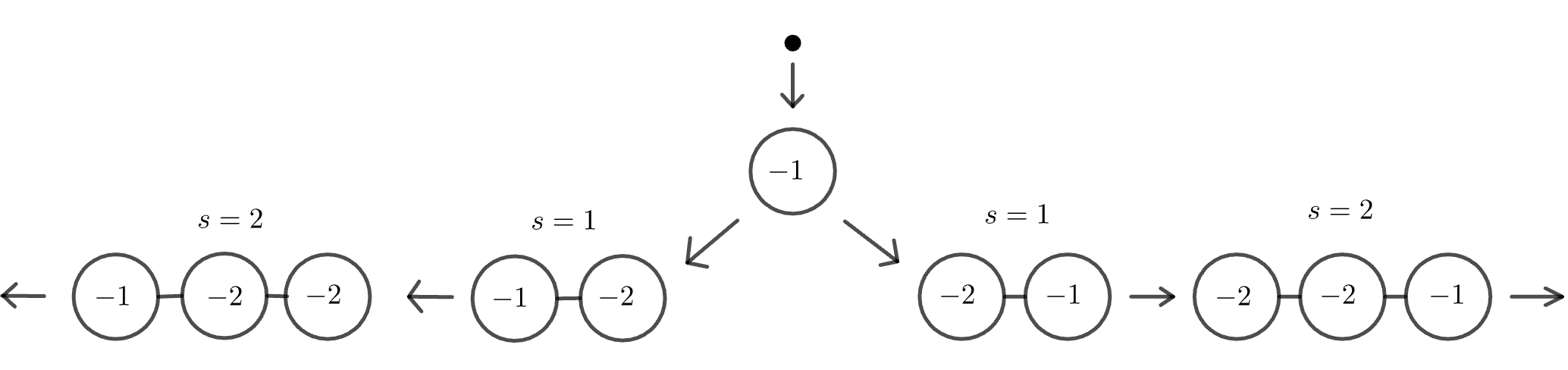}
    \caption{The tree of possibilities}
    \label{fig:key tree A0}
\end{figure}

Since the orientation of our chain matters (for Hirzebruch-Jung fractions, say), there are two branches in the tree of possibilities shown in Figure \ref{fig:key tree A0}.
Observe that the matrices we get in the two branches are reverses of each other (in the sense of Definition \ref{lemdef: reverse matrix}).

\medskip

Firstly, observe that the single $(-1)$ curve case in Figure \ref{fig:key tree A0} cannot occur, since it corresponds to $\wt R$ passing through two smooth points, which has been ruled out in Proposition \ref{prop: opposite duval}. Hence we can assume $s>0.$

\begin{lemma}
    \label{lem: classification of matrices, -(s+5)}
    Define $s>0$ as in Figure \ref{fig:key tree A0}, i.e. one less than the number of type II blowups. 
    Up to orientation,  the matrix of the chain is (in our case, these are the chains on the right in Figure \ref{fig:key tree A0})
    $$\begin{pmatrix}
        -s & -1 \\
        s+1 & 1
    \end{pmatrix}$$
    Furthermore, the heavy curve (cf. Def. \ref{def: heavy curve}) has self-intersection $-(s+5).$
\end{lemma}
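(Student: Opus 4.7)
The plan is to derive both parts from a careful analysis of the reverse blowup process. For the matrix, the key observation (Lemma~\ref{lem: blowup at intersection}) is that a type~I blowup leaves the matrix product of the chain unchanged, so it suffices to compute the matrix right after the $s+1$ initial type~II blowups. At that stage the chain is simply $s$ copies of $(-2)$ followed by a single $(-1)$: the very first blowup creates a $(-1)$ curve, and each of the subsequent type~II blowups adds a new $(-1)$ while demoting the previous one to a $(-2)$. Setting $M(a) = \begin{pmatrix} 0 & -1 \\ 1 & a \end{pmatrix}$ and checking by induction that $M(2)^s = \begin{pmatrix} 1-s & -s \\ s & 1+s \end{pmatrix}$, one obtains $M(2)^s M(1) = \begin{pmatrix} -s & -1 \\ s+1 & 1 \end{pmatrix}$, yielding the first claim; the opposite orientation gives the reverse matrix in the sense of Lemma~\ref{lemdef: reverse matrix}.

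For the self-intersection of the heavy curve, the strategy is to combine the global identity $\sum D_i^2 = 12 - 3N$ for the toric boundary of $Y$ (Lemma~\ref{lemma: sum of smooth numbers}) with an accounting of self-intersections accumulated during the reverse blowup process. The toric boundary of $Y$ consists of the heavy curve (of self-intersection $s_h$), three other $(-1)$ curves (strict transforms of $\wt\gamma_1, \wt\gamma_2, \wt\gamma_3$, by the discussion following Definition~\ref{def: heavy curve}), two $(-2)$-chains of lengths $a$ and $b$ over the Du Val points $R$ avoids, and the chains $\mc S, \mc S'$ over the adjacent points $R$ passes through. Since in the smooth case the entire relevant locus is contracted in $Y \to \cdots \to W$, we get $\ell = |\mc S| + |\mc S'|$, and combining this with $\ell = N - 12$ from Lemma~\ref{lemma: key commutative diagram X,Y,Z,W, and 8 curves} applied to $N = 4 + a + b + |\mc S| + |\mc S'|$ forces $a + b = 8$.

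For the self-intersection tally, the very first blowup contributes $-1$; each of the remaining $s$ type~II blowups adds a new $(-1)$ and demotes the previous one, contributing $-2$; each of the $k$ type~I blowups adds a $(-1)$ and lowers two adjacent self-intersections by $1$, contributing $-3$. Subtracting the $-1$ from $\wt R$ (which is not on the toric boundary) leaves $\sum_{\mc S \cup \mc S'} D^2 = -(2s + 3k)$ with $|\mc S| + |\mc S'| = s + k$. Plugging into $\sum D^2 = 12 - 3N$ gives
\[
s_h - 3 - 2(a+b) - (2s + 3k) = 12 - 3(12 + s + k),
\]
which, together with $a + b = 8$, simplifies to $s_h = -(s+5)$.

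The main obstacle here is purely organizational: carefully distinguishing the toric boundary curves within the relevant locus (all of $\mc S, \mc S'$ but not $\wt R$) and correctly deriving the identity $a + b = 8$ by equating the two expressions for the number of contracted boundary curves. Once that setup is in place, both parts of the lemma reduce to a straightforward matrix induction and elementary arithmetic.
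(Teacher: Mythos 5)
Your proposal is correct and follows essentially the same route as the paper: the matrix part is identical (reduce to the type~II blowups via Lemma~\ref{lem: blowup at intersection} and compute $M(2)^sM(1)$), and your direct tally of self-intersection contributions ($-1$ for the first blowup, $-2$ per further type~II, $-3$ per type~I) is just an unrolled version of the paper's invariant $\mc S_i + 3\mc L_i = s+2$, combined with the same Noether-formula count $\sum D_i^2 = 12-3N$ and $a+b=8$.
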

\begin{proof}
    Recall that matrices stay invariant for blowups at intersection of curves, see Lemma \ref{lem: blowup at intersection}. So we only need to consider blowups of type II, i.e. at non-intersection points.
    It is easy to see the chain consists of $s$ $(-2)$ curves followed by a $(-1)$ curve. Hence Example \ref{example: du val matrix} 
    shows the matrix is
    $$\begin{pmatrix}
        0 & -1 \\
        1 & 2
    \end{pmatrix}^{s}\begin{pmatrix}
        0 & -1 \\ 1 & 1
    \end{pmatrix}=\begin{pmatrix}
        -(s-1) & -s \\
        s & s+1
    \end{pmatrix}\begin{pmatrix}
        0 & -1 \\ 1 & 1
    \end{pmatrix}=\begin{pmatrix}
        -s & -1 \\
        s+1 & 1
    \end{pmatrix}.$$
    Let $\mc L_i, \mc S_i$ be the length, sum of self-intersection numbers of the relevant locus in $Y_i.$
    We claim that $\mc S_i+3\mc L_i = s+2$ always.
    As $(-1)+3(1)=0+2,$ this holds for $Y_\ell$ when we have just one $(-1)$ curve.
    For a blowup at the intersection of two curves, observe that $\mc S_i$ decreases by $3$ and $\mc L_i$ increases by $1,$ showing $\mc S_i+3\mc L_i$ is invariant. Hence it only depends on the number of type II blowups.

    \medskip
    
    For a type II blowup, the key observation is that each blowup increases $\mc L_i$ by $1,$ but decreases $\mc S_i$ by only $2.$ Hence $\mc S_i+3\mc L_i$ increases by $1,$ just like $s+2.$ 

    \medskip

    Let $x$ be the self-intersection number of the heavy curve.
    Consider the toric boundary curves of $Y.$
    There are at three $(-1)$ toric boundary curves (cf. discussion in \ref{def: heavy curve}), and two chain of $(-2)$ curves of lengths $a,b$ satisfy $a+b=8$ by Lemma \ref{lemma: key commutative diagram X,Y,Z,W, and 8 curves}. 
    Let $N$ be the number of toric boundary divisors in $Y.$
    Furthermore, the length, sum of self-intersection numbers of the relevant
    locus in $Y = Y_0$ is $\mc L_0 - 1, \mc S_0 + 1$ respectively, since $\wt R$ is a $(-1)$ curve in the relevant locus which is not a toric boundary curve.
    By Lemma \ref{lemma: sum of smooth numbers}, the sum of self-intersection numbers of toric boundary divisors of $Y$ is $12-3N.$ Hence
    \begin{align*}
        &x+(a+b)(-2)+3(-1)+(\mc S_0+1) = 12-3(4+a+b+\mc L_0-1)\\
        &\implies x = -(a+b)-(\mc S_0+3\mc L_0)+5=-8-(s+2)+5=-(s+5)
    \end{align*}
    and we are done.
\end{proof}


We now have the setup to deal with Cases I and II from Figure \ref{fig:3 cases}. 
Both of them have a different analysis, however the key approach is the same; we attack them algebraically using the matrix equation  \ref{lem: matrix equation} and $\Vol(\Delta)=m^2$.

\subsection{Case I}
\label{subsec: A0 Case I}

Based on our analysis, every toric boundary curve is determined except the relevant locus. 
Our key claim, however, are the following matrix equations whose solutions bijectively correspond to semi-elliptic (smooth) toric surfaces (cf. Rem. \ref{remark: semi-elliptic surface}).

\begin{lemma}
\label{lemma: A0 case I both equations}
There is a bijection between semi-elliptic (smooth) toric surfaces in Case I and integral matrix solutions $\mf M,\mf N$ of the system of equations
\begin{equation}
\label{eq: A0 case I eq 1}
    \mf M \begin{pmatrix}
    0 & -1 \\
    1 & 1
\end{pmatrix} \mf N =\begin{pmatrix}
    -s & -1 \\
    s+1 & 1
\end{pmatrix} 
\end{equation}
\begin{equation}
    \label{eq: A0 case I eq 2}
        \mf N \begin{pmatrix}
        0&-1\\1&1
    \end{pmatrix}\mf M \begin{pmatrix}
        0&-1\\1&1
    \end{pmatrix} \begin{pmatrix}
        0&-1\\1&2
    \end{pmatrix}^a \begin{pmatrix}
        0&-1\\1&s+5
    \end{pmatrix}\begin{pmatrix}
        0&-1\\1&2
    \end{pmatrix}^b\begin{pmatrix}
        0&-1\\1&1
    \end{pmatrix}=\mf I
\end{equation}
where $s \in \ZZ_{>0}$ and $a+b=8$ are any non-negative integers.
\end{lemma}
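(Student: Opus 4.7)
The plan is to decode the two matrix equations as explicit geometric conditions on $Y$ and its relevant locus, and then exhibit the bijection directly. Equation (\ref{eq: A0 case I eq 1}) encodes the classification of the relevant locus from Lemma \ref{lem: classification of matrices, -(s+5)}, while Equation (\ref{eq: A0 case I eq 2}) encodes smoothness of the ambient toric surface $Y$ via the matrix equation of Lemma \ref{lem: matrix equation}.

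\textbf{Forward direction.} Given a semi-elliptic smooth toric surface $Y$ in Case I, let $\mf M$ denote the Hirzebruch--Jung matrix of the chain $\mc S$ read from $\mc E_{\text{in}}$ to $\mc C = \mc E_{\text{out}}$, and let $\mf N$ denote the HJ-matrix of $\mc S'$ read from $\mc C' = \mc E_{\text{out}}'$ to $\mc E_{\text{in}}'$. Since $\wt R$ is a $(-1)$ curve meeting both $\mc C$ and $\mc C'$, the chain $\mc S \cup \wt R \cup \mc S'$ has HJ-matrix equal to $\mf M$ times the HJ-matrix of a $(-1)$ curve times $\mf N$. By Lemma \ref{lem: classification of matrices, -(s+5)} this matrix must equal the one displayed on the right of Equation (\ref{eq: A0 case I eq 1}) for some $s > 0$, producing Equation (\ref{eq: A0 case I eq 1}); the same lemma pins the heavy curve's self-intersection at $-(s+5)$. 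Next, traverse the toric boundary of $Y$ cyclically, starting at $\mc S'$ (matrix $\mf N$), then $\mc D$ (the $(-1)$ curve linking the two torus-invariant points of $X$ on $R$), then $\mc S$ (matrix $\mf M$), then the $(-1)$ curve from $p$ to the adjacent Du Val point, then $a$ copies of $(-2)$ over that point, then the heavy curve $-(s+5)$, then $b$ copies of $(-2)$ over the second Du Val point, and finally a $(-1)$ back to $\mc S'$. Applying Lemma \ref{lem: matrix equation} to this cyclic list yields exactly Equation (\ref{eq: A0 case I eq 2}), with $a + b = 8$ by Lemma \ref{lemma: key commutative diagram X,Y,Z,W, and 8 curves}.

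\textbf{Reverse direction and main obstacle.} Conversely, given matrices $\mf M, \mf N$ and integers $s > 0$, $a + b = 8$ satisfying both equations, Equation (\ref{eq: A0 case I eq 2}) together with Lemma \ref{lem: matrix equation} produces a smooth projective toric surface $Y$ whose cyclic boundary realizes the prescribed sequence. Equation (\ref{eq: A0 case I eq 1}) together with Lemma \ref{lem: classification of matrices, -(s+5)} then guarantees that attaching a non-boundary $(-1)$ curve $\wt R$ between the outer endcurves of the chains encoded by $\mf M, \mf N$ yields a relevant locus that contracts to a smooth point via the prescribed sequence of type-I and type-II blowdowns, so that all axioms of Definition \ref{defi: semi-elliptic} are met. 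The principal technical care lies in orientation conventions: in Equation (\ref{eq: A0 case I eq 1}) the chain $\mc S'$ is read outer-to-inner so that it joins $\mc S$ through a $(-1)$ at the outer ends, while the cyclic traversal in Equation (\ref{eq: A0 case I eq 2}) naturally reads $\mc S'$ in one direction and $\mc S$ in the other. One verifies via Lemma \ref{lemdef: reverse matrix} that, by choosing the starting point of the cyclic traversal as indicated above, the factor $\mf N$ in the second equation is indeed the same $\mf N$ as in the first (and similarly for $\mf M$), with no spurious transposes appearing. A secondary check is that the four torus-invariant corner points of the implied fan are correctly identified with the four chain-junctions in the cyclic list, which follows because the four $(-1)$-type connectors ($\mc D$, two further $(-1)$ boundary curves, and the heavy curve of self-intersection $-(s+5)$) separate the four chain segments $\mf N, \mf M, (-2)^a, (-2)^b$ in the boundary cycle.
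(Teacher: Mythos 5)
Your proposal is correct and follows essentially the same route as the paper: Equation (\ref{eq: A0 case I eq 1}) is extracted from the contraction of $\mc S \cup \wt R \cup \mc S'$ to a smooth point via Lemma \ref{lem: classification of matrices, -(s+5)}, Equation (\ref{eq: A0 case I eq 2}) from the cyclic boundary traversal via Lemma \ref{lem: matrix equation}, and the converse from running both lemmas backwards. The extra care you take with chain orientations (inner-to-outer versus outer-to-inner, and the absence of spurious reverses) is exactly the bookkeeping the paper handles with its ``$\mf A_i M_1 \mf A_{i+1}$ for some $i$'' convention and Figure \ref{fig:A0 case I}.
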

\begin{proof}
Label $\mc S,\mc S',\mc D \subset Y$ as before (cf. Fig. \ref{fig:3 cases}).
Let $\wt R$ pass through $\mc C$ and $\mc C',$ which are curves in $\mc S, \mc S'$ respectively.
Recall the definition of inner and outer curve from Definition \ref{def: inner and outer curves}. Suppose the matrix from the inner curve to outer curve of $\mc S$ is $\mf A_1,$ and the matrix from the outer curve to inner curve of $\mc S'$ is $\mf A_2,$ see Figure \ref{fig:A0 case I}. Since the chain $\mc S \cup \wt R \cup \mc S'$ contracts to a point, Lemma \ref{lem: classification of matrices, -(s+5)} shows
\begin{equation}
\label{eq: A0 case I A_i equation}
    \mf A_i\begin{pmatrix}
    0 & -1 \\
    1 & 1
\end{pmatrix}\mf A_{(i+1)}=\begin{pmatrix}
    -s & -1 \\
    s+1 & 1
\end{pmatrix}
\end{equation}
for some $i \in \{1,2\},$ where indices are modulo $2.$ We can then define $\mf M:=\mf A_i$ and $\mf N:=\mf A_{i+1}$ so that Equation \ref{eq: A0 case I eq 1} holds.

\begin{figure}[h]
    \centering
    \includegraphics[scale=0.2]{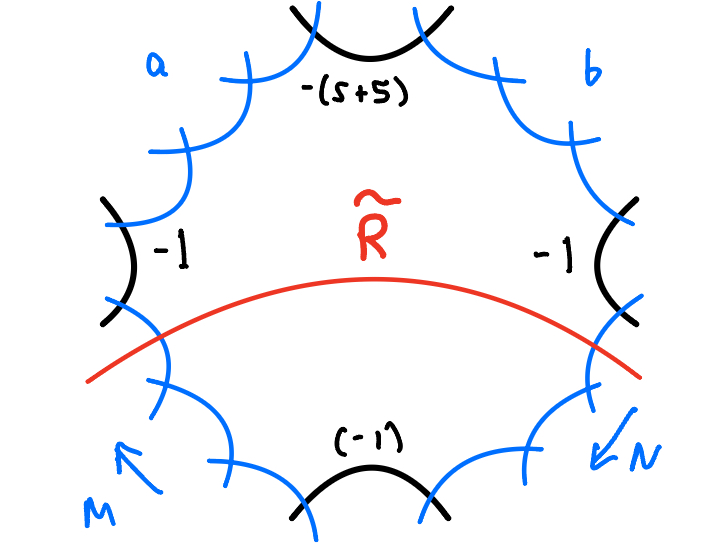}
    \caption{Schematic picture for this case. Note the positions of $a,b$ so that Equation \ref{eq: A0 case I eq 2} holds.}
    \label{fig:A0 case I}
\end{figure}

Outside the relevant locus, the exceptional locus of $Y \to X$ contains two chains of $(-2)$ curves. We can appropriately label the number of curves $a,b$ so that the Matrix equation \ref{lem: matrix equation} implies Equation \ref{eq: A0 case I eq 2}, see Figure \ref{fig:A0 case I}. Lastly, we need to argue why this is a bijection.
However, that is easy; for any chain of curves $\mc S, \mc S'$ with matrices $\mf M, \mf N$ satisfying Equation \ref{eq: A0 case I eq 1}, the divisor $\mc S \cup \wt R \cup \mc S'$ contracts to a point. Further, the matrix equation guarantees the existence of a toric surface.
\end{proof}

Write
\begin{equation}
\label{eq: definition of M}
    \mf M=\begin{pmatrix}
    x&y\\z&w
\end{pmatrix}.
\end{equation}
Observe here that $x < y<0 < z<w$ and $\det(\mf M)=xw-yz=1$ (cf. Lem. \ref{lem: matrix and chain}).

\begin{lemma}
    \label{lemma: pell's equation for A0}
    There are only two possibilities for the triple $(a,b,s),$ each one giving a Pell's equation in $z,w:$
    $$\begin{cases}
        (a,b,s) = (0,8,2), & 55z^{2}-95zw+41w^{2}+1=0\\
        (a,b,s) = (8,0,2), & 55z^{2}-15zw+w^{2}+1=0
    \end{cases}.$$
\end{lemma}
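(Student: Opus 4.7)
The plan is to translate the two matrix equations from Lemma \ref{lemma: A0 case I both equations} into a system of scalar polynomial equations in the entries $x,y,z,w$ of $\mf M$ together with the parameter $s$, and then use the combinatorial restriction $a+b=8$ (giving only nine pairs $(a,b)$) to reduce to a finite symbolic elimination problem, solvable by \textsc{Macaulay2}.

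First I would use Equation \ref{eq: A0 case I eq 1} to write $\mf N$ explicitly in terms of $\mf M$ and $s$, namely
$$\mf N = \begin{pmatrix} 0 & -1 \\ 1 & 1 \end{pmatrix}^{-1} \mf M^{-1} \begin{pmatrix} -s & -1 \\ s+1 & 1 \end{pmatrix},$$
noting that both sides have determinant $1$ so this is consistent with $\det \mf N = 1$. Next, I would compute $\begin{pmatrix} 0&-1\\1&2\end{pmatrix}^k$ in closed form (it equals $\begin{pmatrix} 1-k & -k \\ k & k+1 \end{pmatrix}$, cf.\ Example \ref{example: du val matrix}) and substitute into Equation \ref{eq: A0 case I eq 2}. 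This turns the equation $(\cdots) = \mf I$ into three independent polynomial relations in $x,y,z,w,s$ (after accounting for $\det = 1$), augmented by the constraint $xw - yz = 1$ coming from $\det \mf M = 1$.

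For each of the nine pairs $(a,b)$ with $a,b\ge 0$ and $a+b=8$, I would then run a Gröbner basis elimination in \textsc{Macaulay2} to eliminate $x$ and $y$ from the resulting system. The expected outcome is that in seven of the nine cases the elimination ideal has no solutions compatible with $s \in \ZZ_{>0}$ and the inequalities $x<y<0<z<w$ coming from the chain matrix structure (Lemma \ref{lem: matrix and chain}); this may require further auxiliary arguments using the positivity constraints to rule out parasitic real solutions. In the two surviving cases $(a,b)=(0,8)$ and $(a,b)=(8,0)$, the elimination should pin down $s=2$ uniquely and leave a single quadratic relation in $z,w$, which after clearing denominators becomes the claimed $55z^{2}-95zw+41w^{2}+1=0$ and $55z^{2}-15zw+w^{2}+1=0$ respectively.

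The main obstacle is the bookkeeping of the case analysis: each of the nine matrix products is a genuinely different polynomial system, and some of the ``excluded'' cases may produce nontrivial families of solutions over $\QQ$ or $\RR$ that must be ruled out using the positivity/ordering constraints $x<y<0<z<w$ rather than by the Gröbner basis alone. The symmetry $(a,b)\leftrightarrow (b,a)$ (compare with the reversed-orientation branch of Figure \ref{fig:key tree A0}) should cut the work in half. Both surviving equations have discriminant $5$ in the variable $z$, confirming they are genuine Pell-type forms associated to $\QQ(\sqrt{5})$, which is consistent with the plan for the subsequent sections to rule them out using the algebraic constraint $\Vol(\Delta)=m^2$.
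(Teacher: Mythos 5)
Your proposal follows essentially the same route as the paper's proof: solve Equation \ref{eq: A0 case I eq 1} for $\mf N$ in terms of $\mf M$ and $s$, substitute into Equation \ref{eq: A0 case I eq 2} to get a polynomial system in $x,y,z,w,s$ together with $xw-yz=1$, and enumerate the nine pairs $(a,b)$ with $a+b=8$ via computer algebra (the paper uses \texttt{minimalPrimes} on the ideal of all four matrix entries rather than a Gr\"obner elimination of $x,y$, but this is an implementation detail). The paper likewise finds that only $(a,b,s)\in\{(0,8,2),(8,0,2)\}$ survive, each yielding the stated Pell-type quadric in $z,w$.
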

\begin{proof}
    Using Equation \ref{eq: A0 case I eq 1} to express $\mf N$ as a matrix in $x,y,z,w,$ Equation \ref{eq: A0 case I eq 2} gives $4$ equations in $\{x,y,z,w,a,b,s\}.$ As $a+b=8$ are non-negative (see \ref{lemma: key commutative diagram X,Y,Z,W, and 8 curves}), we can test all $9$ possible pairs. 
    In each case, we look at the minimal primes of the ideal generated by these $4$ equations (on \textsc{Macaulay2}). 
    Fascinatingly enough, there are only two non-negative integral values of $(a,b,s)$ that work, producing the claimed Pell's equation that $z,w$ must satisfy
    (see Computation \ref{comp: A0 case I 1}).
\end{proof}

These equations aren't enough to get a contradiction; turns out there are toric surfaces which satisfy the conditions of Lemma \ref{lemma: pell's equation for A0}. However, the corresponding quadrilaterals fail to satisfy the condition $\Vol(\Delta)=m^2.$ To proceed, we need to understand how this condition translates to the fan's structure:

\begin{lemma}
\label{lem: vol = m^2 equation}
    Consider the adjacent Du Val case, and assume the Du Val chains in the exceptional but non-relevant locus have lengths $a,b.$ Define $c:=a+1, d:=b+1.$ Consider the fan $v_0=(w,-z), v_1=(0,1), v_2=(-c,1), v_3$ so that $\measuredangle (v_1,v_2), \measuredangle (v_2,v_3)$ are Du Val of type $A_{a},A_{b}$ respectively, and $v_0, v_1$ are positive (cf. Lem. \ref{lemma: special fan}). 

    \begin{figure}[h]
        \centering
        \includegraphics[scale=0.3]{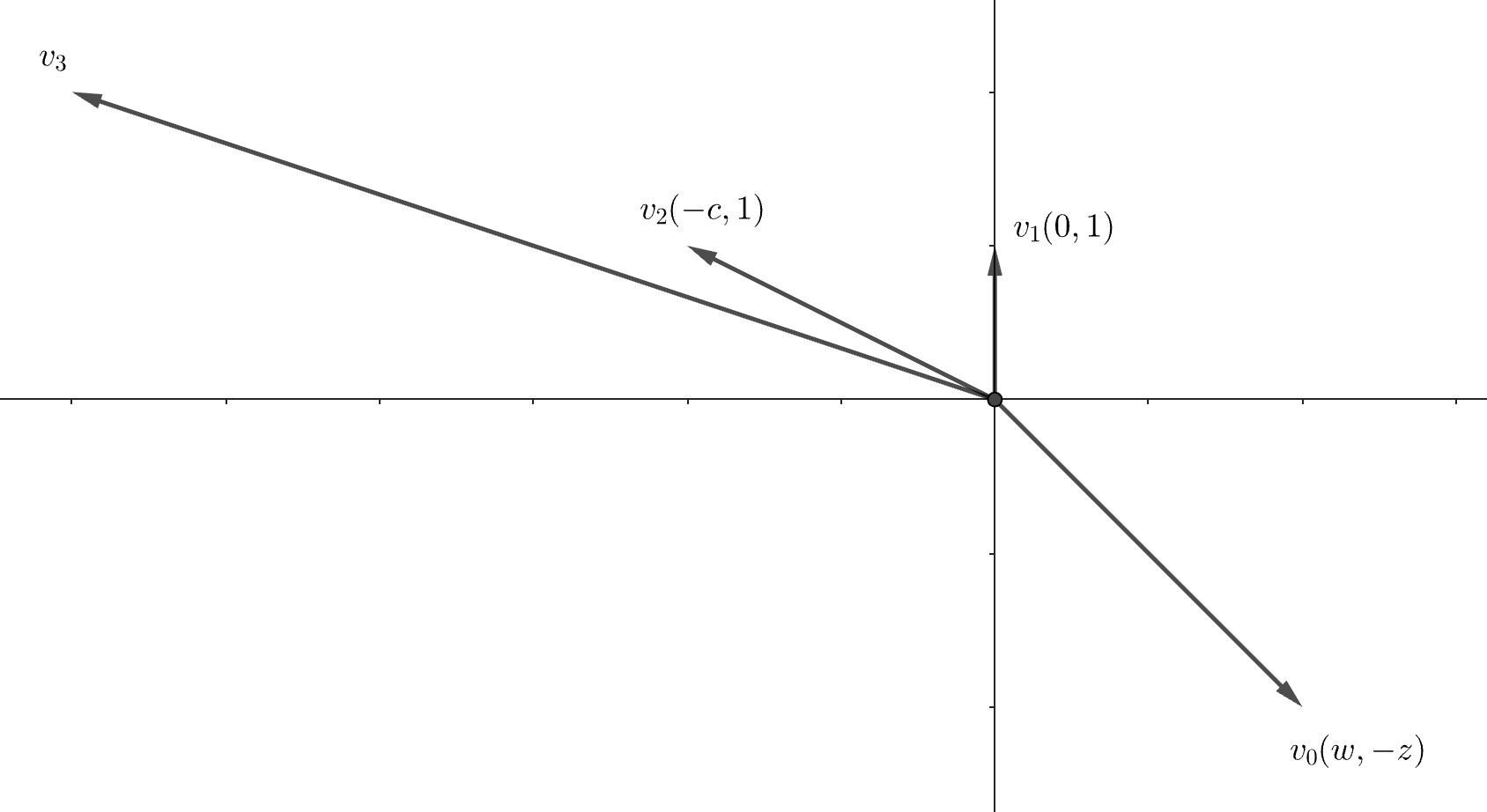}
        \includegraphics[scale=0.35]{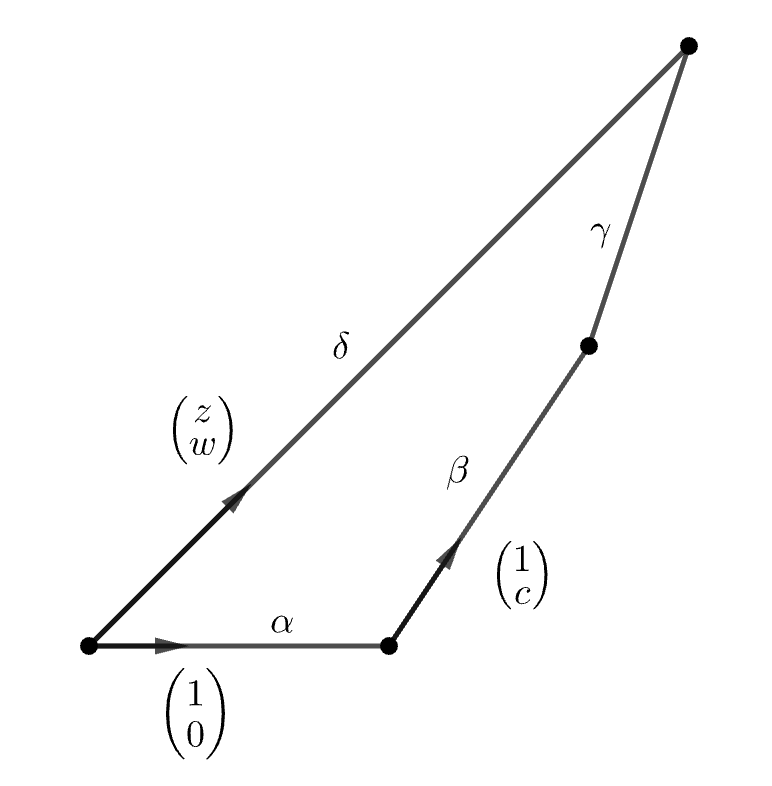}
        \caption{The fan and corresponding quadrilateral}
        \label{fig:A0 vol}
    \end{figure}
    
    Assume $\Delta$ has lattice side-lengths $\alpha,\beta,\gamma,\delta$ corresponding to the vectors $v_1,v_2,v_3,v_0$ respectively (see Figure \ref{fig:A0 vol}). Then the lattice perimeter is $m=\alpha+\beta+\gamma+\delta$ and 
    $$\Vol(\Delta) = \alpha \delta w + \beta \gamma d.$$
\end{lemma}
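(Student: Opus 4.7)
The plan is to compute $\Vol(\Delta)$ directly via the shoelace formula after triangulating $\Delta$ along a well-chosen diagonal, with the structure of the $A_b$-type cone $\measuredangle(v_2,v_3)$ providing the final algebraic input.

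First I place the vertex $P_0$ lying between the edges of lengths $\delta$ and $\alpha$ at the origin. The primitive edge direction corresponding to inner normal $v_i$ is obtained by rotating $v_i$ by $90^{\circ}$ clockwise; applied to $v_1,v_2,v_0$ this yields $\vec d_1=(1,0)$, $\vec d_2=(1,c)$, and $\vec d_0=(-z,-w)$. Tracing the boundary counterclockwise gives
\[
P_1=(\alpha,0),\qquad P_2=(\alpha+\beta,\ \beta c),\qquad P_3=P_2+\gamma\vec d_3,
\]
and the closing condition $P_3+\delta\vec d_0=0$ forces $P_3=(\delta z,\delta w)$. The perimeter claim $m=\alpha+\beta+\gamma+\delta$ is immediate from the definition of $m$ as the total lattice length of $\partial\Delta$.

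Next I triangulate $\Delta$ along the diagonal $P_1P_3$. Since $P_0=0$, the double area of the triangle $P_0P_1P_3$ is simply $P_1\times P_3=\alpha\cdot\delta w-0\cdot\delta z=\alpha\delta w$, which already accounts for the first term in the claim. For the triangle $P_1P_2P_3$, a direct cross-product computation yields
\[
(P_2-P_1)\times(P_3-P_1)\;=\;\beta\cdot\delta w-\beta c(\delta z-\alpha)\;=\;\beta\bigl[\alpha c+\delta(w-cz)\bigr],
\]
so it remains to identify $\alpha c+\delta(w-cz)$ with $\gamma d$.

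This last identification is where the hypothesis $\measuredangle(v_2,v_3)\cong A_b$ enters, and it is the only non-combinatorial step. Each intermediate vector in the resolution of this cone has self-intersection $-2$, so the sequence $v_2=u_0,u_1,\ldots,u_{b+1}=v_3$ satisfies $u_{k-1}+u_{k+1}=2u_k$ and is therefore arithmetic: $v_3=v_2+d\,(p,q)$ for some primitive $(p,q)$ with $d=b+1$. Smoothness of the first resolved cone $\langle v_2,\,v_2+(p,q)\rangle$ forces $\det\bigl(v_2,v_2+(p,q)\bigr)=1$, which reduces to $p+cq=-1$. Consequently $v_{3x}+c\,v_{3y}=d(p+cq)=-d$, and from $P_3-P_1=\gamma\vec d_3=\gamma(v_{3y},-v_{3x})$ one reads off
\[
\gamma d\;=\;-\gamma(v_{3x}+c\,v_{3y})\;=\;\langle P_3-P_1,\,v_2\rangle\;=\;-c(\delta z-\alpha)+\delta w\;=\;\alpha c+\delta(w-cz).
\]
Summing the two triangle areas then gives $\Vol(\Delta)=\alpha\delta w+\beta\gamma d$. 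The only real subtlety is keeping the orientation conventions consistent (counterclockwise traversal of $\partial\Delta$ and the correct sign when converting an inner normal to an edge direction); once these are pinned down, each step is forced.
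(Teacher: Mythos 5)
Your proof is correct and follows essentially the same route as the paper: both split $\Delta$ along the diagonal joining the two vertices adjacent to the $\delta$- and $\beta$-edges... more precisely, into the triangle spanned by the edges of lengths $\alpha,\delta$ (double area $\alpha\delta w$) and the triangle spanned by the edges of lengths $\beta,\gamma$ (double area $\beta\gamma\,|\det(v_2,v_3)|$). The only difference is how the factor $d$ is extracted: the paper invokes $\GL_2(\ZZ)$-invariance of the determinant together with the normal form $(0,1),(-d,1)$ of an $A_{d-1}$ cone, whereas you re-derive $\det(v_2,v_3)=d$ from the arithmetic-progression structure of the resolution rays; both are fine. One line of yours is false as written: $P_3-P_1$ is $\beta\vec d_2+\gamma\vec d_3$, not $\gamma\vec d_3$ — your subsequent equalities survive only because $\vec d_2\perp v_2$, so the $\beta\vec d_2$ component drops out of $\langle P_3-P_1,v_2\rangle$; you should say this explicitly.
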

\begin{proof}
    The formula for $m$ follows from the definition. 
    To prove the volume formula, let $V_i$ be the edge vector in $\Delta$ corresponding to $v_i.$ Subdivide the quadrilateral into two triangles; one with vectors $V_0, V_1$ and the second with vectors $V_2,V_3.$ Observe that the first has volume\footnote{$v \times w$ is the cross product of the vectors $v,w$}
    $$\lt|\alpha V_0 \times \delta V_1\rt|=\alpha \delta |v_0 \times v_1|=\alpha \delta w$$
    since $|v_0 \times v_1| = |V_0 \times V_1|.$ Similarly the second triangle has volume $$|\beta V_2 \times \gamma V_3 |=\beta \gamma |v_2 \times v_3|$$
    We now claim that $|v_2 \times v_3|=|\det (v_2, v_3)|=d.$ Indeed, as $\measuredangle (v_2,v_3)$ is a Du Val singularity of type $b=d-1,$ up to $\GL_2(\ZZ)$ its fan is $(0,1),(-d,1).$ Then $|\det (v_2, v_3)|=d$ is clear, and invariance of determinant under $\GL_2(\ZZ)$ implies the desired result.
\end{proof}

\begin{remark}(Notational Consistency)
The configuration we consider in Figure \ref{fig:A0 case I} will have the fan \ref{fig:A0 vol}, i.e. the $a,b,w,z$ used in Lemma \ref{lem: vol = m^2 equation} are consistent with the orientation in Figure \ref{fig:A0 case I}.
To see this, let $v_1$ be the vector corresponding to the common $(-1)$ curve intersecting both the $A_{a}$ chain and the $\mf M$ chain.
Then $v_0=(w,-z)$ shows the matrix from $v_0$ to $v_1$ is exactly $\mf M$ (see (4) of Lemma \ref{lem: matrix and chain}).
Further, $v_2=(-c,1)$ shows $\measuredangle (v_1,v_2)$ corresponds to the $A_a$ chain.
\end{remark}

We can calculate $v_3$ explicitly:

\begin{lemma}
\label{lemma: value of v3}
    In notation of Lemma \ref{lem: vol = m^2 equation}, $v_3=\begin{pmatrix}
        -c-d-\ell cd \\1+\ell d
    \end{pmatrix},$ where $\ell = s+3.$ 
\end{lemma}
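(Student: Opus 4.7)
The strategy is three-fold: (i) identify the divisor corresponding to $v_2$ as the heavy curve, so that by Lemma \ref{lem: classification of matrices, -(s+5)} its strict transform in $Y$ has self-intersection $-(s+5)$; (ii) compute the two immediate neighbors of $v_2$ in the toric fan of $Y$ from the $A_a$- and $A_b$-resolutions and the self-intersection relation; (iii) read off $v_3$ as the endpoint of the arithmetic progression of vectors inserted by the $A_b$-resolution.

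First, I would identify the heavy curve. In the adjacent Du Val case we are in by hypothesis, the two torus-invariant points of $X$ that $R$ does not pass through are Du Val, and in the setup of Lemma \ref{lem: vol = m^2 equation} these are precisely the points $v_1\cap v_2$ and $v_2\cap v_3$, both lying on the toric boundary divisor associated to $v_2$. By Definition \ref{def: heavy curve} (and the paragraph after it), $v_2$ therefore corresponds to the heavy curve, and by Lemma \ref{lem: classification of matrices, -(s+5)} the strict transform of the $v_2$-curve in $Y$ has self-intersection $-(s+5)$.

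Next I would describe the two neighbors of $v_2=(-c,1)$ in the fan of $Y$. On the $v_1$-side, the $A_a$-resolution of $\measuredangle(v_1,v_2)$ inserts (since $c=a+1$) the $(-2)$-vectors $(-1,1),(-2,1),\dots,(-(c-1),1)$, so the immediate neighbor of $v_2$ there is $v_2^- := (-(c-1),1)$. Let $v_2^+$ denote the (as yet unknown) immediate neighbor of $v_2$ on the $v_3$-side. The self-intersection relation $v_2^- + v_2^+ = (s+5)\,v_2$ in the fan of $Y$ then forces
\[
v_2^+ \;=\; (s+5)(-c,1)-(-(c-1),1) \;=\; \bigl(-c(s+4)-1,\; s+4\bigr).
\]

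Finally, I would walk down the $A_b$-resolution of $\measuredangle(v_2,v_3)$. Because every intermediate vector in this resolution is a $(-2)$-curve, the sequence $v_2,v_2^+,\dots,v_3$ is an arithmetic progression in $\mathbb{Z}^2$ with common difference
\[
t \;:=\; v_2^+ - v_2 \;=\; \bigl(-c(s+3)-1,\; s+3\bigr) \;=\; (-c\ell-1,\;\ell).
\]
Hence $v_3 = v_2 + (b+1)\,t = (-c,1) + d(-c\ell-1,\ell) = (-c - d - \ell c d,\; 1 + \ell d)$, using $d=b+1$ and $\ell = s+3$, which is the desired formula. The argument is largely mechanical once the heavy curve is correctly identified; the only real point of substance is linking the self-intersection $-(s+5)$ of the strict transform of the $v_2$-curve in $Y$ to the sum-of-neighbors relation $v_2^- + v_2^+ = (s+5)v_2$ in the fan.
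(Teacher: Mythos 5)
Your proof is correct and follows essentially the same route as the paper's: both arguments pin down the two fan-neighbors of $v_2$ using the self-intersection $-(s+5)$ from Lemma \ref{lem: classification of matrices, -(s+5)} together with the structure of the $A_a$- and $A_b$-resolutions. The paper normalizes by shear transforms and solves for the unknown $\ell$ in the general form of an $A_{d-1}$ cone, whereas you compute $v_2^+$ directly and propagate the arithmetic progression of $(-2)$-vectors to reach $v_3$; this is only a cosmetic difference.
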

\begin{proof}
    Consider the shear transform $\mf A \in \SL_2(\ZZ)$ mapping $v_2$ to $(0,1).$ 
    As $\measuredangle (v_2,v_3)$ is an $A_{d-1}$ singularity, $v_3$ maps to the vector $(-d, 1+\ell d)$ for some $\ell.$ 
    Further $v_1$ maps to $(c,1).$ 
    We now show that $\ell=s+3$ by the fact that the divisor corresponding to $v_2$ has self-intersection $-(s+5)$ (cf. Lem. \ref{lem: classification of matrices, -(s+5)}).
    Let $u, w$ be the resolution vectors adjacent to $v_2,$ so that $u = (1,1).$ 
    It suffices to show that $w=(-1,1+\ell),$ since then $u+w = (s+5)v_2$ shows $2+\ell = s+5,$ i.e. $\ell=s+3.$

    \begin{figure}[h]
        \centering
        \includegraphics[scale=0.3]{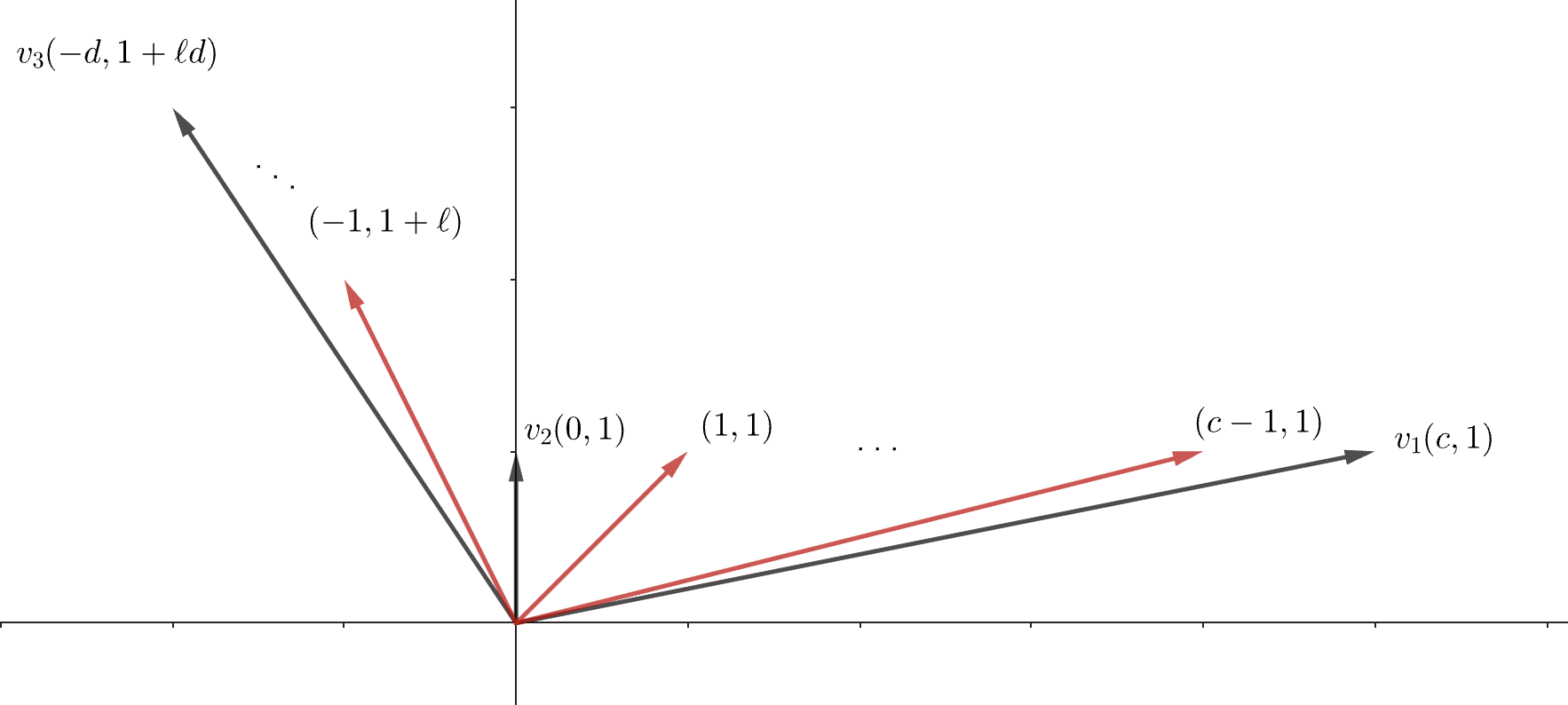}
        \caption{Fan after the shear transform $\mf A=\begin{pmatrix}
            1 & c \\ 0 & 1
        \end{pmatrix}$}
        \label{fig:enter-label}
    \end{figure}

    However this is easy; the shear transform $\mf B \in \SL_2(\ZZ)$ fixing $v_2=(0,1)$ and mapping $v_3$ to $(-d,1)$ must map $w$ to $(-1,1).$ Hence $w=\mf B^{-1}(-1,1)=(-1,1+\ell).$  

    \medskip

    To finish, $v_3=\mf A^{-1}(-d,1+\ell d)=(-d-c(1+\ell d),1+\ell d),$ as desired.
\end{proof}

\begin{remark}
\label{remark: -(s+5) also for An}
    Note that the only result about $s$ used in Lemma \ref{lemma: value of v3} was that the self-intersection of the divisor corresponding to $v_4$ is $-(s+5).$ 
    In the $A_n$ case analysis with $n>0,$ we will define $s$ differently, but the self-intersection will remain $-(s+5)$ (see \ref{lemma: An matrix classifcation}). Hence we will be able to reuse this lemma. 
\end{remark}

We can use this to get second equations in both the cases of Lemma \ref{lemma: pell's equation for A0}:

\begin{lemma}
\label{lem: A0 elliptic curve}
    There are only $2$ possibilities for the triple $(a,b,s).$ For each possibility, the points $(z,w)$ lie on an intersection of two quadric surfaces in $\mathbb{A}^3_{z,w,t}:$
    \begin{align*}
        (a,b,s) = (0,8,2): &\begin{cases}
        55z^{2}-95zw+41w^{2}+1=0 \\
        1980z^{2}-13320zw+9756w^{2}+3960z-3240w+1980+t^2=0
    \end{cases} \\ 
    (a,b,s) = (8,0,2): &\begin{cases}
        55z^{2}-15zw+w^{2}+1=0 \\
        1980z^{2}-10440zw+1116w^{2}+3960z-360w+1980+t^2=0
    \end{cases}
    \end{align*}
    where $t$ is an integer parameter.
    Both intersections are reducible and are the unions of two conics.
\end{lemma}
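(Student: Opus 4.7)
The plan is to combine three ingredients already set up in the preceding lemmas: the Pell equation from Lemma \ref{lemma: pell's equation for A0} (which produces the first quadric directly), the closure condition for the quadrilateral $\Delta$ using the explicit fan from Lemma \ref{lemma: value of v3}, and the elliptic condition $\Vol(\Delta)=m^2$ expressed via Lemma \ref{lem: vol = m^2 equation}. The only thing to construct is the second quadric involving $t^2$.

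First I would specialize $s = 2$ (from Lemma \ref{lemma: pell's equation for A0}) so that $\ell = s+3 = 5$ in Lemma \ref{lemma: value of v3}, and write out the four vectors $v_0 = (w,-z)$, $v_1 = (0,1)$, $v_2 = (-c,1)$, and $v_3 = (-c-d-5cd, 1+5d)$ explicitly, with $(c,d) = (1,9)$ in the first case and $(c,d) = (9,1)$ in the second. Taking edge vectors $V_i$ perpendicular to $v_i$ (with consistent orientation), the closure $\alpha V_1 + \beta V_2 + \gamma V_3 + \delta V_0 = 0$ yields two linear equations in $(\alpha, \beta, \gamma, \delta)$ with coefficients polynomial in $z,w$; these can be solved to express $\alpha$ and $\beta$ as rational linear forms in $\gamma, \delta$.

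Substituting back into the equation $\alpha\delta w + \beta\gamma d = (\alpha+\beta+\gamma+\delta)^2$ from Lemma \ref{lem: vol = m^2 equation} produces a homogeneous quadratic form
\[
Q(\gamma,\delta) = A(z,w)\gamma^2 + B(z,w)\gamma\delta + C(z,w)\delta^2 = 0,
\]
after clearing denominators. For a non-trivial rational solution (and therefore an integer solution up to scaling) the discriminant $B^2 - 4AC$ must equal a square $t^2$, which gives an equation of the form $B^2 - 4AC - t^2 = 0$ in $\mathbb{A}^3_{z,w,t}$. I would carry out this substitution and simplification in \textsc{Macaulay2}, expecting the output to match the two stated quadrics; this is the source of the explicit coefficients $1980, 13320, 9756, \dots$ in the statement.

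Finally, to verify that the intersection of the two quadrics is the union of two conics, I would compute the minimal primes of the ideal $I = (Q_1, Q_2) \subset \mathbb{Q}[z,w,t]$ where $Q_1$ is the Pell quadric and $Q_2$ the discriminant quadric; reducibility into two conics is then immediate from the primary decomposition. The main obstacle is bookkeeping: setting up the correct orientation of the edge vectors and consistently tracking which side corresponds to $\alpha,\beta,\gamma,\delta$, so that the final coefficients agree with the ones in the statement. Once that is handled, the rest reduces to a mechanical \textsc{Macaulay2} computation.
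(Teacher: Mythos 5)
Your proposal matches the paper's proof essentially step for step: the Pell quadric comes from Lemma \ref{lemma: pell's equation for A0}, the closure condition with the explicit $v_3$ from Lemma \ref{lemma: value of v3} expresses $\alpha,\beta$ as linear forms in $\gamma,\delta$, the condition $\Vol(\Delta)=m^2$ yields a homogeneous quadratic whose discriminant must be a square $t^2$, and \texttt{minimalPrimes} in \textsc{Macaulay2} exhibits the intersection as a union of two conics. The approach and the computational verification are the same as in the paper, so no further comparison is needed.
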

\begin{proof}
    We get the two cases and the first equation using Lemma \ref{lemma: pell's equation for A0}. 
    Let $\alpha,\beta,\gamma,\delta$ be the (integer) lattice lengths of $\Delta,$ and $v_0,v_1,v_2,v_3$ be vectors in the toric fan (as in Lemma \ref{lem: vol = m^2 equation}).
    Using Lemma \ref{lemma: value of v3} and the fact that $c+d=a+b+2=10$ (cf. Lem. \ref{lemma: key commutative diagram X,Y,Z,W, and 8 curves}),
    we find
    $$\alpha\begin{pmatrix}
        1\\0
    \end{pmatrix}+\beta\begin{pmatrix}
        1\\c
    \end{pmatrix}+\gamma \begin{pmatrix}
        1+\ell d \\ 10+\ell cd
    \end{pmatrix}=\delta \begin{pmatrix}
        z \\ w
    \end{pmatrix}.$$
    Hence, 
    \begin{equation}
    \label{eq: A0 a', b' expression}
    \begin{split}
        &c\beta = \delta w - \gamma(10+\ell cd) \\
        &\alpha = \delta z-\gamma(1+\ell d)-\beta.
        \end{split}
    \end{equation}
    We can hence express $\alpha,\beta$ as a homogeneous linear forms in $\gamma, \delta.$ 
    We can now use the $\Vol(\Delta)=m^2$ condition with Lemma \ref{lem: vol = m^2 equation} to obtain (see Computation \ref{comp: A0 case I 1}):
    \begin{equation}
    \label{eq: Case A0 I equations}
    \begin{split}
    (a,b,s)&=(0,8,2): \\
    &-(z^{2}-zw+w^2+2z+1)\delta^{2}+(90z+18w+90)\delta\gamma-2520\gamma^2=0 
    \\
    (a,b,s)&=(8,0,2): \\
    -&\lt(9z^{2}-9zw+w^{2}+18z+9\rt)\delta^2+\lt(90z+2w+90\rt)\delta \gamma-280\gamma^2 = 0.
    \end{split}
    \end{equation}
    Since $q$ has rational roots, its discriminant, a quadratic form in $z,w,$ is an integer square, say $t^2.$
    That gives the second equation in each case.
    Note that in Computation \ref{comp: A0 case I 1}, we dehomogenize the above to express $\texttt{a'} := \tfrac{\alpha}{\gamma},\texttt{b'} := \tfrac{\beta}{\gamma}$ as elements of $\QQ(\texttt{d'}) := \QQ(\tfrac{\delta}{\gamma}).$

    \begin{remark}[Note on scaling]
    \label{remark:scaling}
        We have to be careful with scaling the sides of $\Delta,$ since being elliptic in general is not a property invariant under scaling: consider $\Delta$ from Example \ref{example:random example}.
        Then $\Delta$ is elliptic, but $3\Delta$ isn't, as one checks $\dim \mc L_{3\Delta}(3m)=2$ using a computation similar to Computation \ref{comp:random example}. 
        However, note that the relations $\Vol(\Delta)=m^2$ and $\Width(\Delta) \ge m$ (for $m=|\partial \Delta \cap \ZZ^2|)$ stay invariant under scaling.
        Since we show none of the semi-elliptic quadrilaterals can satisfy both of these relations, we only ever deal with these two conditions. Hence we can freely scale $\Delta.$
    \end{remark}

    \medskip

    Finally, we use \texttt{minimalPrimes} on \textsc{Macaulay2} to find the minimal primes of the ideal generated by \texttt{disc=t}$^2$ and the Pell's equation. In both the cases, we find a product of two ideals, both given by the intersection of a hyperplane and quadric in $\mathbb{A}^3_{z,w,t};$ see Computation \ref{comp: A0 case I 1}.
\end{proof}

\subsubsection*{Case $(a,b,s)=(0,8,2)$}
\label{subsec: Case (a,b,s)=(0,8,2)}
The Pell's equation in this case is
\begin{equation}
\label{eq: A0 case I abs=082 pell}
55z^{2}-95zw+41w^{2}+1=0.
\end{equation}
Here, we get explicit roots to our quadratic in Equation \ref{eq: Case A0 I equations} in terms of $z,w$ because the intersection lies on one of the two hyperplanes (cf. Comp. \ref{comp: A0 case I 1}):
$$t=\pm (330z-270w-6).$$
However we rule out both of them: the first root will imply the lattice length of an edge of $\Delta$ is negative. For the second one, we will show either the lattice length of an edge is negative, or the width is less than $m.$ 

\medskip

The roots to the quadratic Equation \ref{eq: Case A0 I equations} are $\delta/\gamma = (U(z,t) \pm t)/V(z,t)$ for two polynomial expressions $U,V$ in $z,t.$ Up to sign, we know $t$ in terms of $z$ and $w.$ Hence we get two possibilities for $\delta/\gamma,$ which we treat separately. 
We will use very crude bounds to get contradictions.
Corresponding to our inequality constraints, we obtain open regions in $\RR^2_{z,w}.$ The approach to show they don't intersect is to find lines between them.
All (difficult) computations of this case are done in \ref{comp: A0 Case I (a,b,s)=(0,8,2)}

\begin{enumerate}
    \item Consider the first root corresponding to $+t.$ We then compute (cf. Comp. \ref{comp: A0 Case I (a,b,s)=(0,8,2)})
    \begin{equation}
    \label{eq: Case A0 abs=082 beta}
    \begin{split}
        \frac{\beta}{\gamma} = \frac{-110z^{2}-130zw+178w^{2}-220z+96w-110}{2(z+1)^2+2w(w-z)}
    \end{split}
    \end{equation}
    However, we show that $\beta/\gamma<0$ in this case, which would be our desired contradiction.

    \begin{claim}
    \label{claim: A0 case I abs=082 +t}
    Let $(z,w)$ be non-negative integers with $w>z.$ Then (cf. Rem. \ref{remark: motivation regions}, Fig. \ref{fig:regions})
    \begin{enumerate}
        \item if $z \le 5/6w,$ then there are no solutions to the Pell's equation \ref{eq: A0 case I abs=082 pell};
        \item if $z \ge 5/6w,$ then $\beta/\gamma < 0.$
    \end{enumerate}
    \end{claim}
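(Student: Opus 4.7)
The plan is to attack both parts through elementary convexity analysis of two quadratic expressions on the half-planes in $\RR^2_{z,w}$ separated by the line $z = 5w/6$. The threshold $5/6$ arises naturally: the two real roots of the Pell form $Q(z,w) := 55z^2 - 95zw + 41w^2$, viewed as a quadratic in $t = z/w$, are $\tfrac{19}{22} \pm \tfrac{\sqrt{5}}{110}$, both strictly above $\tfrac{5}{6}$. Thus the line $z = 5w/6$ lies outside the ``indefinite tube'' where $Q$ can be negative, which is precisely what each part will exploit.

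For Part (1), I would view $Q(z,w)$ as an upward parabola in $z$ for fixed $w > 0$; its vertex sits at $z = 19w/22$. Since $5/6 < 19/22$, on the half-plane $z \le 5w/6$ the map $z \mapsto Q(z,w)$ is strictly decreasing in $z$, so $Q(z,w) \ge Q(5w/6,\, w)$. A direct substitution gives $Q(5w/6,\, w) = w^2/36 \ge 0$, whence $Q(z,w) + 1 \ge 1 > 0$, contradicting the Pell equation $55z^2 - 95zw + 41w^2 + 1 = 0$.

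For Part (2), the strategy is to write $\beta/\gamma = N(z,w)/D(z,w)$ as in Equation \ref{eq: Case A0 abs=082 beta} and check signs separately. The denominator $D = 2(z+1)^2 + 2w(w-z)$ is manifestly positive since $w > z \ge 0$, so it suffices to show $N < 0$ on $z \ge 5w/6$. I would split $N$ into its leading homogeneous part $L(z,w) := -110z^2 - 130zw + 178w^2$ and the affine tail $A(z,w) := -220z + 96w - 110$. Since $\partial L/\partial z = -220z - 130w < 0$ for $z,w > 0$, $L$ is strictly decreasing in $z$, so $L(z,w) \le L(5w/6,\, w) = -121w^2/18 \le 0$. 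For the tail, rewriting $z \ge 5w/6$ as $w \le 6z/5$ and substituting yields $A(z,w) \le -220z + 96 \cdot 6z/5 - 110 = -524z/5 - 110 \le -110 < 0$. Summing, $N = L + A \le -121w^2/18 - 524z/5 - 110 < 0$.

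The main conceptual point---and the only mildly nontrivial step---is identifying the correct separating threshold $5/6$, which is forced by the real geometry of $\{Q = 0\}$. After that, everything reduces to two one-variable convexity/monotonicity checks. The single bookkeeping observation that lets both halves share the same line is that $Q$'s vertex at $z = 19w/22$ lies just to the right of $z = 5w/6$, so the same threshold governs both the positivity of $Q$ in Part (1) and the negativity of $L$ in Part (2).
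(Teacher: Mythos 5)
Your proof is correct and follows essentially the same route as the paper: part (1) is the identical computation ($f$ has its vertex at $19/22>5/6$ and $f(5/6)=1/36>0$), and part (2) likewise reduces to positivity of the denominator plus negativity of the numerator on $z\ge 5w/6$. The only cosmetic difference is that for the numerator you bound the homogeneous quadratic part and the affine tail separately by monotonicity in $z$, whereas the paper substitutes $z=\tfrac{5}{6}w+\epsilon$ and checks that every coefficient of the resulting polynomial in $w,\epsilon$ is negative; both verifications are equivalent and correct.
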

    \begin{proof}
        \begin{enumerate}
            \item Observe that $0>55z^2-95zw+41w^2.$ Consider the quadratic form $f(t)=55t^2-95t+41,$ whose minimum is at $t=95/(2 \times 55)>5/6.$ Observe that $f(5/6)=1/36>0.$ Hence we must have $t>5/6$ for $f(t)=0.$
            \item Firstly, observe that the denominator of in Equation \ref{eq: Case A0 abs=082 beta} is positive since $w>z$ (see the comment below Equation \ref{eq: definition of M}). Hence it suffices to show the numerator is negative.

            \medskip
            
            The easiest way is to write $z=5/6w+\epsilon$ for some $\epsilon>0.$ Making this substitution,
            the numerator is (cf. Comp. \ref{comp: A0 Case I (a,b,s)=(0,8,2)})
            $$-\tfrac{121}{18}w^{2}-\tfrac{940}{3}w\epsilon^2-110\epsilon^{2}-\tfrac{262}{3}w-220\epsilon-
           110<0$$
           as desired.  \qedhere
        \end{enumerate} 
    \end{proof}

    \begin{remark}
    \label{remark: motivation regions}
        For some motivation, the idea is to graph the regions corresponding to the Pell's equation, and the relations $\alpha/\gamma,\beta/\gamma,\delta/\gamma>0.$
        If any two of these are (\textit{eventually}) disjoint, we can get get a contradiction.
        The easiest way to prove they are disjoint is by finding lines in $\RR^2$ separating them.
        Once we have the right guess, using a graphing software say, we can prove it works using \textsc{Macaulay2}.
        For instance, Claim \ref{claim: A0 case I abs=082 +t} is visualized in Figure \ref{fig:regions}.

        \begin{figure}[h]
            \centering
            \includegraphics[width=0.3\linewidth]{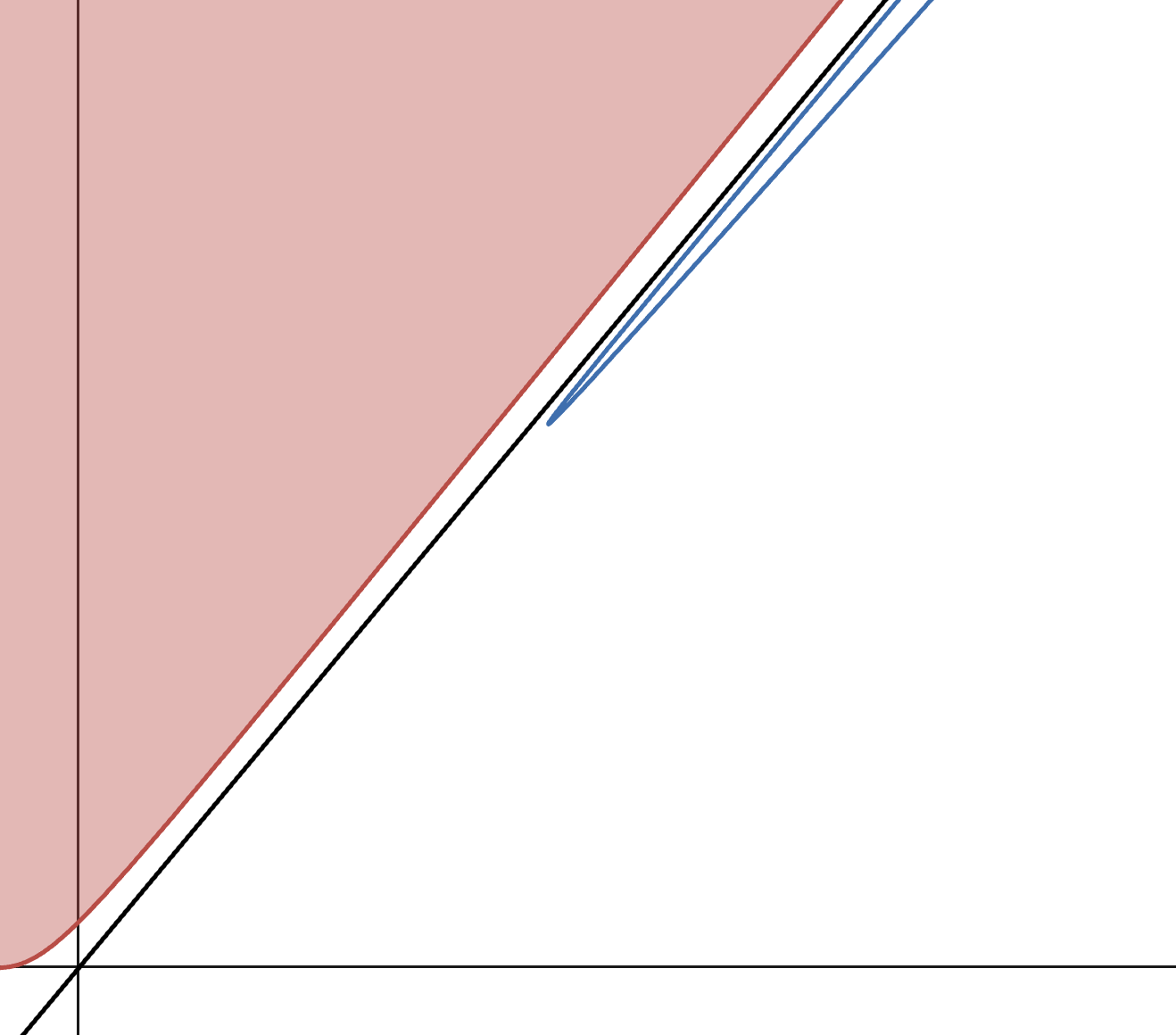}
            \caption{The red region is where $\beta/\gamma>0.$ The blue curve is the Pell's equation. The black line is $z=5/6w,$ the line separating these regions.}
            \label{fig:regions}
        \end{figure}
    \end{remark}


    \item Consider the $-t$ case. We can then compute (cf. Comp. \ref{comp: A0 Case I (a,b,s)=(0,8,2)})
    \begin{equation}
    \label{eq: A0 case I sides -t}
        \begin{split}
            \frac{\alpha}{\gamma}&=\frac{219z^{2}-345zw+135w^{2}+60z-42w+9}{z^{2}-zw+w^{2}+2z+1} \\
            &\frac{\beta}{\gamma} = \frac{-55z^{2}+265zw-181w^{2}-110z+42w-55}{z^{2}-zw+w^{2}+2z+1} \\
            \frac{\delta}{\gamma} &= \frac{210z-126w+42}{z^{2}-zw+w^{2}+2z+1}
        \end{split}
    \end{equation}
    where the denominator is positive as argued in the proof of Claim \ref{claim: A0 case I abs=082 +t}.
    We now prove that outside small values of $(z,w)$, either $(\alpha/\gamma)<0,$ or $\Width_{(1,-1)}(\Delta) < m,$ giving us a contradiction in either case. 

    \begin{figure}[h]
        \centering
        \includegraphics[scale=0.3]{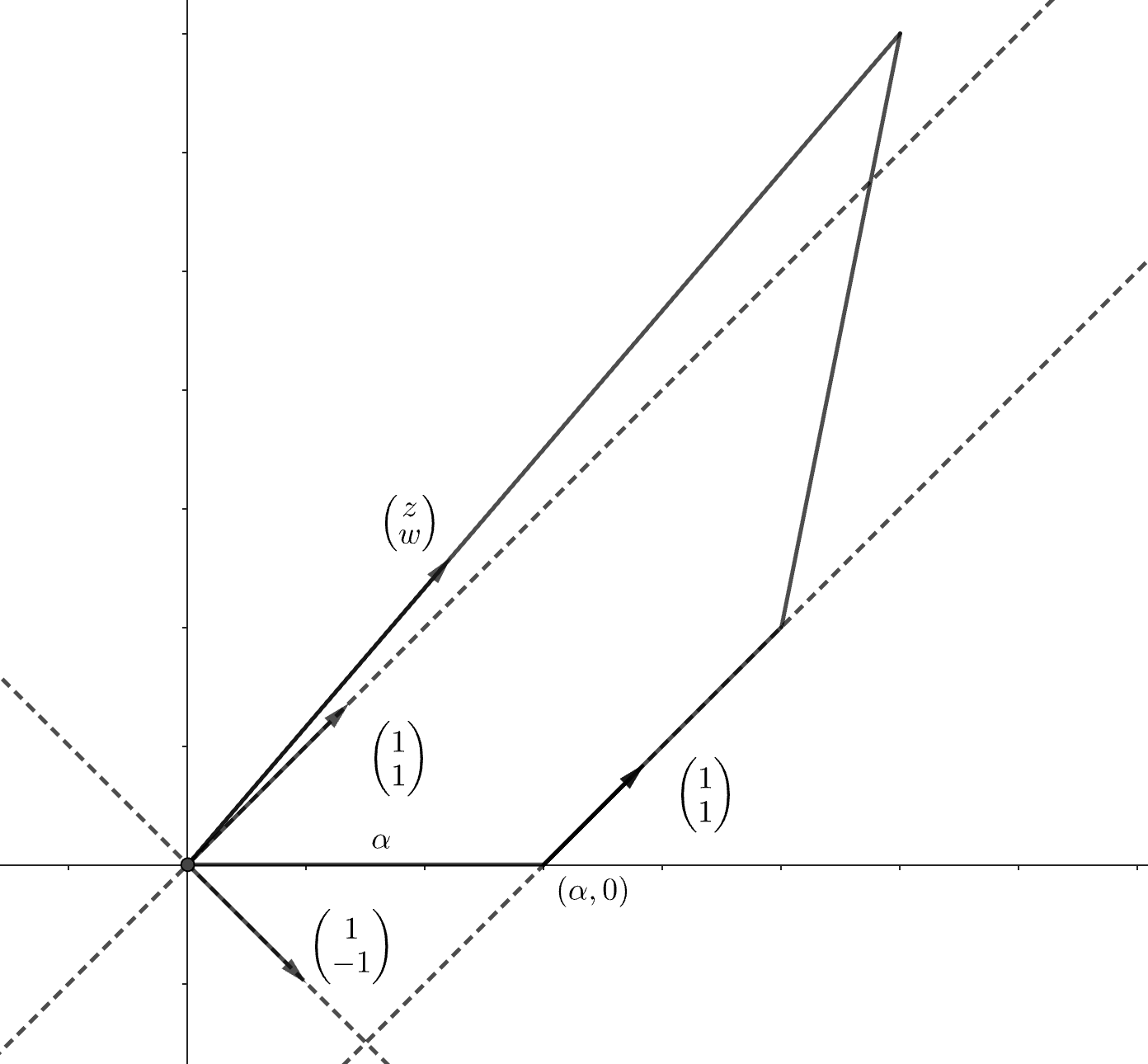}
        \caption{Width along $(1,-1)$}
        \label{fig:width(1,-1)}
    \end{figure}
    
    To compute the width, observe that $w>z$ (see comments after Equation \ref{eq: definition of M}) and $c = a+1= 1.$ As seen in Figure \ref{fig:width(1,-1)}, the width along $(1,-1)$ is 
    $$\lt\la \alpha\begin{pmatrix}
        1 \\ 0
    \end{pmatrix}, \begin{pmatrix}
        1 \\ -1
    \end{pmatrix}\rt \ra - \lt \la \delta\begin{pmatrix}
        z \\ w
    \end{pmatrix},\begin{pmatrix}
        1 \\ -1
    \end{pmatrix} \rt \ra=\alpha + (w-z)\delta.$$

    The condition $\Width_{(1,-1)}(\Delta) \ge m$ translates to
    \begin{equation}
    \label{eq: A0 case I -t width - m}
        (w-z-1)\lt(\frac{\delta}{\gamma}\rt)- \frac{\beta}{\gamma}-1 \ge 0
    \end{equation}
    Combining with Equation \ref{eq: A0 case I sides -t}, we find (cf. Comp. \ref{comp: A0 Case I (a,b,s)=(0,8,2)})
    \begin{equation} \label{eq: A0 Case I abs=082 width - m}
    \frac{-156z^{2}+72zw+54w^{2}-144z+126w+12}{z^{2}-zw+w^{2}+2z+1} \ge 0.
    \end{equation}

    \begin{claim}
    \label{claim: A0 abs = 082 -t}
    Suppose $(z,w)$ are non-negative integers with $z>84,w>4.$ Then
    \begin{enumerate}
        \item if $1.15z \ge w,$ then $\Width_{(1,-1)} < m;$
        \item if $1.18z \ge w \ge 1.15 z$ then the Pell's equation \ref{eq: A0 case I abs=082 pell} has no solutions;
        \item if $1.3z \ge w \ge 1.18z,$ then $\alpha/\gamma<0;$
        \item if $w \ge 1.3z,$ then the Pell's equation \ref{eq: A0 case I abs=082 pell} has no solutions.
    \end{enumerate}
    \end{claim}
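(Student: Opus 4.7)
The plan is to treat the four sub-claims as independent sign analyses of quadratic forms in $(z,w)$ on wedges defined by bounds on the ratio $t = w/z$; the hypothesis $z > 84$ is needed only to absorb the subleading affine terms. Throughout, I will use that $w > z > 0$ (from the comment after Equation \ref{eq: definition of M}), which makes the denominators appearing in all the relevant expressions strictly positive. Since none of the bounds on $t$ in (a)--(d) separate the regimes $w \le 4$ from the rest once $z > 84$, the side-hypothesis $w > 4$ will play no real role.

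For (a), the condition $\Width_{(1,-1)}(\Delta) \ge m$ is, via Equation \ref{eq: A0 Case I abs=082 width - m}, equivalent to $F(z,w) := -156z^{2}+72zw+54w^{2}-144z+126w+12 \ge 0$. Viewed as a quadratic in $w$ with positive leading coefficient, its positive root has asymptotic slope $\sqrt{10/3} - 2/3 \approx 1.159 > 1.15$ (since $\sqrt{38880}/108 = \sqrt{10/3}$). I would then verify directly that $F(z, 1.15 z) < 0$ for $z > 84$, and conclude from the fact that $F(z,\cdot)$ is monotone decreasing below its positive root that $F(z,w) < 0$ for all $w \le 1.15 z$; this shows the width is strictly less than $m$.

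For (b) and (d), the object to analyze is the Pell form $Q(z,w) := 55z^{2}-95zw+41w^{2}$. As a function of $t = w/z$, the quadratic $41 t^{2}-95 t + 55$ has discriminant $95^{2} - 4\cdot 41\cdot 55 = 5$, so it vanishes at $t_{\pm} = (95 \pm \sqrt{5})/82 \approx 1.131, 1.186$. For (d), at $t = 1.3$ we compute $41(1.3)^{2}-95(1.3)+55 = 0.79 > 0$; since this quadratic is increasing for $t > t_{+}$, we get $Q(z,w) > 0$ for $w \ge 1.3 z$, contradicting $Q(z,w) = -1$. For (b), the entire interval $[1.15, 1.18]$ lies strictly between $t_{\pm}$, so $Q$ is strictly negative there. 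Since $Q(z,\cdot)$ is convex (leading coefficient $41 > 0$), its maximum on $[1.15 z, 1.18 z]$ occurs at an endpoint, and direct evaluation gives at most $-\tfrac{1}{86}z^{2}$. For $z > 84$ this is well below $-1$, so no integer Pell solutions exist.

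For (c), consider the numerator $N(z,w) := 219 z^{2}-345zw+135 w^{2}+60 z-42 w+9$ of $\alpha/\gamma$ from Equation \ref{eq: A0 case I sides -t}; the denominator is positive as before. The leading quadratic $135 t^{2}-345 t + 219$ has discriminant $765$ and roots at $(345 \pm \sqrt{765})/270 \approx 1.175,\,1.380$, so it is strictly negative on $[1.18,\,1.3]$. The least negative value on that interval is $\approx -0.126$ at $t = 1.18$, and a short computation shows that the $z > 84$ bound is precisely what is needed to dominate the affine correction $60 z - 42 w + 9$; for $z > 84$ and $w/z \in [1.18, 1.3]$ one gets $N(z,w) < 0$, so $\alpha/\gamma < 0$. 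The main obstacle is calibration rather than conceptual: one must pick the separating slopes $1.15, 1.18, 1.3$ so the four sub-regions tile all possible ratios $w/z$ and each inequality is still strict at its endpoint after the affine terms are absorbed; the threshold $z > 84$ is forced by sub-case (a), where the leading quadratic is closest to zero at the boundary $w = 1.15 z$. All four steps are elementary algebra, easily mechanized via \textsc{Macaulay2} as in Computation \ref{comp: A0 Case I (a,b,s)=(0,8,2)}.
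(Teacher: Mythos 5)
Your overall strategy is the same as the paper's: each sub-case is a sign analysis of a quadratic form on a wedge $w/z\in[t_1,t_2]$, with convexity reducing everything to endpoint evaluations and $z>84$ absorbing the affine terms. The paper simply mechanizes the endpoint checks by substituting $w=(t_1+\epsilon)z$, $z=84+x$ in \textsc{Macaulay2} and reading off sign-definite coefficients, and for part (d) it cites part (a) of Claim \ref{claim: A0 case I abs=082 +t} (note $w\ge 1.3z$ gives $z\le w/1.3<\tfrac56 w$) rather than re-evaluating the form at $t=1.3$; your direct evaluation there is equally valid.

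One step of your argument is wrong as written. In (a) you assert that $F(z,\cdot)$ is \emph{monotone decreasing} below its positive root and deduce $F(z,w)<0$ for all $w\le 1.15z$ from $F(z,1.15z)<0$. If $F(z,\cdot)$ were decreasing on that range, the endpoint evaluation would give a \emph{lower} bound for $F(z,w)$, not an upper bound, and the deduction would fail. In fact the vertex of the upward-opening quadratic $w\mapsto 54w^2+(72z+126)w-156z^2-144z+12$ sits at $w=-(72z+126)/108<0$, so $F(z,\cdot)$ is \emph{increasing} on $w\ge 0$; equivalently, since the product of its roots is negative, $F(z,\cdot)<0$ on the whole interval $[0,\,1.15z]$ once $F(z,1.15z)<0$. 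With that correction (and noting $F(z,1.15z)=-\tfrac{357}{200}z^2+\tfrac{9}{10}z+12<0$ already for small $z$), part (a) goes through. A second, harmless, imprecision: in (b) the worst endpoint value is $Q(z,1.18z)=-\tfrac{29}{2500}z^2$, which is slightly \emph{larger} than your stated bound $-\tfrac{1}{86}z^2$; either way $|Q|>1$ for $z>84$, so the conclusion stands.
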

    \begin{proof}
    We use the same technique for all; introduce a dummy variable $\epsilon$ and use \textsc{Macaulay2}. See Computation \ref{comp: A0 Case I (a,b,s)=(0,8,2)} for computations.
        \begin{enumerate}
        \item Write $z =  1/1.15w+\epsilon$ and $w=4+x$ for $\epsilon,x \ge 0.$ Then the numerator of Equation \ref{eq: A0 Case I abs=082 width - m} as a function of $x,\epsilon$ is
        $$-\tfrac{714}{529}x^{2}-\tfrac{4584}{23}x\epsilon-156\epsilon^{2}-\tfrac{5298}{529}x-\tfrac{21648}{23}\epsilon-\tfrac{3420}{529} < 0 \; \forall x,\epsilon \ge 0$$
        as desired.
             
        \item Write $w=(1.15+\epsilon)z$ and $z=84+x$ for $0.03>\epsilon>0$ and $x>0.$ Then the expression $55z^2-95zw+41w^2+1$ has a function of $x,\epsilon$ is
        $$f(x,\epsilon)=(41x^2+6888x+289296)\epsilon^2+\dots.$$
        As a quadratic in $\epsilon,$ this has positive leading coefficient. So to show $f<0,$ it suffices to check that only for $\epsilon=0,0.03.$ Indeed, we find
        \begin{align*}
            &f(x,0) = -\tfrac{11}{400}x^{2}-\tfrac{231}{50}x-\tfrac{4826}{25} < 0 \; \forall x\ge 0, \\
            &f(x,0.03) = -\tfrac{29}{2500}x^{2}-\tfrac{1218}{625}x-\tfrac{50531}{625} < 0 \; \forall x\ge 0.
        \end{align*}
        as desired.
        \item Write $z=84+x, w=(1.18+\epsilon)z$ so that $x>0$ and $0.12>\epsilon>0.$ 
        Then we find the numerator of $\alpha/\gamma$ in Equation \ref{eq: A0 case I sides -t} as function of $x,\epsilon$ is
        $$f(x,\epsilon)=(135x^2+22680x+952560)\epsilon^2+\dots.$$
        As a quadratic in $\epsilon,$ this has positive leading coefficient. So to show $f<0,$ it suffices to check that only for $\epsilon=0,0.12.$ Indeed, we find
        \begin{align*}
            &f(x,0) = -\tfrac{63}{500}x^{2}-\tfrac{1341}{125}x-\tfrac{387}{125} < 0  \forall x \ge 0 \\
            &f(x,0.12) = -\tfrac{27}{20}x^{2}-\tfrac{1107}{5}x-9063 < 0  \forall x \ge 0.
        \end{align*}
        This give the desired result.
        \item This follows from $(a)$ in Claim \ref{claim: A0 case I abs=082 +t}.  \qedhere
        \end{enumerate} 
    \end{proof}

    We only have to consider the cases when $w \le 4$ or $z \le 84.$ 
    The only roots of the Pell's equation in this range are
    $$(z,w) \in \{(6,7),(7,8),(11,12),(15,17), (27,32),(38,43),(70,83)\}.$$
    All of these can be manually ruled out, see Computation \ref{comp: A0 Case I (a,b,s)=(0,8,2)}.
\end{enumerate}


\subsubsection*{Case $(a,b,s)=(8,0,2)$}
\label{subsec: Case (a,b,s)=(8,0,2)}
The Pell's equation in this case is

\begin{equation}
\label{eq: A0 case I abs=802 pell}
55z^{2}-15zw+w^{2}+1=0.
\end{equation}

Here, we get explicit roots to our quadratic in Equation \ref{eq: Case A0 I equations} in terms of $z,w$ because the intersection lies on one of the two hyperplanes (cf. Comp. \ref{comp: A0 case I 1}):
$$t=\pm (330z-30w-6).$$
However we rule out both of them: the first root will imply the lattice length of an edge of $\Delta$ is negative. For the second one, we will show either the lattice length of an edge is negative, or the width is less than $m.$ 

\medskip

As before, there are two roots to the quadratic Equation \ref{eq: Case A0 I equations}, which we treat separately. 
See \ref{comp: A0 Case I (a,b,s)=(0,8,2)} for computations.

\begin{enumerate}
    \item Consider the first root. Here we find
    $$\frac{\beta}{\gamma} = \frac{-165z^{2}+125zw-13w^{2}-330z+16w-165}{27z^{2}-27zw+3w^{2}+54z+27}.$$
    Define the numerator as $\mk b$ and the denominator as $\mk c.$ We show that for any positive $(z,w)$ satisfying the Pell's equation, either $\mk b>0$ or $\mk c>0,$ but not both.

    \begin{claim}
    \label{claim: A0 abs=802 +t}
    Let $(z,w)$ be non-negative integers with $z>3.$ Then
    \begin{enumerate}
        \item if $4z-4 \ge w,$ then the Pell's equation \ref{eq: A0 case I abs=802 pell} has no solutions;
        \item if $7.8z-4 \ge w \ge 4z-4,$ then $\mk b>0, \mk c <0;$
        \item if $8.4z-4 \ge w \ge 7.8z-4,$ then the Pell's equation \ref{eq: A0 case I abs=802 pell} has no solutions;
        \item if $w \ge 8.4z-4,$ then $\mk b < 0, \mk c > 0.$
    \end{enumerate}
    \end{claim}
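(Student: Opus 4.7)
I follow the template of Claim \ref{claim: A0 abs = 082 -t} verbatim: each of (a)--(d) is a polynomial sign or non-vanishing statement on a half-strip in the $(z,w)$-plane. To verify it, I substitute $z = 4 + x$ (since $z > 3$ forces $z \ge 4$) and $w = L(z) + \epsilon$, where $L(z)$ is the lower linear boundary of the strip; when the strip is bounded above by $U(z)$, I further impose $\epsilon \le U(z) - L(z)$. The target polynomial --- either the Pell form $Q(z,w) := 55z^{2} - 15zw + w^{2} + 1$ in (a) and (c), or the numerator $\mathfrak{b}$ and denominator $\mathfrak{c}$ from the expression for $\beta/\gamma$ in (b) and (d) --- becomes a polynomial $P(x,\epsilon)$ with rational coefficients, which is either manifestly single-signed on $\{x,\epsilon \ge 0\}$ from its monomial expansion, or --- when the leading $\epsilon$-coefficient has the wrong sign --- single-signed by a convexity-in-$\epsilon$ reduction to the two endpoint univariate polynomials $P(x,0)$ and $P(x, U(z)-L(z))$ at $x \ge 0$. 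The mechanics are identical to the $(0,8,2)$ case and can again be handled in \textsc{Macaulay2}.

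\textbf{Geometric picture.} The Pell form $Q$ has discriminant $5z^{2} - 4$ in $w$, so its two hyperbola branches have asymptotic slopes $(15 \pm \sqrt{5})/2 \approx 6.38$ and $8.62$. The numerator $\mathfrak{b} = -13w^{2} + (125z + 16)w - (165z^{2} + 330z + 165)$ is negative-definite in $w$ and positive between its roots at $w \approx 1.58z$ and $w \approx 8.04z$. The denominator $\mathfrak{c} = 3(9z^{2} - 9zw + w^{2} + 18z + 9)$ is positive-definite in $w$ and negative between its roots at $w \approx 1.15z$ and $w \approx 7.85z$. The thresholds $4$, $7.8$, $8.4$ are chosen to interleave these four critical slopes: in (a), $w \le 4z - 4$ lies well below the lower Pell slope $6.38$, so $Q > 0$; in (b), $4z - 4 \le w \le 7.8z - 4$ lies strictly between the two $\mathfrak{b}$-roots and strictly below the upper $\mathfrak{c}$-root $7.85$, so $\mathfrak{b} > 0$ and $\mathfrak{c} < 0$; in (c), $7.8z - 4 \le w \le 8.4z - 4$ lies in the gap between the two Pell branches, so $Q < 0$; in (d), $w \ge 8.4z - 4$ lies above both the upper $\mathfrak{b}$-root $8.04$ and the upper $\mathfrak{c}$-root $7.85$, so $\mathfrak{b} < 0$ and $\mathfrak{c} > 0$.

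\textbf{Main obstacle.} The delicate point is that the four critical slopes $6.38$, $7.85$, $8.04$, $8.62$ are tightly clustered, so the polynomial estimates leave little slack: the cut-offs $7.8$ and $8.4$ must be fine-tuned to fit into the gaps $7.85 - 7.8 > 0$ and $8.62 - 8.4 > 0$ while still leaving enough room for the lower-order $x$ and $\epsilon$ terms to be absorbed uniformly for $z \ge 4$. Finding these constants and verifying the resulting polynomial inequalities is essentially what the \textsc{Macaulay2} search accomplishes, and by analogy with the previous case I expect the verification to require viewing $P(x,\epsilon)$ as a quadratic in $\epsilon$ with an unfavorable leading coefficient in (c), followed by the endpoint convexity trick. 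Finally, any sporadic small-$z$ Pell solutions not captured by the asymptotic argument would be enumerated and eliminated by hand, as at the end of the $(0,8,2)$ subsection; given that $Q(z,w)=0$ has no integer solutions for any $z \in \{4,\dots,\text{small bound}\}$ in the strips (c) and (a), and that (b), (d) are vacuous in the absence of Pell solutions, this cleanup is expected to be brief.
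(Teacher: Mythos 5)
Your proposal is correct and follows essentially the same route as the paper: the same linear substitutions $z=\text{const}+x$, $w=\text{(linear in }z)+\epsilon$, the same reduction of each sign claim to endpoint checks of a quadratic in $\epsilon$ verified in \textsc{Macaulay2}, and the same interleaving of the thresholds $4,7.8,8.4$ with the critical slopes $(15\pm\sqrt5)/2\approx 6.38,8.62$ of the Pell form and the roots $\approx 1.15z,7.85z$ of $\mk c$ and $\approx 1.58z,8.04z$ of $\mk b$ (the paper's only shortcut is disposing of part (a) by the one-line observation $55z^2<15zw$). Your closing remark about enumerating sporadic small-$z$ Pell solutions belongs to the surrounding argument rather than to this claim, whose hypotheses $z>3$ are covered uniformly by the substitutions.
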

    \begin{proof}
    We use the same technique for all; introduce a dummy variable $\epsilon$ and use \textsc{MAcaulay2}. See Computation \ref{comp: A0 Case I (a,b,s)=(0,8,2)} for computations.
    \begin{enumerate}
        \item Observe that $0>55z^2-15zw$ shows $w/z>55/15=11/3>3,$ further showing $w \ge 4z > 4z-4.$
        \item Write $w=(7.8-\epsilon)z-4, z=4+x$ for $3.8 \ge \epsilon \ge 0$ and $x \ge 0.$ We then compute $\mk b,\mk c$ as a function of $x,\epsilon$ as
        \begin{align*}
            &\mk b(x,\epsilon)= (-13x^2-104x-208)\epsilon^2+\dots\\
            &\mk c(x,\epsilon)=(3x^2+24x+48)\epsilon^2+\dots 
        \end{align*}
        As a quadratic in $\epsilon,$ $\mk b, \mk c$ have negative, positive leading coefficients (resp.). Thus to show they are positive, negative (resp.), it suffices to check $\mk b>0, \mk c<0$ for $\epsilon=0,3.8.$ We can check
        \begin{align*}
            &\mk b(x,0) = \tfrac{477}{25}x^{2}+\tfrac{6466}{25}x+\tfrac{7307}{25}> 0 \text{ }\forall x \ge 0 \\
            &\mk b(x,3.8) = 127x^{2}+666x+195>0\text{ }\forall x \ge 0 \\
            &\mk c(x,0) = -\tfrac{27}{25}x^{2}-\tfrac{846}{25}x-\tfrac{1077}{25} < 0 \text{ }\forall x \ge 0 \\
            &\mk c(x,3.8) = -33x^{2}-198x-189<0\text{ }\forall x \ge 0
        \end{align*}
        giving the desired result.
        \item Write $z=3+x$ and $w=(7.8+\epsilon)z-4$ for $x \ge 0$ and $0.6>\epsilon>0.$ Then $55z^2-15zw+w^2+1$ as a function of $x,\epsilon$ is
        $$f(x,\epsilon)=(x^2+6x+9)\epsilon^2+\dots.$$
        As a quadratic in $\epsilon,$ this has leading coefficient positive, and hence we only need to show this is negative (for all $x \ge 0)$ for $\epsilon=0,0.6.$ Indeed,
        \begin{align*}
            &f(x,0) = -\tfrac{29}{25}x^{2}-\tfrac{234}{25}x-\tfrac{16}{25} < 0 \text{ }\forall x \ge 0 \\
            &f(x,3.8) = -\tfrac{11}{25}x^{2}-\tfrac{246}{25}x-\tfrac{214}{25}<0\text{ }\forall x \ge 0
        \end{align*}
        giving the desired result.
        \item Write $w=8.4z-4+\epsilon$ for $\epsilon \ge 0$  We then find 
        $$\mk b =-13\epsilon^2+(120-\tfrac{467}{5}z)\epsilon-\tfrac{807}{25}z^{2}+178z-437.$$
        As a quadratic in $\epsilon,$ the coefficients are all negative since $120-467/5z<0$ for $z \ge 2$ and $-\tfrac{807}{25}z^{2}+178z-437$ is always negative (its discriminant and leading term are both negative). 
        Hence $\mk b<0.$
        Furthermore,
        $$\mk c = 3\epsilon^{2}+( \tfrac{117}{5}z-24)\epsilon+ \tfrac{297}{25}z^{2}-\tfrac{198}{5}z+75.$$
        As a quadratic in $\epsilon,$ the coefficients are all positive since $\tfrac{117}{5}z-24>0$ for $z \ge 2$ and $\tfrac{297}{25}z^{2}-\tfrac{198}{5}z+75$ is always positive (its discriminant is negative but leading term positive). 
        Hence $\mk c>0.$  \qedhere
    \end{enumerate} 
    \end{proof}
    We only need to analyze the roots with $z \le 3.$ All such solutions of the Pell's equations are
    $$(z,w) \in \{(1,7), (1,8), (2,13),(2,17)\}.$$
    However, for all these values, $\beta/\gamma < 0$ as one can check. This finishes this case.
    

    \item Consider the second root. Here we find
    \begin{equation}
    \label{eq: A0 case I abs = 802 -t root}
    \begin{split}
        \frac{\alpha}{\gamma} &= \frac{633z^{2}-115zw+5w^{2}+132z-14w+3}{27z^{2}-27zw+3w^{2}+54z+27} \\
        &\frac{\beta}{\gamma} = \frac{-165z^{2}+235zw-23w^{2}-330z+14w-165}{27z^{2}-27zw+3w^{2}+54z+27} \\
        \frac{\delta}{\gamma} &= \frac{210z-14w+42}{9z^{2}-9zw+w^{2}+18z+9}.
    \end{split}
    \end{equation}    
    
    In this case, we show that any positive $(z,w)$ satisfying the Pell's equation in \ref{lemma: pell's equation for A0} will satisfy $\Width_{(9,-1)}(\Delta) < m,$ which would be a contradiction. 
    To calculate this width, we need to prove something first:

    \begin{claim}
    \label{claim: abs=802 root 2 claim 1}
        For any positive $(z,w)$ satisfying the Pell's equation, $w < 9z.$
    \end{claim}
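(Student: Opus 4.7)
The plan is to read the Pell equation $55z^{2}-15zw+w^{2}+1=0$ as a monic quadratic in $w$ and explicitly bound its larger real root. Completing the square in $w$ produces $(2w-15z)^{2}=5z^{2}-4$, so the two real solutions are
\[
w_{\pm}=\frac{15z\pm\sqrt{5z^{2}-4}}{2}.
\]
Any positive real $w$ satisfying the equation is one of $w_{\pm}$, so it would suffice to show that the larger root $w_{+}$ is strictly less than $9z$ for every $z\geq 1$.

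Next I would reduce the inequality $w_{+}<9z$ to $\sqrt{5z^{2}-4}<3z$. Since $z\geq 1$ makes both sides nonnegative, squaring is valid and gives the equivalent $5z^{2}-4<9z^{2}$, i.e.\ $4z^{2}+4>0$, which is trivial. Thus every positive real solution to the Pell equation satisfies $w\leq w_{+}<9z$; in particular every positive integer solution does, which is the claim.

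There is no real obstacle here; the only point worth double-checking is that we are comparing $9z$ with the correct (larger) root, which is automatic because the vertex of the parabola $w\mapsto w^{2}-15zw+(55z^{2}+1)$ sits at $15z/2<9z$, so any $w\geq 9z$ lies strictly to the right of $w_{+}$. Equivalently, one could bypass the quadratic-formula manipulation by plugging $w=9z$ directly into the equation to obtain $55z^{2}-135z^{2}+81z^{2}+1=z^{2}+1>0$ and then invoking the same vertex remark to conclude $9z>w_{+}$.
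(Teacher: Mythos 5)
Your argument is correct and amounts to the same elementary observation as the paper: the paper substitutes $w=9z+\epsilon$ with $\epsilon\ge 0$ and notes that $55z^{2}-15zw+w^{2}+1$ becomes $z^{2}+3z\epsilon+\epsilon^{2}+1>0$, which is exactly the "plug in $w=9z$ and use the vertex at $15z/2$" variant you mention at the end. Your completing-the-square computation $(2w-15z)^{2}=5z^{2}-4$ and the bound $\sqrt{5z^{2}-4}<3z$ check out, so either presentation is fine.
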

    \begin{proof}
        We prove the contrapositive, write $w=9z+\epsilon.$ Then $55z^2-15zw+w^2+1$ equals
        $$z^2 + 3z\epsilon + \epsilon^2  + 1$$
        which is always nonzero.
    \end{proof}

    Now, observe the side with lattice length $\beta$ has vector $(1,c)^T=(1,9)^T,$ i.e. it is perpendicular to $(9,-1)^T.$
    Further, as $w/z<9$ by Claim \ref{claim: abs=802 root 2 claim 1}, the width along $(9,-1)$ is (also see Figure \ref{fig:width(1,-1)})
    $$\lt \la \alpha\begin{pmatrix}
        1\\0
    \end{pmatrix}, \begin{pmatrix}
        9 \\-1
    \end{pmatrix} \rt \ra=9\alpha.$$
    The condition $\Width_{(9,-1)}(\Delta) \ge m$ then implies
    $$8\lt(\frac{\alpha}{\gamma}\rt) -\frac{\beta}{\gamma}-\frac{\delta}{\gamma}-1 \ge 0,$$
    which with Equation \ref{eq: A0 case I abs = 802 -t root} is the same as
    \begin{equation}
    \label{eq: widthM abs=802 root 2}
        \frac{1734z^{2}-376zw+20w^{2}+234z-28w+12}{9z^{2}-9zw+w^{2}+18z+9} \ge 0.
    \end{equation}
    
    Let the numerator be $\mk b$ and denominator be $\mk c.$

    \begin{claim}
    \label{claim: 10 abs = 802 root 2 claim 4}
        For any positive $(z,w)$ satisfying the Pell's equation, if $\delta/\gamma > 0,$ then $\mk c > 0.$
    \end{claim}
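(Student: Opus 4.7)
The plan is to reduce the claim to a single sign check on the Pell curve. From Equation \eqref{eq: A0 case I abs = 802 -t root}, $\delta/\gamma = (210z - 14w + 42)/\mk c = 14(15z - w + 3)/\mk c$, so the hypothesis $\delta/\gamma > 0$ forces the numerator and $\mk c$ to share the same nonzero sign. Hence it suffices to show that $15z - w + 3 > 0$, i.e.\ $w < 15z + 3$, on every positive solution of the Pell's equation \eqref{eq: A0 case I abs=802 pell}.

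To verify this, I would view the Pell's equation as a quadratic in $w$, namely $w^2 - 15zw + 55z^2 + 1 = 0$. Its discriminant is $5z^2 - 4$, so positive real solutions take the form $w = (15z \pm \sqrt{5z^2 - 4})/2$ and in particular are bounded above by $w_+ := (15z + \sqrt{5z^2 - 4})/2$. The inequality $w_+ < 15z + 3$ rearranges to $\sqrt{5z^2 - 4} < 15z + 6$, which after squaring (both sides being positive) becomes the trivial bound $5z^2 - 4 < 225z^2 + 180z + 36$. So $w < 15z + 3$ on every positive Pell solution, the numerator of $\delta/\gamma$ is strictly positive there, and therefore $\delta/\gamma > 0$ forces $\mk c > 0$.

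There is no real obstacle in this argument; its substance lies in two small observations. First, the numerator of $\delta/\gamma$ factors cleanly as $14(15z - w + 3)$, so a sign analysis of a linear expression suffices. Second, the Pell constraint bounds $w$ linearly in $z$ sharply enough to keep this linear expression positive. An equivalent alternative route would be to substitute $w^2 = 15zw - 55z^2 - 1$ into $\mk c$, simplifying it on the Pell curve to $\mk c = 2(-23z^2 + 3zw + 9z + 4)$, and then compare this conic with the Pell curve in the $(z,w)$-plane directly; however, the sign-chasing approach above is cleaner.
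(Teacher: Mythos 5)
Your proposal is correct and follows essentially the same route as the paper: reduce to showing the numerator $210z-14w+42$ of $\delta/\gamma$ is positive, then bound $w$ linearly in $z$ from the Pell's equation. The paper obtains the (slightly sharper) bound $w<15z$ more directly, from $0=55z^{2}-15zw+w^{2}+1>w^{2}-15zw$, whereas you route through the quadratic formula to get $w<15z+3$; both suffice.
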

    \begin{proof}
        Since $\delta/\gamma$ in Equation \ref{eq: A0 case I abs = 802 -t root} has the same denominator $\mk c,$
        it suffices to show the numerator is positive, i.e. $210z-14w+42 > 0.$ However, as $0=55z^2-15zw+w^2+1>w^2-15zw$ shows $15z>w,$ we conclude $210z-14w+42=14(15z)-14w+42>0.$
    \end{proof}

    \begin{claim}
    \label{claim: A0 abs=802 -t}
    Let $(z,w)$ be non-negative integers with $z \ge 3.$ Then
    \begin{enumerate}
        \item if $3z \ge w,$ then the Pell's equation \ref{eq: A0 case I abs=802 pell} has no solutions;
        \item if $6.5z \ge w \ge 3z,$ then $\mk c < 0;$
        \item if $8.4z \ge w \ge 6.5z,$ then the Pell's equation \ref{eq: A0 case I abs=802 pell} has no solutions;
        \item if $9z \ge w \ge 8.4z,$ then $\mk b<0;$
        \item if $w \ge 9z,$ then the Pell's equation \ref{eq: A0 case I abs=802 pell} has no solutions.
    \end{enumerate}
    \end{claim}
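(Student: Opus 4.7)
The plan is to follow the exact template established in Claim~\ref{claim: A0 abs=802 +t}: for each of the five subcases, I parametrize the relevant strip by a nonnegative dummy variable $\epsilon$ (and when the claim requires a lower bound $z \ge 3$, also write $z = 3 + x$ with $x \ge 0$), substitute into the polynomial in question, and verify the resulting expression in $(x,\epsilon)$ has definite sign. When the substituted polynomial is quadratic in $\epsilon$ with a leading coefficient whose sign can be read off directly, convexity reduces the inequality to a check at the two endpoints of the allowed $\epsilon$-range, giving one-variable polynomial inequalities in $x$ that \textsc{Macaulay2} can handle exactly as in Computation~\ref{comp: A0 Case I (a,b,s)=(0,8,2)}.

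For parts (1) and (5), the Pell's quadratic $55z^2-15zw+w^2+1$ is bounded away from its vanishing locus. Part (5) is Claim~\ref{claim: abs=802 root 2 claim 1} verbatim. For part (1), substitute $w = 3z-\epsilon$ with $\epsilon \ge 0$; the result simplifies to $19z^2 + \epsilon^2 + 9z\epsilon + 1$ which is strictly positive, so the equation has no integer solutions. Part (3) is the delicate interior case: the strip $6.5z \le w \le 8.4z$ lies inside the root band $[(15-\sqrt 5)z/2,\,(15+\sqrt 5)z/2] \approx [6.382z,\,8.618z]$, but the solution branches of the Pell's equation asymptote to the band boundaries from the outside of the strip. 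Writing $w=(6.5+\epsilon)z$ with $\epsilon\in[0,1.9]$ and $z=3+x$ with $x\ge 0$, the substituted Pell's expression becomes a polynomial quadratic in $\epsilon$ with positive leading coefficient (cf. the analogous reduction in Claim~\ref{claim: A0 abs=802 +t}(c)); it then suffices to verify negativity of the two endpoint polynomials $f(x,0)$ and $f(x,1.9)$, which are both manifestly negative on $x\ge 0$.

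Parts (2) and (4) are sign analyses on $\mk c$ and $\mk b$ respectively. For part (2), write $w=(3+\epsilon)z$ with $\epsilon\in[0,3.5]$. Then $\mk c(z,\epsilon) = 9z^2 - 9z(3+\epsilon)z + (3+\epsilon)^2z^2 + 18z + 9$ is a quadratic in $\epsilon$ whose leading $\epsilon^2$ coefficient is $z^2 > 0$; the sign of $\mk c$ at the two endpoints $\epsilon=0$ and $\epsilon=3.5$ will both be negative for $z\ge 3$, and convexity in $\epsilon$ forces $\mk c<0$ on the strip. Part (4) is identical in spirit: write $w=(8.4+\epsilon)z$ with $\epsilon\in[0,0.6]$ and compute $\mk b$ as a quadratic in $\epsilon$, whose leading coefficient is $20z^2>0$; endpoint checks at $\epsilon=0,0.6$ with $z=3+x$, $x\ge 0$, will yield polynomials in $x$ negative throughout, implying $\mk b<0$.

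The main obstacle will be part (3), where the strip lies strictly inside the root band and one might fear that Pell's solutions intrude. The assumption $z\ge 3$ is essential: for $z\in\{1,2\}$ one checks directly (enumerating the finitely many small solutions) that no Pell's solution sits in the strip $[6.5z, 8.4z]$, and these exceptional values are absorbed into the finite list of small roots that the outer case analysis handles separately. Beyond this, the only care required is tracking which endpoint of the $\epsilon$-range yields the extremal value of each quadratic; since all leading coefficients are monomials in $z$, convexity arguments mirror those in Claim~\ref{claim: A0 abs=802 +t} and no new obstruction arises.
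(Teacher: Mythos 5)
Your proposal is correct and follows essentially the same route as the paper: substitute $w=(\lambda+\epsilon)z$ (and $z=3+x$), observe the substituted polynomial is quadratic in $\epsilon$ with positive leading coefficient $cz^2$, and reduce to sign checks at the two endpoints of the $\epsilon$-range, with part (5) deferred to the earlier claim $w<9z$. One tangential slip: for $z=2$ the Pell solution $(2,13)$ \emph{does} lie in the strip $[6.5z,8.4z]$ (which is exactly why $z\ge 3$ is hypothesized and why $(2,13)$ must be handled in the finite leftover list), but this does not affect the claim as stated.
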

    \begin{proof}
    We use the same technique for all; introduce a dummy variable $\epsilon$ and use \textsc{MAcaulay2}. See Computation \ref{comp: A0 Case I (a,b,s)=(0,8,2)} for computations.
    \begin{enumerate}
        \item Observe that $0>55z^2-15zw$ shows $w/z>55/15=11/3>3,$ showing $w > 3z.$
        \item Write $w=(3+\epsilon)z$ for $3.5>\epsilon>0,$ and $z=3+x.$ As a function of $x,\epsilon,$ $\mk c$ equals
        $$f(x,\epsilon)=(x^2+6x+9)\epsilon^2+\dots.$$
        As a quadratic in $\epsilon,$ this has positive leading coefficient for all $x>0.$ 
        Thus to show it's negative for all $3.5>\epsilon>0,$ we only need to check that for $\epsilon =0,3.5.$ Indeed, we find
        \begin{align*}
            &f(x,0) = -9x^{2}-36x-18 < 0 \text{ } \forall x \ge 0\\
            &f(x, 3.5) = -\tfrac{29}{4}x^{2}-\tfrac{51}{2}x-\tfrac{9}{4} < 0 \text{ } \forall x \ge 0.
        \end{align*}
        Hence we have our claim.
        \item Write $w=(6.5+\epsilon)z$ for $1.9>\epsilon>0$ and $z=3+x$ for $x>0.$ As a function of $x,\epsilon,$ the expression $55z^2-15zw+w^2+1$ equals
        $$f(x,\epsilon)=(x^2+6x+9)\epsilon^2+\dots.$$
        As a quadratic in $\epsilon,$ this has positive leading coefficient for all $x>0.$ 
        Thus to show it's negative for all $1.9\ge \epsilon\ge0,$ we only need to check that for $\epsilon =0,1.9.$ Indeed, we find
        \begin{align*}
            &f(x,0) = -\tfrac{1}{4}x^{2}-\tfrac{3}{2}x-\tfrac{5}{4} < 0 \text{ } \forall x \ge 0\\
            &f(x, 1.9) = -\tfrac{11}{25}x^{2}-\tfrac{66}{25}x-\tfrac{74}{25} < 0 \text{ } \forall x \ge 0.
        \end{align*}
        Hence we have our claim.
        \item Write $w=(8.4+\epsilon)z$ with $0.6>\epsilon>0,$ and $z=2+x.$ We then find $\mk b$ as a function of $x,\epsilon:$
        $$f(x,\epsilon) = (20x^2+80x+80)\epsilon^2+\dots.$$
        As a quadratic in $\epsilon,$ this has positive leading coefficient for all $x>0.$ 
        Thus to show it's negative for all $0.6>\epsilon>0,$ we only need to check that for $\epsilon =0,0.6.$ Indeed, we find
        \begin{align*}
            &f(x,0) = -\tfrac{66}{5}x^{2}-54x-\tfrac{216}{5} < 0 \text{ } \forall x > 0\\
            &f(x, 0.6) = - 30x^2  - 138x - 144 < 0 \text{ } \forall x > 0.
        \end{align*}
        Hence we have our claim.
        \item This was Claim \ref{claim: abs=802 root 2 claim 1}.  \qedhere
    \end{enumerate} 
    \end{proof}

    Hence it suffices to consider roots of the Pell's equation with $z < 3.$ These are
    $$(z,w) \in \{(1,7),(1,8),(2,13),(2,17)\}.$$
    For the first two roots, we find $\beta/\gamma<0$ (see Computation \ref{comp: A0 Case I (a,b,s)=(0,8,2)}). For the last root, we find $\Width_{(9,-1)}<m.$
    For $(z,w)=(2,13),$ we however find $\Width_{(1,0)}(\Delta)<m,$ exhausting all possibilities.
\end{enumerate}

\subsection{Case II}
\label{subsec: A0 Case II}

We can do an analysis similar to that in Section \ref{subsec: A0 Case I}. We start by translating the combinatorial structure to an algebraic equation. Recall $\mf A^R$ stands for the reverse of a matrix (cf. Def. \ref{lemdef: reverse matrix}).

\begin{lemma}
\label{lemma: A0 case II both equations}
There is a bijection between semi-elliptic (smooth) toric surfaces in Case II and integral matrix solutions $\{\mf M,\mf N\}$ of the system
\begin{equation}
\label{eq: A0 case II eq 1}
    \mf M \begin{pmatrix}
    0 & -1 \\
    1 & 1
\end{pmatrix} \mf N^R =\begin{pmatrix}
    -s & -1 \\
    s+1 & 1
\end{pmatrix} 
\end{equation}
\begin{equation}
    \label{eq: A0 case II eq 2}
        \mf N \begin{pmatrix}
        0&-1\\1&1
    \end{pmatrix}\mf M \begin{pmatrix}
        0&-1\\1&1
    \end{pmatrix} \begin{pmatrix}
        0&-1\\1&2
    \end{pmatrix}^a \begin{pmatrix}
        0&-1\\1&s+5
    \end{pmatrix}\begin{pmatrix}
        0&-1\\1&2
    \end{pmatrix}^b\begin{pmatrix}
        0&-1\\1&1
    \end{pmatrix}=\mf I
    \end{equation}
where $s \in \ZZ_{>0}$ and $a+b=8$ are any non-negative integers.
\end{lemma}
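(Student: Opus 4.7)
The plan is to mirror the proof of Lemma \ref{lemma: A0 case I both equations} almost verbatim, changing only the orientation with which $\wt R$ attaches to $\mc S'.$ Recall that Case II is the configuration in which $\wt R$ meets the outer endcurve of one of the two chains $\mc S, \mc S'$ and the inner endcurve of the other. Without loss of generality, suppose $\wt R$ meets $\mc E_{\mathrm{out}} \subset \mc S$ and $\mc E'_{\mathrm{in}} \subset \mc S'$ (cf. Def. \ref{def: inner and outer curves}). Define $\mf M$ to be the matrix of the chain $\mc S$ read from its inner to its outer endcurve, and $\mf N$ to be the matrix of $\mc S'$ read from its outer to its inner endcurve, exactly as in Case I.

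For Equation \ref{eq: A0 case II eq 1}, I would invoke Lemma \ref{lem: classification of matrices, -(s+5)}, which says that the relevant chain $\mc S \cup \wt R \cup \mc S'$ contracts to a smooth point with matrix $\begin{pmatrix} -s & -1 \\ s+1 & 1 \end{pmatrix}.$ Traversing the chain from the inner end of $\mc S$ to the outer end of $\mc S'$ passes first through $\mc S$ in the direction of $\mf M,$ then across the $(-1)$ curve $\wt R$ (contributing $\begin{pmatrix} 0 & -1 \\ 1 & 1 \end{pmatrix}$), and finally across $\mc S'$ from inner to outer. Since $\mf N$ was defined in the opposite direction, this last factor is $\mf N^R,$ yielding precisely Equation \ref{eq: A0 case II eq 1}.

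For Equation \ref{eq: A0 case II eq 2}, I would use that the toric boundary of $Y$ does not contain $\wt R,$ so the boundary cycle is structurally identical to that in Case I. Starting at the outer end of $\mc S'$ and going around the boundary, one passes through $\mc S'$ (matrix $\mf N$), the $(-1)$ curve $\mc D,$ then $\mc S$ (matrix $\mf M$), the $(-1)$ curve adjacent to the $A_a$ chain, $a$ copies of a $(-2)$ curve, the heavy curve of self-intersection $-(s+5)$ (by Lemma \ref{lem: classification of matrices, -(s+5)}), $b$ copies of a $(-2)$ curve, and finally the last $(-1)$ curve, closing the cycle. Invoking the matrix relation of Lemma \ref{lem: matrix equation} delivers Equation \ref{eq: A0 case II eq 2}.

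The bijection then runs as in Case I. Given any integer matrices $(\mf M, \mf N)$ satisfying both equations together with $s \in \ZZ_{>0}$ and $a+b=8,$ Lemma \ref{lem: matrix equation} constructs a smooth toric surface with the prescribed boundary self-intersections, and Equation \ref{eq: A0 case II eq 1} certifies that attaching a $(-1)$ curve $\wt R$ between the outer end of $\mc S$ and the inner end of $\mc S'$ produces a chain contracting to a smooth point, so the resulting configuration is semi-elliptic in Case II. The only subtlety—and essentially the only obstacle—is tracking orientations carefully: the sole difference from Case I is the appearance of $\mf N^R$ rather than $\mf N$ in the first equation, reflecting that $\wt R$ enters $\mc S'$ at its inner endcurve in Case II, while the boundary equation is unaffected because the toric boundary cycle does not depend on which endcurve $\wt R$ attaches to within each chain.
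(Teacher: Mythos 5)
Your proposal is correct and follows exactly the route the paper takes: the paper's own proof simply declares the argument identical to that of Lemma \ref{lemma: A0 case I both equations}, with $\mf A_{i+1}$ replaced by $\mf A_{i+1}^R$ in Equation \ref{eq: A0 case I A_i equation} to account for $\wt R$ attaching at the inner rather than outer endcurve of one chain. Your added explanation of why the boundary-cycle equation \ref{eq: A0 case II eq 2} is unchanged is accurate and consistent with the paper's treatment.
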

\begin{proof}
Identical to the proof of Lemma \ref{lemma: A0 case I both equations}. The only difference would be to change $\mf A_{i+1}$ to $\mf A_{i+1}^R$ in Equation \ref{eq: A0 case I A_i equation}, see Figure \ref{fig:A0 case II}.
\end{proof}
\begin{figure}[h]
    \centering
    \includegraphics[scale=0.2]{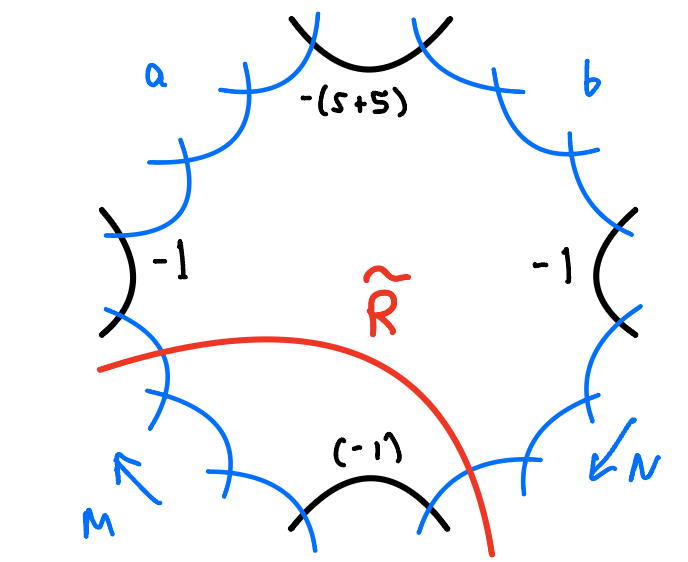}
    \caption{Schematic picture for this case. Note the positions of $a,b$ so that Equation \ref{eq: A0 case II eq 2} holds.}
    \label{fig:A0 case II}
\end{figure}
Write
\begin{equation}
\label{eq: definition of M 2}
    \mf M=\begin{pmatrix}
    x&y\\z&w
\end{pmatrix}.
\end{equation}
Observe here that $x < y<0 < z<w$ and $\det(\mf M)=xw-yz=1$ (cf. Lem. \ref{lem: matrix and chain}).
This time, unlike Case I, there are a finite number of solutions $\{\mf M,\mf N\}$ to the Equations \ref{eq: A0 case II eq 1}, \ref{eq: A0 case II eq 2}. 
However, none of them give an elliptic quadrilateral.
We study these solutions this in the following lemma:

\begin{lemma}
\label{lemma: A0 Case II key lemma}
For each non-negative $(a,b)$ pair with $a+b=8$, solutions $(z,w)$ to the system in Lemma \ref{lemma: A0 case II both equations} lie on a curve in $\mathbb{A}^2$ listed in Table \ref{table:A0 case II}. However, none of them give an elliptic quadrilateral.
\end{lemma}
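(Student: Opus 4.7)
The plan is to parallel the algebraic attack from Case I (Subsection \ref{subsec: A0 Case I}), taking advantage of the fact that in Case II the matrix $\mf N$ is replaced by $\mf N^R$ in Equation \eqref{eq: A0 case II eq 1}. For each of the nine pairs $(a,b)$ with $a+b=8$, write $\mf M$ as in Equation \eqref{eq: definition of M 2}, use Equation \eqref{eq: A0 case II eq 1} to solve for $\mf N^R$ (and hence $\mf N$) in terms of $\mf M$ and $s$, and substitute into Equation \eqref{eq: A0 case II eq 2}. This yields four polynomial equations in $\{x,y,z,w,s\}$ subject to $xw-yz=1$. Running \texttt{minimalPrimes} in \textsc{Macaulay2} on the resulting ideal and projecting onto $\mathbb{A}^2_{z,w}$ produces, for each $(a,b)$, a finite list of curves in $\mathbb{A}^2_{z,w}$; these are the curves collected in Table \ref{table:A0 case II}. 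This proves the first sentence of the lemma.

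For the second sentence, the idea is to combine each such curve with the algebraic constraint $\Vol(\Delta)=m^2$. Using the analogue of Equation \eqref{eq: A0 a', b' expression} adapted to the Case II orientation (Figure \ref{fig:A0 case II}) together with Lemma \ref{lem: vol = m^2 equation}, the identity $\Vol(\Delta)=m^2$ becomes a homogeneous quadratic form in $\gamma,\delta$ whose coefficients are polynomial in $z,w$. A rational ratio $\delta/\gamma$ requires that the discriminant be a perfect square, say $t^2$; after restricting $(z,w)$ to a curve from Table \ref{table:A0 case II}, this condition $\mathrm{disc}=t^2$ reduces to a single biquadratic equation, as flagged in the introduction. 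Such a biquadratic has only finitely many integer solutions, which can be enumerated directly by bounding the leading terms or via \textsc{Macaulay2}.

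The main obstacle is the enumeration step: unlike Case I, the Case II curves are of higher degree in $z,w$, so the $w/z$-window arguments used in Claims \ref{claim: A0 case I abs=082 +t}--\ref{claim: A0 abs=802 -t} no longer reduce to a single Pell form, and one must bound the solution set of the biquadratic explicitly. For each surviving integer solution I would reconstruct the tuple $(\alpha,\beta,\gamma,\delta)$ from the linear expressions in $\gamma,\delta$ and apply one of the standard filters to rule it out: non-positivity of a side length, failure of $\Width(\Delta)\ge m$ along a judiciously chosen direction (as in Sections \ref{subsec: Case (a,b,s)=(0,8,2)} and \ref{subsec: Case (a,b,s)=(8,0,2)}), or a direct \textsc{Macaulay2} check that $\dim \mc L_\Delta(m)\ne 1$ in the sense of Definition \ref{def: definition of elliptic polygon} (as in Computation \ref{comp:random example}). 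By Remark \ref{remark:scaling} we may freely pass to the primitive rescaling of $\Delta$ throughout, so the width and volume conditions can be tested on the dehomogenized ratios $\alpha/\gamma, \beta/\gamma, \delta/\gamma$.
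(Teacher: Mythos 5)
You set up the problem exactly as the paper does: solve Equation \ref{eq: A0 case II eq 1} for $\mf N$ via the reverse-matrix identity, substitute into Equation \ref{eq: A0 case II eq 2}, and run \texttt{minimalPrimes} for each of the nine pairs $(a,b)$. The divergence --- and the gap --- is in how finiteness of the solution set is obtained. The entries of Table \ref{table:A0 case II} are not general plane curves: after eliminating $x,y,z$, each ideal contains a quartic in $(s,w)$ of the very particular shape $f(s)w^{2}+g(s)=0$ with $\deg f=2$ and $\deg g=4$. This is the ``biquadratic'' flagged in the introduction, and it carries the entire finiteness argument on its own: an integer solution forces the divisibility $f(s)\mid g(s)$, which (since $f$ does not divide $g$ identically) admits only finitely many integers $s$; each such $s$ determines $w^{2}$, and a second \texttt{minimalPrimes} pass recovers $z$. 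This produces the explicit finite list $(a,s,z,w)\in\{(1,1,63,139),\dots,(8,2,2,17)\}$ \emph{before} the volume condition is ever invoked; $\Vol(\Delta)=m^{2}$ and $\Width(\Delta)\ge m$ are then applied only as filters on these finitely many explicit candidates.

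Your plan instead defers finiteness to the intersection of the Table curve with the discriminant condition $\mathrm{disc}=t^{2}$ coming from $\Vol(\Delta)=m^{2}$, and asserts that the resulting ``biquadratic has only finitely many integer solutions, which can be enumerated directly by bounding the leading terms.'' That assertion is unsupported: a low-degree affine curve can perfectly well carry infinitely many integer points --- the Pell conics of Case I already do, and so do the degenerate quadric intersections that split into conics throughout Sections \ref{sec: the smooth case} and \ref{sec: the A_n case}. You yourself flag this enumeration as ``the main obstacle'' without resolving it, so the proposal as written does not close the argument. The repair is to notice the $f(s)w^{2}+g(s)$ structure of the eliminated equation and run the divisibility argument in $s$; once the candidate list is finite and explicit, the filters you propose (negative side length, failure of $\Width(\Delta)\ge m$, reducibility of the curve in $\mc L_\Delta(m)$) are exactly the ones the paper uses to finish.
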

\begin{proof}
\begin{table}[h]
    \centering
    \begin{tabular}{c|c|c}
    $a$ & $b$ & Equation \\
    \hline
    \hline
    $0$ & $8$ & $31w^{2}+(81s^{4}+810s^{3}+2619s^{2}+2970s+1089)$ \\
    \hline
    $1$ & $7$ & $(-8s^2-24s+31)w^{2}+(256s^{4}+2304s^{3}+6816s^{2}+7344s+2601)$ \\
    \hline
    $2$ & $6$ & $(-14s^{2}-42s+31)w^{2}+(441s^{4}+3822s^{3}+10927s^{2}+11466s+3969)$\\
    \hline
    $3$ & $5$ & $(-18s^{2}-54s+31)w^{2}+(576s^{4}+4896s^{3}+13716s^{2}+14076s+4761)$ \\
    \hline
    $4$ & $4$ & $(-20s^{2}-60s+31)w^{2}+(625s^{4}+5250s^{3}+14475s^{2}+14490s+4761)$ \\
    \hline
    $5$ & $3$ & $(-20s^{2}-60s+31)w^{2}+(576s^{4}+4800s^{3}+13024s^{2}+12600s+3969)$ \\
    \hline
    $6$ & $2$ & $(-18s^{2}-54s+31)w^{2}+(441s^{4}+3654s^{3}+9711s^{2}+8874s+2601)$ \\
    \hline
    $7$ & $1$ & $(-14s^{2}-42s+31)w^{2}+(256s^{4}+2112s^{3}+5412s^{2}+4356s+1089)$ \\
    \hline
    $8$ & $0$ & $(-8s^{2}-24s+31)w^{2}+(81s^{4}+666s^{3}+1531s^{2}+666s+81)$\\ \hline
    \hline
\end{tabular}
    \caption{Quartics in $s,w.$ Note they are all of the form $f(s)w^2+g(s).$}
    \label{table:A0 case II}
\end{table}
From Equation \ref{eq: A0 case II eq 1} and the formula for the reverse of a matrix (see \ref{lemdef: reverse matrix}), can express $\mf N$ in terms of $x,y,z,w,s,$ see Computation \ref{comp: A0 case II}. 
Equation \ref{eq: A0 case II eq 2} then gives $4$ equations in $\{x,y,z,w,a,b,s\}.$ As $a+b=8$ are non-negative, we can test all $9$ possible pairs. 
In each case, we look at the minimal primes of the ideal generated by these $4$ equations (on \textsc{Macaulay2}). 
As shown in Computation \ref{comp: A0 case II}, each ideal contains a quartic in $s,w$ of the form $f(s)w^2+g(s),$ where $\deg f=2, \deg g=4,$ see Table \ref{table:A0 case II}.

\medskip

The case $(a,b)=(0,8)$ clearly has no solutions in positive integers $s,w.$ For the remaining cases, we get the divisibility condition $f(s) \mid g(s).$ 
This is a Diophantine equation in $s,$ which has finitely many solutions unless the polynomial $f$ identically divides $g.$
We use \textsc{Mathematica} to analyze these (see Computation \ref{comp: A0 case II}) and find the only solutions with $s,w>0,$ which happen to be finite. Furthermore, we can use \texttt{minimalPrimes} again to find $z$ in each case. We find the only solutions to be 
$$(a,s,z,w) \in \{(1,1,63,139),(1,2,17,37),(2,1,11,35),(7,1,3,23),(8,1,7,55),(8,2,2,17)\}.$$
These do lead to valid toric surfaces, and so we next check if $\Vol(\Delta)=m^2$ has solutions.
This can be done as before; using Lemmas \ref{lem: vol = m^2 equation},\ref{lemma: value of v3} and Equations \ref{eq: A0 a', b' expression} (expression for $\alpha,\beta$ in terms of $\gamma, \delta, z$ and $w).$ If it does have a solution, we test if $\Width(\Delta) \ge m$ holds, which we find never holds true for our examples (cf. Comp. \ref{comp: A0 case II}).
\end{proof}

\section{The $A_n$ case}
\label{sec: the A_n case}

\subsection{Classification}

The approach and analysis is very similar to that in Section \ref{sec: the smooth case}.
Looking at the maps $Y \to Y_1 \to \dots W$ as a series of blowups of the relevant locus, we 
start with a chain of $n$ $(-2)$ curves, and initially perform some type II blowups (cf. Def. \ref{def: type I, type II blowups}), and then type I blowups (same proof as Lemma \ref{lemma: type II blowups then type I}).    
Hence, we get a similar tree of possibilities as in Section \ref{sec: the smooth case}. 

\medskip

This time, however, we define $s$ as two less than the number of blowups. 
Like in Figure \ref{fig:key tree A0}, there are two branches with reverse chains. 
Here, we only show one of the branches, corresponding to type II blowups at the right endchain (like in Lemma \ref{lem: classification of matrices, -(s+5)}), see Figure \ref{fig: key tree An}.

\begin{figure}[h]
    \centering
    \includegraphics[scale=0.35]{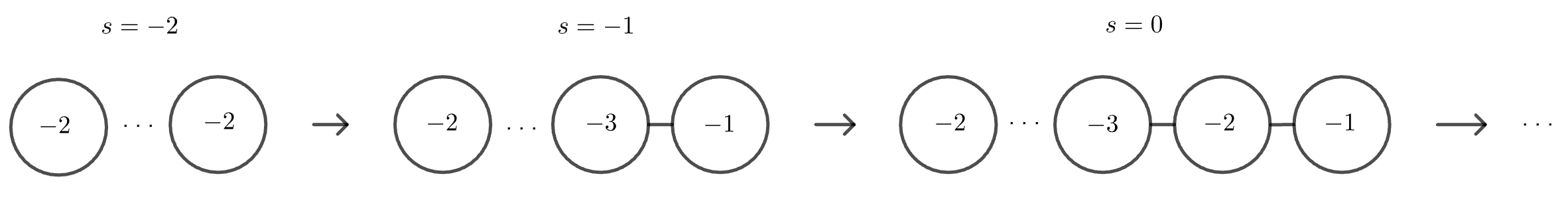}
    \caption{Tree of possibilities}
    \label{fig: key tree An}
\end{figure}

\begin{lemma}
\label{lemma: An matrix classifcation}
    Define $s \ge -2$ as in Figure \ref{fig: key tree An}, i.e. two less than the number of Type II blowups. 
    Up to orientation,  the matrix of the chain is (in our case, the ones shown in Figure \ref{fig: key tree An})
    $$\begin{pmatrix}
        -(ns+3n-1) & -n \\
        (n+1)s+3n+2 & n+1
    \end{pmatrix}.$$
    Furthermore, the heavy curve (cf. Def. \ref{def: heavy curve}) has self-intersection $-(s+5).$
\end{lemma}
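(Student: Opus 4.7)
The plan is to mirror Lemma \ref{lem: classification of matrices, -(s+5)} from the smooth case: first reduce the matrix computation to the effect of type II blowups alone (type I blowups being matrix-invariant by Lemma \ref{lem: blowup at intersection}), then recover the heavy curve's self-intersection from the invariant $\mc S_i + 3\mc L_i$.

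Set $M_k := \begin{pmatrix} 0 & -1 \\ 1 & k \end{pmatrix}$ and $T := \begin{pmatrix} 1 & 0 \\ 1 & 1 \end{pmatrix}$. The key observation is that each type II blowup at the right end of the chain --- either at the rightmost $(-k)$ curve when no $(-1)$ is yet present, or at a non-intersection point of the current $(-1)$ --- replaces the final tail $M_k$ of the matrix by $M_{k+1} M_1$, and a short computation gives $M_k^{-1} M_{k+1} M_1 = T$ for every $k \ge 1$. Thus each type II blowup right-multiplies the chain's matrix by $T$. Since the relevant locus in $W$ is the $A_n$-chain with matrix $M_2^n$, after the $s+2$ type II blowups prescribed by the branch in Figure \ref{fig: key tree An},
\[
\mf M_Y \;=\; M_2^n \cdot T^{s+2} \;=\; \begin{pmatrix} -(n-1) & -n \\ n & n+1 \end{pmatrix} \begin{pmatrix} 1 & 0 \\ s+2 & 1 \end{pmatrix},
\]
which multiplies out directly to the claimed matrix. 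The edge case $s = -2$ is $T^0 = I$: here the single blowup is a type I one that introduces $\wt R$ at an intersection of the $A_n$-chain, so the matrix remains $M_2^n$ and still matches the claimed formula.

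For the heavy curve, recall from the smooth case that $\mc S_i + 3\mc L_i$ is invariant under type I contractions and decreases by one at each type II; equivalently, each type II blowup (backward) increases it by one. Starting from $\mc S_W + 3\mc L_W = -2n + 3n = n$, after $s+2$ type II blowups we obtain $\mc S_0 + 3\mc L_0 = n + s + 2$. Since each blowup adds one curve to the relevant locus, $\mc L_0 = n + \ell + 1$, hence $\mc S_0 = -2n + s - 1 - 3\ell$.

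Finally, summing self-intersections over all toric boundary curves of $Y$ --- namely the heavy curve $x$, three strict-transform $(-1)$-curves, the chains $\mc S \cup \mc S'$ contributing $\mc S_0 + 1$ (since $\wt R$ is the only non-toric-boundary curve in the relevant locus), and two Du Val $A$-type chains of lengths $a,b$ over the opposite torus invariant points satisfying $a + b = 8 - n$ by Lemma \ref{lemma: key commutative diagram X,Y,Z,W, and 8 curves} --- and equating with $12 - 3 N_Y = -24 - 3\ell$ (via Lemma \ref{lemma: sum of smooth numbers} and Noether's formula, using $N_Y = 12 + \ell$) yields
\[
x + (\mc S_0 + 1) - 3 - 2(8 - n) \;=\; -24 - 3\ell.
\]
Substituting $\mc S_0 = -2n + s - 1 - 3\ell$ and simplifying gives $x = -(s+5)$. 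The main subtlety is verifying uniformly that every blowup counted as type II --- including the initial one on the pure $A_n$-chain, where technically no $(-1)$ is yet present --- contributes the same right-multiplier $T$ to the matrix and the same $+1$ to $\mc S + 3\mc L$; both reductions follow from the local identity $M_k^{-1} M_{k+1} M_1 = T$, as for $k = 2$ this matches the endpoint blowup creating $(-3),(-1)$ from a $(-2)$.
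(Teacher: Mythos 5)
Your proof is correct and takes essentially the same route as the paper's: type I blowups leave the matrix unchanged by Lemma \ref{lem: blowup at intersection}, the type II blowups determine the chain explicitly, and the heavy curve's self-intersection falls out of the invariant $\mc S_i + 3\mc L_i$ together with the sum formula of Lemma \ref{lemma: sum of smooth numbers}. The only (cosmetic, but pleasant) difference is your identity $M_k^{-1}M_{k+1}M_1 = T$, which packages all $s+2$ type II blowups --- including the initial one performed on the pure $A_n$-chain before any $(-1)$ curve exists --- uniformly as right-multiplication by the unipotent $T$, yielding $M_2^{\,n}T^{\,s+2}$ at once, whereas the paper writes out the self-intersection sequence $(-2)^{n-1},(-3),(-2)^{s+1},(-1)$ and verifies the resulting product directly.
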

\begin{proof}
    Recall that matrices stay invariant for blowups at intersection of curves, see Lemma \ref{lem: blowup at intersection}. So we only need to consider blowups of type II, i.e. at non-intersection points. 

    \medskip

    This formula holds for $s=-2$ since
    $$\begin{pmatrix}
        -(-2n+3n-1) & -n \\
        (n+1)(-2)+3n+2 & n+1
    \end{pmatrix}=\begin{pmatrix}
        0&-1\\
        1 & 2
    \end{pmatrix}^n.$$
    
    Assume $s>-2.$ 
    It is easy to see we obtain a chain of $n-1$ $(-2)$ curves, followed by a $(-3)$ curve, $s+1$ $(-2)$ curves and a $(-1)$ curve. 
    Hence the matrix is
    $$\begin{pmatrix}
        0&-1\\
        1&2
    \end{pmatrix}^{n-1}\begin{pmatrix}
        0&-1\\1&3
    \end{pmatrix}\begin{pmatrix}
        0&-1\\
        1&2
    \end{pmatrix}^{s+1}\begin{pmatrix}
        0&-1\\
        1&1
    \end{pmatrix}$$
    which can be checked to be the desired matrix.

    \medskip

    Let $\mc L_i, \mc S_i$ be the length, sum of self-intersection numbers of our the relevant locus in $Y_i.$ 
    We claim that $\mc S_i+3\mc L_i = s+n+2$ always. 
    Firstly, this holds for a chain of $n$ $(-2)$ curves.
    For a blowup at the intersection of two curves, observe that $\mc S_i$ decreases by $3$ and $\mc L_i$ increases by $1,$ showing $\mc S_i+3\mc L_i$ is invariant.

    \medskip
    
    For a type II blowup, the key observation is that each blowup increases $\mc L_i$ by $1,$ but decreases $\mc S_i$ by only $2.$ Hence $\mc S_i+3\mc L_i$ increases by $1,$ just like $s+n+2$ (which is linear in $s).$ 

    \medskip

    Let $x$ be the self-intersection number of the heavy curve.
    Consider the toric boundary curves of $Y.$
    There are at three $(-1)$ toric boundary curves (cf. discussion in \ref{def: heavy curve}), and two chains of $(-2)$ curves of lengths $a,b$ satisfying $a+b+n=8$ by Lemma \ref{lemma: key commutative diagram X,Y,Z,W, and 8 curves}.  
    Let $N$ be the number of toric boundary divisors in $Y.$
    Furthermore, the length, sum of self-intersection numbers of the relevant
    locus in $Y = Y_0$ is $\mc L_0 - 1, \mc S_0 + 1$ respectively, since $\wt R$ is a $(-1)$ curve in the relevant locus which is not a toric boundary curve.
    By Lemma \ref{lemma: sum of smooth numbers}, the sum of self-intersection numbers of toric boundary divisors of $Y$ is $12-3N.$ Hence
    \begin{align*}
        &x+3(-1)+(a+b)(-2)+(\mc S_0+1) = 12-3(4+a+b+\mc L_0-1) \\
        &\implies x = -(a+b)-\mc S_0-3\mc L_0+5 = (n-8)-(s+n+2)+5=-(s+5).
    \end{align*}
    and we are done.
\end{proof}

\begin{remark}
\label{remark: -(s+5) so can repeat results}
Since we get the same self-intersection number $-(s+5)$ in Lemma \ref{lemma: An matrix classifcation} as we did in the smooth case (cf. Lem. \ref{lem: classification of matrices, -(s+5)}), we can apply most of the results we did in that section by treating $s$ purely as an unknown variable, discarding it's geometric interpretation.
Also see Remark \ref{remark: -(s+5) also for An}.
\end{remark}

We now have the setup to deal with Cases I and II from Figure \ref{fig:3 cases}. Both of them have a different analysis, however the key approach is the same; we attack them algebraically using the matrix equation \ref{lem: matrix equation} and $\Vol(\Delta)=m^2.$ 
The approaches and results in both cases are very similar to corresponding cases in Section \ref{sec: the smooth case}.

\subsection{Case I} 

The same analysis that we did in the proof of Lemma \ref{lemma: A0 case I both equations} combined with Lemma \ref{lemma: An matrix classifcation} implies the following. (Also see Remark \ref{remark: -(s+5) so can repeat results} on we can reuse the results, and Remark \ref{remark:scaling} on scaling).

\begin{lemma}
\label{lemma: An case I both equations}
There is a bijection between semi-elliptic (smooth) toric surfaces in Case I and integral matrix solutions $\{\mf M,\mf N\}$ of the system
\begin{equation}
\label{eq: An case I eq 1}
    \mf M \begin{pmatrix}
    0 & -1 \\
    1 & 1
\end{pmatrix} \mf N =\begin{pmatrix}
        -(ns+3n-1) & -n \\
        (n+1)s+3n+2 & n+1
    \end{pmatrix}
\end{equation}
\begin{equation}
    \label{eq: An case I eq 2}
        \mf N \begin{pmatrix}
        0&-1\\1&1
    \end{pmatrix}\mf M \begin{pmatrix}
        0&-1\\1&1
    \end{pmatrix} \begin{pmatrix}
        0&-1\\1&2
    \end{pmatrix}^a \begin{pmatrix}
        0&-1\\1&s+5
    \end{pmatrix}\begin{pmatrix}
        0&-1\\1&2
    \end{pmatrix}^b\begin{pmatrix}
        0&-1\\1&1
    \end{pmatrix}=\mf I
\end{equation}
where $s \in \ZZ_{\ge -2}$ and $a+b=8-n$ are any non-negative integers.
\end{lemma}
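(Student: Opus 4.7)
The plan is essentially to imitate the proof of Lemma \ref{lemma: A0 case I both equations}, substituting the $A_n$ classification (Lemma \ref{lemma: An matrix classifcation}) for the smooth classification (Lemma \ref{lem: classification of matrices, -(s+5)}). First I would fix notation as in Case I of Figure \ref{fig:3 cases}: let $\mc S$ and $\mc S'$ be the two disjoint toric boundary chains in the relevant locus, meeting the $(-1)$ curve $\mc D$ (which is not the heavy curve, cf. Definition \ref{def: heavy curve}) at their inner endcurves, and let $\mc C \subset \mc S$, $\mc C' \subset \mc S'$ be the outer endcurves through which $\wt R$ passes. Define $\mf M$ to be the matrix of $\mc S$ oriented from its inner to its outer curve, and $\mf N$ that of $\mc S'$ oriented from its outer to its inner curve.

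For Equation \ref{eq: An case I eq 1}: the chain $\mc S \cup \wt R \cup \mc S'$ is contracted by $\pi$ to the resolution of the $A_n$-singularity at $\wt\pi_\ast R \in Z$, namely a chain of $n$ $(-2)$ curves. Reading the matrix of this chain from the inner endcurve of $\mc S$ to the inner endcurve of $\mc S'$, so that $\wt R$ contributes the middle factor corresponding to a $(-1)$ curve, Lemma \ref{lemma: An matrix classifcation} directly identifies this matrix with the right-hand side of Equation \ref{eq: An case I eq 1}. For Equation \ref{eq: An case I eq 2}: Lemma \ref{lem: matrix equation} applied to the full toric boundary of $Y$ yields the identity, where the cyclic order of factors is $\mc D$ (self-intersection $-1$), the chain $\mc S$, an outer $(-1)$ curve, a chain of $a$ $(-2)$ curves, the heavy curve of self-intersection $-(s+5)$ (also by Lemma \ref{lemma: An matrix classifcation}), a chain of $b$ $(-2)$ curves, another outer $(-1)$ curve, and the chain $\mc S'$. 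Rewriting this cyclic product yields Equation \ref{eq: An case I eq 2}; the constraint $a + b = 8 - n$ comes from Lemma \ref{lemma: key commutative diagram X,Y,Z,W, and 8 curves}, since $W \to Z$ has exactly $8$ exceptional curves, $n$ of which arise from contracting the relevant locus.

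For the bijection, the converse direction uses Lemma \ref{lem: matrix equation}: given $(\mf M, \mf N, a, b, s)$ satisfying both equations with $s \ge -2$ and nonnegative $a, b$ summing to $8 - n$, one obtains a smooth toric surface $Y$ with the prescribed boundary self-intersections, and Equation \ref{eq: An case I eq 1} forces $\mc S \cup \wt R \cup \mc S'$ to contract through successive $(-1)$ curves to the resolution of the $A_n$-singularity, placing $Y$ in Case I. The weakened lower bound $s \ge -2$ (rather than $s > 0$ as in the smooth case) accounts for the edge case in which no Type II blowups occur; this is permitted here precisely because the initial chain of $n$ $(-2)$ curves is already nontrivial, so no analogue of the exclusion via Proposition \ref{prop: opposite duval} is required.

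The main technical obstacle, modulo bookkeeping, is keeping the orientation conventions for $\mf M$ and $\mf N$ and the placement of the $a$- versus $b$-chains relative to the heavy curve consistent with the precise forms of Equations \ref{eq: An case I eq 1} and \ref{eq: An case I eq 2}: one must check that the direction in which $\wt R$ is traversed matches the middle factor $\left(\begin{smallmatrix} 0 & -1 \\ 1 & 1 \end{smallmatrix}\right)$, and that the cyclic rotation applied to the boundary product is the one yielding precisely the displayed arrangement. Once these conventions are pinned down, the proof is a direct transcription of the $A_0$ Case I argument.
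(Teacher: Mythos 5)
Your proposal is correct and follows exactly the route the paper takes: the paper's "proof" of this lemma is literally the observation that the argument of Lemma \ref{lemma: A0 case I both equations} carries over verbatim once Lemma \ref{lem: classification of matrices, -(s+5)} is replaced by Lemma \ref{lemma: An matrix classifcation} (with the heavy curve still of self-intersection $-(s+5)$ and $a+b=8-n$ from Lemma \ref{lemma: key commutative diagram X,Y,Z,W, and 8 curves}). Your additional remarks on orientation conventions and on why $s\ge -2$ replaces $s>0$ are consistent with the paper's setup.
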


Write 
\begin{equation}
\label{eq: An M matrix}
    \mf M = \begin{pmatrix}
    x&y\\
    z&w
\end{pmatrix}
\end{equation}
with $x<y<0<z<w$ and $\det(M)=xw-yz=1$ (cf. Lem. \ref{lem: matrix and chain}).
A similar computation as that done in the proof of Lemma \ref{lemma: pell's equation for A0} shows there are a finite possible values for $(s,n),$ however, each accompanied with a Pell's equation in $z,w.$ 
If we further add the algebraic constraint $\Vol(\Delta)=m^2,$ we get a pair of equations in each case.

\begin{lemma}
    \label{lemma: pell's equation for An}
    There are a finite number of possibilities for the triple $(n,s,a),$ listed in Table \ref{tab:An case I}. Further, any point $(z,w)$ lies on the intersection of two quadrics in $\mathbb{A}^3_{z,w,t}.$ 
    \begin{table}[h]
        \footnotesize
        \centering
        \begin{tabular}{c|c|c|c|c}
            $n$ & $s$ & $a$ & Pell's Equation&$\Vol(\Delta)=m^2$\\
            \hline
            \hline
            $1$ & $0$ & $0$ & $33z^{2}-54zw+22w^{2}+2$&$1056z^{2}-4896zw+3104w^{2}+2112z-1536w+1056+t^2$ \\
            \hline
            $1$ & $0$ & $7$ & $33z^{2}-12zw+w^{2}+2$ & $1056z^{2}-3552zw+416w^{2}+2112z-192w+1056+t^2$ \\
            \hline
            \hline
            $2$ & $-1$ & $0$ & $22z^{2}-34zw+13w^{2}+3$&$616z^{2}-2184zw+1204w^{2}+1232z-784w+616+t^2$ \\
            \hline
            $2$ & $-1$ & $6$ & $22z^{2}-10zw+w^{2}+3$&$616z^{2}-1512zw+196w^{2}+1232z-112w+616+t^2$ \\
            \hline
            \hline
            $3$ & $4$ & $1$ & $77z^{2}-98zw+29w^{2}+4$&$3080z^{2}-25480zw+11240w^{2}+6160z-2800w+3080+t^2$ \\
            \hline
            $3$ & $4$ & $4$ & $77z^{2}-56zw+8w^{2}+4$&$3080z^{2}-23800zw+4520w^{2}+6160z-1120w+3080+t^2$ \\
            \hline
            \hline
            $4$ & $-2$ & $0$ & $11z^2- 15zw + 5w^2  + 5$&$220z^{2}-520zw+220w^{2}+440z-200w+220+t^2$ \\
            \hline
            $4$ & $-2$ & $4$ & $11z^2 - 7zw + w^2  + 5$&$220z^{2}-360zw+60w^{2}+440z-40w+220+t^2$ \\
            \hline
            \hline
            $4$ & $-1$ & $1$ & $22z^{2}-26zw+7w^{2}+5$&$704z^{2}-2240zw+800w^{2}+1408z-512w+704+t^2$ \\
            \hline
            $4$ & $-1$ & $3$ & $22z^{2}-18zw+3w^{2}+5$&$704z^{2}-1984zw+416w^{2}+1408z-256w+704+t^2$ \\
            \hline
            \hline
            $4$ & $0$ & $2$ & $33z^{2}-33zw+7w^{2}+5$&$1188z^{2}-4752zw+1332w^{2}+2376z-648w+1188+t^2$ \\
            \hline
            \hline
        \end{tabular}
        \caption{List of possible $(n,s,a)$ and pairs of quadrics that $(z,w)$ lie on.}
        \label{tab:An case I}
    \end{table}
    
     Every intersection is a curve of geometric genus $0.$ The four curves for $(n,s) \in \{(1,0),(4,-2)\}$ are a union of two conics, each an intersection of a hyperplane and a quadric. Every other curve is irreducible but singular.
\end{lemma}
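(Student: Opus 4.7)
The plan is to follow the template established in the proof of Lemma \ref{lem: A0 elliptic curve}, now starting from the matrix equations in Lemma \ref{lemma: An case I both equations} rather than Lemma \ref{lemma: A0 case I both equations}. Writing $\mf M$ as in Equation \ref{eq: An M matrix} with $\det \mf M = xw-yz=1,$ Equation \ref{eq: An case I eq 1} lets me solve for $\mf N$ as a matrix with entries polynomial in $x,y,z,w,n,s.$ Substituting into Equation \ref{eq: An case I eq 2} then yields four polynomial relations in $\{x,y,z,w,n,s,a,b\}.$ Since $a+b = 8-n$ with $a,b\ge 0,$ for each value of $n\in\{1,\dots,8\}$ I would enumerate the $(9-n)$ possible pairs $(a,b)$ and feed the resulting ideals to \textsc{Macaulay2}'s \texttt{minimalPrimes} routine. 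Exactly as in Lemma \ref{lemma: pell's equation for A0}, this returns only finitely many non-negative integer tuples $(n,s,a),$ and each minimal prime contains a Pell-type quadric in $z,w$ with coefficients depending on $s$ (with $s$ forced to a specific integer in each surviving case). This gives the first column of equations in Table \ref{tab:An case I}.

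For the second equation in each row, I would invoke the algebraic constraint $\Vol(\Delta)=m^2.$ By Remark \ref{remark: -(s+5) so can repeat results}, the heavy curve still has self-intersection $-(s+5),$ so Lemma \ref{lemma: value of v3} applies verbatim (with $\ell = s+3$) to give the coordinates of $v_3,$ and Lemma \ref{lem: vol = m^2 equation} continues to compute the volume from the four side lengths. Using $c+d = a+b+2 = 10-n$ and the analogues of Equations \ref{eq: A0 a', b' expression} to solve for $\alpha,\beta$ as linear forms in $\gamma,\delta$ with coefficients in $z,w,$ the condition $\Vol(\Delta)=m^2$ becomes a homogeneous quadric in $(\gamma,\delta).$ Its discriminant (a quadratic form in $z,w$) must be a perfect square $t^2,$ giving the second quadric in each row. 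Both quadric equations in Table \ref{tab:An case I} can then be extracted directly by \textsc{Macaulay2}.

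Finally, I need the geometric genus statement. Again by \texttt{minimalPrimes}, the four rows indexed by $(n,s)\in\{(1,0),(4,-2)\}$ have ideals that split as a product of two primes, each of which contains a linear form; extracting the linear form exhibits the component as the intersection of a hyperplane with a quadric, i.e.\ a plane conic (necessarily of geometric genus $0$). For each of the remaining seven rows, \texttt{minimalPrimes} returns a single prime, confirming irreducibility; to verify singularity I would compute the Jacobian ideal (using \texttt{singularLocus} or \texttt{saturate} against the unit ideal) and check it defines a nonempty scheme over $\overline{\QQ},$ which by the genus-degree formula for a smooth complete intersection of two quadrics in $\mathbb{P}^3$ (which would give a genus-$1$ curve) forces the geometric genus to drop to $0.$ Equivalently, one may produce in each case an explicit rational parameterization that \textsc{Macaulay2} can verify by substitution.

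The main obstacle will be purely computational bookkeeping: the case enumeration in $(n,s,a)$ is large, and verifying irreducibility and singularity of seven affine curves in $\mathbb{A}^3$ over $\QQ$ requires careful \textsc{Macaulay2} scripts. Everything is in principle mechanical, and since the hard geometric work is postponed to the later analysis of each row (cf.\ the treatment of Case $(a,b,s)=(0,8,2)$ in Section \ref{subsec: Case (a,b,s)=(0,8,2)}), the lemma itself is really a structural statement setting up those analyses.
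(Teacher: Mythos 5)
Your proposal is correct and follows essentially the same route as the paper: solve for $\mf N$ from the first matrix equation, enumerate $(n,a,b)$ and extract the Pell quadrics via \texttt{minimalPrimes}, derive the second quadric from the discriminant of the $\Vol(\Delta)=m^2$ quadratic using Lemma \ref{lemma: value of v3} and Remark \ref{remark: -(s+5) also for An}, and deduce geometric genus $0$ in the irreducible cases from singularity of a curve of arithmetic genus $1$. The only difference is cosmetic — the paper certifies irreducibility, singularity and genus with \textsc{Magma} rather than a \textsc{Macaulay2} Jacobian computation.
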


\begin{proof}
The Pell's equation is derived using the same method as Lemma \ref{lemma: pell's equation for A0}, see Computation \ref{comp: An case I general} for details. For $\Vol(\Delta)=m^2,$ firstly observe that the result of Lemma \ref{lemma: value of v3} holds here as well, as mentioned in Remark \ref{remark: -(s+5) also for An}.
Hence we can again express $\beta,\alpha$ in terms of $\gamma,\delta$ as in Equation \ref{eq: A0 a', b' expression}. 
Then $\Vol(\Delta)=m^2$ gives a quadratic equation in $\delta/\gamma,$
whose discriminant should be a square, say $t^2.$ This gives the second equation (cf. Comp. \ref{comp: An case I general}).

\medskip

Finally, we use \texttt{minimalPrimes} on \textsc{Macaulay2} to find the minimal primes of the ideal generated by \texttt{disc=t}$^2$ and the Pell's equation. 
In the $4$ special cases when $(n,s) \in \{(1,0),(4,-2)\}$, we find a product of two ideals, both given by the intersection of a hyperplane and quadric in $\mathbb{A}^3_{z,w,t};$ see Computation \ref{comp: An case I general}. In all other cases, we find an irreducible singular curve of geometric genus $0,$ as is checked on \textsc{Magma}. 
We remark here that since the intersection of two quadrics in $\PP^3$ has arithmetic genus $1,$ hence there is just one singular point.
\end{proof}

Here, it is fascinating how every curve we get has genus zero, which forms a (Zariski) closed subset of the moduli space of all quadric intersections in $\mathbb{A}^3_{z,w,t}$. 

\medskip

We now show that all cases in Lemma \ref{lemma: pell's equation for An} outside $(n,s) \in \{(1,0),(4,-2)\}$ aren't locally soluble, and hence can be ruled out.

\begin{lemma}
\label{lemma: An cases not locally soluble}
Consider the cases from Lemma \ref{lemma: pell's equation for An}. There are no integer points on the intersection of quadrics 
\begin{enumerate}
    \item modulo $13$ when $(n,s)=(2,-1);$ 
    \item modulo $7$ when $(n,s) \in \{(3,4),(4,-1)\};$ 
    \item modulo $17$ when $(n,s) = (4,0).$ 
\end{enumerate}
\end{lemma}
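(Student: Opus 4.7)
The plan is to verify local insolubility by direct enumeration modulo each specified prime. For every case $(n,s,a)$ listed in Lemma \ref{lemma: pell's equation for An}, the intersection is cut out by a Pell-type quadric $P(z,w)=0$ and a discriminant relation of the form $Q(z,w)+t^{2}=0$. Since $t$ enters only through $t^{2}$, an $\mathbb{F}_{p}$-point exists if and only if there is some $(z,w) \in \mathbb{F}_{p}^{2}$ with $P(z,w) \equiv 0 \pmod{p}$ such that $-Q(z,w) \pmod p$ is a square in $\mathbb{F}_{p}$ (zero included). If no such pair exists, then any integral solution would reduce to an $\mathbb{F}_p$-solution and we immediately obtain a contradiction.

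First I would record the squares in $\mathbb{F}_{p}$ for the three primes used: $\{0,1,2,4\}$ modulo $7$, $\{0,1,3,4,9,10,12\}$ modulo $13$, and $\{0,1,2,4,8,9,13,15,16\}$ modulo $17$. Then for each of the seven $(n,s,a)$ triples covered by parts (i)--(iii), I would loop over the at most $p^{2}$ pairs $(z,w) \in \mathbb{F}_{p}^{2}$, retain those satisfying $P(z,w) \equiv 0 \pmod{p}$, and for each such pair compute $-Q(z,w) \pmod{p}$ and test whether it lies in the square set. The lemma asserts that in every case this set of admissible $(z,w)$ is empty, so once the enumeration is completed there is nothing more to do.

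The choice of prime in each case is not arbitrary: it is the smallest $p$ for which the associated discriminant quadric fails to have an $\mathbb{F}_{p}$-point constrained to the Pell locus. In practice I would find these primes by sweeping $p$ over the first few primes and testing both quadrics simultaneously in \textsc{Macaulay2}, in the spirit of Computations \ref{comp: A0 case I 1} and \ref{comp: An case I general}.

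The only obstacle is organizational bookkeeping: seven $(n,s,a)$ triples across three primes, each requiring a short finite loop. Once verified, the lemma collapses the $A_{n}$-Case I analysis to the two surviving pairs $(n,s) \in \{(1,0),(4,-2)\}$, which are precisely the cases whose quadric intersection is reducible rather than irreducible singular, and hence must be handled separately by region-separation arguments analogous to those of Section \ref{subsec: A0 Case I}.
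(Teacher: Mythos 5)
Your proposal is correct and is essentially the paper's own argument: the paper simply runs \textsc{Mathematica}'s \texttt{Solve} with \texttt{Modulus} set to the relevant prime on the pair of quadrics in $(z,w,t)$ for each of the seven $(n,s,a)$ triples and finds the solution set empty (Computation \ref{comp: An case I singular}). Your reformulation via enumerating the Pell locus mod $p$ and testing whether $-Q(z,w)$ is a quadratic residue is the same finite verification, just with the $t$-variable eliminated by hand.
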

\begin{proof}
This is readily checked on \textsc{Mathematica}; see Computation \ref{comp: An case I singular}.
\end{proof}

We are left with the cases when $(n,s) \in \{(1,0),(4,-2)\}.$ 
Since the intersection is a union of two conics, both being the intersection of a hyperplane with a quadric, they have infinitely many integer points. 
However, we show that in each case, the quadrilateral either does not exist, or is not elliptic; a method similar to what we did in Section \ref{subsec: A0 Case I}.



\subsubsection*{Case $(n,s,a)=(1,0,0)$}
In this case, the Pell's equation is
\begin{equation}
\label{eq: An nsa=100 pell}
    33z^{2}-54zw+22w^{2}+2 = 0.
\end{equation}
Here, we find $t = \pm (132z - 96w - 8)$ as seen in Computation \ref{comp: An case I general}. As seen before, we find two roots for $\gamma/\delta$ in terms of $z,w.$
\begin{enumerate}
    \item Consider the first case. Here we find (cf. Comp. \ref{comp: An case I degenerate})
    \begin{equation}
    \label{eq: An nsa = 100}
        \frac{\beta}{\delta}=\frac{-33z^{2}-9zw+23w^{2}-66z+28w-33}{z^{2}-zw+w^{2}+2z+1}.
    \end{equation}

    However, we show that $\beta/\gamma <0$ in this case, which would be our desired contradiction.

    \begin{claim}
     \label{claim: An nsa = 100}
        Let $(z,w)$ be non-negative integers. Then
        \begin{enumerate}
            \item if $1.35z > w,$ then $\beta/\gamma > 0;$
            \item if $w \ge 1.35z,$ then there are no solutions to the Pell's equation \ref{eq: An nsa=100 pell}.
        \end{enumerate}
    \end{claim}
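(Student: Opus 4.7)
The plan is to apply the same $\epsilon$-substitution strategy used throughout Section \ref{subsec: A0 Case I} (see in particular Claims \ref{claim: A0 case I abs=082 +t} and \ref{claim: A0 abs = 082 -t}): in each half-plane bounded by $w = 1.35\,z$, introduce a non-negative dummy variable $\epsilon$ so that the inequality in question becomes an assertion about a polynomial in $(z,\epsilon)$ whose coefficients can be inspected in \textsc{Macaulay2} to have a fixed sign.

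For part (b), I would write $w = 1.35\,z + \epsilon$ with $\epsilon \ge 0$ and substitute into the left-hand side of Equation \ref{eq: An nsa=100 pell}. The key observation is that the binary quadratic form $22\,t^{2} - 54\,t + 33$ (with $t = w/z$) has discriminant $54^{2} - 4\cdot 33 \cdot 22 = 12$, hence roots $(27\pm\sqrt{3})/22 \approx 1.149,\, 1.306$. Since $1.35$ exceeds the larger root, the form is strictly positive on the region $w/z \ge 1.35$, so $33\,z^{2} - 54\,zw + 22\,w^{2} + 2 > 0$ there and the Pell equation cannot hold. In polynomial form, the substituted expression is a quadratic in $\epsilon$ with positive leading coefficient $22$, so verifying strict positivity at $\epsilon = 0$ and one other endpoint suffices, exactly as in Claim \ref{claim: A0 abs = 082 -t}.

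For part (a), I would first observe that the denominator $z^{2} - zw + w^{2} + 2z + 1$ of $\beta/\gamma$ in Equation \ref{eq: An nsa = 100} is manifestly positive (it can be rewritten as $(z - w/2)^{2} + 3 w^{2}/4 + 2z + 1$), so the sign of $\beta/\gamma$ is determined entirely by the sign of the numerator. Writing $w = 1.35\,z - \epsilon$ with $\epsilon \ge 0$ and $z = 1 + x$ with $x \ge 0$, one then feeds the numerator into \textsc{Macaulay2} and reads off the signs of its coefficients as a polynomial in $(x,\epsilon)$. If a single substitution does not give a polynomial of fixed sign across the whole strip $z < w \le 1.35\,z$, the standard remedy (used repeatedly in Claim \ref{claim: A0 abs = 082 -t}) is to subdivide the strip into finitely many sub-strips $c_{i}\,z \le w \le c_{i+1}\,z$ and repeat the check on each piece, exploiting the fact that a quadratic in $\epsilon$ with a fixed-sign leading coefficient is controlled by its values at the two endpoints.

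The main obstacle is really just the mismatch between the rational threshold $1.35$ and the irrational upper Pell root $(27 + \sqrt{3})/22 \approx 1.306$. The slack is comfortable, so the $\epsilon$-substitution easily produces polynomials of fixed sign in the asymptotic region, but one must then separately rule out the finitely many small-$z$ Pell solutions (the base solution being $(z,w) = (4,5)$, and the rest forming a single orbit under the fundamental automorphism of the form $33\,z^{2} - 54\,zw + 22\,w^{2}$) by direct computation, in the same style as the small-root tables that conclude Section \ref{subsec: Case (a,b,s)=(0,8,2)}. No new technique beyond those already developed in Section \ref{sec: the smooth case} is needed.
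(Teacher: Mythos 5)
Your approach is the same as the paper's: for (b) you compare $w/z$ with the larger root $(27+\sqrt{3})/22\approx 1.306<1.35$ of $22t^{2}-54t+33$, which is exactly what the paper's substitution $z=w/1.35-\epsilon$ verifies, and for (a) you note the denominator of $\beta/\gamma$ is positive and settle the sign of the numerator by an $\epsilon$-substitution. Two caveats are worth recording. First, when you actually run the computation for (a) the numerator $-33z^{2}-9zw+23w^{2}-66z+28w-33$ comes out \emph{negative} on the region $1.35z>w$, so the conclusion you will reach is $\beta/\gamma<0$; the ``$>0$'' in the printed statement is a typo (the sentence immediately preceding the claim, and the contradiction it is used for, both require $\beta/\gamma<0$), so do not try to prove the statement literally as written. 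Second, with your parametrization $w=1.35z-\epsilon$ the coefficient of $\epsilon^{2}$ in the numerator is $+23$, so ``reading off the signs of the coefficients'' does not immediately close the argument; you either need the endpoint trick you mention (the values at $\epsilon=0$ and at $\epsilon=1.35z$, i.e.\ $w=0$, are $-\tfrac{1293}{400}z^{2}-\tfrac{141}{5}z-33$ and $-33(z+1)^{2}$, both negative, and the quadratic in $\epsilon$ is convex), or you should solve for $z$ instead, as the paper does: with $z=w/1.35+\epsilon$ every coefficient of the numerator is negative and the conclusion is immediate. Finally, no enumeration of small Pell solutions is needed for this claim: unlike Claim \ref{claim: An nsa = 100 -t}, it holds for all non-negative $(z,w)$ with no lower bound on $z$, so the closing paragraph of your plan is superfluous here.
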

    \begin{proof}
    \begin{enumerate}
        \item As $w>z$ (see the comment below Equation \ref{eq: An M matrix}), we find the denominator in Equation \ref{eq: An nsa = 100} is $w(w-z)+(z+1)^2>0.$ Hence it suffices to show the numerator is negative. 
        Write $z = 1/1.35w+\epsilon$ for $\epsilon>0.$ Then we find the numerator equals (cf. Comp. \ref{comp: An case I degenerate})
        $$-\tfrac{431}{243}w^{2}-\tfrac{521}{9}w\epsilon-33\epsilon^{2}-\tfrac{188}{9}w-66\epsilon-33 < 0 \; \forall \epsilon>0$$
        giving the desired result.
        \item We prove that if $w > 1.35z,$ then $ 33z^{2}-54zw+22w^{2}+2>0.$ Indeed, write $z=1/1.35w-\epsilon.$ The expression then becomes (cf. Comp. \ref{comp: An case I degenerate})
        $$\tfrac{26}{243}w^{2}+\tfrac{46}{9}w\epsilon+33\epsilon^{2}+2>0 \; \forall \epsilon>0$$
        proving the desired claim. \qedhere
    \end{enumerate} 
    \end{proof}
    

    \item Consider the second case. Here, we find (cf. Comp. \ref{comp: An case I degenerate})
    \begin{equation}
    \label{eq: An nsa = 100 -t}
    \begin{split}
        \frac{\alpha}{\gamma} &= \frac{98z^{2}-138zw+48w^{2}+36z-20w+8}{z^{2}-zw+w^{2}+2z+1} \\
        &\frac{\beta}{\gamma} = \frac{-33z^{2}+123zw-73w^{2}-66z+20w-33}{z^{2}-zw+w^{2}+2z+1} \\
        \frac{\delta}{\gamma} &= \frac{90z-40w+20}{z^{2}-zw+w^{2}+2z+1}.
        \end{split}
    \end{equation}
    In this case we show either $\alpha/\gamma < 0$ or $\Width_{(1,-1)}<m,$ just like in the smooth case (cf. the case with Equations \ref{eq: A0 case I sides -t}). 
    The same analysis as in the smooth case works (cf. Fig. \ref{fig:width(1,-1)}), and so $\Width_{(1,-1)} \ge m$ translates to
    $$(w-z-1)\lt(\frac{\delta}{\gamma}\rt)- \frac{\beta}{\gamma}-1 \ge 0$$
    like in Equation \ref{eq: A0 case I -t width - m}. In our case, Equations \ref{eq: An nsa = 100 -t} translates this to (cf. Comp. \ref{comp: An case I degenerate})
    \begin{equation}
        \label{eq: An nsa = 100 width-m} \frac{-58z^{2}+8zw+32w^{2}-46z+40w+12}{z^{2}-zw+w^{2}+2z+1} \ge 0.
    \end{equation}

    \begin{claim}
    \label{claim: An nsa = 100 -t}
    Let $(z,w)$ be non-negative integers with $z \ge 37,w \ge 4.$ Then
    \begin{enumerate}
        \item if $1.2z>w,$ then $\Width_{(1,-1)}(\Delta)<m;$
        \item if $1.3z \ge w \ge 1.2z,$ then there are no solutions to the Pell's equation \ref{eq: An nsa=100 pell};
        \item if $1.35z \ge w \ge 1.3z,$ then $\alpha/\gamma < 0;$
        \item if $w>1.35z,$ then there are no solutions to the Pell's equation \ref{eq: An nsa=100 pell}.
    \end{enumerate}
    \end{claim}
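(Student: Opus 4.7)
The plan is to mirror the four-part structure of Claim \ref{claim: A0 abs = 082 -t} from the smooth case. In each regime we introduce a non-negative dummy variable $\epsilon$ measuring how far the ratio $w/z$ sits from the relevant threshold, and a dummy $x \ge 0$ shifting $z$ (or $w$) above its floor. The polynomial under consideration---either the numerator of \eqref{eq: An nsa = 100 width-m}, the Pell expression $33z^{2}-54zw+22w^{2}+2$, or the numerator of $\alpha/\gamma$ from \eqref{eq: An nsa = 100 -t}---then becomes a bivariate polynomial $f(x,\epsilon)$ whose sign we verify on \textsc{Macaulay2}, exactly as in the smooth case.

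For (a), we write $z = \tfrac{5}{6}w + \epsilon$ with $\epsilon \ge 0$ (this encodes $1.2z > w$) and $w = 4+x$ with $x \ge 0$. Substituting into the numerator $-58z^{2}+8zw+32w^{2}-46z+40w+12$ of \eqref{eq: An nsa = 100 width-m} should yield a polynomial all of whose coefficients in $x,\epsilon$ are negative, making strict negativity transparent. Since the denominator $z^{2}-zw+w^{2}+2z+1 = (z-w/2)^{2}+3w^{2}/4+2z+1$ is manifestly positive, this gives $\Width_{(1,-1)}(\Delta) < m$. For (c), we write $w = (1.3+\epsilon)z$ with $0 \le \epsilon \le 0.05$ and $z = 37+x$ with $x \ge 0$, then substitute into $98z^{2}-138zw+48w^{2}+36z-20w+8$. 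As a quadratic in $\epsilon$ this has positive leading coefficient $48(37+x)^{2}$, so by convexity its maximum on $[0,0.05]$ is attained at an endpoint; it suffices to check $f(x,0)<0$ and $f(x,0.05)<0$ as one-variable polynomial inequalities. Part (d) is immediate from part (b) of Claim \ref{claim: An nsa = 100}, which already excludes Pell solutions with $w \ge 1.35z$.

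The heart of the argument is (b), where Pell solutions must be ruled out in the middle strip $1.2z \le w \le 1.3z$ for $z \ge 37$. Writing $w = (1.2+\epsilon)z$ with $0 \le \epsilon \le 0.1$ and $z = 37+x$ with $x \ge 0$, substitution turns $33z^{2}-54zw+22w^{2}+2$ into a polynomial $f(x,\epsilon)$ that is quadratic in $\epsilon$ with leading coefficient $22(37+x)^{2} > 0$. By convexity in $\epsilon$ the maximum over $[0,0.1]$ is at one of the endpoints, so it suffices to verify $f(x,0)<0$ and $f(x,0.1)<0$ for all $x \ge 0$, each a single-variable polynomial inequality. This is the case with the tightest margin, since $w/z \in [1.2,1.3]$ probes the interior between the real zeros $(27 \pm \sqrt{3})/22 \approx 1.149$ and $1.306$ of the homogeneous Pell form $33 - 54r + 22r^{2}$; indeed $33z^{2}-54zw+22w^{2} \approx -0.02 z^{2}$ at $w = 1.3z$, so the hypothesis $z \ge 37$ (forcing $z^{2} \ge 1369$) is genuinely needed to dominate the constant $+2$.

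The main obstacle will therefore be executing the \textsc{Macaulay2} verifications for (b) cleanly, following the template of Computation \ref{comp: A0 Case I (a,b,s)=(0,8,2)}; the tight margin near the upper Pell root means the endpoint polynomials $f(x,0)$ and $f(x,0.1)$ must be handled with some care, but they remain strictly negative on $[0,\infty)$ by the elementary bounds above. Once these computations go through, the finitely many residual pairs $(z,w)$ with $z<37$ or $w<4$ can be enumerated and dispatched by direct substitution into \eqref{eq: An nsa=100 pell} together with the width and sign inequalities.
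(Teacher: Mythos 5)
Your proposal is correct and follows essentially the same route as the paper: the same substitutions $z=\tfrac{5}{6}w+\epsilon$, $w=(1.2+\epsilon)z$, $w=(1.3+\epsilon)z$ with a shift $x$, the same convexity-in-$\epsilon$ endpoint check verified in \textsc{Macaulay2}, and the same reduction of (d) to Claim \ref{claim: An nsa = 100}. The only cosmetic difference is that the paper uses $z=11+x$ in part (b) (noting $z\ge 11$ already suffices there; the threshold $z\ge 37$ is forced by part (c), not (b)), which does not affect validity.
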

    \begin{proof}
    We use the same technique for all; introduce a dummy variable $\epsilon$ and use \textsc{Macaulay2}. See Computation \ref{comp: An case I degenerate} for computations.
        \begin{enumerate}
            \item The denominator of Equation \ref{eq: An nsa = 100 width-m} is positive as $w>z.$ So consider the numerator.
            Write $z=10/12w +\epsilon$ and $w=4+x$ for $\epsilon, x \ge 0.$ Then the numerator is 
            $$\tfrac{1}{18}(-29x^{2}-1596x\epsilon-1044\epsilon^{2}-202x-7212\epsilon-128) < 0\; \forall x,\epsilon>0$$
            as desired.

            \item Write $w=(1.2+\epsilon)z$ and $z=11+x$ for $0.1>\epsilon>0$ and $x \ge 0$ (as $z \ge 11$ is sufficient here). As a function of $x,\epsilon,$ the expression $33z^2-54zw+22w^2+2$ becomes 
            $$f(x,\epsilon)=(22x^2+484x+2662)\epsilon^2+\dots.$$
            As a quadratic in $\epsilon,$ this has positive leading coefficient. Thus to show it's negative for all $0.1>\epsilon>0,$ we only need to check that $\epsilon=0,0.1.$ Indeed, we find
            \begin{align*}
            &f(x,0) = -\tfrac{3}{25}x^{2}-\tfrac{66}{25}x-\tfrac{313}{25}<0 \; \forall x\ge0\\
            &f(x,0.1) = -\tfrac{1}{50}x^{2}-\tfrac{11}{25}x-\tfrac{21}{50}<0 \; \forall x\ge0
            \end{align*}
            as desired.
            
            \item The denominator of $\alpha/\gamma$ in Equation \ref{eq: An nsa = 100 -t} is positive as $w>z.$ So consider the numerator.
            Write $w=(1.3+\epsilon)z$ and $z=37+x$ for $0.05>\epsilon>0$ and $x \ge 0.$ As a function of $x,\epsilon,$ we find the numerator equals
            $$f(x,\epsilon)=(48x^{2}+3552x+65712)\epsilon^{2}+\dots.$$
            As a quadratic in $\epsilon,$ this has positive leading coefficient. Thus to show it's negative for all $0.05>\epsilon>0,$ we only need to check that $\epsilon=0,0.05.$ Indeed, we find
            \begin{align*}
                &f(x,0) = \tfrac{1}{25}(-7x^{2}-268x-133)<0 \; \forall x\ge0\\
                &f(x,0.05) = \tfrac{1}{50}(-41x^{2}-2584x-39079)<0 \; \forall x\ge0.
            \end{align*}
            Hence we have our claim.
            \item This was $(b)$ in Claim \ref{claim: An nsa = 100}.  \qedhere
        \end{enumerate}
    \end{proof}

    We are thus left to analyze the cases when $z \le 36$ or $w \le 4.$ The only possible solutions to the Pell's equation \ref{eq: An nsa=100 pell} in this range are
    $$(z,w) \in \{(4,5),(6,7),(10,13),(20,23),(36,47)\}.$$
    As shown in Computation \ref{comp: An case I degenerate}, none of these work:
    for $(z,w) \in \{(4,5),(10,13)\},$ we find $\beta/\gamma < 0.$ For $(z,w) \in \{(6,7),(20,23)\},$ we find $\Width_{(1,-1)}(\Delta)<m.$ lastly, for $9z,w)=(36,47),$ we find $\alpha/\gamma < 0.$ This exhausts all the possibilities in this case.   
\end{enumerate}


\subsubsection*{Case $(n,s,a)=(1,0,7)$} In this case, the Pell's equation is
\begin{equation}
\label{eq: An nsa=107 pell}
    33z^{2}-12zw+w^{2}+2 = 0.
\end{equation}
Here $t=\pm (132z-12w-8)$ as seen in Computation \ref{comp: An case I general}. As seen before, we find two roots for $\gamma/\delta$ in terms of $z,w.$
\begin{enumerate}
    \item Consider the first case. Here, we find (cf. Comp. \ref{comp: An case I degenerate})
    \begin{equation}
    \label{eq: An nsa = 107 +t}
        \frac{\beta}{\gamma} = \frac{-132z^{2}+111zw-13w^{2}-264z+14w-132}{32z^{2}-32zw+4w^{2}+64z+32}.
    \end{equation}
    Call the numerator $\mk b$ and denominator $\mk c$ in Equation \ref{eq: An nsa = 107 +t}. We prove that $\beta/\gamma<0$ for large enough $(z,w)$ satisfying the Pell's equation. We do this by studying individual regions in $\RR^2_{z,w}:$
    \begin{claim}
    \label{claim: An nsa=107 +t}
        Assume $(z,w)$ are non-negative integers with $z \ge 3.$ Then
        \begin{enumerate}
            \item if $3z \ge w,$ then there are no solutions to the Pell's equation \ref{eq: An nsa=107 pell};
            \item if $5z \ge w \ge 3z,$ then $\mk b > 0$ and $\mk c < 0;$
            \item if $7.5z \ge w \ge 5z,$ then there are no solutions to the Pell's equation \ref{eq: An nsa=107 pell};
            \item if $w \ge 7.5z,$ then $\mk b<0$ and $\mk c > 0.$
        \end{enumerate}
    \end{claim}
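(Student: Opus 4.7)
The plan is to follow the template established in Claims~\ref{claim: A0 abs=802 +t}, \ref{claim: A0 abs=802 -t}, and \ref{claim: An nsa = 100 -t}: each sub-part asks us to verify the sign of a specific rational function on a wedge-shaped region $\lambda_{1} z \le w \le \lambda_{2} z$, and we reduce this to a finite polynomial sign check via a well-chosen substitution and a \textsc{Macaulay2} computation that would appear in Computation~\ref{comp: An case I degenerate}.

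For parts (a) and (c), let $P(z,w) := 33z^{2} - 12zw + w^{2} + 2$ be the Pell polynomial. Regarded as a quadratic in $w$, its roots are $w = 6z \pm \sqrt{3z^{2}-2}$, which asymptote to the lines $w/z = 6 \pm \sqrt{3} \approx 4.27$ and $\approx 7.73$. Hence $P$ is strictly positive away from a neighborhood of these two lines, and in particular on the complementary wedges $w \le 3z$ and $5z \le w \le 7.5z$. To make this rigorous in part (a), I would write $w = 3z - \epsilon$ with $\epsilon \ge 0$ and $z = 3 + x$ with $x \ge 0$, substitute into $P$, and use \textsc{Macaulay2} to check that the resulting polynomial in $x, \epsilon$ is a manifestly positive sum of monomials. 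Part (c) is analogous: write $w = (5 + \epsilon)z$ with $0 \le \epsilon \le 2.5$, shift $z = 3+x$, and view $P$ as a quadratic in $\epsilon$ with positive leading coefficient so that checking the endpoints $\epsilon = 0, 2.5$ suffices, exactly as in Claim~\ref{claim: An nsa = 100 -t}(b),(c).

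For parts (b) and (d), the argument switches from sign of $P$ to signs of the numerator $\mathfrak{b}$ and denominator $\mathfrak{c}$ of Equation~\ref{eq: An nsa = 107 +t}. In part (b), write $w = (3+\epsilon)z$ with $0 \le \epsilon \le 2$ and $z = 3 + x$ with $x \ge 0$, substitute into $\mathfrak{b}$ and $\mathfrak{c}$, and view each as a quadratic in $\epsilon$. Provided the leading coefficients in $\epsilon$ have the correct sign (positive for $\mathfrak{b}$, negative for $\mathfrak{c}$) after the shift, it suffices by convexity to verify $\mathfrak{b} > 0$ and $\mathfrak{c} < 0$ at the two endpoints $\epsilon = 0$ and $\epsilon = 2$, reducing the task to checking four sign inequalities of polynomials in $x$ alone, which is immediate in \textsc{Macaulay2}. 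For part (d), write $w = 7.5 z + \epsilon$ with $\epsilon \ge 0$ and verify that $\mathfrak{b}$ and $\mathfrak{c}$ as polynomials in $z, \epsilon$ have, respectively, all coefficients negative and all coefficients positive (as was done in part (d) of Claim~\ref{claim: A0 abs=802 +t}); the leading coefficients in $\epsilon$ and the coefficients of $z^{2}$ can be checked directly, and if necessary the lower-order terms can be rewritten after shifting $z$ by a small constant to absorb sign changes.

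The main obstacle is cosmetic rather than conceptual: the split points $3z$, $5z$, and $7.5z$ must be chosen so that (i) they leave the two branches of the Pell curve inside the "no-solution" sub-regions of parts (a) and (c), and (ii) the quadratic-in-$\epsilon$ arguments for $\mathfrak{b}$ and $\mathfrak{c}$ retain definite leading coefficients after the shift in $z$. Since the branches of $P = 0$ sit firmly inside $(3z, 5z)$ and $(5z, \infty)$ away from small $z$, the slack is generous, and the Pell-free parts should go through without requiring an exceptional finite-set analysis beyond ruling out a bounded number of small $(z,w)$ by hand, as in the earlier claims of this section.
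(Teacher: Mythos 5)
Your overall strategy coincides with the paper's: split the slope $w/z$ at $3$, $5$ and $7.5$, substitute $w=3z-\epsilon$, resp.\ $w=(\lambda+\epsilon)z$ with $z=3+x$, reduce each sign claim to an endpoint check of a quadratic in $\epsilon$ carried out in \textsc{Macaulay2}, and dispose of the finitely many small $(z,w)$ by hand. Two sign assertions in your writeup are wrong, however, and one of them inverts the logic of the endpoint check. In part (b) you say the check is valid ``provided the leading coefficients in $\epsilon$ have the correct sign (positive for $\mathfrak{b}$, negative for $\mathfrak{c}$)''; this is backwards. To deduce $\mathfrak{b}>0$ on $[0,2]$ from its values at the two endpoints you need $\mathfrak{b}$ \emph{concave} in $\epsilon$ (negative leading coefficient), and to deduce $\mathfrak{c}<0$ you need $\mathfrak{c}$ \emph{convex} (positive leading coefficient). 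The actual leading coefficients are $-13x^{2}-78x-117$ for $\mathfrak{b}$ and $4x^{2}+24x+36$ for $\mathfrak{c}$, so the endpoint argument is legitimate, but your stated criterion would have you reject it. Second, your heuristic that $P=33z^{2}-12zw+w^{2}+2$ is ``strictly positive'' on the wedge $5z\le w\le 7.5z$ is false: as a quadratic in $w$ with roots $w=\bigl(6\pm\sqrt{3-2/z^{2}}\,\bigr)z$, $P$ is \emph{negative} between the roots, and that wedge lies between them; the paper accordingly verifies $P<0$ at $\epsilon=0,2.5$. The conclusion ``no solutions'' survives, but an attempt to write $P$ as a positive sum of monomials there would fail. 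Finally, in part (d) not all coefficients of $\mathfrak{b}$ in $(z,\epsilon)$ are negative (there is a $+14\epsilon$ term); your fallback of shifting $z$ by the lower bound $z\ge 3$ does absorb it (the paper instead bounds $-13\epsilon^{2}+14\epsilon-132<0$ by its discriminant), so this last point is only presentational.
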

    \begin{proof}
    We use the same technique for all; introduce a dummy variable $\epsilon$ and use \textsc{Macaulay2}. See Computation \ref{comp: An case I degenerate} for computations.
        \begin{enumerate}
            \item Write $w=3z-\epsilon$ for $\epsilon>0.$ Then $33z^{2}-12zw+w^{2}+2$ equals
            $$6z^2  + 6z\epsilon + \epsilon^2  + 2>0 \; \forall \epsilon>0$$
            as desired.
            \item Write $w=(3+\epsilon)z$ and $z=3+x$ for $2 \ge \epsilon \ge 0$ and $x \ge 0.$ Then as a function of $x,\epsilon,$
            \begin{align*}
                &\mk b(x,\epsilon) = (-13x^2-78x-117)\epsilon^2+\dots \\
                &\mk c(x,\epsilon) = (4x^2+24x+36)\epsilon^2+\dots.
            \end{align*}
            As a quadratic in $\epsilon,$ the leading coefficients of $\mk b,\mc c$ are negative, positive. So it suffices to check the conclusions at $\epsilon \in \{0,2\}.$ Indeed,
            \begin{align*}
            &\mk b(x,0) = 84x^{2}+282x-42>0 \; \forall x\ge0 \\ 
            &\mk b(x, 2) = -98x^{2}+394x+168>0 \; \forall x\ge0 \\ 
            &\mk c(x,0) = - 28x^2  - 104x - 28<0 \; \forall x\ge0\\
            &\mk c(x,2) = - 28x^2  - 104x - 28 <0 \; \forall x\ge0.
            \end{align*}
            \item Write $w=(5+\epsilon)z$ and $z=3+x$ for $2.5 \ge \epsilon \ge 0$ and $x \ge 0.$ Then $33z^{2}-12zw+w^{2}+2$ as a function of $x,\epsilon$ equals
            $$f(x,\epsilon)=(x^2+6x+9)\epsilon^2+\dots.$$
            We show this is negative in our range of values.
            As a quadratic in $\epsilon,$ the leading coefficient is positive. So we only need to check $\epsilon \in \{0,2.5\}.$ Indeed,
            \begin{align*}
            &\mk b(x,0) = - 2x^2  - 12x - 16<0 \; \forall x\ge0 \\ 
            &\mk b(x, 2) = -\tfrac{3}{4}x^{2}-\tfrac{9}{2}x-\tfrac{19}{4}<0 \; \forall x\ge0 \\ 
            \end{align*}
            as desired.
            \item Write $w=7.5z+\epsilon$ for $\epsilon\ge0.$ Then
            \begin{align*}
                &\mk b = -\tfrac{123}{4}z^{2}-84z\epsilon-13\epsilon^{2}-159z+14\epsilon-132 < 0 \; \forall \epsilon\ge0 \\
                &\mk c =17z^2  + 28z\epsilon + 4\epsilon^2  + 64z + 32 > 0 \; \forall \epsilon\ge0.
            \end{align*}
            where the former follows since $-13\epsilon^{2}+14\epsilon-132 < 0$ for all $\epsilon$ as its discriminant $(-6668)$ and leading coefficient are both negative.  \qedhere
        \end{enumerate} 
    \end{proof}
    We are thus left to analyze the case when $z \le 2.$ The only possible solutions to the Pell's equation \ref{eq: An nsa=107 pell} in this range are
    $$(z,w) \in \{(1,5),(1,7)\}.$$
    However, one can check $\beta/\gamma<0$ for both of these, exhausting this case.


    \item Consider the second case. Here we find (cf. Comp. \ref{comp: An case I degenerate})
    \begin{equation}
    \label{eq: An nsa = 107 -t}
        \begin{split}
            \frac{\alpha}{\gamma} &= \frac{364z^{2}-69zw+3w^{2}+88z-10w+4}{32z^{2}-32zw+4w^{2}+64z+32} \\
            &\frac{\beta}{\gamma} = \frac{-132z^{2}+177zw-19w^{2}-264z+10w-132}{32z^{2}-32zw+4w^{2}+64z+32} \\
            \frac{\delta}{\gamma} &= \frac{90z-5w+20}{8z^{2}-8zw+w^{2}+16z+8}.
        \end{split}
    \end{equation}

    \begin{claim}
    \label{claim: An nsa = 107 -t w<8z}
        Any $(z,w)$ satisfying the Pell's equation \ref{eq: An nsa=107 pell} satisfy $w<8z.$
    \end{claim}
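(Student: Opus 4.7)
The plan is to argue by contrapositive: if we assume $w \geq 8z$, I will show that $33z^2 - 12zw + w^2 + 2 > 0$, so Equation \ref{eq: An nsa=107 pell} can have no solution. This is the same substitution technique used throughout Sections \ref{sec: the smooth case} and \ref{sec: the A_n case} (for instance, in Claims \ref{claim: abs=802 root 2 claim 1} and \ref{claim: An nsa=107 +t} (d)).

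Concretely, write $w = 8z + \epsilon$ with $\epsilon \geq 0$ and $z > 0$ (noting that $z > 0$ follows from the comment after Equation \ref{eq: An M matrix}). Substituting into the left-hand side of the Pell's equation yields
\[
33z^2 - 12z(8z+\epsilon) + (8z+\epsilon)^2 + 2 = z^2 + 4z\epsilon + \epsilon^2 + 2,
\]
which is strictly positive for all $z > 0$ and $\epsilon \geq 0$. Hence no such $(z,w)$ can satisfy Equation \ref{eq: An nsa=107 pell}, completing the proof.

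I expect no obstacle here, as the bound $8z$ is exactly the linear combination that annihilates the cross term and turns the quadratic form into a positive-definite expression in $z$ and $\epsilon$ (the discriminant of $33t^2 - 12t + 1$ as a quadratic in $t = w/z$ is $144 - 132 = 12$, so its roots straddle a value near $8/33^{1/2}$ paired with $8$ on the outer side, confirming $w/z = 8$ lies outside the real locus of the homogeneous form). A single \textsc{Macaulay2} check (analogous to those in Computation \ref{comp: An case I degenerate}) could verify the final polynomial identity, but the computation above is already self-contained.
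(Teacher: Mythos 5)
Your proof is correct and is essentially identical to the paper's: both argue the contrapositive by substituting $w = 8z + \epsilon$ and observing that the Pell expression reduces to $z^2 + 4z\epsilon + \epsilon^2 + 2 > 0$. (Your use of $\epsilon \ge 0$ rather than the paper's $\epsilon > 0$ is if anything slightly more careful, as it also covers the boundary case $w = 8z$.)
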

    \begin{proof}
        We prove the contrapositive; write $w=8z+\epsilon$ with $\epsilon>0.$ Then the expression $33z^2-12zw+w^2+2$ equals
        $$z^2 + 4z\epsilon + \epsilon^2 + 2 > 0 \; \forall \epsilon>0$$
        as desired.
    \end{proof}

    Now, observe the side with lattice length $\beta$ has vector $(1,c)^T=(1,8)^T,$ i.e. it is perpendicular to $(8,-1)^T.$
    Further, as $w/z<8$ by Claim \ref{claim: An nsa = 107 -t w<8z}, the width along $(8,-1)$ is (also see Figure \ref{fig:width(1,-1)})
    $$\lt \la \alpha\begin{pmatrix}
        1\\0
    \end{pmatrix}, \begin{pmatrix}
        8 \\-1
    \end{pmatrix} \rt \ra=8\alpha.$$
    The condition $\Width_{(8,-1)}(\Delta) \ge m$ then implies
    $$7\lt(\frac{\alpha}{\gamma}\rt)-\frac{\beta}{\gamma}-\frac{\delta}{\gamma}-1 \ge 0,$$
    which by Equations \ref{eq: An nsa = 107 -t} is
    \begin{equation}
    \label{eq: An nsa = 107 -t width-m}
    \frac{662z^{2}-157zw+9w^{2}+114z-15w+12}{8z^{2}-8zw+w^{2}+16z+8} \ge 0.
    \end{equation}
    Let $\mk b, \mk c$ be the numerator, denominator in Equation \ref{eq: An nsa = 107 -t width-m}. Observe that $\mk c$ is the same denominator we have in Equations \ref{eq: An nsa = 107 -t} up to a constant. Hence we can prove the following:

    \begin{claim}
    \label{claim: An nsa = 107 mk c > 0}
        If $\delta/\gamma > 0,$ then $\mk c > 0.$
    \end{claim}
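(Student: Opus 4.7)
The plan is to reduce the claim to a purely algebraic inequality by exploiting the observation that $\mk c$ is literally the denominator of $\delta/\gamma$ in Equations~\ref{eq: An nsa = 107 -t}. Since $\delta/\gamma>0$, the sign of $\mk c$ equals the sign of the numerator $90z-5w+20$, so the claim is equivalent to showing that $90z-5w+20>0$ for every non-negative integer solution of the Pell equation~\ref{eq: An nsa=107 pell}.

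To establish this inequality, I would mimic the strategy used in the proof of Claim~\ref{claim: 10 abs = 802 root 2 claim 4}: extract a crude upper bound on $w/z$ from the Pell equation and plug it in. Concretely, from $33z^{2}-12zw+w^{2}+2=0$ one can drop the non-negative terms $33z^{2}+2$ to obtain
\[
0 = (33z^{2}+2) + (w^{2}-12zw) > w(w-12z),
\]
which forces $w<12z$ (the case $w=0$ is trivial since then $90z+20>0$). Substituting this bound yields
\[
90z-5w+20 \;>\; 90z - 5(12z) + 20 \;=\; 30z+20 \;>\; 0,
\]
as required.

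The main (and essentially only) step to get right is the derivation of the bound $w<12z$ from the Pell equation; once that is in hand, the proportionality between $90$ and $5 \cdot 12 = 60$ has enough slack to close the argument immediately. I do not anticipate an obstacle: the parallel with the analogous claim in Section~\ref{subsec: A0 Case I} shows the method works, and here the coefficients ($90$ versus the product $5\cdot 12$) are already favorable without needing any finer analysis of small cases.
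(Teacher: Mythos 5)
Your proposal is correct and follows essentially the same route as the paper: reduce to showing the numerator $90z-5w+20$ is positive, then use a linear bound $w<Cz$ extracted from the Pell equation. The only (harmless) difference is that you derive the crude bound $w<12z$ from scratch, whereas the paper simply invokes the sharper bound $w<8z$ already established in Claim \ref{claim: An nsa = 107 -t w<8z}.
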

    \begin{proof}
    It suffices to show the numerator, i.e. $90z-5w+20,$ is positive. As $(z,w)$ satisfy the Pell's equation \ref{eq: An nsa=107 pell}, Claim \ref{claim: An nsa = 107 -t w<8z} shows $90z>88z>11w>5w>5w-20.$
    \end{proof}

    Claim \ref{claim: An nsa = 107 mk c > 0} and Equation \ref{eq: An nsa = 107 -t width-m} then imply $\mk b>0$ too. 
    However, we show this is impossible by studying individual regions in $\RR^2_{z,w}:$
    
    \begin{claim}
    \label{claim: An nsa=107 -t} 
    Let $(z,w)$ be non-negative integers with $z\ge 3.$ Then
        \begin{enumerate}
            \item if $3z \ge w,$ then there are no solutions to the Pell's equation \ref{eq: An nsa=107 pell};
            \item if $5z \ge w \ge 3z,$ then $\mk c < 0;$
            \item if $7.5z \ge w \ge 5z,$ then there are no solutions to the Pell's equation \ref{eq: An nsa=107 pell};
            \item if $8z \ge w \ge 7.5z,$ then $\mk b < 0;$
            \item if $w \ge 8z,$ then there are no solutions to the Pell's equation \ref{eq: An nsa=107 pell}.
        \end{enumerate}
    \end{claim}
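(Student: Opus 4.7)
The plan is to follow the structure of Claim \ref{claim: An nsa=107 +t} verbatim: partition $\{(z,w):z,w\ge 0\}$ into the five radial sectors cut out by the slopes $3, 5, 7.5, 8$, and in each sector produce either a Pell-equation obstruction (for parts (a), (c), (e)) or a sign-definiteness certificate for $\mk b$ or $\mk c$ (for parts (b), (d)). The method throughout is the standard one in this paper: introduce a dummy parameter $\epsilon\in[0,L]$ measuring the ratio $w/z$ relative to the sector boundary, together with $z = 3+x$ where needed, and observe that the target polynomial is quadratic in $\epsilon$ with leading coefficient of known sign, so its extremum over $\epsilon\in[0,L]$ is attained at an endpoint; this reduces each sub-claim to two polynomial inequalities in $x$ alone.

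Parts (a), (c), (e) require no new work. Part (a) is literally part (a) of Claim \ref{claim: An nsa=107 +t}: setting $w = 3z-\epsilon$ makes the Pell expression $6z^2+6z\epsilon+\epsilon^2+2>0$. Part (c) is part (c) of the same claim, with $w=(5+\epsilon)z$, $z=3+x$, and $\epsilon\in[0,2.5]$. Part (e) is exactly Claim \ref{claim: An nsa = 107 -t w<8z}.

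The two genuinely new parts are (b) and (d), and each is a short \textsc{Macaulay2} calculation. For (b), parametrize $w=(3+\epsilon)z$ with $\epsilon\in[0,2]$ and $z=3+x$ with $x\ge 0$; expansion yields $\mk c = (\epsilon^2-2\epsilon-7)z^2+16z+8$, which is convex in $\epsilon$ with leading coefficient $(3+x)^2>0$, and one computes $\mk c(x,0)=\mk c(x,2)=-7x^2-26x-7<0$ for every $x\ge 0$, so $\mk c<0$ throughout the interior as well. For (d), parametrize $w=(15/2+\epsilon)z$ with $\epsilon\in[0,1/2]$ and $z=3+x$; the coefficient of $z^2$ in $\mk b$ equals $9\epsilon^2-22\epsilon-\tfrac{37}{4}$, so $\mk b$ is again convex in $\epsilon$, and the two endpoint polynomials $\mk b(x,0)=-\tfrac{37}{4}x^2-54x-\tfrac{267}{4}$ and $\mk b(x,1/2)=-18x^2-114x-168$ both have all nonpositive coefficients and strictly negative constant terms, so $\mk b<0$ for $x\ge 0$.

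The main obstacle here is not conceptual but calibration: just as in every other case in Sections \ref{sec: the smooth case} and \ref{sec: the A_n case}, one has to hit on thresholds so that the sign-definite sectors for $\mk b$ and $\mk c$ interlock with the Pell-free sectors to cover the quadrant. Once the thresholds $3,5,7.5,8$ are fixed, the verifications collapse to a handful of polynomial inequality checks, and the residual finitely many Pell solutions outside the range (here $z\le 2$, giving $(z,w)\in\{(1,5),(1,7)\}$) are then eliminated by direct evaluation, following the pattern at the end of the $+t$ case.
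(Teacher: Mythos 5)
Your proposal is correct and follows essentially the same route as the paper: parts (a), (c), (e) are delegated to Claim \ref{claim: An nsa=107 +t} and Claim \ref{claim: An nsa = 107 -t w<8z}, while (b) and (d) use the identical substitutions $w=(3+\epsilon)z$, $w=(7.5+\epsilon)z$ with $z=3+x$, the same convexity-in-$\epsilon$ endpoint reduction, and arrive at the same endpoint polynomials $-7x^2-26x-7$, $-\tfrac{37}{4}x^2-54x-\tfrac{267}{4}$, and $-18x^2-114x-168$. Your stated range $\epsilon\in[0,1/2]$ in part (d) is in fact the correct one (the paper's text says $1.5\ge\epsilon\ge 0$ but evaluates at $\epsilon=0.5$, consistent with your version).
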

    \begin{proof}
    We use the same technique for all; introduce a dummy variable $\epsilon$ and use \textsc{Macaulay2}. See Computation \ref{comp: An case I degenerate} for computations.
    \begin{enumerate}
        \item This was $(a)$ in Claim \ref{claim: An nsa=107 +t}.
        \item Write $w=(3+\epsilon)z$ and $z=3+x$ for $2 \ge \epsilon \ge 0$ and $x \ge 0.$ Then $\mk c$ as a function of $x,\epsilon$ equals
        $$\mk c(x,\epsilon)=(x^2+6x+9)\epsilon^2+\dots.$$
        As a quadratic in $\epsilon$, this has positive leading coefficient. Hence we need to check the result only for $\epsilon \in \{0,2\}.$ Indeed,
            \begin{align*}
            &\mk c(x,0) = \mk c(x,2) = - 7x^2  - 26x - 7<0 \; \forall x\ge0
            \end{align*}
            as desired.
        \item This was $(c)$ in Claim \ref{claim: An nsa=107 +t}.
        \item Write $w=(7.5+\epsilon)z$ and $z=3+x$ with $1.5 \ge \epsilon \ge 0$ and $x \ge 0.$ Then $\mk b$ as a function of $x,\epsilon$ equals
        $$\mk b(x,\epsilon) = (9x^2+54x+81)\epsilon^2+\dots.$$
        As a quadratic in $\epsilon$, this has negative leading coefficient. Hence we need to check the result only for $\epsilon \in \{0,1.5\}.$ Indeed,
            \begin{align*}
            &\mk b(x,0) = -\tfrac{37}{4}x^{2}-54x-\tfrac{267}{4}<0 \; \forall x\ge0 \\
            &\mk b(x,0.5) = -18x^{2}-114x-168<0 \; \forall x\ge0
            \end{align*}
            as desired.
        \item This was proven in Claim \ref{claim: An nsa = 107 -t w<8z}.  \qedhere
    \end{enumerate} 
    \end{proof}

    We are thus left to analyze the case when $z \le 2.$ The only possible solutions to the Pell's equation \ref{eq: An nsa=107 pell} in this range are
    $$(z,w) \in \{(1,5),(1,7)\}.$$
    However, one can check $\beta/\gamma<0$ for both of these, exhausting this case.
\end{enumerate}



\subsubsection*{Case $(n,s,a)=(4,-2,0)$}
In this case, the Pell's equation is
\begin{equation}
\label{eq: An nsa=4-20 pell}
11z^2-15zw+5w^2+5 = 0.
\end{equation}
Here, $t = \pm(22z - 10w - 10)$ as seen in Computation \ref{comp: An case I general}.
As seen before, we find two roots for $\gamma/\delta$ in terms of $z,w.$
\begin{enumerate}
    \item Consider the first case. We find (cf. Comp. \ref{comp: An case I degenerate})
    \begin{equation}
    \label{eq: An nsa=4-20}
    \frac{\beta}{\delta} = \frac{-11z^{2}+5zw-w^{2}-22z+10w-11}{w(w-z)+(z+1)^2}.
    \end{equation}
    We show that for $(z,w)$ satisfying the Pell's equation \ref{eq: An nsa=4-20 pell}, $\beta/\gamma<0.$ We can in fact prove this for non-negative reals $z,w,$ not just integers.

    \begin{claim}
    Let $(z,w)$ be any non-negative real numbers.
    \begin{enumerate}
        \item if $z < 3,$ then there are no solutions to the Pell's equation \ref{eq: An nsa=4-20 pell};
        \item if $z \ge 3$ then $\beta/\gamma < 0.$
    \end{enumerate}
    \begin{proof}
    \begin{enumerate}
        \item As a quadratic in $w,$ the discriminant is $(15z)^2-4(5)(11z^2+5)=5z^2-100 < 0$ for $z \le 4.$
        \item The denominator in Equation \ref{eq: An nsa=4-20} is always positive as $w>z$ (see the comment below Equation \ref{eq: An M matrix}). So consider the numerator, which in fact is an ellipse. As a quadratic in $w,$ this is $-w^2+5(z+2)w-11(z+1)^2.$
        The discriminant is $25(z+2)^2-44(z+1)^2=- 19z^2 + 12z + 56.$ We can easily check this is negative for all $z \ge 3.$  \qedhere
    \end{enumerate} 
    \end{proof}
    \end{claim}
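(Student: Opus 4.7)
The plan is to treat both the Pell equation and the numerator of $\beta/\gamma$ as quadratics in $w$ with $z$ regarded as a parameter, and to conclude via discriminant computations.

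For part (1), I would rewrite the Pell equation \ref{eq: An nsa=4-20 pell} as $5w^2 - 15zw + (11z^2 + 5) = 0$, a quadratic in $w$. Real solutions in $w$ exist if and only if the discriminant $(15z)^2 - 4 \cdot 5 \cdot (11z^2 + 5) = 5z^2 - 100$ is non-negative, which requires $z^2 \ge 20$. Since $z < 3$ forces $z^2 < 9 < 20$, the discriminant is strictly negative and no real $w$ (hence no non-negative real $w$) satisfies the equation.

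For part (2), the first step is to control the sign of the denominator in Equation \ref{eq: An nsa=4-20}, namely $w(w-z) + (z+1)^2$. Invoking the standing constraint $w > z$ from the setup (cf. the comments after Equation \ref{eq: An M matrix}), both summands are non-negative and $(z+1)^2 > 0$, so the denominator is strictly positive. It remains to show the numerator is strictly negative. I would regroup the numerator as a quadratic in $w$: $-w^2 + 5(z+2)w - 11(z+1)^2$. Since the leading coefficient $-1$ is negative, this quadratic is negative for every real $w$ iff its discriminant is negative. That discriminant equals $25(z+2)^2 - 44(z+1)^2 = -19z^2 + 12z + 56$, a downward parabola in $z$ with vertex at $z = 6/19$, so it is decreasing on $[3,\infty)$; its value at $z = 3$ is $-79 < 0$, hence the discriminant (and therefore the numerator) is negative for all $z \ge 3$.

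The argument is essentially a pair of discriminant calculations, so no significant obstacle is anticipated. The only subtlety worth flagging is that part (2) quietly relies on $w > z$ to guarantee the denominator is positive; this is not explicit in the phrase ``non-negative real numbers'' but is part of the ambient setup in which the claim is applied.
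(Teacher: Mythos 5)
Your proposal is correct and follows essentially the same route as the paper: part (1) is the identical discriminant computation $5z^2-100<0$, and part (2) uses the same positivity of the denominator via $w>z$ together with the same regrouping of the numerator as $-w^2+5(z+2)w-11(z+1)^2$ and the discriminant $-19z^2+12z+56<0$ for $z\ge 3$. Your explicit remark that the claim implicitly relies on the ambient constraint $w>z$ matches the paper's own citation of the comment below the definition of $\mf M$.
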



    \item Consider the second case. We find (cf. Comp. \ref{comp: An case I degenerate})
    \begin{equation}
    \label{eq: An nsa=4-20 -t}
    \begin{split}
        &\frac{\alpha}{\gamma} = \frac{21z^{2}-21zw+5w^{2}+10z+5}{z^{2}-zw+w^{2}+2z+1} \\
        \frac{\beta}{\gamma} &= \frac{-11z^{2}+27zw-11w^{2}-22z-11}{z^{2}-zw+w^{2}+2z+1} \\
        &\frac{\delta}{\gamma} = \frac{16z}{z^{2}-zw+w^{2}+2z+1}.
    \end{split}
    \end{equation}
    In this case we show either $\alpha/\gamma < 0$ or $\Width_{(1,-1)}<m,$ just like in the smooth case (cf. the case with Equations \ref{eq: A0 case I sides -t}). 
    The same analysis as in the smooth case works (cf. Fig. \ref{fig:width(1,-1)}), and so $\Width_{(1,-1)}-m \ge 0$ translates to
    $$(w-z-1)\lt(\frac{\delta}{\gamma}\rt)- \frac{\beta}{\gamma}-1 \ge 0$$
    like in Equation \ref{eq: A0 case I -t width - m}. In our case, Equation \ref{eq: An nsa=4-20 -t} translates this to
    \begin{equation}
        \label{eq: An nsa=4-20 -t width-m}
        \frac{-6z^{2}-10zw+10w^{2}+4z+10}{z^{2}-zw+w^{2}+2z+1} \ge 0.
    \end{equation}

    \begin{claim}
    \label{claim: An nsa=4-20 -t}
        Assume $(z,w)$ are non-negative integers with $w > z$ (see the comment below Equation \ref{eq: An M matrix}) and $z \ge 31$ Then
        \begin{enumerate}
            \item if $1.3z \ge w \ge z,$ then $\Width_{(1,-1)}<m;$
            \item if $1.72z \ge w \ge 1.3z,$ then there are no solutions to the Pell's equation \ref{eq: An nsa=4-20 pell};
            \item if $2z \ge w \ge 1.72z,$ then here $\alpha/\gamma < 0;$
            \item if $w \ge 2z,$ then are no solutions to the Pell's equation \ref{eq: An nsa=4-20 pell}.
        \end{enumerate}
    \end{claim}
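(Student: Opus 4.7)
The plan is to repeat the dummy-variable technique used for every analogous claim in this section and in Section~\ref{sec: the smooth case}. For each of (a)--(d), I would parametrize the relevant strip of the $(z,w)$-plane by $w = (\lambda + \epsilon)z$ with $\epsilon$ varying over a fixed closed interval, shift $z = 31 + x$ with $x \ge 0$, and rewrite the inequality of interest as a polynomial $f(x,\epsilon)$. Each $f$ is a quadratic in $\epsilon$ whose leading coefficient (a polynomial in $x$) has definite sign on $\{x \ge 0\}$, so the sign of $f$ on the full rectangle is controlled by its values at the two $\epsilon$-endpoints, which can be verified directly (or via \textsc{Macaulay2}).

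For part (a), the denominator in Equation~\ref{eq: An nsa=4-20 -t width-m} is positive since $w > z$ (cf. Equation~\ref{eq: An M matrix}), so it suffices to show the numerator $-6z^2 - 10zw + 10w^2 + 4z + 10$ is negative. Substituting $w = (1+\epsilon)z$ with $0 \le \epsilon \le 0.3$ yields an expression whose coefficient of $z^2$ is $10\epsilon^2 + 10\epsilon - 6$, negative on $[0,0.3]$ (its positive root is near $0.422$), giving an upper bound of $-2.1 z^2 + 4z + 10 < 0$ for $z \ge 31$. Part (c) is identical in spirit: write $w = (1.72 + \epsilon)z$ with $0 \le \epsilon \le 0.28$, substitute into the numerator $21z^2 - 21zw + 5w^2 + 10z + 5$ of $\alpha/\gamma$ from Equation~\ref{eq: An nsa=4-20 -t}, and verify the resulting polynomial in $x,\epsilon$ is negative at both $\epsilon$-endpoints for $z \ge 31$; the denominator is again positive for the same reason.

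For (d), the substitution $w = 2z + \epsilon$ with $\epsilon \ge 0$ transforms the left side of the Pell Equation~\ref{eq: An nsa=4-20 pell} into $z^2 + 5z\epsilon + 5\epsilon^2 + 5 > 0$, so no solutions exist in this half-plane. For (b), write $w = (1.3+\epsilon)z$ with $0 \le \epsilon \le 0.42$, so that the left side of Equation~\ref{eq: An nsa=4-20 pell} becomes $(5\epsilon^2 - 2\epsilon - 0.05)z^2 + 5$. The quadratic $5\epsilon^2 - 2\epsilon - 0.05$ has roots $(2 \pm \sqrt{5})/10 \approx -0.024,\ 0.424$ and is therefore negative on $[0,0.42]$, with maximum value $-0.008$ at $\epsilon = 0.42$; this gives $-0.008 z^2 + 5 < 0$ whenever $z \ge 31 > 25$.

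The hardest step is (b), and its sharpness is the main obstacle: the window $[1.3, 1.72]$ for $w/z$ approximates the asymptotic directions $(15 \pm \sqrt{5})/10 \approx 1.276,\ 1.724$ along which integer solutions of the Pell Equation~\ref{eq: An nsa=4-20 pell} accumulate, so the cut-offs $1.3$ and $1.72$ cannot be loosened without letting an infinite family of Pell solutions leak into the forbidden strip, and the hypothesis $z \ge 31$ is essentially forced by the remaining slack ($-0.008 z^2 + 5$ only becomes negative at $z \ge 26$). Once (a)--(d) are established, the outstanding cases are the finitely many Pell solutions with $z \le 30$, which can be enumerated and ruled out individually by direct checks on $\alpha/\gamma$, $\beta/\gamma$, or a width in some primitive direction, exactly as in the $(n,s,a) = (1,0,0)$ and $(1,0,7)$ subcases treated earlier.
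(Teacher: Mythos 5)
Your proposal is correct and follows essentially the same route as the paper's proof: the same substitutions $w=(\lambda+\epsilon)z$, $z=31+x$ for each strip, the same reduction to checking the sign of a quadratic in $\epsilon$ at its endpoints (or, equivalently, bounding its $z^2$-coefficient), and the identical computation $z^2+5z\epsilon+5\epsilon^2+5>0$ for part (d). The only difference is cosmetic — in (a) and (b) you bound the coefficient of $z^2$ directly rather than evaluating at the two $\epsilon$-endpoints as the paper's \textsc{Macaulay2} computation does — and your closing remarks on the sharpness of the cut-offs and the finitely many residual Pell solutions match the paper's subsequent treatment.
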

    \begin{proof}
     We use the same technique for all; introduce a dummy variable $\epsilon$ and use \textsc{Macaulay2}. See Computation \ref{comp: An case I degenerate} for computations.
     \begin{enumerate}
         \item As $w>z,$ the denominator of Equation \ref{eq: An nsa=4-20 -t width-m} is positive, and so consider the numerator, say $\mk b$.
         Write $w=(1+\epsilon)z$ and $z=31+x$ where $0.3 \ge \epsilon \ge 0$ and $x \ge 0.$ Then $\mk b$ as a function of $x,\epsilon$ is
         $$\mk b(x,\epsilon) = (10x^2+620x+9610)\epsilon^2+\dots$$
         As a quadratic in $\epsilon,$ this has positive leading coefficient. So we only need to check this is negative for $\epsilon \in \{0,0.3\}.$ Indeed,
         \begin{align*}
             &\mk b(x,0) = -6x^{2}-368x-5632 < 0 \; \forall x \ge 0 \\
             &\mk b(x,0.3) -\tfrac{21}{10}x^{2}-\tfrac{631}{5}x-\tfrac{18841}{10} < 0 \; \forall x \ge 0
         \end{align*}
         as desired.
         \item Write $w=(1+\epsilon)z$ and $z=31+x$ where $0.3 \ge \epsilon \ge 0$ and $x \ge 0.$ Then $11z^2-15zw+5w^2+5$ as a function of $x,\epsilon$ is
         $$f(x,\epsilon) = (5x^2+310x+4805)\epsilon^2+\dots.$$
         As a quadratic in $\epsilon,$ this has positive leading coefficient. So we only need to check this is negative for $\epsilon \in \{0,0.42\}.$ Indeed,
         \begin{align*}
             &\mk b(x,0) = -\tfrac{1}{20}x^{2}-\tfrac{31}{10}x-\tfrac{861}{20} < 0 \; \forall x \ge 0 \\
             &\mk b(x,0.42) -\tfrac{1}{125}x^{2}-\tfrac{62}{125}x-\tfrac{336}{125} < 0 \; \forall x \ge 0
         \end{align*}
         as desired.
         \item As $w>z,$ the denominator of Equation \ref{eq: An nsa=4-20 -t} is positive, and so consider the numerator, say $\mk a$.
         Write $w=(1.72+\epsilon)z$ and $z=31+x$ where $0.28 \ge \epsilon \ge 0$ and $x \ge 0.$ 
         Then $\mk a$ as a function of $x,\epsilon$ is
         $$\mk a(x,\epsilon) = (5x^2+310x+4805)\epsilon^2+\dots$$
         As a quadratic in $\epsilon,$ this has positive leading coefficient. So we only need to check this is negative for $\epsilon \in \{0,0.28\}.$ Indeed,
         \begin{align*}
             &\mk a(x,0) = -\tfrac{41}{125}x^{2}-\tfrac{1292}{125}x-\tfrac{26}{125} < 0 \; \forall x \ge 0 \\
             &\mk a(x,0.28) =-x^{2}-52x-646 < 0 \; \forall x \ge 0
         \end{align*}
         as desired.
         \item Write $w=2z+\epsilon$ for $\epsilon\ge0.$ Then $11z^2-15zw+5w^2+5$ as a function of $x,\epsilon$ is
         $$z^{2}+5z\epsilon+5\epsilon^{2}+5>0 \; \forall\epsilon\ge0$$
         as desired.  \qedhere
     \end{enumerate}
    \end{proof}
    We are thus left to analyze the case when $z < 31.$ The only possible solutions to the Pell's equation \ref{eq: An nsa=4-20 pell} in this range are
    $$(z,w) \in \{(5,7),(5,8),(10,13),(10,17),(25,32),(25,43)\}.$$
    As shown in Computation \ref{comp: An case I degenerate}, $\beta/\gamma<0$ for $(z,w) = (5,8),$ and $\Width_{(1,-1)}<m$ for $(z,w) \in \{(10,13),(25,32)\}.$
    Of the remaining cases, we can manually check that there is only polygon (up to scaling) which satisfies $\Width(\Delta) \ge m.$ 
    For that quadrilateral, $\mc L_{\Delta}(m)$ has a reducible curve, showing it's not elliptic. 
    \end{enumerate}


\subsubsection*{Case $(n,s,a)=(4,-2,4)$}
In this case, the Pell's equation is
\begin{equation}
\label{eq: An nsa=4-24 pell}
11z^2-7zw+w^2+5 = 0.
\end{equation}
Here, $t = \pm(22z - 2w - 10)$ as seen in Computation \ref{comp: An case I general}.
As seen before, we find two roots for $\gamma/\delta$ in terms of $z,w.$
\begin{enumerate}
    \item Consider the first case. We find (cf. Comp. \ref{comp: An case I degenerate})
    \begin{equation}
    \label{eq: An nsa=4-24 +t}
        \frac{\beta}{\gamma} = \frac{-55z^{2}+49zw-9w^{2}-110z+10w-55}{25z^{2}-25zw+5w^{2}+50z+25}.
    \end{equation}
    Let $\mk b, \mk c$ be the numerator, denominator in Equation \ref{eq: An nsa=4-24 +t}. We show $\mk b < 0$ or $\mk c < 0$ for any $(z,w)$ satisfying the Pell's equation.
        \begin{claim}
        \label{claim: An nsa=4-24 +t}
        Let $(z,w)$ be non-negative integers with $z \ge 14.$ Then
        \begin{enumerate}
            \item if $2z \ge w,$ then there are no solutions to the Pell's equation \ref{eq: An nsa=4-24 pell};
            \item if $2.5z \ge w \ge 2z,$ then $\mk c < 0$ but $\mk b>0;$
            \item if $4.4z \ge w \ge 2.5z,$ then there are no solutions to the Pell's equation \ref{eq: An nsa=4-24 pell};
            \item if $w \ge 4.4z,$ then $\mk b < 0$ but $\mk c>0.$
        \end{enumerate}
    \end{claim}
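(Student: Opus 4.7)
The plan is to follow the template established in the earlier sign claims of this section (notably Claims \ref{claim: An nsa = 100}, \ref{claim: An nsa=107 +t}, and \ref{claim: An nsa=4-20 -t}), turning each sub-case into a polynomial non-negativity check via the introduction of an auxiliary parameter $\epsilon \ge 0$. Viewed as a quadratic in $w$ with positive leading coefficient, the Pell expression $f(z,w) := 11z^2 - 7zw + w^2 + 5$ has real roots (for large $z$) near $w \approx \tfrac{7 - \sqrt 5}{2}z \approx 2.382\,z$ and $w \approx \tfrac{7+\sqrt 5}{2}z \approx 4.618\,z$, so $f < 0$ strictly between the roots and $f > 0$ strictly outside. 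The boundary slopes $2,\ 2.5,\ 4.4$ are chosen so that the regions in (a) and (c) lie outside and strictly inside this Pell strip, respectively, while the sign-change loci of $\mk b$ and $\mk c$ contain the Pell strip and sandwich the regions in (b) and (d).

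For (a), substitute $w = 2z - \epsilon$ with $\epsilon \ge 0$: a direct expansion gives
\[
f(z, 2z-\epsilon) = z^2 + 3z\epsilon + \epsilon^2 + 5 > 0,
\]
so the Pell equation admits no solutions in this region. For (c), substitute $w = (2.5 + \epsilon)z$ with $0 \le \epsilon \le 1.9$ and $z = 14 + x$ with $x \ge 0$. Viewed as a polynomial in $x,\epsilon$ and then as a quadratic in $\epsilon$, the leading coefficient of $f$ in $\epsilon$ is $z^2 = (14+x)^2 > 0$, so it suffices to verify $f < 0$ for all $x \ge 0$ at the two endpoint values $\epsilon \in \{0, 1.9\}$, which amounts to exhibiting two explicit polynomials in $x$ with negative coefficients (after expansion and collection).

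For (b) and (d) the same pattern applies. In (b), substitute $w = (2 + \epsilon)z$ with $0 \le \epsilon \le 0.5$ and $z = 14 + x$, then rewrite $\mk b$ and $\mk c$ as polynomials in $x,\epsilon$. Each is a quadratic in $\epsilon$ whose $\epsilon^2$-coefficient has a definite sign (negative for $\mk b$, positive for $\mk c$), so the global sign on the slab $\epsilon \in [0, 0.5]$ is determined by the values at $\epsilon = 0$ and $\epsilon = 0.5$, each of which reduces to a polynomial in $x$ with coefficients of the desired sign. In (d), substitute $w = 4.4z + \epsilon$ with $\epsilon \ge 0$; here one obtains $\mk b$ and $\mk c$ as polynomials in $z,\epsilon$ whose signs follow directly by checking that the coefficient polynomials (in $z$) have negative discriminant and definite leading coefficient.

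The main obstacle is not conceptual but computational: choosing the rational slopes $2,\ 2.5,\ 4.4$ and the lower bound $z \ge 14$ tightly enough that all four substitutions simultaneously yield polynomials with the desired uniform signs. This is calibrated by the approximate Pell roots $2.382\,z$ and $4.618\,z$, and once the slopes are correct every resulting polynomial inequality is a mechanical \textsc{Macaulay2} check of the sort already carried out in the prior claims. The small-$z$ complement (here $z \le 13$) will need to be handled by finitely many direct verifications, as in the other claims of this section.
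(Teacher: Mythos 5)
Your proposal matches the paper's proof essentially step for step: part (a) via $w=2z-\epsilon$ giving $z^2+3z\epsilon+\epsilon^2+5>0$; parts (b) and (c) via $w=(2+\epsilon)z$ resp.\ $(2.5+\epsilon)z$ with $z=14+x$, using the sign of the $\epsilon^2$-coefficient to reduce to the endpoint values $\epsilon\in\{0,0.5\}$ resp.\ $\{0,1.9\}$; and part (d) via $w=4.4z+\epsilon$, all verified by the same mechanical \textsc{Macaulay2} expansions and with the small-$z$ cases handled separately. The approach is correct and is the same as the paper's.
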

    \begin{proof}
         We use the same technique for all; introduce a dummy variable $\epsilon$ and use \textsc{Macaulay2}. See Computation \ref{comp: An case I degenerate} for computations.
     \begin{enumerate}
        \item Write $w=2z-\epsilon$ for $\epsilon\ge0.$ Then $11z^2-7zw+w^2+5$ as a function of $z,\epsilon$ is
        $$z^{2}+3z\epsilon+\epsilon^{2}+5 > 0 \; \forall z,\epsilon\ge0$$
        showing it's nonzero.
         \item Write $w=(2+\epsilon)z$ and $z=14+x$ for $0.5 \ge \epsilon \ge 0$ and $x \ge 0.$ Then $\mk b, \mk c$ as a function of $x,\epsilon$ are
         \begin{align*}
            &\mk b(x,\epsilon)=(-9x^2- 252x-1764)\epsilon^2+\dots \\
            &\mk c(x,\epsilon)=(5x^2+140x+980)\epsilon^2+\dots.   
         \end{align*}
         As a quadratic in $\epsilon,$ they have negative, positive leading coefficients respectively. So we only need to check the result for $\epsilon \in \{0,0.5\}.$ Indeed,
         \begin{align*}
             &\mk b(x,0) = 7x^{2}+106x+57 > 0 \; \forall x \ge0 \\
             &\mk b(x,0.5)= \tfrac{45}{4}x^{2}+230x+960 > 0 \; \forall x \ge 0 \\
             &\mk c(x,0) = -5x^{2}-90x-255 < 0 \; \forall x \ge 0 \\
             &\mk c(x,0.5)= -\tfrac{25}{4}x^{2}-125x-500 < 0 \; \forall x \ge 0
         \end{align*}
         as desired.   
         \item Write $w=(2.5+\epsilon)z$ and $z=14+x$ for $1.9 \ge \epsilon \ge 0.$
         Then $11z^2-7zw+w^2+5$ as a function of $z,\epsilon$ is
        \begin{align*}
            f(x,\epsilon)=(x^2+28x+196)\epsilon^2+\dots    
         \end{align*}
         As a quadratic in $\epsilon,$ this has positive leading coefficient. So we only need to check the result for $\epsilon \in \{0,1.5\}.$ Indeed,
         \begin{align*}
             &f(x,0) = -\tfrac{1}{4}x^{2}-7x-44 < 0 \; \forall x \ge0 \\
             &f(x,1.9)= -\tfrac{11}{25}x^{2}-\tfrac{308}{25}x-\tfrac{2031}{25} < 0 \; \forall x \ge 0
         \end{align*}
         as desired.   
         \item Write $w=4.4z+\epsilon$ and $z=14+x$ for $\epsilon,x \ge 0.$ Then $\mk b, \mk c$ as functions of $x,\epsilon$ are
         {\footnotesize
         \begin{align*}
            \mk b(x,\epsilon)&=-9x^{2}\epsilon^{2}-\tfrac{151}{5}x^{2}\epsilon-252x\epsilon^{2}-\tfrac{341}{25}x^{2}-\tfrac{4178}{5}x\epsilon \\
            &-1764\epsilon^{2}-\tfrac{11198}{25}x-\tfrac{28896}{5}\epsilon
       -\tfrac{91311}{25} < 0 \; \forall z,\epsilon\ge0 \\
            \mk c(x,\epsilon)&=5x^{2}\epsilon^{2}+19x^{2}\epsilon+140x\epsilon^{2}+\tfrac{59}{5}x^{2} \\
            &+532x\epsilon+980
       \epsilon^{2}+\tfrac{1902}{5}x+3724\epsilon+\tfrac{15189}{5} > 0 \; \forall z,\epsilon\ge0.
         \end{align*}
         }
         as desired.  \qedhere
     \end{enumerate}
    \end{proof}
     We are thus left to analyze the case when $z < 14.$ The only possible solutions to the Pell's equation \ref{eq: An nsa=4-20 pell} in this range are
    $$(z,w) \in \{(2,7),(3,8),(3,13),(7,17),(7,32)\}.$$
    However, we can check in each case that $\beta/\gamma<0.$


    \item  Consider the second case. Here we find (cf. Comp. \ref{comp: An case I degenerate})
    \begin{equation}
    \label{eq: An nsa=4-24 -t}
        \begin{split}
            \frac{\alpha}{\gamma} &= \frac{85z^{2}-21zw+w^{2}+10z+5}{25z^{2}-25zw+5w^{2}+50z+25} \\
            &\frac{\beta}{\gamma} = \frac{-55z^{2}+71zw-11w^{2}-110z-55}{25z^{2}-25zw+5w^{2}+50z+25} \\
            \frac{\delta}{\gamma} &= \frac{16z}{5z^{2}-5zw+w^{2}+10z+5}.
        \end{split}
    \end{equation}

    \begin{claim}
    \label{claim: An nsa = 4-24 -t w<5z}
        Any $(z,w)$ satisfying the Pell's equation \ref{eq: An nsa=4-24 pell} satisfy $w<5z.$
    \end{claim}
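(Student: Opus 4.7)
The plan is to mimic the contrapositive arguments used in the analogous Claims \ref{claim: abs=802 root 2 claim 1} and \ref{claim: An nsa = 107 -t w<8z}: assume $w \ge 5z$ and show that the left-hand side of the Pell's equation \ref{eq: An nsa=4-24 pell} is strictly positive, which contradicts its vanishing.

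Concretely, I would parametrize $w = 5z + \epsilon$ with $\epsilon \ge 0$ and substitute into $11z^2 - 7zw + w^2 + 5$. The linear-in-$w$ and quadratic-in-$w$ cross terms combine to give something of the form $z^2 + 3z\epsilon + \epsilon^2 + 5$ (this is the same kind of clean cancellation that happens in the previous two claims, where the $z^2$ coefficient collapses from $11$ down to $1$ once the dominant $w^2 - 7zw$ piece cancels out). Since $z, \epsilon \ge 0$, every term in
\[
z^2 + 3z\epsilon + \epsilon^2 + 5
\]
is non-negative and the constant term $5$ is strictly positive, so the whole expression is $> 0$.

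This contradicts the hypothesis that $(z,w)$ satisfies the Pell's equation, completing the proof. No serious obstacle is expected; the only thing to verify carefully is the algebraic expansion, which can be checked by hand or in \textsc{Macaulay2} exactly as in Computation \ref{comp: An case I degenerate}. The argument is essentially identical in spirit to Claim \ref{claim: An nsa = 107 -t w<8z} (where the bound was $w < 8z$ from $33z^2 - 12zw + w^2 + 2 = 0$) and Claim \ref{claim: abs=802 root 2 claim 1}, with the constants $5$, $7$, $11$ chosen precisely so the substitution telescopes to a manifestly positive quadratic form.
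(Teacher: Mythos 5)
Your proposal is correct and is essentially identical to the paper's own proof: the paper also argues the contrapositive by substituting $w = 5z + \epsilon$ and observing that $11z^2 - 7zw + w^2 + 5$ becomes $z^2 + 3z\epsilon + \epsilon^2 + 5 > 0$. The algebraic expansion you anticipated checks out exactly.
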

    \begin{proof}
        We prove the contrapositive; write $w=5z+\epsilon$ for $\epsilon>0.$ Then the  expression $11z^2-7zw+w^2+5$ is 
        $$z^{2}+3z\epsilon+\epsilon^{2}+5>0 \; \forall z,\epsilon \ge 0$$
        as desired.
    \end{proof}

    Now, observe the side with lattice length $\beta$ has vector $(1,c)^T=(1,5)^T,$ i.e. it is perpendicular to $(5,-1)^T.$
    Further, as $w/z<5$ by Claim \ref{claim: An nsa = 107 -t w<8z}, the width along $(5,-1)$ is (also see Figure \ref{fig:width(1,-1)})
    $$\lt \la \alpha\begin{pmatrix}
        1\\0
    \end{pmatrix}, \begin{pmatrix}
        5 \\-1
    \end{pmatrix} \rt \ra=5\alpha.$$
    The condition $\Width_{(5,-1)}(\Delta) \ge m$ then implies

    $$4\lt(\frac{\alpha}{\gamma}\rt)-\frac{\beta}{\gamma}-\frac{\delta}{\gamma}-1 \ge 0,$$
    which by Equations \ref{eq: An nsa=4-24 -t} is
    \begin{equation}
    \label{eq: An nsa=4-24 -t width-m}
    \frac{74z^{2}-26zw+2w^{2}+4z+10}{5z^{2}-5zw+w^{2}+10z+
       5} \ge 0.
    \end{equation}
    Let $\mk b, \mk c$ be the numerator, denominator in Equation \ref{eq: An nsa = 107 -t width-m}. Observe that $\mk c$ is the same denominator we have in Equations \ref{eq: An nsa = 107 -t} up to a constant. Hence we can prove the following:

    \begin{claim}
    \label{claim: An nsa = 4-24 mk c > 0}
        If $\delta/\gamma > 0,$ then $\mk c > 0.$
    \end{claim}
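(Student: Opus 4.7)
The plan is to observe that the denominator $\mk c = 5z^{2}-5zw+w^{2}+10z+5$ appearing in the width inequality \ref{eq: An nsa=4-24 -t width-m} is \emph{exactly} the denominator in the expression for $\delta/\gamma$ given in Equations \ref{eq: An nsa=4-24 -t}. Indeed, the denominators of $\alpha/\gamma$ and $\beta/\gamma$ in those equations are $25z^{2}-25zw+5w^{2}+50z+25 = 5\,\mk c$, while the denominator of $\delta/\gamma$ is literally $\mk c$ (the author's parenthetical remark ``up to a constant'' refers to these two denominators being proportional).

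Given this identification, the sign of $\mk c$ agrees with the sign of $\delta/\gamma$ whenever the numerator of $\delta/\gamma$ is positive. The numerator in question is $16z$, and the relevant variable satisfies $0 < z < w$ (see the comment below Equation \ref{eq: An M matrix}), so $16z > 0$ unconditionally. Therefore, once $\delta/\gamma > 0$, positivity of its numerator forces its denominator $\mk c$ to be positive as well, which is exactly the claim.

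The argument is the direct analog of the proof of Claim \ref{claim: An nsa = 107 mk c > 0}, but is in fact simpler: in that earlier case the numerator $90z - 5w + 20$ of $\delta/\gamma$ required a short argument using the Pell's equation (via Claim \ref{claim: An nsa = 107 -t w<8z}) to be seen as positive, whereas here the numerator $16z$ is manifestly positive from the constraint $z > 0$ alone, with no appeal to the Pell's equation \ref{eq: An nsa=4-24 pell} needed. For this reason, I do not anticipate any obstacle; the whole proof is a one-line sign comparison once the denominators are identified.
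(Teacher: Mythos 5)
Your proof is correct and is essentially identical to the paper's one-line argument: since the numerator of $\delta/\gamma$ in Equation \ref{eq: An nsa=4-24 -t} is $16z>0$, positivity of $\delta/\gamma$ forces its denominator, which is exactly $\mk c$, to be positive. The additional remark comparing with Claim \ref{claim: An nsa = 107 mk c > 0} is accurate but not needed.
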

    \begin{proof}
    This follows since the numerator of $\delta/\gamma$ in Equation \ref{eq: An nsa=4-24 -t} is $16z>0.$
    \end{proof}

    Claim \ref{claim: An nsa = 4-24 mk c > 0} and Equation \ref{eq: An nsa=4-24 -t width-m} then imply $\mk b>0$ too. 
    However, we show this is impossible by studying individual regions in $\RR^2_{z,w}:$

    \begin{claim}
    \label{claim: An nsa=4-24 -t}
    Let $(z,w)$ be non-negative integers with $z \ge 14.$ Then
    \begin{enumerate}
        \item if $2.5z \ge w \ge 2z,$ then $\mk c < 0;$
        \item if $4.4z \ge w \ge 2.5z,$ then there are no solutions to the Pell's equation \ref{eq: An nsa=4-24 pell};
        \item if $5z \ge w \ge 4.4z,$ then $\mk b <0;$
        \item if $w \ge 5z,$ then there are no solutions to the Pell's equation \ref{eq: An nsa=4-24 pell}.
    \end{enumerate}
    \end{claim}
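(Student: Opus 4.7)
The plan is to follow verbatim the template used throughout this section (cf.\ Claim~\ref{claim: An nsa=4-24 +t}): introduce a slack parameter $\epsilon$, write $w=(c+\epsilon)z$ where $c$ is the left-hand slope of the relevant range, shift $z=14+x$ with $x\ge 0$ when needed, and analyze the resulting polynomial as a quadratic in $\epsilon$ with coefficients in $\mathbb{R}[x]$. The sign of the leading $\epsilon^{2}$-coefficient then yields (con)vexity, reducing each assertion to two polynomial inequalities in $x$ obtained by plugging in the endpoints of the $\epsilon$-interval; each of these can be handed to \textsc{Macaulay2}.

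Parts (b) and (d) require no new work. Part (d) is precisely Claim~\ref{claim: An nsa = 4-24 -t w<5z}. For part (b), observe that the Pell polynomial $11z^{2}-7zw+w^{2}+5$ depends only on $(z,w)$ and is therefore insensitive to the choice of sign of $t$; hence the range $4.4z\ge w\ge 2.5z$ was already handled by part (c) of Claim~\ref{claim: An nsa=4-24 +t} and the same computation transfers verbatim.

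For part (a), substituting $w=(2+\epsilon)z$ with $0\le\epsilon\le 0.5$ gives
\[
\mk c \;=\; 5z^{2}-5zw+w^{2}+10z+5 \;=\; z^{2}\bigl(\epsilon^{2}-\epsilon-1\bigr)+10z+5,
\]
which is convex in $\epsilon$. At $\epsilon=0$ it evaluates to $-z^{2}+10z+5$ and at $\epsilon=0.5$ to $-\tfrac{5}{4}z^{2}+10z+5$, both negative for $z\ge 14$. For part (c), substituting $w=(4.4+\epsilon)z$ with $0\le\epsilon\le 0.6$ gives
\[
\mk b \;=\; 74z^{2}-26zw+2w^{2}+4z+10 \;=\; z^{2}\bigl(2\epsilon^{2}-8.4\,\epsilon-1.68\bigr)+4z+10,
\]
again convex in $\epsilon$. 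Its endpoint values $-1.68\,z^{2}+4z+10$ and $-6z^{2}+4z+10$ are negative for $z\ge 14$. Convexity in $\epsilon$ then forces the whole strip to have the desired sign.

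No conceptual obstacle is expected here: the argument is a mechanical imitation of Claim~\ref{claim: An nsa=4-24 +t}, reusing even the same shift $z=14+x$. The one point that warrants care is that the intermediate slopes $2.5$ and $4.4$ partitioning the four subranges must be chosen compatibly with the previously established Pell-free strips, so that the sign-change regions (a), (c) and the Pell-free regions (b), (d) together tile $\{w\ge 2z\}$ at $z\ge 14$; since these slopes are already fixed by the earlier claim, this consistency is automatic. The remaining low-$z$ cases $(z\le 13)$, as in the analogous earlier cases, are then finite and will be checked individually.
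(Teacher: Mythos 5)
Your proposal is correct and follows essentially the same route as the paper: parts (b) and (d) are deferred to the earlier Pell-free ranges and to Claim~\ref{claim: An nsa = 4-24 -t w<5z}, while parts (a) and (c) use the same $\epsilon$-slack substitution with convexity in $\epsilon$ and endpoint checks (your expansions $z^{2}(\epsilon^{2}-\epsilon-1)+10z+5$ and $z^{2}(2\epsilon^{2}-8.4\epsilon-1.68)+4z+10$ match the paper's \textsc{Macaulay2} output after the shift $z=14+x$). The only cosmetic difference is that the paper handles part (a) by citing the sign of the (proportional) denominator already established in Claim~\ref{claim: An nsa=4-24 +t}, whereas you recompute it directly; both are valid.
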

    \begin{proof}
     We use the same technique for all; introduce a dummy variable $\epsilon$ and use \textsc{Macaulay2}. See Computation \ref{comp: An case I degenerate} for computations.
     \begin{enumerate}
         \item This was $(a)$ in Claim \ref{claim: An nsa=4-24 +t}.
         \item This was $(b)$ in Claim \ref{claim: An nsa=4-24 +t}.
         \item Write $w=(4.4+\epsilon)z$ with $0.6 \ge \epsilon \ge 0$ and $z=14+x.$ Then $\mk b$ as a function of $x,\epsilon$ is
         $$\mk b(x,\epsilon) = (2x^2+56x+ 392)\epsilon^2+\dots.$$
         As a quadratic in $\epsilon,$ this has positive leading coefficient. Hence we only need to check this is negative for $\epsilon\in \{0,0.6\}.$ Indeed,
         \begin{align*}
             &\mk b(x,0) = -\tfrac{42}{25}x^{2}-\tfrac{1076}{25}x-\tfrac{6582}{25} < 0 \; \forall x \ge 0\\
             &\mk b(x,0.6) = -6x^{2}-164x-1110 < 0 \; \forall x \ge 0
         \end{align*}
         as desired.
         \item This was Claim \ref{claim: An nsa = 4-24 -t w<5z}. \qedhere
     \end{enumerate}
    \end{proof}

     We are thus left to analyze the case when $z < 14.$ The only possible solutions to the Pell's equation \ref{eq: An nsa=4-20 pell} in this range are
    $$(z,w) \in \{(2,7),(3,8),(3,13),(7,17),(7,32)\}.$$
    As shown in Computation \ref{comp: An case I degenerate}, $\beta/\gamma<0$ for $(z,w) = (2,7),$ and $\Width_{(5,-1)}<m$ for $(z,w) =(7,32).$
    Of the remaining cases, we can manually check that there is only polygon (up to scaling) which satisfies $\Width(\Delta) \ge m.$ 
    For that quadrilateral, $\mc L_{\Delta}(m)$ has a reducible curve, showing it's not elliptic.  
\end{enumerate}

\subsection{Case II}

This is similar to Case II of the smooth case, see Section \ref{subsec: A0 Case II} (also see Remark \ref{remark: -(s+5) so can repeat results}). We start off by establishing a similar equivalence between surfaces in this case and solutions $\{\mf M,\mf N\}$ to a pair of matrix equations.

\begin{lemma}
\label{lemma: An case II both equations}
There is a bijection between semi-elliptic (smooth) toric surfaces in Case II and integral matrix solutions $\{\mf M,\mf N\}$ of the system

\begin{equation}
\label{eq: An case II eq 1}
    \mf M \begin{pmatrix}
    0 & -1 \\
    1 & 1
\end{pmatrix} \mf N^R =\begin{pmatrix}
        -(ns+3n-1) & -n \\
        (n+1)s+3n+2 & n+1
    \end{pmatrix} 
\end{equation}
\begin{equation}
    \label{eq: An case II eq 2}
        \mf N \begin{pmatrix}
        0&-1\\1&1
    \end{pmatrix}\mf M \begin{pmatrix}
        0&-1\\1&1
    \end{pmatrix} \begin{pmatrix}
        0&-1\\1&2
    \end{pmatrix}^a \begin{pmatrix}
        0&-1\\1&s+5
    \end{pmatrix}\begin{pmatrix}
        0&-1\\1&2
    \end{pmatrix}^b\begin{pmatrix}
        0&-1\\1&1
    \end{pmatrix}=\mf I
    \end{equation}
where $s \in \ZZ_{>0}$ and $a+b=8-n$ are any non-negative integers.
\end{lemma}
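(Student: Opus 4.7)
The plan is to mirror the proof of Lemma \ref{lemma: A0 case II both equations} almost verbatim, substituting the generalized chain matrix from Lemma \ref{lemma: An matrix classifcation} in place of the smooth-case matrix of Lemma \ref{lem: classification of matrices, -(s+5)}. The key enabling observation (cf. Rem. \ref{remark: -(s+5) so can repeat results}) is that Lemma \ref{lemma: An matrix classifcation} produces the same formal structure as in the smooth case: $s$ appears as a free integer parameter and the heavy curve still has self-intersection $-(s+5)$, so every algebraic manipulation from the $A_0$ setting carries over unchanged, with only the right-hand side of the ``local contraction'' equation replaced.

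First, I would label $\mc S, \mc S', \mc D \subset Y$ as in the setup preceding Figure \ref{fig:3 cases}, and let $\wt R$ intersect $\mc C \subset \mc S$ and $\mc C' \subset \mc S'$. The distinguishing feature of Case II (middle panel of Figure \ref{fig:3 cases}) is that exactly one of $\mc C, \mc C'$ is an inner curve and the other is an outer curve (cf. Def. \ref{def: inner and outer curves}). Let $\mf M$ (resp.\ $\mf N$) denote the matrix of the chain $\mc S$ (resp.\ $\mc S'$) read in a fixed orientation (say from outer to inner endcurve). After contracting $\wt R$ and the subsequent relevant $(-1)$ curves in $Y \to \dots \to W$, the image of $\mc S \cup \wt R \cup \mc S'$ is the $A_{n-1}$-resolution chain whose matrix is exactly the right-hand side of Equation \ref{eq: An case II eq 1}. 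The orientation mismatch between the two chains relative to $\wt R$ in Case II (compared to Case I, where both are outer) forces one factor to appear reversed (cf. Def. \ref{lemdef: reverse matrix}), producing Equation \ref{eq: An case II eq 1}.

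Next, I would read off Equation \ref{eq: An case II eq 2} from the Matrix Equation (Lem. \ref{lem: matrix equation}) applied to the full cyclic toric boundary of $Y$: in cyclic order one traverses $\mc S$, the inner $(-1)$ curve $\mc D$, $\mc S'$, the $A_a$ chain of $(-2)$ curves, the heavy curve of self-intersection $-(s+5)$, the $A_b$ chain of $(-2)$ curves, and the remaining $(-1)$ curve, with $a+b+n=8$ by Lemma \ref{lemma: key commutative diagram X,Y,Z,W, and 8 curves}. Crucially, this global cyclic equation involves $\mf N$ rather than $\mf N^R$: the traversal around the toric boundary picks up the non-reversed orientation on one chain and the already-reversed orientation on the other, so Equations \ref{eq: An case II eq 1} and \ref{eq: An case II eq 2} naturally see the two chains through different orientations. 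The schematic organization here is the direct analogue of Figure \ref{fig:A0 case II}.

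The bijection claim then follows exactly as in Lemma \ref{lemma: A0 case II both equations}: any pair $(\mf M, \mf N)$ of integral matrices satisfying Equation \ref{eq: An case II eq 1} produces a chain $\mc S \cup \wt R \cup \mc S'$ that contracts via $(-1)$-curve contractions to the $A_{n-1}$ resolution chain, and Equation \ref{eq: An case II eq 2} together with Lemma \ref{lem: matrix equation} guarantees the existence of a smooth toric surface realizing this boundary data. The converse is immediate by construction. The main (and essentially the only) obstacle is bookkeeping: one must ensure the reverse-vs-non-reverse choice is made consistently between Equations \ref{eq: An case II eq 1} and \ref{eq: An case II eq 2}, which is accomplished by fixing an orientation on each chain once and tracking it carefully along both the local contraction path and the cyclic boundary, just as in the $A_0$ analogue.
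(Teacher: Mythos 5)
Your proposal follows the paper's route exactly: the paper offers no separate proof of this lemma, deferring to Lemma \ref{lemma: A0 case II both equations} (itself a one-line modification of Lemma \ref{lemma: A0 case I both equations}, replacing one chain matrix by its reverse) combined with the chain matrix supplied by Lemma \ref{lemma: An matrix classifcation}, and your bookkeeping of $\mf N$ versus $\mf N^R$ in the local versus global equations matches Figures \ref{fig:A0 case II} and \ref{fig:A0 case I}. One sentence should be repaired: the right-hand side of Equation \ref{eq: An case II eq 1} is the matrix of the chain $\mc S \cup \wt R \cup \mc S'$ as it sits in $Y$ --- which is exactly what Lemma \ref{lemma: An matrix classifcation} computes, since type II blowups do change the matrix --- and not the matrix of its image in $W$; the $A$-type resolution chain downstairs has matrix $\begin{pmatrix}0&-1\\1&2\end{pmatrix}^{n}$, which agrees with the displayed right-hand side only when $s=-2$.
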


\begin{table}[h]
\begin{tabular}{cc}
    \begin{minipage}{.5\linewidth}
    \tiny
        \begin{tabular}{c|c|c|c}
    $n$ & $a$ & $b$ & Equation \\ 
    \hline \hline
    $1$&$2$&$5$&$(36s^{2}+210s+295)^{2}-w^2(6s^{2}+30s+25)$ \\ \hline
    $1$&$3$&$4$&$(40s^{2}+230s+319)^{2}-w^2(10s^{2}+50s+49)$ \\ \hline
    $1$&$4$&$3$&$(40s^{2}+228s+313)^{2}-w^2(12s^{2}+60s+61)$ \\ \hline
    $1$&$5$&$2$&$(36s^{2}+204s+277)^{2}-w^2(12s^{2}+60s+61)$\\ \hline
    $1$&$6$&$1$&$(28s^{2}+158s+211)^{2}-w^2(10s^{2}+50s+49)$ \\ \hline
    $1$&$7$&$0$&$(16s^{2}+90s+115)^{2}-w^2(6s^{2}+30s+25)$ \\ \hline
    $2$&$1$&$5$&$(36s^{2}+222s+331)^{2}-w^2(6s^{2}+42s+61)$ \\ \hline
    $2$&$2$&$4$&$(45s^{2}+270s+394)^{2}-w^2(15s^{2}+90s+124)$ \\ \hline
    $2$&$3$&$3$&$(48s^{2}+284s+409)^{2}-w^2(20s^{2}+116s+157)$ \\ \hline
    $2$&$4$&$2$&$(45s^{2}+264s+376)^{2}-w^2(21s^{2}+120s+160)$ \\ \hline
    $2$&$5$&$1$&$(36s^{2}+210s+295)^{2}-w^2(18s^{2}+102s+133)$ \\ \hline
    $2$&$6$&$0$&$(21s^{2}+122s+166)^{2}-w^2(11s^{2}+62s+76)$ \\ \hline
    $3$&$1$&$4$&$(40s^{2}+250s+379)^{2}-w^2(10s^{2}+70s+109)$ \\ \hline
    $3$&$2$&$3$&$(48s^{2}+292s+433)^{2}-w^2(20s^{2}+124s+181)$ \\ \hline
    $3$&$3$&$2$&$(48s^{2}+288s+421)^{2}-w^2(24s^{2}+144s+205)$ \\ \hline
    $3$&$4$&$1$&$(2s+7)^{2}(20s+49)^{2}-w^2(22s^{2}+130s+181)$ \\ \hline
    \hline
    \end{tabular}
    \end{minipage} &

    \begin{minipage}{.5\linewidth}
    \tiny
        \begin{tabular}{c|c|c|c}
    $n$ & $a$ & $b$ & Equation \\ 
    \hline \hline
    $3$&$5$&$0$&$(24s^{2}+142s+199)^{2}-w^2(14s^{2}+82s+109)$ \\ \hline
    $4$&$0$&$4$&$(25s^{2}+170s+274)^{2}+w^2(5s^{2}+10s-4)$ \\ \hline
    $4$&$1$&$3$&$(40s^{2}+252s+385)^{2}-w^2(12s^{2}+84s+133)$ \\ \hline
    $4$&$2$&$2$&$(45s^{2}+276s+412)^{2}-w^2(21s^{2}+132s+196)$ \\ \hline
    $4$&$3$&$1$&$(2s+5)^{2}(20s+71)^{2}-w^2(22s^{2}+134s+193)$ \\ \hline
    $4$&$4$&$0$&$(25s^{2}+150s+214)^{2}-w^2(15s^{2}+90s+124)$ \\ \hline
    $5$&$0$&$3$&$(24s^{2}+164s+265)^{2}+w^2(4s^{2}+4s-13)$ \\ \hline
    $5$&$1$&$2$&$(36s^{2}+228s+349)^{2}-w^2(12s^{2}+84s+133)$ \\ \hline
    $5$&$2$&$1$&$(36s^{2}+222s+331)^{2}-w^2(18s^{2}+114s+169)$ \\ \hline
    $5$&$3$&$0$&$(24s^{2}+146s+211)^{2}-w^2(14s^{2}+86s+121)$ \\ \hline
    $6$&$0$&$2$&$(21s^{2}+144s+232)^{2}+w^2(3s^{2}-16)$ \\ \hline
    $6$&$1$&$1$&$(28s^{2}+178s+271)^{2}-w^2(10s^{2}+70s+109)$ \\ \hline
    $6$&$2$&$0$&$(21s^{2}+130s+190)^{2}-w^2(11s^{2}+70s+100)$ \\ \hline
    $7$&$0$&$1$&$(2s+5)^{2}(8s+35)^{2}+w^2(2s^{2}-2s-13)$ \\ \hline
    $7$&$1$&$0$&$(16s^{2}+102s+151)^{2}-w^2(6s^{2}+42s+61)$ \\ \hline
    $8$&$0$&$0$&$(9s^{2}+62s+94)^{2}+w^2(s^{2}-2s-4)$ \\
    \hline\hline
    \end{tabular}
    \end{minipage} 
\end{tabular}
 \centering
    \caption{Quartics in $s,w.$ Note they are all of the form $f(s)w^2+g(s).$}
    \label{table:An case II}
\end{table}

Write 
\begin{equation}
\label{eq: An case II M matrix}
    \mf M = \begin{pmatrix}
    x&y\\
    z&w
\end{pmatrix}
\end{equation}
with $x<y<0<z<w$ and $\det(M)=xw-yz=1$ (cf. Lem. \ref{lem: matrix and chain}).
We now again use \textsc{Macaulay2} to solve the system of Equations \ref{eq: An case II eq 1}, \ref{eq: An case II eq 2} and get a finite number of possibilities like in Section \ref{subsec: A0 Case II}, many more this time, however. 

\begin{lemma}
For each non-negative $(a,b)$ pair with $a+b=8-n$, solutions $(z,w)$ to the system in Lemma \ref{lemma: An case II both equations} lie on curves in $\mathbb{A}^2$ of bidegree $(2,4),$ listed in Table \ref{table:An case II}. However, none of them give an elliptic quadrilateral.
\end{lemma}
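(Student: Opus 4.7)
The strategy mirrors that of Lemma \ref{lemma: A0 Case II key lemma} in the smooth case. Write $\mf M = \begin{pmatrix} x & y \\ z & w \end{pmatrix}$ with $\det \mf M = xw - yz = 1$ (cf.\ Lemma \ref{lem: matrix and chain}). From Equation \eqref{eq: An case II eq 1} and the formula for the reverse of a matrix (Lemma/Definition \ref{lemdef: reverse matrix}), one can solve for $\mf N$ as an explicit matrix in $\{x,y,z,w,s,n\}$. Substituting into Equation \eqref{eq: An case II eq 2} produces a system of four polynomial equations in $\{x,y,z,w,a,b,s\}$ for each fixed triple $(n,a,b)$, and since $a+b = 8-n$ with $a,b \ge 0$ and $n \ge 1$, there are only finitely many such triples to test.

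For each $(n,a,b)$, I will compute the minimal primes of the resulting ideal on \textsc{Macaulay2}. The output, listed in Table \ref{table:An case II}, contains a quartic of the shape $f(s)w^2 + g(s)$ with $\deg f \le 2$ and $\deg g \le 4$, establishing the claimed bidegree $(2,4)$. The divisibility $f(s) \mid g(s)$ is a one-variable Diophantine condition; for each row I will enumerate its integer solutions $s \ge -2$ in \textsc{Mathematica}, and then recover $z$ by reapplying \texttt{minimalPrimes} with $s$ fixed. The result is a finite list of tuples $(n,a,b,s,z,w)$ that correspond to honest semi-elliptic toric surfaces.

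It remains to verify that none of these surfaces arises from an elliptic quadrilateral. By Remark \ref{remark: -(s+5) also for An} the heavy curve still has self-intersection $-(s+5)$, so Lemmas \ref{lem: vol = m^2 equation} and \ref{lemma: value of v3} apply verbatim: the sides $\alpha,\beta$ can be written as homogeneous linear forms in $\gamma,\delta$ exactly as in Equation \eqref{eq: A0 a', b' expression}, and the algebraic constraint $\Vol(\Delta)=m^2$ becomes a homogeneous quadratic in $\delta/\gamma$. For each surviving tuple I check that either this quadratic has no positive rational root, or else the resulting quadrilateral fails the geometric constraint $\Width(\Delta) \ge m$ from Proposition \ref{prop: width}; at least one of these will hold in every case.

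The main obstacle is purely bookkeeping: Table \ref{table:An case II} holds roughly thirty rows, each demanding its own instance of the three-step pipeline (minimal primes, Diophantine enumeration, volume/width check). There is no genuinely new theoretical difficulty beyond what already appeared in Section \ref{subsec: A0 Case II}; some care is required for the rows where $f(s)$ is indefinite (for instance $s^2-2s-4$ in the $(n,a,b)=(8,0,0)$ row, or $3s^2-16$ in $(6,0,2)$), but in each such case the sign of $g(s)$ already restricts $s$ to a finite interval, so only finitely many candidates remain.
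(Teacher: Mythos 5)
Your proposal is correct and follows essentially the same route as the paper: solve Equation \eqref{eq: An case II eq 1} for $\mf N$ via the reverse-matrix formula, feed it into Equation \eqref{eq: An case II eq 2}, run \texttt{minimalPrimes} over the finitely many $(n,a,b)$, extract the quartic $f(s)w^2+g(s)$, enumerate the integer solutions (discarding rows where $f,g$ are both positive-definite, and using that $g(s)\ge 0$ forces $f(s)\le 0$ on a bounded interval in the indefinite rows), and then kill each surviving tuple with the $\Vol(\Delta)=m^2$ and $\Width(\Delta)\ge m$ constraints exactly as in Section \ref{subsec: A0 Case II}. The only quibble is a wording slip — it is the positivity of $g$ together with the equation that confines $s$ to the finite interval where $f(s)\le 0$, not "the sign of $g(s)$" by itself — but the underlying argument is the paper's.
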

\begin{proof}
The same method used in Lemma \ref{lemma: A0 Case II key lemma} works, see Computation \ref{comp: An Case II}. 
Out of the terms $f(s)w^2+g(s),$ if $f,g$ are positive for all $s,$ we discard those in Table \ref{table:An case II}.
Using \textsc{Mathematica}, we get $31$ admissible tuples $(z,w).$ All of them can be used to construct potential elliptic quadrilaterals. However, some of them either have a negative side, or fail the $\Width(\Delta) \ge m$ condition, see Computation \ref{comp: An Case II} for details.
\end{proof}

\nocite{Magma}
\nocite{M2}
\nocite{Mathematica}

\bibliographystyle{alpha}
\bibliography{toric_bib}

\clearpage

\appendix

\section{Toric Surfaces}

Here, we present and prove some key facts about toric surfaces that we use in the paper. 
We skip the proof of some well known results, although most of these can be found in a standard text such as \cite{fulton}.
We also discuss some non-standard terminologies we use in the paper.

\begin{lemma}
\label{lemma: convexity of (-2) curves}
If the vectors $v_i,\dots,v_{i+k},$ generating rays of the toric fan $\Ti$ of $X,$ all correspond to curves with self-intersection $\le (-2),$ then $\measuredangle (v_i, v_{i+k}) < \pi,$ \footnote{In this paper, $\measuredangle$ means directed angle, measured counter-clockwise} i.e. they form a convex cone. 
Conversely, if $\measuredangle (v_i, v_{i+k}) < \pi,$ then the minimal resolution of the cone by subdivision correspond to curves of self-intersection $\le (-2).$
\end{lemma}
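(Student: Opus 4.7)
The plan is to reduce both directions to the fundamental relation $v_{j-1}+v_{j+1} = \beta_j v_j$, where $\beta_j = -D_j^2$, so that the hypothesis ``$D_j^2 \le -2$'' becomes the arithmetic condition $\beta_j \ge 2$.

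For the forward direction, I would expand every $v_j$ in the $\RR$-basis $\{v_i, v_{i+1}\}$, writing $v_j = x_j v_{i+1} + y_j v_i$. The recursion $v_{j+1} = \beta_j v_j - v_{j-1}$ with $\beta_j \ge 2$ (for $i < j < i+k$) forces the same linear recursion on the coefficients: $x_{j+1} = \beta_j x_j - x_{j-1}$, with initial values $(x_i, y_i) = (0,1)$ and $(x_{i+1}, y_{i+1}) = (1,0)$. A simple induction then shows $x_j > x_{j-1} > 0$ for all $j \ge i+1$: the base case $x_{i+2} = \beta_{i+1} \ge 2 > 1 = x_{i+1}$ is immediate, and if $x_j > x_{j-1} > 0$ then $x_{j+1} \ge 2x_j - x_{j-1} > x_j$. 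Finally, choose coordinates so that $v_i = (1,0)$ and $v_{i+1} = (p,q)$ with $q>0$ (possible since $\measuredangle(v_i,v_{i+1})<\pi$). Then the second coordinate of $v_{i+k}$ is $q\cdot x_{i+k} > 0$, which is exactly the statement $\measuredangle(v_i,v_{i+k})<\pi$.

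For the converse, suppose $\sigma = \operatorname{cone}(v_i,v_{i+k})$ has angle $<\pi$, and let $v_{i+1},\dots,v_{i+k-1}$ be the rays added by the minimal resolution. Fix an interior index $j$ with $i < j < i+k$; by smoothness of consecutive sub-cones, $\{v_{j-1},v_j\}$ is a $\ZZ$-basis and the usual argument (using $|\det(v_j,v_{j+1})|=1$ and the counter-clockwise orientation) yields the integer relation $v_{j-1}+v_{j+1} = \beta_j v_j$ with $\beta_j \in \ZZ$. Since $v_{j-1},v_{j+1}$ both lie in $\sigma$ and $\measuredangle(v_{j-1},v_{j+1})<\pi$, the vector $v_{j-1}+v_{j+1}$ lies in the open interior of $\operatorname{cone}(v_{j-1},v_{j+1})$, which is the ray of $v_j$; hence $\beta_j \ge 1$. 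If $\beta_j = 1$, then $v_j = v_{j-1}+v_{j+1}$ gives $|\det(v_{j-1},v_{j+1})| = |\det(v_{j-1},v_j)| = 1$, so $\operatorname{cone}(v_{j-1},v_{j+1})$ is already smooth and $v_j$ can be removed, contradicting minimality. Therefore $\beta_j \ge 2$, i.e.\ $D_j^2 \le -2$.

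The main technical point I expect to be slightly delicate is checking in the forward induction that the growth is strict (not merely $x_j \ne 0$), since it is the strict inequality $x_{i+k} > 0$ that gives $\measuredangle(v_i,v_{i+k}) < \pi$ rather than $\le \pi$; the rest is bookkeeping with the linear recursion and a standard minimality argument familiar from Hirzebruch--Jung continued fractions.
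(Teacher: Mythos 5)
Your proof is correct. The paper does not actually supply an argument for this lemma --- it is stated in the appendix among the ``well known results'' whose proofs are skipped with a pointer to Fulton --- so there is nothing to compare against; your write-up is the standard one, reducing both directions to the relation $v_{j-1}+v_{j+1}=\beta_j v_j$ with $\beta_j=-D_j^2$, running the linear recursion $x_{j+1}=\beta_j x_j-x_{j-1}$ for the forward implication, and using the ``remove a redundant ray'' minimality argument for the converse. The only cosmetic point is the indexing of your induction (the claim $x_j>x_{j-1}>0$ starts at $j=i+2$, not $j=i+1$, since $x_i=0$), which you in fact handle correctly via the stated base case; and the final step can be phrased coordinate-freely as $\det(v_i,v_{i+k})=x_{i+k}\det(v_i,v_{i+1})>0$.
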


\begin{lemma}
    \label{lemma: sum of smooth numbers}
    Suppose the toric fan of $X$ (smooth and projective) has $n$ primitive vectors (or equivalently $X$ has $n$ boundary divisors). The sum of self-intersection numbers of the boundary divisors of $X$ equals $12-3n.$
\end{lemma}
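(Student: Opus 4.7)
The plan is to combine Noether's formula for a smooth rational surface with the standard toric description of the canonical divisor. Recall that for any smooth projective toric surface $X$ with boundary divisors $D_1,\ldots,D_n$ (indexed cyclically, corresponding to the primitive ray generators of the fan), we have $K_X \sim -\sum_{i=1}^n D_i$.

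First I would compute $K_X^2$ via Noether's formula. A smooth projective toric surface is rational, so $\chi(\mathcal{O}_X) = 1$. The topological Euler characteristic $e(X)$ equals the number of torus-fixed points, which in turn equals the number of maximal (two-dimensional) cones of the fan; for a complete fan in $\mathbb{R}^2$ this is just $n$. Hence Noether's formula
\[
1 = \chi(\mathcal{O}_X) = \frac{K_X^2 + e(X)}{12}
\]
yields $K_X^2 = 12 - n$.

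Next I would expand $K_X^2$ directly in terms of the boundary curves. Since $K_X \sim -\sum D_i$,
\[
K_X^2 = \sum_{i=1}^n D_i^2 + 2\sum_{i<j} D_i \cdot D_j.
\]
In a smooth toric surface, each $D_i$ meets only its two cyclic neighbors $D_{i-1}$ and $D_{i+1}$, and does so transversally in a single point (the torus-fixed point corresponding to the maximal cone spanned by the consecutive rays). Thus $D_i \cdot D_j = 1$ if $j \equiv i \pm 1 \pmod n$ and $0$ otherwise, giving $2\sum_{i<j} D_i \cdot D_j = 2n$.

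Equating the two expressions for $K_X^2$ gives $\sum_i D_i^2 + 2n = 12 - n$, i.e.\ $\sum_i D_i^2 = 12 - 3n$, as claimed. There is no real obstacle here; the only small subtlety is justifying $e(X) = n$ and the transversal intersection pattern of the boundary, both of which follow from the standard orbit-cone correspondence for smooth toric varieties.
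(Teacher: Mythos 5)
Your proof is correct. The paper states this lemma in the appendix without proof, citing it as a well-known fact (referring to standard references such as Fulton), so there is no in-paper argument to compare against; your derivation via $K_X\sim-\sum D_i$, the intersection pattern of the boundary, and Noether's formula with $e(X)=n$ is the standard one, and it is exactly the same computation the paper itself carries out for the surface $Y$ in Lemma \ref{lemma: key commutative diagram X,Y,Z,W, and 8 curves} (where $e(Y)=n+1$ because of the extra blowup).
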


\begin{definition}
\label{def: matrix}
    Given a curve $\mc C$ on a smooth projective surface, we define the \textit{matrix corresponding to a curve} $\mc C$ as the matrix
    $$\mf M_{\mc C} := \begin{pmatrix}
        0 & -1 \\ 1 & -\mc C^2
    \end{pmatrix}.$$
    The \textit{matrix corresponding to a chain} $\mc C_1,\dots, \mc C_n$ is defined as the product
     $$\mf M := \begin{pmatrix}
        0 & -1 \\ 1 & -\mc C_1^2
    \end{pmatrix} \cdots \begin{pmatrix}
        0 & -1 \\ 1 & -\mc C_n^2
    \end{pmatrix}.$$
    Observe here that $\mf M \in \SL_2(\ZZ).$
\end{definition}

\begin{lemma}
\label{lem: matrix and chain}
Let 
$$\mf M=\begin{pmatrix}
    x&y\\z&w
\end{pmatrix} \in \SL_2(\ZZ)$$
be the matrix of a chain of curves $\mc C_1,\dots, \mc C_n.$ Assume $\mc C_i^2 \le -2$ for all $i.$
\begin{enumerate}
    \item Then $x,y \le 0 \le z,w$ and $|x|<|y|,|z| < w.$
    \item The Hirzebruch-Jung fraction of the chain is $[-\mc C_1^2,\dots,-\mc C_n^2]=\frac{w}{(-y)}.$
    \item The cone generated by $v_0=(0,1)$ and $v_1=(w,y)$ has the resolution given by the curves $\mc C_1,\dots, \mc C_n$ (clockwise).
    \item The cone generated by $v_0=(0,1)$ and $v_1=(w,-z)$ has the resolution given by the curves $\mc C_n,\dots, \mc C_1$ (also follows from (3) and Lemma \ref{lemdef: reverse matrix}).
\end{enumerate}
\end{lemma}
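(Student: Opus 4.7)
The plan is to prove all four parts simultaneously by induction on the length $n$ of the chain. For the base case $n=1$, the matrix is $\begin{pmatrix} 0 & -1 \\ 1 & k_1 \end{pmatrix}$ with $k_1 := -\mc C_1^2 \geq 2$, and every assertion is a one-line check: part (1) is immediate; (2) reads $w/(-y) = k_1/1 = [k_1]$; for (3) the unique interior ray of the cone from $(0,1)$ to $(k_1,-1)$ is $(1,0)$, since $(0,1)+(k_1,-1)=k_1(1,0)$, and this is the ray corresponding to $\mc C_1$.

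For the inductive step I would factor $\mf M = \mf M_{n-1} \cdot \mf M_{\mc C_n}$, where $\mf M_{n-1} = \begin{pmatrix} x' & y' \\ z' & w' \end{pmatrix}$ is the matrix of the subchain $\mc C_1,\dots,\mc C_{n-1}$ with the inductively established sign and size constraints. Direct multiplication gives $x=y'$, $z=w'$, $y=-x'+k_n y'$, $w=-z'+k_n w'$, where $k_n := -\mc C_n^2 \geq 2$. Part (1) follows from a short arithmetic verification using $k_n \geq 2$ together with the inductive inequalities $|x'|<|y'|$ and $|z'|<w'$; for instance, $|y|-|x| = (k_n-1)|y'| - |x'| \geq |y'|-|x'|>0$, and symmetrically $w - |z| \geq w' - |z'| > 0$.

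Part (2) reduces to the standard identification of Hirzebruch--Jung continuants with matrix products: the continued fraction $[k_1,\dots,k_n]$ satisfies the recursion $p_n = k_n p_{n-1}-p_{n-2}$, $q_n = k_n q_{n-1}-q_{n-2}$ with $(p_0,q_0)=(1,0)$, $(p_1,q_1)=(k_1,1)$, which is exactly the recursion induced on the $w$- and $(-y)$-entries by right-multiplication by $\mf M_{\mc C_n}$. The parallel identifications $(d,b,c,a) \leftrightarrow (p_{n-1},q_{n-1},p_{n-2},q_{n-2})$ propagate inductively, giving $w/(-y) = p_n/q_n = [k_1,\dots,k_n]$. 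For part (3), I would show by a parallel induction that the interior rays $u_i$ of the minimal resolution are given by $u_i = (w_{i-1},y_{i-1})$, the first column of the matrix $\mf M_{i-1}$ of the initial subchain $\mc C_1,\dots,\mc C_{i-1}$ (with the convention $\mf M_0 := I$, so $u_1=(1,0)$); this works because the defining relation $u_{i+1} = k_i u_i - u_{i-1}$ mirrors the matrix recursion derived above. The boundary case $i=n+1$ then yields $v_1 = (w,y)$, which is exactly the claim.

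Part (4) is immediate from (3) applied to the reverse chain $\mc C_n,\dots,\mc C_1$, whose matrix equals $\begin{pmatrix} x & -z \\ -y & w \end{pmatrix}$ by Lemma/Definition \ref{lemdef: reverse matrix}, so the outer cone ray becomes $(w,-z)$. I do not expect a serious obstacle: the argument is pure inductive bookkeeping, and the only delicate point is aligning the orientation convention (``clockwise'' from $v_0$) with the signs of the matrix entries, so that the rays $u_1,\dots,u_n$ are indexed in the order matching $\mc C_1,\dots,\mc C_n$ rather than the reverse.
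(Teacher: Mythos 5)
The paper does not actually prove this lemma: it is stated in the appendix as one of the ``well known results'' deferred to standard references such as \cite{fulton}, so there is no in-paper argument to compare yours against. Your inductive proof is the standard one and is correct in substance. The base case, the entrywise recursion $x=y'$, $z=w'$, $y=-x'+k_ny'$, $w=-z'+k_nw'$, the sign/size estimates for part (1), and the identification of $(w,-y)$ with the Hirzebruch--Jung continuants $(p_n,q_n)$ via the shared recursion $p_n=k_np_{n-1}-p_{n-2}$ all check out, and part (4) does follow formally from (3) together with Lemma/Definition \ref{lemdef: reverse matrix}.

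Two points in part (3) deserve to be made explicit rather than left as ``bookkeeping.'' First, having defined the rays by $u_0=(0,1)$, $u_1=(1,0)$, $u_{i+1}=k_iu_i-u_{i-1}$, you should record that $\det(u_i,u_{i+1})=\det(u_{i-1},u_i)=-1$ for all $i$ (so each consecutive pair is a smooth cone) and that, because every $k_i\ge 2$, the sequence $u_0,\dots,u_{n+1}$ rotates monotonically clockwise through a total angle less than $\pi$; this is what guarantees that $u_1,\dots,u_n$ genuinely lie in the interior of the cone spanned by $v_0=u_0$ and $v_1=u_{n+1}=(w,y)$ and hence form a subdivision of it, rather than merely a formally defined list of vectors. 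Second, to conclude that this subdivision is \emph{the} resolution of the cone you need both that it is minimal (no $(-1)$-curves, which is exactly the hypothesis $\mc C_i^2\le -2$) and the uniqueness of the minimal resolution of a two-dimensional cone, which the paper itself invokes elsewhere. With those two sentences added, the argument is complete.
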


\begin{corollary}
\label{cor: chains up to equivalence}
Define an equivalence relation $\sim$ between two convex cones $(v_0,v_1)$ and $(v_0',v_1')$ iff there is a $\GL_2(\ZZ)$ transform $\mf M$ with $\mf Mv_0=v_0'$ and $\mf Mv_1=v_1'.$ Then there is a bijection
$$\lt\{\substack{\text{Chains }\mc C_1,\dots, \mc C_n \\
\text{with }\mc C_i^2 \le -2}\rt\} \longleftrightarrow \lt\{\substack{\text{Convex cones cones}\\ \text{in }\ZZ^2}\rt\}/\sim.$$
In particular, the cone generated by $v_0=(0,1)$ and $v_1=(x,y)$ corresponds to the chain with Hirzebruch-Jung fraction $\frac{w}{(-y)}$ if and only if $$(x,y) \in \{(w,y+kw),(-w,y+kw)\}$$
for some $k \in \ZZ.$
\end{corollary}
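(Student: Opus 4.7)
The plan is to build the bijection via the cone exhibited in Lemma \ref{lem: matrix and chain}(3) and to verify invertibility using a short stabilizer computation.

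I would define the forward map $\Phi$ by sending a chain $\mc C_1,\dots,\mc C_n$ with matrix $\mf M=\begin{pmatrix} x & y \\ z & w\end{pmatrix}$ to the $\sim$-class of the cone generated by $(0,1)$ and $(w,y)$, which is exactly the cone appearing in Lemma \ref{lem: matrix and chain}(3). For the inverse $\Psi$: given a convex cone $(v_0,v_1)$, choose $\mf N\in \GL_2(\ZZ)$ with $\mf N v_0=(0,1)^T$ (possible since $v_0$ is primitive), let $(x',y')^T:=\mf N v_1$, and apply the minimal toric resolution algorithm to the normalized cone $((0,1),(x',y'))$. By Lemma \ref{lemma: convexity of (-2) curves}, since the cone is convex, every curve in the resulting chain has self-intersection $\le -2$.

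The heart of the argument is well-definedness of $\Psi$, which amounts to computing the stabilizer of $(0,1)$ in $\GL_2(\ZZ)$. Writing $\mf M=\begin{pmatrix}a&b\\c&d\end{pmatrix}$ and imposing $\mf M (0,1)^T=(0,1)^T$ forces $b=0$ and $d=1$, and then $\det \mf M=a=\pm 1$, so the stabilizer equals $\left\{\begin{pmatrix}\pm 1 & 0\\ k & 1\end{pmatrix}:k\in\ZZ\right\}$. These matrices act on $(x',y')^T$ as $(x',y')\mapsto (\pm x',\,y'+kx')$. Since the minimal toric resolution is $\GL_2(\ZZ)$-equivariant and the self-intersection sequence is a $\GL_2(\ZZ)$-invariant of the cone, all cones in this orbit produce the same chain, so $\Psi$ descends to $\sim$-classes.

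That $\Phi$ and $\Psi$ are mutual inverses follows directly from Lemma \ref{lem: matrix and chain}(3): resolving $((0,1),(w,y))$ recovers the chain $\mc C_1,\dots,\mc C_n$, and conversely the matrix of $\Psi(v_0,v_1)$ reconstructs the generator $(w,y)$ of the normalized cone. The ``in particular'' statement is then precisely the stabilizer orbit computed above: the cone $((0,1),(x,y'))$ lies in the class of $((0,1),(w,y))$ if and only if $(x,y')\in\{(w,y+kw),\,(-w,y+kw):k\in\ZZ\}$. No serious obstacle is anticipated --- the corollary reduces to the previous lemma plus an elementary stabilizer calculation; the only point requiring care is tracking the clockwise ordering of the chain in the resolution algorithm, so that $\Phi\circ\Psi$ and $\Psi\circ\Phi$ agree as ordered tuples rather than merely as unordered sets.
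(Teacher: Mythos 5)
Your proof is correct and follows essentially the same route as the paper: both arguments reduce the corollary to the uniqueness of the minimal resolution of a convex cone together with its $\GL_2(\ZZ)$-invariance (via Lemmas \ref{lemma: convexity of (-2) curves} and \ref{lem: matrix and chain}). The only cosmetic difference is that you make the stabilizer of $(0,1)$ explicit to obtain the orbit $\{(\pm w, y+kw)\}$ directly, whereas the paper packages the same invariance through the Hirzebruch--Jung fraction $\tfrac{w}{(-y)}$; both yield the ``in particular'' clause identically.
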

\begin{proof}
Every rational convex cone corresponds to a chain of curves of self-intersection $\le (-2),$ see Lemma \ref{lemma: convexity of (-2) curves}.
Hence it suffices to show injectivity, i.e., the chain you get from a cone depends only on the class of the cone.
Consider a cone $(v_0,v_1),$ and map $v_0$ to $(0,1)$ and $v_1$ to $(w,y)$ with $y<0<w$ and $|y|<|w|.$ 
By Lemma \ref{lem: matrix and chain} and invariance of Hirzebruch-Jung fractions by $\GL_2(\ZZ),$ the cone $(v_0,v_1)$ corresponds to the fraction $\frac{w}{(-y)}.$
Since the Hirzebruch-Jung fraction uniquely determines the minimal chain (i.e. without $(-1)$ curves), the result follows.
\end{proof}


\begin{example}
\label{example: du val matrix}
Consider the resolution of an $A_{n}$ singularity.
It is easy to check that $v_0=(0,1)$ and $v_1=(n+1,1)$ has a resolution with $n$ $(-2)$ curves, namely the vectors $(a,1)$ for $0<a \le n.$
By uniqueness of resolution and Lemma \ref{lem: matrix and chain}, the Hirzebruch-Jung fraction is $\frac{n+1}{n}.$
We can then find the matrix $\mf M$ using the facts that $\mf M \in \SL_2(\ZZ)$ and $(1)$ of Lemma \ref{lem: matrix and chain}. This gives a combinatorial proof of the identity
$$\mf M = \begin{pmatrix}
    0&-1\\
    1&2
\end{pmatrix}^{n} = \begin{pmatrix}
    -(n-1)&-n\\
    n&n+1
\end{pmatrix}.$$
Corollary \ref{cor: chains up to equivalence} also shows that every cone with $v_0=(0,1)$ corresponds to the resolution of an $A_n$ singularity if and only if $v_1=(n+1,1+k(n+1))$ or $v_1=(-n-1,1+k(n+1))$ for some $k \in \ZZ.$
\end{example}

\begin{lemma}
\label{lem: blowup at intersection}
    Consider the chain of curves $\mc C_1, \mc C_2$ and the blowup $\pi$ at the point $\mc C_1 \cap \mc C_2.$ Then the matrix of the new chain $\ol{\mc C_1}, E, \ol{\mc C_2}$ of the strict transforms and the exceptional curve is the same as the matrix of the chain $\mc C_1, \mc C_2.$
\end{lemma}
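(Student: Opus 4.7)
My plan is to reduce the statement to an identity in $\mathrm{SL}_2(\mathbb{Z})$ and then verify it directly. Write $\mc C_1^2 = -n$ and $\mc C_2^2 = -m$. Since the blowup is at a point that is smooth on each $\mc C_i$, the strict transforms satisfy $\overline{\mc C_i} = \pi^\ast \mc C_i - E$, giving $\overline{\mc C_1}^2 = -(n+1)$ and $\overline{\mc C_2}^2 = -(m+1)$, while $E^2 = -1$. By Definition~\ref{def: matrix}, proving that the chain matrices coincide is equivalent to the identity
$$
\begin{pmatrix} 0 & -1 \\ 1 & n+1 \end{pmatrix}\begin{pmatrix} 0 & -1 \\ 1 & 1 \end{pmatrix}\begin{pmatrix} 0 & -1 \\ 1 & m+1 \end{pmatrix} \;=\; \begin{pmatrix} 0 & -1 \\ 1 & n \end{pmatrix}\begin{pmatrix} 0 & -1 \\ 1 & m \end{pmatrix}.
$$
I would just multiply out both sides and observe that each equals $\begin{pmatrix} -1 & m \\ -n & nm-1 \end{pmatrix}$.

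A slightly more conceptual way to present this, which I would include as a remark, is through the toric interpretation of Corollary~\ref{cor: chains up to equivalence}. The matrix of a chain $\mc C_1, \dots, \mc C_k$ is the composition of the chart-transition matrices $\mf M_{\mc C_i}$ at each internal ray $v_i$ of the associated smooth fan; concretely, if $v_0, v_1, \dots, v_k, v_{k+1}$ are the rays with $\mc C_i$ corresponding to $v_i$, then the matrix expresses the basis $(v_k, v_{k+1})$ in terms of $(v_0, v_1)$. Blowing up $\mc C_1 \cap \mc C_2$ inserts the new ray $v' = v_1 + v_2$ between $v_1$ and $v_2$ (with corresponding exceptional curve $E$), while leaving $v_0, v_1, v_2, v_3$ unchanged. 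Thus the composite change-of-basis from $(v_0, v_1)$ to $(v_2, v_3)$ is unaltered, which is exactly the content of the lemma.

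I do not anticipate any real obstacle: the statement is a bare $2 \times 2$ identity, and the toric viewpoint makes the invariance transparent. In the write-up I would give the direct multiplication for self-containedness and mention the toric interpretation as motivation.
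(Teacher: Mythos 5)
Your proposal is correct and is essentially the paper's own proof: the paper likewise observes that the strict transforms' self-intersections each drop by one and reduces the claim to the identity $\left(\begin{smallmatrix}0&-1\\1&a\end{smallmatrix}\right)\left(\begin{smallmatrix}0&-1\\1&b\end{smallmatrix}\right)=\left(\begin{smallmatrix}0&-1\\1&a+1\end{smallmatrix}\right)\left(\begin{smallmatrix}0&-1\\1&1\end{smallmatrix}\right)\left(\begin{smallmatrix}0&-1\\1&b+1\end{smallmatrix}\right)$, which is exactly your identity with $a=n$, $b=m$. One small correction for your write-up: both products actually equal $\left(\begin{smallmatrix}-1&-m\\ n&nm-1\end{smallmatrix}\right)$, not $\left(\begin{smallmatrix}-1&m\\ -n&nm-1\end{smallmatrix}\right)$ — the off-diagonal signs in your displayed common value are flipped, though the identity itself holds.
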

\begin{proof}
    Since self-intersection numbers of the strict transforms $\mc C_1, \mc C_2$ will be one lesser than that of $\mc C_1, \mc C_2,$ hence the identity $$\begin{pmatrix}
        0&-1\\
        1&a
    \end{pmatrix}\begin{pmatrix}
        0&-1\\
        1&b
    \end{pmatrix}=\begin{pmatrix}
        0&-1\\
        1&a+1
    \end{pmatrix}\begin{pmatrix}
        0&-1\\
        1&1
    \end{pmatrix}\begin{pmatrix}
        0&-1\\
        1&b+1
    \end{pmatrix}$$
    proves the desired result.
\end{proof}

\begin{lemdef}
\label{lemdef: reverse matrix}
    If the matrix corresponding to the chain $\mc C_1,\dots, \mc C_n$ is 
    $$\mf M= \begin{pmatrix}
        x&y\\z&w
    \end{pmatrix},$$
    then the matrix corresponding to the reverse chain $\mc C_n,\dots, \mc C_1$ is 
    $$\begin{pmatrix}
        x&-z\\-y&w
    \end{pmatrix}.$$
    Call this the \textit{reverse matrix} of $\mf M,$ which we will denote by $\mf M^R.$
\end{lemdef}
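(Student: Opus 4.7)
The plan is to reduce the statement to a simple conjugation identity. Write $A_i := \begin{pmatrix} 0 & -1 \\ 1 & -\mc C_i^2 \end{pmatrix}$, so that by Definition \ref{def: matrix} we have $\mf M = A_1 A_2 \cdots A_n$ and, by the very same definition, the matrix associated to the reversed chain $\mc C_n, \dots, \mc C_1$ is $\mf M^R = A_n A_{n-1} \cdots A_1$. Let $J := \operatorname{diag}(1,-1)$, which satisfies $J = J^T$ and $J^2 = I$. A direct $2\times 2$ check shows the involution
$$J A_i J \;=\; A_i^T \qquad \text{for every } i,$$
since both sides equal $\begin{pmatrix} 0 & 1 \\ -1 & -\mc C_i^2 \end{pmatrix}$.

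Inserting $JJ = I$ between consecutive factors and applying the involution termwise yields
$$J \mf M J \;=\; (JA_1 J)(JA_2 J) \cdots (JA_n J) \;=\; A_1^T A_2^T \cdots A_n^T \;=\; (A_n A_{n-1} \cdots A_1)^T \;=\; (\mf M^R)^T.$$
Transposing both sides and using $J^T = J$ gives the closed form $\mf M^R = J \mf M^T J$. The claim is then immediate from the direct conjugation
$$J \begin{pmatrix} x & y \\ z & w \end{pmatrix}^{\!T} J \;=\; \begin{pmatrix} 1 & 0 \\ 0 & -1 \end{pmatrix}\! \begin{pmatrix} x & z \\ y & w \end{pmatrix}\! \begin{pmatrix} 1 & 0 \\ 0 & -1 \end{pmatrix} \;=\; \begin{pmatrix} x & -z \\ -y & w \end{pmatrix}.$$

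There is no genuine obstacle here: the whole argument is a formal manipulation once the involution $J A_i J = A_i^T$ is observed. An equivalent, slightly more hands-on route would be induction on $n$: the base case $n = 1$ is immediate from the explicit shape of $A_i$, and the inductive step reduces to checking that $\mf M \mapsto J \mf M^T J$ is an anti-homomorphism of $\mathrm{GL}_2(\mathbb{Z})$, i.e.\ $J(\mf M \mf N)^T J = (J \mf N^T J)(J \mf M^T J)$, which is again formal. As a sanity check, the formula preserves determinant and is an involution $(\mf M^R)^R = \mf M$, consistent with each $A_i \in \mathrm{SL}_2(\mathbb{Z})$ and with reversing a chain twice recovering the original. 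Geometrically, it also matches parts (3) and (4) of Lemma \ref{lem: matrix and chain}, where the roles of $v_1 = (w,y)$ and $v_1 = (w,-z)$ are interchanged precisely by the reflection $J$ on the dual lattice.
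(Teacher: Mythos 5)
Your proof is correct, and it is essentially the paper's argument in a different guise: both reduce the claim to the fact that $\mf M\mapsto\bigl(\begin{smallmatrix}x&-z\\-y&w\end{smallmatrix}\bigr)$ is an anti-homomorphism fixing each generator $A_i$, the paper writing this map as $(\det\mf M)\,E\mf M^{-1}E$ with $E$ the swap matrix and proving the product rule plus induction, while you write the same map as $J\mf M^TJ$ and telescope. The two closed forms coincide (adjugate versus transpose conjugation agree for $2\times2$ matrices), so nothing substantive differs.
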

\begin{proof}
Firstly, we prove that $(AB)^R=B^RA^R.$ While this can be manually done for $2\times 2$ matrices, another way is to observe that
$$(xw-yz)\begin{pmatrix}0&1\\ 1&0\end{pmatrix}\begin{pmatrix}x&y\\ z&w\end{pmatrix}^{-1}\begin{pmatrix}0&1\\ 1&0\end{pmatrix} = \begin{pmatrix}x&-z\\ -y&w\end{pmatrix}.$$
Hence if $E$ is the column swap elementary matrix, then $\mf M^R=(\det \mf M) E \mf M^{-1} E,$ from which the desired fact follows. 

\medskip

Next, to prove the result, we use induction, the base case for $n=1$ being clear.
Assuming the result holds for a chain of length $n,$ consider a chain $\mc C_1,\dots, \mc C_{n+1}.$ The matrix corresponding to the reverse of this chain would be the product of the matrices corresponding to $\mc C_{n+1}$ and the reverse of the chain $\mc C_1,\dots, \mc C_n.$ 
By the induction hypothesis, this becomes the product $$\mf M_{\mc C_{n+1}}^R(\mf M_{\mc C_1} \dots \mf M_{\mc C_n})^R = (\mf M_{\mc C_1} \dots \mf M_{\mc C_n} \mf M_{\mc C_{n+1}})^R$$
and hence the induction is complete.
\end{proof}



\begin{lemma}[Matrix Equation]
\label{lem: matrix equation}
    An ordered chain of curves $C_1,\dots, C_n$ are the boundary curves of a smooth projective toric surface if and only if $\sum_i \mc C_i^2=12-3n$ and their matrices multiply to the identity matrix, i.e.
    $$\begin{pmatrix}
        0 & -1 \\ 1 & -\mc C_1^2
    \end{pmatrix} \cdots \begin{pmatrix}
        0 & -1 \\ 1 & -\mc C_n^2
    \end{pmatrix}=\begin{pmatrix}
        1 & 0 \\ 0 & 1
    \end{pmatrix}.$$
\end{lemma}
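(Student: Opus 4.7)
The plan is to treat the two directions of the iff separately. The forward direction is essentially an unpacking of standard toric geometry, while the backward direction requires reconstructing the fan from the combinatorial data and then carefully verifying completeness.

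For the forward direction, let $X$ be a smooth projective toric surface with primitive ray generators $v_1, \ldots, v_n$ in counter-clockwise cyclic order (so $v_0 := v_n$ and $v_{n+1} := v_1$), corresponding to boundary curves $C_1, \ldots, C_n$. The classical toric relation $v_{i-1} + v_{i+1} = -\mc C_i^2\, v_i$ may be rewritten as the matrix recursion
\[
(v_i \mid v_{i+1}) \;=\; (v_{i-1} \mid v_i) \cdot \begin{pmatrix} 0 & -1 \\ 1 & -\mc C_i^2 \end{pmatrix}.
\]
Iterating from $i=1$ to $n$ and invoking cyclicity, the product $\prod_i M_{\mc C_i^2}$ right-fixes the invertible matrix $(v_0 \mid v_1)$, hence equals the identity. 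The sum formula $\sum \mc C_i^2 = 12 - 3n$ is exactly Lemma~\ref{lemma: sum of smooth numbers}.

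For the backward direction, given integers $a_1, \ldots, a_n$ satisfying both hypotheses, I would set $v_0 := (1,0)$, $v_1 := (0,1)$, and inductively define $v_{i+1} := -v_{i-1} - a_i v_i$. The matrix equation is precisely the assertion that $(v_n, v_{n+1}) = (v_0, v_1)$, so the sequence closes up cyclically. A short induction on the recursion gives $\det(v_i, v_{i+1}) = \det(v_{i-1}, v_i) = \cdots = \det(v_0, v_1) = 1$, so each $v_i$ is primitive in $\ZZ^2$ and every consecutive pair generates a smooth rational cone. The remaining task is to show that these $n$ cones assemble into a complete fan in $\RR^2$—equivalently, that the rays wind around the origin exactly once rather than some $k \geq 2$ times.

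This completeness check is the hard part, and it is precisely where the sum condition is used. My approach would go via blowup invariance: the identity $M_a M_{-1} M_b = M_{a+1} M_{b+1}$ (a one-line $2 \times 2$ calculation) shows that combinatorially ``blowing down'' an $a_j = -1$ from the sequence—removing $v_j$ and incrementing both neighboring self-intersections—preserves both the matrix-product condition and the quantity $\sum a_i + 3n$. I would induct downward on $n$: at each step with $n \geq 5$, a combination of Lemma~\ref{lemma: convexity of (-2) curves} (long runs of $a_i \leq -2$ span convex cones of angle less than $\pi$, so cannot close up on their own) with the sum constraint produces a $(-1)$-curve in the sequence, which is then blown down. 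The base cases are $n = 3$ (direct matrix multiplication forces $(a_1,a_2,a_3) = (1,1,1)$, the fan of $\PP^2$) and $n = 4$ without $(-1)$'s (the equations force $(a_1,\ldots,a_4) = (0, -a, 0, a)$ up to cyclic rotation, corresponding to $\PP^1 \times \PP^1$ or $\mathbb{F}_a$ with $a \geq 2$). All these minimal cases satisfy $\sum a_i + 3n = 12$; applied to $k$-fold covers of the $\PP^2$ fan, the same invariance argument gives $\sum a_i + 3n = 12k$ for winding-$k$ configurations, so the sum hypothesis forces $k = 1$. The fan is then a genuine complete smooth fan, and projectivity is automatic since every smooth complete toric surface in dimension $2$ is projective.
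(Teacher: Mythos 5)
The paper itself offers no proof of this lemma: it sits in the appendix under the disclaimer ``we skip the proof of some well known results,'' with a pointer to Fulton. So there is no proof of record to compare against, and your argument has to be judged on its own. Your forward direction is complete and correct: the wall relation $v_{i-1}+v_{i+1}=-\mc C_i^2 v_i$ gives exactly the column recursion $(v_i\mid v_{i+1})=(v_{i-1}\mid v_i)M_{\mc C_i}$, cyclicity forces the product to fix an invertible matrix, and the sum condition is Noether's formula (Lemma~\ref{lemma: sum of smooth numbers}). In the backward direction, the construction of the $v_i$, the determinant computation giving primitivity and unimodular consecutive cones, and the reduction of everything to ``winding number $=1$'' are all right, and you correctly identify that as the only real content.

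The gap is in how you close that last step. During your downward induction the configuration is \emph{not} yet known to be a fan --- it is only a closed unimodular sequence with some winding number $k\ge 1$ --- so the standard argument that a $(-1)$-free complete smooth fan has $n\le 4$ (two rays of nonnegative self-intersection already exhaust the $2\pi$ angle budget) is not available: for winding number $k$ the budget is $2\pi k$, and indeed the $k$-fold cover of the $\PP^2$ fan is a $(-1)$-free closed solution of the matrix equation with $n=3k\ge 6$. Hence the assertion ``for $n\ge 5$ the convexity lemma plus the sum constraint produce a $(-1)$'' is exactly where the theorem lives, and you never say how the sum constraint enters quantitatively; your write-up also wobbles between using $\sum a_i+3n=12$ as a standing hypothesis preserved by blowdowns (legitimate, since a blowdown adds $3$ to $\sum a_i$ and subtracts $3$ from $3n$) and deriving $\sum a_i+3n=12k$ as the \emph{output} of the same induction. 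To repair it, fix one of the two routes: either (a) classify all blowdown-minimal closed unimodular sequences for arbitrary $k$ (runs of $a_i\le -2$ turn by less than $\pi$; an index with $a_i\ge 0$ forces $\measuredangle(v_{i-1},v_{i+1})\ge\pi$; the Hirzebruch--Jung positivity of products of matrices with $a_i\le-2$, as in Lemma~\ref{lem: matrix and chain}, rules out a single nonnegative index closing up a long negative run) and verify $\sum a_i+3n=12k$ on each minimal model, or (b) carry the hypothesis $\sum a_i+3n=12$ through the blowdowns and show a minimal configuration with that invariant must have $n\le 4$, hence winding number $1$. Either is a finite, elementary case analysis, but as written the step is asserted, not proved.
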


\clearpage

\clearpage

\section{Computer Computations}

We use the same \textsc{Magma} package used in the paper \cite{jenia_blowup}, which can be found here: \url{https://github.com/alaface/non-polyhedral}. The \textsc{Python} code used can be found here: \url{https://github.com/Wizard-32/Toric-Varieties}. The Python code helps visualize toric resolutions given the coordinates of a polygon, as well as draw the fan of a cyclic quotient singularity.
While these aid certain calculations, all of the code used in this paper is contained in this section, except in Computation \ref{comp: DE case}, where we use a function from the \textsc{Python} file.

\begin{computation}
\label{comp:random example}
We verify the quadrilateral given in Example \ref{example:random example} is indeed a good quadrilateral.
\begin{tbox} {\footnotesize \begin{verbatim}
> p := Polytope([ [0 , 0] , [1 , 0] , [20 , 14], [7 , 5] ]);  
> #BoundaryPoints(p);    
    4
> Volume(p);
    16
> #FindCurves(p, 4, QQ);  
    1
> IsIrreducible(FindCurve(p,4,QQ));
    true
\end{verbatim}
    } 
    \end{tbox}
The linear system, however, has two curves as can be seen by the following:
\begin{tbox} {\footnotesize \begin{verbatim}
> #PolsAdjSys(p);      
    2
\end{verbatim}
} 
\end{tbox}
\end{computation}


\begin{computation}
    \label{comp: DE case}

    Consider Equation \ref{eq: DE eq 1}. As $a+b+n=8$ and $p+q+r=n-1,$ we can consider all the possibilities on \textsc{Macaulay2}. 
    The strategy is to use equation \ref{eq: DE eq 1} to get $4$ equations. We then consider the ideal generated by them, and use \texttt{minimalPrimes} to find the minimal primes of the ideal.

    \begin{tbox} {\footnotesize \begin{verbatim}
R = QQ[a,b,s,x,y,z,w,p,q,r,l,n];
M1 = matrix{{0,-1},{1,1}};
Ma = matrix {{1-a,-a},{a,a+1}};
Mb = matrix {{1-b,-b},{b,b+1}};
cc = k -> matrix{{0,-1},{1,-k}}; -- "canonical matrix"
exp2 = k -> matrix{{1-k,-k},{k,k+1}}; -- matrix of (-2) to power k

-- Dn CASE I
eq = 
exp2(p)*cc(-3-l)*exp2(q)*cc(-1)*exp2(l)*cc(-3)*
exp2(r-1)*cc(-1)*Mb*cc(-3)*Ma*cc(-1) - matrix{{1,0},{0,1}}

arr = [[1,1,n-3], [1,n-3,1], [n-3,1,1]];
for aa from 0 to 8 do{
for elem in arr do{
pp = elem_0; qq = elem_1; rr = elem_2;
bb = 7 - (aa + pp + qq + rr);
print("CASE a,b,p,q,r");
print(aa, bb, pp, qq, rr);
print("");
eq2 = sub(eq,{p => pp, q => qq, r => rr, a => aa, b => bb});
I = minimalPrimes ideal (eq2_(0,0), eq2_(0,1), eq2_(1,0), eq2_(1,1));
print(I);
print("");
print("");
}}
\end{verbatim}
} 
\end{tbox}

To test the possible tuples we get, we run the following code from the \textsc{Python} file ``veccheck.py''.

\begin{tbox} {\footnotesize \begin{verbatim}
'''D_n case eq 1'''
vec = [-1,-2,-4,-2,-1,-2,-3,-1,-2,-2,-2,-2,-3]
list_of_black = [0,4,7,11]
vec_check(vec,list_of_black)

vec = [-1,-2,-3,-2,-1,-3,-2,-2,-2,-1,-3,-2]
list_of_black = [0,4,9,10]
vec_check(vec,list_of_black)
\end{verbatim}
} 
\end{tbox}

For the $E_n$ case, we run the same code as above, just change \texttt{arr} to

\begin{tbox} {\footnotesize \begin{verbatim}
arr = [[1,2,n-4], [2,1,n-4], [1,n-4,2], 
[2,n-4,1], [n-4,1,2], [n-4,2,1]];
\end{verbatim}
} 
\end{tbox}

To test the possible tuple we get, we run the following code:

\begin{tbox} {\footnotesize \begin{verbatim}
'''E_n case eq 1'''
vec = [-1,-2,-3,-2,-2,-1,-3,-2,-1,-2,-2,-3]
list_of_black = [0,5,8,11]
vec_check(vec,list_of_black)
\end{verbatim}
} 
\end{tbox}

For equation \ref{eq: DE eq 2}, we run the same code as above for $D_n,$ only changing \texttt{eq} to

\begin{tbox} {\footnotesize \begin{verbatim}
eq = 
exp2(p)*cc(-3-l)*exp2(q)*cc(-1)*exp2(r-1)*cc(-3)*
exp2(l)*cc(-1)*Mb*cc(-3)*Ma*cc(-1) - matrix{{1,0},{0,1}};
\end{verbatim}
} 
\end{tbox}

We again run the following code from the same \textsc{Python} file.

\begin{tbox} {\footnotesize \begin{verbatim}
'''D_n case eq 2'''
vec = [-1,-2,-3,-2,-1,-2,-3,-1,-2,-2,-2,-3]
list_of_black = [0,4,7,11]
vec_check(vec,list_of_black)
\end{verbatim}
} 
\end{tbox}

For the $E_n$ case, we run the same code as above, and change \texttt{arr} as before.
\end{computation}


\begin{computation}
\label{comp: A0 case I 1}

We use the following code in \textsc{Macaulay2} to get Lemma \ref{lemma: pell's equation for A0}.

\begin{tbox} {\footnotesize \begin{verbatim}
R = QQ[a,b,s,x,y,z,w];

-- Standard matrices
M = matrix{{x,y},{z,w}};
M1 = matrix{{0,-1},{1,1}}; 
Ma = matrix {{1-a,-a},{a,a+1}}; 
Mb = matrix {{1-b,-b},{b,b+1}}; 

-- matrix for self-intersection k
cc = k -> matrix{{0,-1},{1,-k}}; 

-- matrix for chain contracting to A0 (up to orientation)
tc = s -> matrix{{-s, -1},{s+1, 1}};

-- Computing N using equation 1
-- inverse of M*M1;
MM1inv = matrix{{-z+w,x-y},{-w,y}};
N = MM1inv * tc(s);

-- equation 2
eq = N * M1 * M * M1 * Ma * cc(-(s+5)) * Mb * M1 - matrix {{1,0},{0,1}};

-- running a from 0 to 8 and using a+b=8
for a' from 0 to 8 do{
  eq' = sub(eq, {a => a', b => 8-a'});
  print(a');
  print(minimalPrimes ideal (eq'_(0,0), eq'_(0,1), 
  eq'_(1,0), eq'_(1,1), x*w-y*z-1));
  print("");
}
\end{verbatim}
} 
\end{tbox}

    Based on the output, we see that we get $s \ge 0$ only for $a=0$ and $8.$ Further, we get corresponding Pell's equations in terms of $z,w.$

    \medskip

    Next, we use Lemma \ref{lem: vol = m^2 equation} to get the second equation. 
    This is a quadratic in $\delta/\gamma,$ with discriminant \texttt{disc}. This must be a square, which we call $t^2$ for some $t \in \ZZ.$

    \medskip
    
    The function \texttt{idealCheck} takes as input $(n,s,a)$ (where $n=0$ in the smooth case) and the Pell's equation, and outputs the minimal primes of the ideal generated by the Pell's equation and \texttt{disc-t}$^2.$ Note that $\alpha/\gamma,\beta/\gamma,\delta/\gamma$ are replaced by \texttt{a', b', d'}.

    \begin{tbox} {\footnotesize \begin{verbatim}
R = QQ[x,y,z,w,s,a',b',c',d',c,d,l,t,e];
idealCheck = (nn,ss,aa, pell) -> {
    c=aa+1; d=(8-aa-nn)+1; l=ss+3;
    b' = (d'*w-(c+d+l*c*d))/c; 
    a' = z*d'-(1+l*d)-b';
    
    m = a'+b'+1+d';
    vol = (a')*(d')*w+(b')*d;
    eq = vol - m^2;

-- want integer coefficients
    eq = eq * c;

    f0 = sub(eq, {d' => 0});
    f1 = sub((eq-f0)//d', {d' => 0});
    f2 = sub((eq-f0-f1*d')//(d')^2, {d' => 0});
-- NOTE: we use // instead of / so the result is the same ring, 
-- not its fraction field
-- eq = f2(d')^2 + f1(d') + f0;
    print("disc");
    disc = (f1)^2 - 4*(f0)*(f2);
-- note we are printing -disc
    print(-disc);
    print("");
    print("ideal");
    
    J = minimalPrimes ideal (disc - t^2, pell);
    return J;}
\end{verbatim}
}
\end{tbox}

This prints \texttt{-disc} which must be a square. Furthermore, it prints minimal primes of the ideal generated by \texttt{disc-t}$^2$ and the Pell's equation. We use the function for the two cases we have:

\begin{tbox} {\footnotesize \begin{verbatim}
idealCheck(0,2,0,55*z^2-95*z*w+41*w^2+1)
idealCheck(0,2,8,55*z^2-15*z*w+w^2+1)
\end{verbatim}
} 
\end{tbox}

This shows the intersection is the union of two conics in both the cases, both conics being the intersection of a hyperplane with a quadric.
\end{computation}


\begin{computation}
    \label{comp: A0 Case I (a,b,s)=(0,8,2)}
Consider the case $(a,b,s)=(0,8,2).$ Here $t = \pm (330z - 270w - 6),$
\begin{enumerate}
    \item Consider the $+t$ root. We find $\beta/\gamma$ in Equation \ref{eq: Case A0 abs=082 beta} using
\begin{tbox} {\footnotesize \begin{verbatim}
-- initialise f0, f1, f2
idealCheck(0,2,0,55*z^2-95*z*w+41*w^2+1)

tt = 330*z - 270*w - 6
-- the root is d' = (-f1+t)/2f1.
-- up to scaling, we can write 
d'' = (f1 - tt)/(-2*f2);
a'' = sub(a', {d' => d''});
b'' = sub(b', {d' => d''});
\end{verbatim}
    } 
    \end{tbox}
    For Claim \ref{claim: A0 case I abs=082 +t}, we use
\begin{tbox} {\footnotesize \begin{verbatim}
bb = -110*z^2-130*z*w+178*w^2-220*z+96*w-110;
sub(bb,{z=>(5/6)*w+e})
\end{verbatim}
    } 
    \end{tbox}
    \item Consider the $-t$ root. We find $\alpha/\gamma,\beta/\gamma,\delta/\gamma$ in Equation \ref{eq: A0 case I sides -t} using
     \begin{tbox} {\footnotesize \begin{verbatim}
-- initialise f0, f1, f2
idealCheck(0,2,0,55*z^2-95*z*w+41*w^2+1)

tt = 330*z - 270*w - 6
-- the root is d' = (-f1+t)/2f1.
-- up to scaling, we can write 
d'' = (f1 + tt)/(-2*f2);
a'' = sub(a', {d' => d''});
b'' = sub(b', {d' => d''});

-- width - m quantity
widthM = (w-z-1)*d''-b''-1
\end{verbatim}
    } 
    \end{tbox}

For Claim \ref{claim: A0 abs = 082 -t}, we use
    \begin{tbox} {\footnotesize \begin{verbatim}
---- Part (a) ----
wnum = -156*z^2+72*z*w+54*w^2-144*z+126*w+12;
w1 = sub(wnum, {z => 100/115*w+e});
w2 = sub(w1, {w=>4+x})

---- Part (b) ----
pell = 55*z^2-95*z*w+41*w^2+1;
p1 = sub(pell, w=>(115/100+e)*z);
p2 = sub(p1, z=>84+x)

sub(p2, {e=>0})
sub(p2, {e=>3/100})

---- Part (c) ----
anum = 219*z^2-345*z*w+135*w^2+60*z-42*w+9
a1 = sub(anum, {w=>(118/100+e)*z});
a2 = sub(a1, {z=>84+x});
sub(a2, {e=>0})
sub(a2, {e=>12/100})
\end{verbatim}
    } 
    \end{tbox}
    The final cases we rule out manually on \textsc{Magma}. First, the following function takes the values $(z,w)=(\texttt{zt},\texttt{wt})$ and spits out the coordinates of the corresponding polytope, which we can test in \textsc{Magma}.
    
    \begin{tbox} {\footnotesize \begin{verbatim}
findCoordinates = (zt,wt) -> {
-- find integer side lengths upto scaling
      alpha = a'' * (-2*f2); beta = b'' * (-2*f2); 
      gamma = (-2*f2); delta = d'' * (-2*f2);
      at = sub(alpha, {z=>zt, w=>wt});
      bt = sub(beta, {z=>zt, w=>wt});
      ct = sub(gamma, {z=>zt, w=>wt});
      dt = sub(delta, {z=>zt, w=>wt});
      print([[0,0],[at,0],[at+bt, c*bt],[dt*zt,dt*wt]]);
  }

arr = [(6,7),(7,8),(11,12),(15,17),(27,32),(38,43),(70,83)];
for elem in arr do
    findCoordinates(elem);
\end{verbatim}
    } 
    \end{tbox}
    The output is
{\footnotesize  \begin{verbatim}
[[0, 0], [168, 0], [-112, -280], [5040, 5880]]
[[0, 0], [288, 0], [432, 144], [7056, 8064]]
[[0, 0], [1128, 0], [4128, 3000], [18480, 20160]]
[[0, 0], [1020, 0], [4820, 3800], [31500, 35700]]
[[0, 0], [192, 0], [3872, 3680], [90720, 107520]]
[[0, 0], [5208, 0], [38192, 32984], [197904, 223944]]
[[0, 0], [-1224, 0], [36720, 37944], [599760, 711144]]
\end{verbatim}
}
    We thus test all these quadrilaterals (except the first and last) 
    on \textsc{Magma}:

    \begin{tbox} {\footnotesize \begin{verbatim}
function checkAll(arr)
    for elem in arr do
        p := Polytope(elem);
        m := #BoundaryPoints(p);
        if Width(p) ge m then
        print(m);
        print("Yes"); print(p);
        end if; 
    end for;
    return 0;
end function;
\end{verbatim}
    } 
    \end{tbox}
    This function can be used to check that none of the quadrilaterals in the list above satisfy $\Width \ge m.$ Interestingly enough, we can check that $\Width_{(1,-1)}(\Delta)<m$ in all but one cases.
\end{enumerate}


Consider the case $(a,b,s)=(8,0,2).$ Here $t = \pm (330z - 30w - 6),$

\begin{enumerate}
    \item 
Consider the $+t$ root. For Claim \ref{claim: A0 abs=802 +t},
    \begin{tbox} {\footnotesize \begin{verbatim}
-- initialize f0, f1, f2
idealCheck(0,2,8,55*z^2-15*z*w+w^2+1)

tt = 330*z - 30*w - 6;
-- the root is d' = (-f1+t)/2f1.
-- up to scaling, we can write 
d'' = (f1 - tt)/(-2*f2);
a'' = sub(a',{d' => d''}); 
b'' = sub(b',{d' => d''});

---- Part (b) ----
bb = - 165*z^2  + 125*z*w - 13*w^2  - 330*z + 16*w - 165;
bb1 = sub(bb,{w=>(78/10-e)*z-4});
bb2 = sub(bb1,{z=>4+x});

cc = 27*z^2  - 27*z*w + 3*w^2 + 54*z + 27;
cc1 = sub(cc,{w=>(78/10-e)*z-4});
cc2 = sub(cc1,{z=>4+x});

sub(bb2,{e=>0})
sub(bb2,{e=>38/10})

sub(cc2,{e=>0})
sub(cc2,{e=>38/10})

---- Part (c) ----
pellexp = 55*z^2-15*z*w+w^2+1;
p1 = sub(pellexp,{w=>(78/10+e)*z-4});
p2 = sub(p1, {z=>3+x});

sub(p2, {e=>0})
sub(p2, {e=>6/10})

---- Part (d) ----
b1 = sub(bb, {w=>84/10*z-4+e});
c1 = sub(cc, {w=>84/10*z-4+e});
\end{verbatim}
    }
    \end{tbox}

To finish and exhaust the remaining cases, we can use

\begin{tbox} {\footnotesize \begin{verbatim}
arr = [[1,7],[1,8],[2,13],[2,17]]
for elem in arr do
      print(sub(b'', {z=>elem_0, w=>elem_1}));
\end{verbatim}
} 
\end{tbox}

\item Consider the $-t$ root.

        \begin{tbox} {\footnotesize \begin{verbatim}
-- initialize f0,f1,f2
idealCheck(0,2,8,55*z^2-15*z*w+w^2+1);

tt = -(330*z - 30*w - 6);
-- the root is d' = (-f1+t)/2f1.
-- up to scaling, we can write
d'' = (f1 - tt)/(-2*f2);
a'' = sub(a',{d' => d''});
b'' = sub(b',{d' => d''});
widthM = 8*a''-b''-1-d'' -- width(9,-1) - m
\end{verbatim}
    } 
    \end{tbox}

    For Claim \ref{claim: abs=802 root 2 claim 1}
    \begin{tbox} {\footnotesize \begin{verbatim}
pellexp = 55*z^2-15*z*w+w^2+1;
sub(pellexp,{w=>9*z+e})
\end{verbatim}
    } 
    \end{tbox}

For Claim \ref{claim: A0 abs=802 -t}, we use
    \begin{tbox} {\footnotesize \begin{verbatim}
---- Part (b) ----
cc = 9*z^2 - 9*z*w + w^2  + 18*z + 9;
c1 = sub(cc, {w=>(3+e)*z});
c2 = sub(c1, {z=>3+x});

sub(c2, {e=>0})
sub(c2, {e=>35/10})

---- Part (c) ----
pellexp = 55*z^2-15*z*w+w^2+1;
p1 = sub(pellexp, {w=>(65/10+e)*z});
p2 = sub(p1, {z=>3+x});

sub(p2, {e=>0})
sub(p2, {e=>19/10})

---- Part (d) ----
bb := 1734*z^2  - 376*z*w + 20*w^2  + 234*z - 28*w + 12;
b1 = sub(bb, {w=>(84/10+e)*z});
b2 = sub(b1, {z=>2+x});

sub(b2, {e=>0})
sub(b2, {e=>6/10})
\end{verbatim}
    } 
    \end{tbox}










    For the remaining cases, we first find the following negative:

\begin{tbox} {\footnotesize \begin{verbatim}
sub(b'',{z=>1,w=>7})
sub(b'',{z=>1,w=>8})
sub(widthM,{z=>2,w=>17})
\end{verbatim}
} 
\end{tbox}

The last case is $(z,w)=(2,13).$ In this case, we find $\lt(\tfrac{\alpha}{\gamma},\tfrac{\beta}{\gamma},\tfrac{\delta}{\gamma}\rt) = \lt(\tfrac{59}{6},\tfrac{115}{6},\tfrac{35}{2}\rt).$

\begin{tbox} {\footnotesize \begin{verbatim}
at = sub(a'', {z=>2, w=>13});  
bt = sub(b'', {z=>2, w=>13});  
dt = sub(d'', {z=>2, w=>13});  
\end{verbatim}
} 
\end{tbox}

Up to scaling, we consider a quadrilateral with side lengths $(59,115,6,105).$ The coordinates are

\begin{tbox} {\footnotesize \begin{verbatim}
> [[0,0],[at,0],[at+bt, c*bt],[dt*zt,dt*wt]];
[[0, 0], [59, 0], [174, 1035], [210, 1365]]
\end{verbatim}
} 
\end{tbox}

However, \textsc{Magma} shows $\Width_{(1,0)}=210<m=285.$
\end{enumerate}



\end{computation}


\begin{computation}
\label{comp: A0 case II}
The basic setup is similar to what we have seen before. The only difference is that we have to reverse the matrix $N$ to express it in terms of $x,y,z,w.$
\begin{tbox} {\footnotesize \begin{verbatim}
R = QQ[a,b,s,x,y,z,w];
M1 = matrix{{0,-1},{1,1}};
Ma = matrix {{1-a,-a},{a,a+1}};
Mb = matrix {{1-b,-b},{b,b+1}};
cc = k -> matrix{{0,-1},{1,-k}}; -- "canonical matrix"
M = matrix {{x,y},{z,w}};
-- reverse function
rev = A -> matrix{{A_(0,0),-A_(1,0)},{-A_(0,1),A_(1,1)}}

--inverse of M*M1
MM1inv = matrix{{-z+w,x-y},{-w,y}};
--check: MM1inv * M * M1 = I modulo xw-yz=1
N = rev(MM1inv*matrix{{-s,-1},{s+1,1}})

Neq 
= N * M1 * M * M1 * Ma * cc(-(s+5)) * Mb * M1 - matrix{{1,0},{0,1}};
-- This should be zero in order for the surface to be toric
\end{verbatim}
} 
\end{tbox}

The key now is to use the \texttt{eliminate} function to find an element in the ideal involving only $w,s.$
\begin{tbox} {\footnotesize \begin{verbatim}
for a' from 0 to 8 do{
  Neq' = sub(Neq, {a => a', b => 8-a'});
  I = minimalPrimes ideal (Neq'_(0,0), Neq'_(0,1), Neq'_(1,0), 
  Neq'_(1,1), x*w - y*z - 1);
  J = eliminate({x,y,z},I_0);
  print(J);
}
\end{verbatim}
} 
\end{tbox}

We can make a list of all cases on \textsc{Mathematica}. However, we replace $w^2$ by $k$ for it to work.

\begin{tbox} {\footnotesize \begin{verbatim}
(*(a,b) = (1,7), k = w^2*)
f = (-8s^2-24s+31)k+(256s^{4}+2304s^{3}+6816s^{2}+7344s+2601);
Sol = Solve[{f == 0}, 
{s,k}, Integers]

(*(a,b) = (2,6), k = w^2*)
f = (-14s^{2}-42s+31)k+(441s^{4}+3822s^{3}+10927s^{2}+11466s+3969);
Sol = Solve[{f==0}, {s,k}, Integers]

(*(a,b) = (3,5), k = w^2*)
f = (-18s^{2}-54s+31)k+(576s^{4}+4896s^{3}+13716s^{2}+14076s+4761);
Sol = Solve[{f==0}, {s,k}, Integers]

(*(a,b) = (4,4), k = w^2*)
f = (-20s^{2}-60s+31)k+(625s^{4}+5250s^{3}+14475s^{2}+14490s+4761);
Sol = Solve[{f==0}, {s,k}, Integers]

(*(a,b) = (5,3), k = w^2*)
f = (-20s^{2}-60s+31)k+(576s^{4}+4800s^{3}+13024s^{2}+12600s+3969);
Sol = Solve[{f == 0}, {s,k}, Integers]

(*(a,b) = (6,2), k = w^2*)
f = (-18s^{2}-54s+31)k+(441s^{4}+3654s^{3}+9711s^{2}+8874s+2601);
Sol = Solve[{f == 0}, {s,k}, Integers]

(*(a,b) = (7,1), k = w^2*)
f = (-14s^{2}-42s+31)k+(256s^{4}+2112s^{3}+5412s^{2}+4356s+1089);
Sol = Solve[{f == 0}, {s,k}, Integers]

(*(a,b) = (8,0), k = w^2*)
f = (-8s^{2}-24s+31)k+(81s^{4}+666s^{3}+1531s^{2}+666s+81)
Sol = Solve[{f == 0}, {s,k}, Integers]
\end{verbatim}
}
\end{tbox}

For each $(a,s,w)$ triple, we can use 
\begin{tbox} {\footnotesize \begin{verbatim}
arr = [[1,1,139],[1,2,37],[2,1,35],[7,1,23],[8,1,55],[8,2,17]]

for elem in arr do{
a' = elem_0; b' = 8-a'; s' = elem_1; w' = elem_2;
Neq' = sub(Neq, {a => a', b => b', s => s', w => w'});
I = minimalPrimes ideal (Neq'_(0,0), Neq'_(0,1), Neq'_(1,0), 
Neq'_(1,1), x*w - y*z - 1);
print(I);}
\end{verbatim}
} 
\end{tbox}
This shows that there is a unique $(x,y,z)$ solution for each $(a,s,w)$ triple. 
We use the value of $z.$
To use $\Vol(\Delta)=m^2$ to find coordinates of a potential elliptic quadrilateral for each tuple $(a,s,z,w),$ we use the following code.

\begin{tbox} {\footnotesize \begin{verbatim}
QQ[x,y,z,w,s,a',b',c',d',c,d,l,t,e];

arr = [[1,1,63,139],[1,2,17,37],[2,1,11,35],[7,1,3,23],
[8,1,7,55],[8,2,2,17]];

-- input: (a,s,z,w) tuple
-- prints discriminant
f = (arr) ->{
at = arr_0; st = arr_1; zt = arr_2; wt = arr_3;
c = at+1; d = 10-c; l = st+3;
b' = (d'*wt-(10+l*c*d))/c; 
a' = zt*d'-(1+l*d)-b';

m = a'+b'+1+d';
vol = (a')*(d')*wt+(b')*d;
eq = vol - m^2;

eq = eq * c;

f0 = sub(eq, {d' => 0});
f1 = sub((eq-f0)//d', {d' => 0});
f2 = sub((eq-f0-f1*d')//(d')^2, {d' => 0});

disc = (f1)^2 - 4*(f0)*(f2);
print(eq);
print(disc);}

for elem in arr do{
f(elem);}
\end{verbatim}
} 
\end{tbox}

The discriminants are $$2903616, 
153664, 
235956, 
87616, 
367236, 
20736,$$ 
all of which except $235956$ is a square, namely $1704^2,392^2,296^2,606^2,144^2$ respectively.

\medskip

We further observe here that the leading coefficient is negative for each quadratic in $\delta/\gamma.$ Hence we can solve for $\delta/\gamma,$ and use that to solve for $\alpha/\gamma,\beta/\gamma.$
We can use these to find $\Delta$ up to scaling, and then check if the quadrilateral satisfies $\Width(\Delta) \ge m$ (which is relation invariant under scaling).

\medskip
First, a function to print the coordinates given its sidelengths $\alpha,\beta,\gamma,\delta.$
\begin{tbox} {\footnotesize \begin{verbatim}
giveCoordinatesHelper = (zt,wt,a'',b'',c'',d'') -> {
      at = sub(a'', {z=>zt, w=>wt});
      bt = sub(b'', {z=>zt, w=>wt});
      ct = sub(c'', {z=>zt, w=>wt});
      dt = sub(d'', {z=>zt, w=>wt});
      print([[0,0],[at,0],[at+bt, bt*c],[dt*zt,dt*wt]]);
  }
\end{verbatim}
} 
\end{tbox}
Then, we use the function to write a function which gives the coordinates given the tuple $(a,s,z,t).$ 
\begin{tbox} {\footnotesize \begin{verbatim}
-- t is sqrt(disc)
giveCoordinates = (arr) -> {
at = arr_0; st = arr_1; zt = arr_2;
wt = arr_3; t = arr_4;
c = at+1; d = 10-c; l = st+3;
b' = (d'*wt-(10+l*c*d)*c')/c; 
a' = zt*d'-(1+l*d)*c'-b';
-- Note: this is different from before because not dehomogenized

m = a'+b'+c'+d';
vol = (a')*(d')*wt+(b')*(c')*d;
eq = vol - m^2;

eq = eq * c;

f0 = sub(eq//(c')^2, {d' => 0});
f1 = sub((eq-f0*(c')^2)//(c'*d'), {d' => 0});
f2 = sub((eq-f0*(c')^2-f1*(c'*d'))//(d')^2, {d' => 0});

-- FIRST ROOT
-- side lengths up to scaling
d'' = f1-t;
c'' = -2*f2;
a'' = sub(a', {d' => d'', c' => c''});
b'' = sub(b', {d' => d'', c' => c''});

giveCoordinatesHelper(zt,wt,a'',b'',c'',d'');

-- SECOND ROOT
-- side lengths up to scaling
d'' = f1+t;
c'' = -2*f2;
a'' = sub(a', {d' => d'', c' => c''});
b'' = sub(b', {d' => d'', c' => c''});

giveCoordinatesHelper(zt,wt,a'',b'',c'',d'');}
\end{verbatim}
} 
\end{tbox}

Combining this with the square roots of our discriminants, we run the following to get a list of quadrilaterals, which we test on \textsc{Magma}.

\begin{tbox} {\footnotesize \begin{verbatim}
arr = [[1,1,63,139,1704],[1,2,17,37,392],[7,1,3,23,296],
[8,1,7,55,606],[8,2,2,17,144]];

for elem in arr do
giveCoordinates(elem)
\end{verbatim}
} 
\end{tbox}

We can then run the following where \texttt{arr} is the list of quadrilaterals with non-negative integer coordinates. The code returns nothing, showing that $\Width(\Delta) \ge m$ fails in each case.

\begin{tbox} {\footnotesize \begin{verbatim}
function checkAll(arr)
    for elem in arr do
        p := Polytope(elem);
        m := #BoundaryPoints(p);
        if Width(p) ge m then
        print(m);
        print("Yes"); 
        print(p);
        end if; 
    end for;
    return 0;
end function;
\end{verbatim}
} 
\end{tbox}
\end{computation}


\begin{computation}
\label{comp: An case I general}

To get Lemma \ref{lemma: pell's equation for An}, we use the function \texttt{minimalPrimes} on \textsc{Macaulay2} to get the minimal primes of the ideal generated by the $4$ equations we get from the matrix equation. 

\begin{tbox} {\footnotesize \begin{verbatim}
R = QQ[a,b,s,x,y,z,w,n];

-- Standard matrices
M = matrix{{x,y},{z,w}};
M1 = matrix{{0,-1},{1,1}}; -- matrix of (-1) curve
Ma = matrix {{1-a,-a},{a,a+1}}; -- a'th power of matrix of (-2)
Mb = matrix {{1-b,-b},{b,b+1}}; -- b'th power of matrix of (-2)

-- matrix for self-intersection k
cc = k -> matrix{{0,-1},{1,-k}}; 

-- matrix respresenting k (-2) curves
exp2 = k -> matrix {{1-k,-k}, {k,k+1}} 

-- matrix for chain contracting to An (up to orientation)
tc = s -> matrix{{-(n*s+3*n-1), -n},{(n+1)*s+3*n+2, n+1}};

-- Computing N using equation 1
-- inverse of M*M1;
MM1inv = matrix{{-z+w,x-y},{-w,y}};
N = MM1inv * tc(s);

-- equation 2
eq = N * M1 * M * M1 * Ma * cc(-(s+5)) * Mb * M1 - matrix {{1,0},{0,1}};

-- running n, a from 0 to 8 and using a+b=8-n

-- note: Macaulay2 for loops include start and end too
for n' from 1 to 8 do{  -- note, n > 0.
    print("n ", n');
        for a' from 0 to (8-n') do{
          print("a ",a');
          eq' = sub(eq, {n => n', a => a', b => 8-n'-a'});
          print(minimalPrimes ideal (eq'_(0,0), eq'_(0,1), 
          eq'_(1,0), eq'_(1,1), x*w - y*z - 1));
          print("");
          };
    print("");
    print("");
} 
\end{verbatim}
} 
\end{tbox}

To get the second equation using $\Vol(\Delta)=m^2,$ we reuse the function \texttt{idealCheck} from Computation \ref{comp: A0 case I 1}.

\medskip

We now use it on those cases we get from the matrix equation:

\begin{tbox} {\footnotesize \begin{verbatim}
---- (n,s) = (1,0) ----
idealCheck(1,0,0, 33*z^2-54*z*w+22*w^2+2)
idealCheck(1,0,7, 33*z^2-12*z*w+w^2+2)

---- (n,s) = (2,-1) ----
idealCheck(2,-1,0, 22*z^2-34*z*w+13*w^2+3)
idealCheck(2,-1,6, 22*z^2-10*z*w+w^2+3)

---- (n,s) = (3,4) ----
idealCheck(3,4,1, 77*z^2-98*z*w+29*w^2+4)
idealCheck(3,4,4, 77*z^2-56*z*w+8*w^2+4)

---- (n,s) = (4,-2) ----
idealCheck(4,-2,0, 11*z^2-15*z*w+5*w^2+5)
idealCheck(4,-2,4, 11*z^2-7*z*w+w^2+5)

---- (n,s) = (4,-1) ----
idealCheck(4,-1,1, 22*z^2-26*z*w+7*w^2+5)
idealCheck(4,-1,3, 22*z^2-18*z*w+3*w^2+5)

---- (n,s) = (4,0) ----
idealCheck(4,0,2, 33*z^2-33*z*w+7*w^2+5)
\end{verbatim}
} 
\end{tbox}

To check which curves are irreducible and find the genus, we can use the following code on \textsc{Magma}:

\begin{tbox} {\footnotesize \begin{verbatim}
A3<z,w,t> := AffineSpace(Rationals(),3);

// input: equations of the two quadrics
// output: prints if the intersections is reducible,
// singular, and the genus
CheckSingular := function(f, g)
C := Curve(A3, [f,g]);
printf "irreducible: "; print IsIrreducible(C);
if IsIrreducible(C) then
printf "singular: "; print IsSingular(C);
printf "genus = "; print Genus(C);
end if;
return "";
end function;

arr := [[33*z^2-54*z*w+22*w^2+2,
1056*z^2-4896*z*w+3104*w^2+2112*z-1536*w+1056+t^2],

[33*z^2-12*z*w+w^2+2,
1056*z^2-3552*z*w+416*w^2+2112*z-192*w+1056+t^2],

[22*z^2-34*z*w+13*w^2+3,
616*z^2-2184*z*w+1204*w^2+1232*z-784*w+616+t^2],

[22*z^2-10*z*w+w^2+3, 
616*z^2-1512*z*w+196*w^2+1232*z-112*w+616+t^2],

[77*z^2-98*z*w+29*w^2+4, 
3080*z^2-25480*z*w+11240*w^2+6160*z-2800*w+3080+t^2],

[77*z^2-56*z*w+8*w^2+4, 
3080*z^2-23800*z*w+4520*w^2+6160*z-1120*w+3080+t^2],

[11*z^2-15*z*w+5*w^2+5, 
220*z^2-520*z*w+220*w^2+440*z-200*w+220+t^2],

[11*z^2-7*z*w+w^2+5, 
220*z^2-360*z*w+60*w^2+440*z-40*w+220+t^2],

[22*z^2-26*z*w+7*w^2+5, 
704*z^2-2240*z*w+800*w^2+1408*z-512*w+704+t^2],

[22*z^2-18*z*w+3*w^2+5, 
704*z^2-1984*z*w+416*w^2+1408*z-256*w+704+t^2],

[33*z^2-33*z*w+7*w^2+5, 
1188*z^2-4752*z*w+1332*w^2+2376*z-648*w+1188+t^2]];

for elem in arr do
CheckSingular(elem[1], elem[2]);
end for;

\end{verbatim}
} 
\end{tbox}
\end{computation}


\begin{computation}
\label{comp: An case I singular}
The following code on \textsc{Mathematica} can be used to solve the equations modulo the claimed primes in Lemma \ref{lemma: An cases not locally soluble}. We find in each case that the solution set is empty.

\begin{tbox} {\footnotesize \begin{verbatim}
f = 22z^2-34z*w+13w^2+3
g = 616z^2-2184z*w+1204w^2+1232z-784w+616+t^2
Solve[{f==0,g==0},{z,w,t},Modulus->13]

f= 22z^2-10z*w+w^2+3
g = 616z^{2}-1512z*w+196w^{2}+1232z-112w+616+t^2
Solve[{f==0,g==0},{z,w,t},Modulus->13]

f = 77z^{2}-98z*w+29w^{2}+4
g = 3080z^{2}-25480z*w+11240w^{2}+6160z-2800w+3080+t^2 
Solve[{f==0,g==0},{z,w,t},Modulus->7]

f = 77z^{2}-56z*w+8w^{2}+4
g = 3080z^{2}-23800z*w+4520w^{2}+6160z-1120w+3080+t^2
Solve[{f==0,g==0},{z,w,t},Modulus->7]

f = 22z^{2}-26z*w+7w^{2}+5
g = 704z^{2}-2240z*w+800w^{2}+1408z-512w+704+t^2
Solve[{f==0,g==0},{z,w,t},Modulus->7]

f = 22z^{2}-18z*w+3w^{2}+5
g = 704z^{2}-1984z*w+416w^{2}+1408z-256w+704+t^2
Solve[{f==0,g==0},{z,w,t},Modulus->7]

f = 33z^{2}-33z*w+7w^{2}+5
g = 1188z^{2}-4752z*w+1332w^{2}+2376z-648w+1188+t^2 
Solve[{f==0,g==0},{z,w,t},Modulus->17]
\end{verbatim}
}
\end{tbox}
\end{computation}


\begin{computation}
\label{comp: An case I degenerate}
For computations here, first initialize the function \texttt{idealCheck} in Computation \ref{comp: An case I general}.

\medskip

Consider first $(n,s,a)=(1,0,0).$ Here $t = \pm (132z-96w-8).$
\begin{enumerate}
\item Consider the $+t$ root. We find $\beta/\gamma$ in Equation \ref{eq: An nsa = 100} using
\begin{tbox}
{\footnotesize
\begin{verbatim}
-- initialize f0, f1, f2
idealCheck(1,0,0, 33*z^2-54*z*w+22*w^2+2)

tt = 132*z - 96*w - 8;
-- the root is d' = (-f1+t)/2f1.
-- up to scaling, we can write 
d'' = (f1 - tt)/(-2*f2);
a'' = sub(a',{d' => d''}); 
b'' = sub(b',{d' => d''});
\end{verbatim}
}
\end{tbox}

For Claim \ref{claim: An nsa = 100}, we use

\begin{tbox} {\footnotesize \begin{verbatim}
---- Part (a) ----
bb = -33*z^2-9*z*w+23*w^2-66*z+28*w-33;
sub(bb, {z=>100/135*w+e})

---- Part (b) ----
pell = 33*z^2-54*z*w+22*w^2+2;
sub(pell, {z => 100/135*w-e})
\end{verbatim}
}
\end{tbox}

\item We get Equation \ref{eq: An nsa = 100 -t} using

\begin{tbox} {\footnotesize \begin{verbatim}
-- initialize f0, f1, f2
idealCheck(1,0,0, 33*z^2-54*z*w+22*w^2+2)

tt = -(132*z - 96*w - 8);
d'' = (f1 - tt)/(-2*f2);
a'' = sub(a',{d' => d''});
b'' = sub(b',{d' => d''});
\end{verbatim}
} 
\end{tbox}

We get Equation \ref{eq: An nsa = 100 width-m} using
\begin{tbox} {\footnotesize \begin{verbatim}
widthM = (w-z-1)*d''-b''-1;
\end{verbatim}
} 
\end{tbox}

For Claim \ref{claim: An nsa = 100 -t}, we use

\begin{tbox} {\footnotesize \begin{verbatim}
---- Part (a) ----
wnum = widthM * (z^2-z*w+w^2+2*z+1);
w1 = sub(wnum, {z=>10/12*w+e});
w2 = sub(w1, {w => 4+x})

---- Part (b) ----
pell = 33*z^2-54*z*w+22*w^2+2;
p1 = sub(pell, {w=>(12/10+e)*z});
p2 = sub(p1, {z=>11+x})

sub(p2, {e=>0})
sub(p2, {e=>1/10})

---- Part (c) ----
anum = a''* (z^2-z*w+w^2+2*z+1);
a1 = sub(anum, {w=>(13/10+e)*z});
a2 = sub(a1, {z => 37+x})

sub(a2, {e=>0})
sub(a2, {e=>5/100})
\end{verbatim}
} 
\end{tbox}

For the remaining cases, we use

\begin{tbox} {\footnotesize \begin{verbatim}
sub(b'', {z=>4,w=>5})
sub(widthM, {z=>6,w=>7})
sub(b'', {z=>10,w=>13})
sub(widthM,{z=>20,w=>23})
sub(a'', {z=>36,w=>47})
\end{verbatim}
} 
\end{tbox}
\end{enumerate}

Consider next $(n,s,a)=(1,0,7).$ Here, $t=\pm (132z - 12w - 8).$
\begin{enumerate}
\item Consider the $+t$ root. We find $\beta/\gamma$ in Equation \ref{eq: An nsa = 107 +t} using
\begin{tbox} {\footnotesize \begin{verbatim}
-- initialize f0, f1, f2
idealCheck(1,0,7,33*z^2-12*z*w + w^2+2)

tt = 132*z - 12*w - 8;
-- the root is d' = (-f1+t)/2f1.
-- up to scaling, we can write 
d'' = (f1 - tt)/(-2*f2);
a'' = sub(a',{d' => d''}); 
b'' = sub(b',{d' => d''});
\end{verbatim}
} 
\end{tbox}

The following computations are used in Claim \ref{claim: An nsa=107 +t}.

\begin{tbox} {\footnotesize \begin{verbatim}
---- part (a) -----
pell = 33*z^2-12*z*w+w^2+2;
sub(pell, {w=>3*z-e});

---- part (b) -----
bb = -132*z^2 +111*z*w - 13*w^2 - 264*z + 14*w - 132;
b1 = sub(bb, {w=>(3+e) * z});
b2 = sub(b1, {z=>3+x});

sub(b2, {e=>0})
sub(b2, {e=>2})

cc = 32*z^2 - 32*z*w + 4*w^2  + 64*z + 32;
c1 = sub(cc, {w=>(3+e) * z});
c2 = sub(c1, {z=>3+x});

sub(c2, {e=>0})
sub(c2, {e=>2})

---- part (c) -----
pell = 33*z^2-12*z*w+w^2+2;
p1 = sub(pell, {w=>(5+e)*z});
p2 = sub(p1, {z=>3+x});

sub(p2, {e=>0});
sub(p2, {e=>25/10});

---- part (d) -----
bb = -132*z^2 +111*z*w - 13*w^2 - 264*z + 14*w - 132;
sub(bb, {w=>75/10*z+e})

cc = 32*z^2 - 32*z*w + 4*w^2  + 64*z + 32;
sub(cc, {w=>75/10*z+e})
\end{verbatim}
} 
\end{tbox}
\item Consider the $-t$ root. We find $\alpha/\gamma,\beta/\gamma,\delta/\gamma$ in Equation \ref{eq: An nsa = 107 -t} using

\begin{tbox} {\footnotesize \begin{verbatim}
-- initialize f0, f1, f2
idealCheck(1,0,7,33*z^2-12*z*w + w^2+2);

tt = -(132*z - 12*w - 8);
-- the root is d' = (-f1+t)/2f1.
-- up to scaling, we can write 
d'' = (f1 - tt)/(-2*f2);
a'' = sub(a',{d' => d''}); 
b'' = sub(b',{d' => d''});
\end{verbatim}
} 
\end{tbox}

For Claim \ref{claim: An nsa = 107 -t w<8z}, we use

\begin{tbox} {\footnotesize \begin{verbatim}
pell = 33*z^2-12*z*w+w^2+2;
sub(pell, {w=>8*z+e})
\end{verbatim}
} 
\end{tbox}

To calculate Equation \ref{eq: An nsa = 107 -t width-m}, we use
\begin{tbox} {\footnotesize \begin{verbatim}
widthM = 7*a''-b''-d''-1
\end{verbatim}
} 
\end{tbox}

For Claim \ref{claim: An nsa=107 -t}, we use

\begin{tbox} {\footnotesize \begin{verbatim}
----Part (b) ----
cc =  8*z^2 - 8*z*w + w^2 + 16*z + 8;
c1 = sub(cc, {w=>(3+e)*z});
c2 = sub(c1, {z=>3+x})

sub(c2, {e=>0})
sub(c2, {e=>2})

---- Part (d) ----
bb = 662*z^2 - 157*z*w + 9*w^2  + 114*z - 15*w + 12;
b1 = sub(bb, {w=>(75/10+e)*z});
b2 = sub(b1, {z=>3+x})

sub(b2, {e=>0})
sub(b2, {e=>5/10})
\end{verbatim}
} 
\end{tbox}
\end{enumerate}

Consider next $(n,s,a)=(4,-2,0).$ Here, $t=\pm (22z - 10w - 10).$

\begin{enumerate}
    \item Consider $t=22z - 10w - 10.$ To get Equation \ref{eq: An nsa=4-20}, we use

    \begin{tbox} {\footnotesize \begin{verbatim}
-- initialize f0, f1, f2
idealCheck(4,-2,0,11*z^2-15*z*w+5*w^2+5)

tt = 22*z - 10*w - 10;
-- the root is d' = (-f1+t)/2f1.
-- up to scaling, we can write 
d'' = (f1 - tt)/(-2*f2);
a'' = sub(a',{d' => d''}); 
b'' = sub(b',{d' => d''});
    \end{verbatim}
    } 
    \end{tbox}
    \item Consider $t=-(22z - 10w - 10).$ To get Equation \ref{eq: An nsa=4-20 -t}, we use

    \begin{tbox} {\footnotesize \begin{verbatim}
-- initialize f0, f1, f2
idealCheck(4,-2,0,11*z^2-15*z*w+5*w^2+5)

tt = -(22*z - 10*w - 10);
-- the root is d' = (-f1+t)/2f1.
-- up to scaling, we can write 
d'' = (f1 - tt)/(-2*f2);
a'' = sub(a',{d' => d''}); 
b'' = sub(b',{d' => d''});
    \end{verbatim}
    } 
    \end{tbox}

    To find the difference $\Width_{(1,-1)}-m$ in Equation \ref{eq: An nsa=4-20 -t width-m}, we use
    \begin{tbox} {\footnotesize \begin{verbatim}
widthM = (w-z-1)*d''-b''-1
    \end{verbatim}
    } 
    \end{tbox}

    For Claim \ref{claim: An nsa=4-20 -t}, we use 

    \begin{tbox} {\footnotesize \begin{verbatim}
---- Part (a) -----
wnum = -6*z^2 - 10*z*w + 10*w^2 + 4*z + 10;
w1 = sub(wnum, {w=>(1+e)*z});
w2 = sub(w1, {z=>31+x})

sub(w2,{e=>0})
sub(w2,{e=>3/10})

---- Part (b) -----
pell = 11*z^2-15*z*w+5*w^2+5;
p1 = sub(pell, {w=>(13/10+e)*z});
p2 = sub(p1, {z=>31+x})

sub(p2,{e=>0})
sub(p2,{e=>42/100}) 

---- Part (c) -----
anum = 21*z^2-21*z*w+5*w^2+10*z+5;
a1 = sub(anum, {w=>(172/100+e)*z});
a2 = sub(a1, {z=>31+x})

sub(a2,{e=>0})
sub(a2,{e=>28/100}) 

---- Part (d) -----
pell = 11*z^2-15*z*w+5*w^2+5;
p1 = sub(pell, {w=>2*z+e})
    \end{verbatim}
    } 
    \end{tbox}

The following code shows $\beta/\gamma<0$ for $(z,w) \in (5,8),$ and $\Width_{(1,-1)}<m$ for $(z,w) \in \{(10,13),(25,32)\}:$
    \begin{tbox} {\footnotesize \begin{verbatim}
sub(b'',{z=>5,w=>8})
sub(widthM,{z=>10,w=>13})
sub(widthM,{z=>25,w=>32})
    \end{verbatim}
    } 
    \end{tbox}

To analyze the remaining cases, we compute the coordinates of $\Delta$ using the \texttt{findCoordinates} function from Computation \ref{comp: A0 Case I (a,b,s)=(0,8,2)}:

    \begin{tbox} {\footnotesize \begin{verbatim}
arr = [(5,7),(10,17),(25,43)]
for elem in arr do
    findCoordinates(elem);
\end{verbatim}
    } 
    \end{tbox}
The output is

\begin{tbox} {\footnotesize \begin{verbatim}
[[0, 0], [180, 0], [200, 20], [800, 1120]]
[[0, 0], [160, 0], [320, 160], [3200, 5440]]
[[0, 0], [100, 0], [2600, 2500], [20000, 34400]]
\end{verbatim}
} 
\end{tbox}

We can test all of these quadrilaterals on \textsc{Magma} using the \texttt{checkAll} function from Computation \ref{comp: A0 Case I (a,b,s)=(0,8,2)}. The first quadrilateral, call $\Delta,$ is the only which satisfies $\Width(\Delta) \ge m.$
On \textsc{Magma}, consider the following code:

\begin{tbox} {\footnotesize \begin{verbatim}
p := Polytope([[0, 0], [9, 0], [10, 1], [40, 56]]);
m := #BoundaryPoints(p);
#FindCurves(p, m, Rationals());
IsIrreducible(FindCurve(p, m, Rationals()));
\end{verbatim}
} 
\end{tbox}

Hence the polygon \texttt{p} satisfies $\dim \mc L_{p}(|\partial p \cap \ZZ^2|)=1,$ but the generator is reducible. 
Since one gets $\Delta$ on scaling \texttt{p} by $20$, the linear system $\dim \mc L_{\Delta}(|\partial \Delta \cap \ZZ^2|)$ also contains a reducible element, showing it is not elliptic.


\end{enumerate}

Consider next $(n,s,a)=(4,-2,4).$ Here, $t=\pm (22z - 2w - 10).$

\begin{enumerate}
    \item Consider $t=22z - 2w - 10.$ To get Equation \ref{eq: An nsa=4-24 +t}, we use

\begin{tbox} {\footnotesize \begin{verbatim}
-- initialize f0, f1, f2
idealCheck(4,-2,4, 11*z^2-7*z*w+w^2+5)

tt = 22*z - 2*w - 10;
-- the root is d' = (-f1+t)/2f1.
-- up to scaling, we can write 
d'' = (f1 - tt)/(-2*f2);
a'' = sub(a',{d' => d''}); 
b'' = sub(b',{d' => d''});
\end{verbatim}
    } 
    \end{tbox}

    For Claim \ref{claim: An nsa=4-24 +t}, we use

    \begin{tbox} {\footnotesize \begin{verbatim}
---- Part (a) ----
pell = 11*z^2-7*z*w+w^2+5;
p1 = sub(pell, {w=>2*z-e})
    
---- Part (b) ----
bb =  -55*z^2 + 49*z*w - 9*w^2 - 110*z + 10*w - 55;
b1 = sub(bb, {w=>(2+e)*z});
b2 = sub(b1, {z=>14+x})

cc = 25*z^2 - 25*z*w + 5*w^2 + 50*z + 25;
c1 = sub(cc, {w=>(2+e)*z});
c2 = sub(c1, {z=>14+x})

sub(b2, {e=>0})
sub(b2, {e=>5/10})

sub(c2, {e=>0})
sub(c2, {e=>5/10})

---- Part (c) ----
p1 = sub(pell, {w=>(25/10+e)*z})
p2 = sub(p1, {z=>14+x})

sub(p2, {e=>0})
sub(p2, {e=>19/10})

---- Part (d) ----
b1 = sub(bb, {w=>(44/10+e)*z});
b2 = sub(b1, {z=>14+x})

c1 = sub(cc, {w=>(44/10+e)*z});
c2 = sub(c1, {z=>14+x})
\end{verbatim}
    } 
    \end{tbox}

    \item Consider $t=(22z - 2w - 10).$ To get Equation \ref{eq: An nsa=4-24 -t}, we use

    \begin{tbox} {\footnotesize \begin{verbatim}
-- initialize f0, f1, f2
idealCheck(4,-2,4, 11*z^2-7*z*w+w^2+5)

tt = -(22*z - 2*w - 10);
-- the root is d' = (-f1+t)/2f1.
-- up to scaling, we can write 
d'' = (f1 - tt)/(-2*f2);
a'' = sub(a',{d' => d''}); 
b'' = sub(b',{d' => d''});
\end{verbatim}
    } 
    \end{tbox}

    For Claim \ref{claim: An nsa = 4-24 -t w<5z}, we use

\begin{tbox} {\footnotesize \begin{verbatim}
pell = 11*z^2-7*z*w+w^2+5;
sub(pell, {w=>5*z+e})
\end{verbatim}
} 
\end{tbox}

    For Equation \ref{eq: An nsa=4-24 -t width-m}, we use

\begin{tbox} {\footnotesize \begin{verbatim}
widthM = 4*a''-b''-d''-1
\end{verbatim}
} 
\end{tbox}

    For Claim \ref{claim: An nsa=4-24 -t}, we use

\begin{tbox} {\footnotesize \begin{verbatim}
bb = 74*z^2-26*z*w+2*w^2+4*z+10;
b1 = sub(bb, {w=>(44/10+e)*z});
b2 = sub(b1, {z=>14+x})

sub(b2, {e=>0})
sub(b2, {e=>6/10})
\end{verbatim}
} 
\end{tbox}

The following code can be used to show $\beta/\gamma<0$ for $(z,w)=(2,7),$ and $\Width_{(5,-1)}<m$ for $(z,w) =(7,32),$ 
\begin{tbox} {\footnotesize \begin{verbatim}
sub(b'', {z=>2,w=>7})
sub(widthM, {z=>7, w=>32})
\end{verbatim}
} 
\end{tbox}

To analyze the remaining cases, we compute the coordinates of $\Delta$ using the \texttt{findCoordinates} function from Computation \ref{comp: A0 Case I (a,b,s)=(0,8,2)}:

\begin{tbox} {\footnotesize \begin{verbatim}
arr = [(3,8),(3,13),(7,17)]
for elem in arr do
    findCoordinates(elem);
\end{verbatim}
    } 
    \end{tbox}

The output is

\begin{tbox} {\footnotesize \begin{verbatim}
[[0, 0], [144, 0], [192, 240], [288, 768]]
[[0, 0], [60, 0], [72, 60], [288, 1248]]
[[0, 0], [812, 0], [1512, 3500], [1568, 3808]]
\end{verbatim}
} 
\end{tbox}

We can test all of these quadrilaterals on \textsc{Magma} using the \texttt{checkAll} function from Computation \ref{comp: A0 Case I (a,b,s)=(0,8,2)}. 
The second quadrilateral, call $\Delta,$ is the only which satisfies $\Width(\Delta) \ge m.$
On \textsc{Magma}, consider the following code:

\begin{tbox} {\footnotesize \begin{verbatim}
p := Polytope([[0, 0], [5, 0], [6, 5], [24, 104]]);
m := #BoundaryPoints(p);
#FindCurves(p, m, Rationals());
IsIrreducible(FindCurve(p, m, Rationals()));
\end{verbatim}
} 
\end{tbox}

Hence the polygon \texttt{p} satisfies $\dim \mc L_{p}(|\partial p \cap \ZZ^2|)=1,$ but the generator is reducible. 
Since one gets $\Delta$ on scaling \texttt{p} by $12$, the linear system $\dim \mc L_{\Delta}(|\partial \Delta \cap \ZZ^2|)$ also contains a reducible element, showing it is not elliptic.
\end{enumerate}
\end{computation}

\begin{computation}
\label{comp: An Case II}

The following code uses \texttt{eliminate} to find an expression involving $w,s$ in the ideal.
\begin{tbox} {\footnotesize \begin{verbatim}
R = QQ[a,b,s,x,y,z,w,n];
M1 = matrix{{0,-1},{1,1}};
Ma = matrix {{1-a,-a},{a,a+1}};
Mb = matrix {{1-b,-b},{b,b+1}};
cc = k -> matrix{{0,-1},{1,-k}}; -- "canonical matrix"
M = matrix {{x,y},{z,w}};

-- Function to "reverse" a matrix
rev = A -> matrix{{A_(0,0),-A_(1,0)},{-A_(0,1),A_(1,1)}}

-- inverse of M * M1
MM1inv = matrix{{-z+w,x-y},{-w,y}};

prod = matrix{{-(n*s+3*n-1), -n},{(n+1)*s+3*n+2,n+1}};
N' = MM1inv * prod;

N = rev(N');

Neq = N*M1*M*M1*Ma*cc(-(s+5))*Mb*M1 - matrix{{1,0},{0,1}};

for n' from 1 to 8 do{
    for a' from 0 to 8-n' do {
        print(n', a', 8-a'-n');
        Neq' = sub(Neq, {a=>a', b=>8-n'-a', n=>n'});
        Iarray = minimalPrimes ideal (Neq'_(0,0), Neq'_(1,0), 
        Neq'_(0,1), Neq'_(1,1), x*w-y*z-1);

-- we observe only one ideal in each case
-- but to confirm, we add an if statement
        if #Iarray > 1 then{
            print("Many ideals");
        }; 
        
        J = eliminate({x,y,z},Iarray_0);
        f0 = sub(J_0, {w=>0});
        f1 = sub((J_0-f0)//w, {w=>0});
        f2 =  sub((J_0-f0-w*f1)//w^2, {w=>0});
-- expression is f0+f1*w+f2*w^2 = 0
        print(factor f0);
        print(factor f1);
        print(factor f2);
        print("");
        print("");
    }
}
\end{verbatim}
} 
\end{tbox}
    
For each $(n,a,b,s,w)$ triple, we can use

\begin{tbox} {\footnotesize \begin{verbatim}
arr = [[1,2,5,-1,121],[1,2,5,0,59],[1,3,4,-1,43],[1,3,4,8,143],
[1,6,1,-1,27],[1,6,1,8,99],[1,7,0,-1,41],[1,7,0,0,23],
[2,1,5,-2,31],[2,1,5,-1,29],[2,1,5,19,319],[2,2,4,-2,17],
[2,4,2,-2,14],[2,5,1,-2,19],[2,6,0,-1,13],[3,1,4,-2,13],
[3,4,1,-2,9],[3,5,0,-2,11],[4,0,4,-2,17],[4,0,4,-1,43],
[4,0,4,0,137],[4,2,2,-2,10],[4,3,1,-1,17],[4,4,0,-2,7],
[6,1,1,-2,9],[6,2,0,-2,7],[6,2,0,0,19],[6,2,0,30,209],
[7,1,0,-2,11],[7,1,0,-1,13],[7,1,0,19,143]]

for elem in arr do{
n' = elem_0, a' = elem_1; b' = elem_2; s' = elem_3; w' = elem_4;
Neq' = sub(Neq, {n => n', a => a', b => b', s => s', w => w'});
I = minimalPrimes ideal (Neq'_(0,0), Neq'_(0,1), Neq'_(1,0), 
Neq'_(1,1), x*w - y*z - 1);
print(I);}
\end{verbatim}
} 
\end{tbox}

This shows that there is a unique $(x, y, z)$ solution for each $(n,a,b,s,w)$ tuple. 
We use the value of $z.$
To use $\Vol(\Delta)=m^2$ to find coordinates of a potential elliptic quadrilateral for each tuple $(n, a, b, s, z, w),$ we use the following code. 

\begin{tbox} {\footnotesize \begin{verbatim}
R = QQ[x,y,z,w,s,a',b',c',d',c,d,l,t,e];

-- format: (n,a,b,s,z,w)
arr = [[1,2,5,-1,36,121],[1,2,5,0,18,59],[1,3,4,-1,10,43],
[1,3,4,8,35,143], [1,6,1,-1,4,27],[1,6,1,8,14,99],
[1,7,0,-1,6,41],[1,7,0,0,3,23],[2,1,5,-2,12,31],
[2,1,5,-1,12,29],[2,1,5,19,156,319], [2,2,4,-2,5,17],
[2,4,2,-2,3,14],[2,5,1,-2,4,19],[2,6,0,-1,2,13],
[3,1,4,-2,5,13],[3,4,1,-2,2,9],[3,5,0,-2,3,11],
[4,0,4,-2,10,17],[4,0,4,-1,30,43], [4,0,4,0,105,137],
[4,2,2,-2,3,10],[4,3,1,-1,4,17],[4,4,0,-2,2,7],
[6,1,1,-2,4,9],[6,2,0,-2,3,7],[6,2,0,0,6,19],
[6,2,0,30,69,209], [7,1,0,-2,7,11],[7,1,0,-1,6,13],
[7,1,0,19,70,143]]

-- map to change ring to integers
-- will be used to find square root of discriminant
mapToZ = map(ZZ,R,{});

-- input: (n,a,b,s,z,w) tuple
-- first solves vol = m^2 and finds roots
-- uses that to get coordinates of sides
-- if quadratic has no integer solutions,
-- then will print non-integer coordinates (since
-- it approximates square root of discriminant)

f = (elem) ->{
n' = elem_0, at = elem_1; bt = elem_2; st = elem_3; 
zt = elem_4; wt = elem_5;
c = at+1; d = bt+1; l = st+3;
b' = (d'*wt-(c+d+l*c*d))/c; 
a' = zt*d'-(1+l*d)-b';

m = a'+b'+1+d';
vol = (a')*(d')*wt+(b')*d;
eq = vol - m^2;

eq = eq * c;

f0 = sub(eq, {d' => 0});
f1 = sub((eq-f0)//d', {d' => 0});
f2 = sub((eq-f0-f1*d')//(d')^2, {d' => 0});

disc = (f1)^2 - 4*(f0)*(f2);
-- First change ring of disc to integers
-- then take square root
print(elem);
print(sqrt mapToZ(disc));
print("");
print("");}

for elem in arr do{
f(elem);}
\end{verbatim}
} 
\end{tbox}

First, a function to print the coordinates given its sidelengths $\alpha,\beta,\gamma,\delta.$

\begin{tbox} {\footnotesize \begin{verbatim}
-- given z,t and side lengths, give coordinates
-- of quadrilateral
giveCoordinatesHelper = (zt,wt,a'',b'',c'',d'') -> {
      at = sub(a'', {z=>zt, w=>wt});
      bt = sub(b'', {z=>zt, w=>wt});
      ct = sub(c'', {z=>zt, w=>wt});
      dt = sub(d'', {z=>zt, w=>wt});
      print([[0,0],[at,0],[at+bt, bt*c],[dt*zt,dt*wt]]);
}
\end{verbatim}
} 
\end{tbox}

Then, we use the function to write a function which gives the coordinates given the tuple $(a,b,s,z,t),$ where $t$ is the square root of the discriminant.

\begin{tbox} {\footnotesize \begin{verbatim}
giveCoordinates = (elem) -> {
n' = elem_0; at = elem_1; bt = elem_2; st = elem_3;  
zt = elem_4; wt = elem_5; tt = elem_6;
c = at+1; d = bt+1; l = st+3;
-- observe that we scale by c'f
b' = (d'*wt-(c+d+l*c*d)*c')/c; 
a' = zt*d'-(1+l*d)*c'-b';

m = a'+b'+c'+d';
vol = (a')*(d')*wt+(b')*(c')*d;
eq = vol - m^2;

eq = eq * c;

f0 = sub(eq//(c')^2, {d' => 0});
f1 = sub((eq-f0*(c')^2)//(c'*d'), {d' => 0});
f2 = sub((eq-f0*(c')^2-f1*(c'*d'))//(d')^2, {d' => 0});

-- FIRST ROOT
-- side lengths up to scaling
d'' = f1-tt;
c'' = -2*f2;
a'' = sub(a', {d' => d'', c' => c''});
b'' = sub(b', {d' => d'', c' => c''});

giveCoordinatesHelper(zt,wt,a'',b'',c'',d'');

-- SECOND ROOT
-- side lengths up to scaling
d'' = f1+tt;
c'' = -2*f2;
a'' = sub(a', {d' => d'', c' => c''});
b'' = sub(b', {d' => d'', c' => c''});

giveCoordinatesHelper(zt,wt,a'',b'',c'',d'');}
\end{verbatim}
} 
\end{tbox}

Combining this with the square roots of our discriminants, we run the following to
get a list of quadrilaterals, which we test on \textsc{Magma}.

\begin{tbox} {\footnotesize \begin{verbatim}
R = QQ[x,y,z,w,s,a',b',c',d',c,d,l,t,e];

arr = [[1, 2, 5, -1, 36, 121, 996], [1, 2, 5, 0, 18, 59, 420], 
[1, 7, 0, -1, 6, 41, 264], [1, 7, 0, 0, 3, 23, 112], 
[2, 1, 5, -2, 12, 31, 204], [2, 1, 5, -1, 12, 29, 132], 
[2, 1, 5, 19, 156, 319, 1284], [2, 5, 1, -2, 4, 19, 156], 
[4, 0, 4, -2, 10, 17, 40], [4, 0, 4, -1, 30, 43, 90], 
[4, 0, 4, 0, 105, 137, 290], [4, 4, 0, -2, 2, 7, 20], 
[7, 1, 0, -2, 7, 11, 16], [7, 1, 0, -1, 6, 13, 8],
[7, 1, 0, 19, 70, 143, 136]]

for elem in arr do{
giveCoordinates(elem);} 
\end{verbatim}
} 
\end{tbox}

We can test all of these quadrilaterals on \textsc{Magma} using the \texttt{checkAll} function from Computation \ref{comp: A0 Case I (a,b,s)=(0,8,2)}. The code returns nothing, showing that $\Width(\Delta) \ge m$ fails in each case.

\end{computation}
\end{document}